\documentclass[11pt,twoside,reqno,withaddress]{mathpaper}
\usepackage[bookmarks=false]{hyperref}
\usepackage{geometry,graphicx,xcolor,color}
\usepackage{booktabs} 
\usepackage{enumitem}
\usepackage{multicol}
\usepackage[numbers,sort&compress]{natbib}
\newtheorem{conjecture}{Conjecture}[section]
\newcommand{\Z}{\mathbb Z}
\usepackage{hyperref}
\begin{document}

%title
\title[Monotonicity of the first Steklov eigenvalue of regular $N$-gons]{Monotonicity of the first nonzero Steklov eigenvalue of regular $N$-gon with fixed perimeter}

%author
\author{Zhuo Cheng}
\address{School of Mathematics and Statistics, HNP-LAMA, Central South University, Changsha, Hunan 410083, P. R. China}
\email{zhuo@outlook.cz}
\author{Changfeng Gui}
\address{Department  of  Mathematics,  University  of  Macau,  Macau  SAR,  P. R. China}
\address{Zhuhai UM Science and Technology Research Institute, Hengqin, Guangdong, 519031,  P. R. China}
\email{changfenggui@um.edu.mo}
\author{Yeyao Hu}
\address{School of Mathematics and Statistics, HNP-LAMA, Central South University, Changsha, Hunan 410083, P. R. China}
\email{huyeyao@gmail.com}
\author{Qinfeng Li}
\address{School of Mathematics, Hunan University, Changsha, P.R. China.}
\email{liqinfeng1989@gmail.com}
\author{Ruofei Yao}
\address{School of Mathematics, South China University of Technology, Guangzhou, P.R. China.}
\email{yaorf5812@126.com}
%% 正文

\begin{abstract}
 We study the first nontrivial Steklov eigenvalue of perimeter-normalized regular \(N\)-gons and show that it is strictly increasing in \(N\). The proof mainly relies on an analytic framework that establishes a refined asymptotic expansion in three steps: first, identifying the Steklov eigenvalue as the maximal eigenvalue of a Toeplitz-type operator; second, deriving the eigenvalue and its associated eigenfunctions simultaneously via Schur reduction; and finally, obtaining the exact coefficients in the Schur moment expansion by evaluating Euler-type sums. The monotonicity is proved to be eventual, holding for \(N\ge 20\). For the remaining cases \(3\le N\le 20\), we provide complementary computer-assisted verification, confirming monotonicity across the full range of \(N\).
\end{abstract}

\maketitle

\section{Introduction}
In this paper, we consider the well-known Steklov eigenvalue problem. Given $\Omega$ a bounded convex set in $\mathbb{R}^n$, we say that $\sigma$ is a Steklov eigenvalue of $\Omega$ if the following equation admits a nonzero solution:
\begin{equation}\label{eq:steklov}
\Delta u\,=\,0\ \text{ in }\ \Omega,
\qquad
\partial_{\nu} u \,=\,\sigma\, u\ \text{ on }\ \partial\Omega,    
\end{equation}
where $\partial_{\nu}$ denotes the outward normal derivative. In particular, constant functions satisfy \eqref{eq:steklov}, so $\sigma=0$ is an eigenvalue and the corresponding eigenfunctions are precisely constants. 

The eigenvalue problem \eqref{eq:steklov} arises naturally in the mechanics of an ideal elastic membrane. As the membrane vibrates slightly in the vertical direction, the displacement evolves according to the wave equation with an extreme mass distribution. More precisely, the mass density is concentrated on the boundary $\partial\Omega$, so the interior of $\Omega$ is massless. The standard separation of variables argument reduces the problem into a spatial profile $u$ and a time-periodic oscillation. The massless interior forces the absolute displacement $u$ to be harmonic in $\Omega$, while the balance between the membrane tension and the inertial force yields the boundary condition on $\partial\Omega$. This type of boundary condition also appears in the study of “sloshing” modes of fluids in containers and, more classically, in heat conduction with heat exchange at the boundary. Historically, the model was first proposed by V. Steklov~\cite{Stekloff1902} to study the thermal equilibrium with boundary heat transfer.
%citation Stekloff, W.
%W. Stekloff, Sur les problèmes fondamentaux de la physique mathématique (suite et fin), Annales scientifiques de l’École Normale Supérieure (3), 19 (1902), 455–490.

Another way to formulate problem \eqref{eq:steklov} is via the Dirichlet-to-Neumann map. Starting from a function $u\in L^{2}(\partial\Omega)$, we denote by $P_{\Omega}u$ the unique harmonic extension of $u$ into $\Omega$ such that the following holds
\begin{equation*}
\Delta (P_{\Omega}u)\,=\,0\ \text{ in }\ \Omega,
\qquad
P_\Omega u\,=\,u\ \text{ on }\ \partial\Omega,    
\end{equation*}
in a very weak sense. We now restrict ourselves to a Hilbert subspace of $L^2(\partial\Omega)$, defined as 
\begin{equation*}
W^{\frac12,2}(\partial\Omega):=\left\{u\in L^2(\partial\Omega): \int_{\Omega} |\nabla P_{\Omega}u|^2 dx<\infty\right\}.    
\end{equation*} 
We are ready to give the definition of the Dirichlet-to-Neumann map: $\Lambda u = \partial_{\nu} \left(P_{\Omega}u\right)$, for any $u\in W^{\frac12,2}(\partial\Omega)$. Consequently, problem \eqref{eq:steklov} can be expressed in the following simple form:
\begin{equation}\label{eq:DtNmap}
\Lambda u \,=\, \sigma u. 
\end{equation}
However, this equation must be understood in a weak sense. We use the standard sesquilinear duality pairing (linear in the first argument and conjugate-linear in the second) for any $\varphi, \psi \in W^{\frac{1}{2},2}(\partial\Omega)$:
\begin{equation*}
\left(\Lambda \varphi, \psi\right):=\int_{\Omega} \nabla \left(P_{\Omega}\varphi\right)\cdot \overline{\nabla\left(P_{\Omega}\psi\right)}\,dx,   
\end{equation*}
where $\Lambda \varphi\in W^{-\frac12,2}(\partial\Omega)$ in the duality sense. The weak Green's identity for convex domains implies that the quadratic form induced by the pairing is self-adjoint and positive semi-definite. Using the compactness of the trace operator, one obtains instead a compact, self-adjoint, positive operator on the interior normalized energy space; therefore, by the spectral theorem, problem~\eqref{eq:DtNmap} admits a discrete sequence of eigenvalues, counted with multiplicity:
\begin{equation*}
0 = \sigma_0 < \sigma_1 \leq \sigma_2 \leq \dots \leq \sigma_k \leq \dots,
\end{equation*}
where $\sigma_k$ denotes the $k$-th nonzero eigenvalue and $\lim_{k \to \infty} \sigma_k = \infty$.

Problem \eqref{eq:steklov} has attracted the attention of many mathematicians since its indroduction. Progress has been made in various aspects, including the Weyl law (see \cite{Karpukhin2023Weyl,Agranovich2006,Sandgren1955,Shamma1971,GirouardParnovskiPolterovichSher2014,GirouardKarpukhinLevitinPolterovich2022}), nodal domains and nodal sets (see \cite{Bellova2015Nodal, Zelditch2015Hausdorff, Polterovich2019Nodal, Wang2015Lower, Sogge2016Lower, Zhu2016Interior, Zhu2020Geometry, Decio2022Nodal, Decio2024Hausdorff,Kuttler1969Inequality,Alessandrini1994Elliptic,Karpukhin2014Multiplicity}). In recent decades, A. Fraser and R. Schoen~\cite{Fraser2011First} first uncovered a deep connection between the Steklov eigenvalue problem and minimal surface theory by showing that the coordinate functions of free boundary minimal surfaces in the Euclidean unit ball are Steklov eigenfunctions with eigenvalue $1$. They~\cite{Fraser2016Sharp,Fraser2013Minimal} later used Morse index estimates for free boundary minimal submanifolds to characterize the critical catenoid and the critical M\"obius band as extremizers among surfaces of the same topology. This leads naturally to isoperimetric inequalities for the first nonzero Steklov eigenvalue under prescribed topology. In that regard, fundamental earlier contributions include Weinstock’s classical result~\cite{Weinstock1954Inequalities} that the unit disk maximizes the first nonzero Steklov eigenvalue among simply connected planar domains of fixed perimeter, and Brock’s result~\cite{Brock2001Isoperimetric} identifying the maximizer among domains of prescribed volume in $\mathbb{R}^n$ for every $n$. However, in dimensions three and higher, A. Fraser and R. Schoen~\cite{Fraser2019Shape} proved that the unit ball is no longer a maximizer among contractible domains under a perimeter constraint. More recently, D. Bucur et. al.~\cite{Bucur2021Weinstock} showed that the Weinstock inequality remains valid for convex sets of prescribed perimeter. For isoperimetric inequalities involving higher Steklov eigenvalues, see \cite{Hersch1975Some, Girouard2010Hersch, Colbois2011Isoperimetric, Fraser2020Some, Girouard2021Large, Girouard2021Continuity}. Interested readers are referred to \cite{Henrot2006Extremum, Girouard2016Steklov, Girouard2017Spectral, Colbois2024Some} and the references therein for a comprehensive understanding of this hot topic.

Now we focus on the Steklov eigenvalue problem on convex polygons. In analogy with the Weinstock inequality, one may pose the corresponding isoperimetric maximization problem. To study this problem in a restricted setting, we introduce the following admissible class:
\begin{equation*}\label{eq:ConvexPolygon}
\mathcal{C}_{N} = \{ \Omega \in \mathbb{R}^2 : \Omega \text{ is a convex polygon with at most } N \text{ sides and } \mathrm{Per}(\Omega) = 2\pi \},
\end{equation*}
where $\mathrm{Per}(\Omega)$ denotes the perimeter of $\Omega$. We believe the following conjecture holds.
\begin{conjecture}[Existence and non-degeneracy of the maximizer]\label{conj:Maximizer}
Consider the extremal problem
\begin{equation*}\label{eq:Suppoly}
\sigma^* (N) := \sup_{\Omega \in \mathcal{C}_{N}} \sigma_1(\Omega).
\end{equation*}
The superemum is attained by a convex polygon $\Omega^{\ast}$ with exactly $N$ sides. In particular, the maximizer cannot be a ``degenerate" polygon with fewer than $N$ sides.
\end{conjecture}
In fact, a much stronger P\'olya-type conjecture was proposed by E. Oudet at the AIM workshop~\cite{AimPL_Steklov} in 2018.
\begin{conjecture}[The P\'olya--Szeg\H{o} conjecture for the Steklov eignvalue]\label{conj:Polya}
The maximizer $\Omega^{\ast}$ in Conjecture~\ref{conj:Maximizer} must be a regular $N$-gon.  
\end{conjecture}
%AimPL: Steklov eigenproblems, available at http://aimpl.org/stekloveigen.
To the best of our knowledge, Conjecture~\ref{conj:Polya} is completely open even in the triangular case, where the maximizer is expected to be the equilateral triangle. This conjecture implies that, among regular $N$-gons with fixed perimeter, the first nonzero Steklov eigenvalue forms a strictly increasing sequence in $N$. In this paper, we establish this monotonicity, which in turn provides strong evidence for Conjecture~\ref{conj:Polya}.

From now on, throughout the entire paper, let $\Omega_N\subset \mathbb{R}^2$ be a regular $N$-gon centered at the origin with fixed perimeter $2\pi$. A recent breakthrough by J. Dahne et al.~\cite{Dahne2026Monotonicity} established the monotonicity of the first Dirichlet eigenvalue of $\Omega_N$, which in part motivates the current paper. More precisely, they developed a computer-assisted scheme to verify that this eigenvalue sequence is monotonically decreasing. In fact, this monotonicity is also predicted by a classical conjecture of G. P\'olya and G. Szeg\H{o} in~\cite{Polya1951Isoperimetric}, which asserts that, among area-normalized polygons with at most $N$ sides, the minimizer of the first Dirichlet eigenvalue is the regular $N$-gon, uniquely up to translations and rotations. G. P\'olya himself proved the conjecture for triangles and quadrilaterals by Steiner symmetrization in~\cite{Polya1948Torsional}. Although the conjecture remains widely open for $N\ge 5$, substantial progress has been made by many authors. Numerical work of P. Antunes and P. Freitas~\cite{Antunes2006New} provides strong evidence for the Polygonal P\'olya--Szeg\H{o} conjecture for the Dirichlet eigenvalue. More recently, E. Indrei~\cite{Indrei2024First} proved that the regular $N$-gon is a local minimizer for sufficiently large $N$, and also established quantitative stability of the equilateral triangle within the triangular class. Most recently, B. Bogosel and D. Bucur~\cite{Bogosel2024Polygonal} proved local minimality of the regular $N$-gons for $N=5,\,6,\,7,\, 8$ with a hybrid scheme combining theoretical argument with numerical computations.
%Pólya, G., & Szegő, G. (1951). Isoperimetric Inequalities in Mathematical Physics. Annals of Mathematics Studies, No. 27. Princeton University Press, Princeton, NJ.
%Pólya, G. (1948). Torsional rigidity, principal frequency, electrostatic capacity and symmetrization. Quarterly of Applied Mathematics, 6(3), 267–277.
%Antunes, P., & Freitas, P. (2006). New bounds for the principal Dirichlet eigenvalue of planar regions. Experiment. Math. 15, 333–342.
%Indrei, E. (2024). On the first eigenvalue of the Laplacian for polygons. Journal of Mathematical Physics, 65(4), 041506.
%Bogosel, B.; Bucur, D. On the polygonal Faber–Krahn inequality. J. Éc. polytech. Math. 11 (2024), 19–105.

Now we return to the Steklov problem. Concerning Conjecture~\ref{conj:Maximizer}, A. Henrot proved the Dirichlet analogue in his monograph~\cite{Henrot2006Extremum} by a local ``surgery'' argument: one can add a vertex while strictly decreasing the principal Dirichlet eigenvalue. This strategy, however, does not extend directly to the Steklov setting, due to the nonlocal nature of the Dirichlet-to-Neumann map. A direct consequence of Conjecture~\ref{conj:Maximizer} would be the strict monotonicity of $\sigma^{\ast}(N)$ as $N$ increases. At present, however, even the monotonicity of $\sigma^{\ast}(N)$  itself remains an interesting open problem for Steklov eigenvalues. Moreover, symmetry considerations suggest that the regular \(N\)-gon should be
extremal among all \(N\)-gons for the first nonzero Steklov eigenvalue, as stated in Conjecture~\ref{conj:Polya}. While the next theorem does not settle this extremal question, it shows that the sequence \(\{\sigma_1(\Omega_N)\}\) is strictly increasing beyond an explicit threshold. This monotonicity result is consistent with Conjecture~\ref{conj:Polya} and thus yielding evidence in its favor.
%Henrot, A. (2006). Extremum Problems for Eigenvalues of Elliptic Operators. Frontiers in Mathematics, Birkhäuser.
\begin{theorem}[Eventual monotonicity]\label{thm:monotone_N0_20}
Recall that $\Omega_N$ is the regular $N$-gon normalized to have perimeter $2\pi$. Then, for all $N\ge 20$,
\begin{equation}\label{eq:monotone_intro}
\sigma_1(\Omega_{N+1})=\sigma_2(\Omega_{N+1})>\sigma_1(\Omega_N)=\sigma_2(\Omega_N),
\end{equation}
where $\sigma_1(\Omega_N)$ denotes the first nonzero Steklov eigenvalue of $\Omega_N$ with multiplicity two.
\end{theorem}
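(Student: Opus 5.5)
The plan is to derive an asymptotic expansion of $\sigma_1(\Omega_N)$ in powers of $1/N$ with explicit remainder bounds, and then check that the difference between consecutive terms is positive for $N\ge 20$. The abstract suggests the expansion comes from a Toeplitz/Schur reduction. First I map $\Omega_N$ conformally onto the unit disk using the Schwarz--Christoffel map $f_N(z)=C_N\int_0^z(1-w^N)^{-2/N}\,dw$, with $C_N$ chosen so that the perimeter equals $2\pi$. Harmonicity is conformally invariant, so the Steklov problem on $\Omega_N$ becomes the weighted problem $\partial_r v=\sigma\,|f_N'(e^{i\theta})|\,v$ on the unit circle, where $\rho_N(\theta):=|f_N'(e^{i\theta})|=C_N|1-e^{iN\theta}|^{-2/N}$ has mean $1$.

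The weight is $2\pi/N$-periodic, so its Fourier series is $\rho_N=\sum_k \hat\rho_{kN}e^{ikN\theta}$, and the coefficients can be computed exactly from the binomial expansion of $|1-e^{iN\theta}|^{-2/N}$. They are $\hat\rho_{kN}=O(1/(kN))$. The Dirichlet-to-Neumann operator on the disk is $|D|$, diagonal in $e^{in\theta}$ with entry $|n|$. Writing the weak problem as $\mu\,|D|v=\rho_N v$ with $\mu=1/\sigma$, and restricting to the invariant mode class $n\equiv 1 \pmod N$ (by rotational symmetry the $n\equiv-1$ class gives the conjugate, which yields multiplicity two), we obtain a Toeplitz-type operator $T_N=|D|^{-1/2}\rho_N|D|^{-1/2}$. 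Its maximal eigenvalue in the mode class $n\equiv\pm1$ is $1/\sigma_1(\Omega_N)$. I also need to show that the other residue classes give strictly smaller eigenvalues, so that the first eigenvalue lives in the classes $\pm1$. This should follow by comparing $\min|n|$ across classes, since the class $n\equiv j$ has $\min|n|\ge \min(j,N-j)\ge 2$ and the off-diagonal entries are small. That comparison also gives exactly multiplicity two.

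Next I apply a Schur complement with respect to the mode $n=1$. This gives $\mu=\rho_{0}+\sum_{m\neq 0}\dfrac{|\hat\rho_{mN}|^2}{|1+mN|}+\dots$, an implicit equation $\mu=\rho_0+S(\mu)$. Its terms are moments of order $N^{-2}$, $N^{-3}$, and so on. I expand to the order needed, probably $\sigma_1(\Omega_N)=1-a/N^2+b/N^3+O(N^{-4})$ or one order further, and compute the coefficients exactly. They are Euler-type sums such as $\sum_k\hat\rho_{kN}^2/k$, which reduce to values like $\zeta(3)$ once each $\hat\rho_{kN}$ is expanded as $\frac{1}{N}\cdot\frac{1}{k}+O(N^{-2})$. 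Since $\sigma_1(\Omega_N)<1=\sigma_1(\text{disk})$ by Weinstock, I expect $a>0$. Then $\sigma_1(\Omega_{N+1})-\sigma_1(\Omega_N)\approx 2a/N^3$, which is positive once the remainder is controlled.

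The main obstacle is obtaining explicit, rigorous constants in the $O(N^{-4})$ remainder, both in the Fourier coefficients and in the Neumann-series tail of the Schur complement, that are sharp enough to beat $2a/N^3$ already at $N=20$. I would bound the Schur tail with the operator norm $\|Q T_N Q\|\le \sup_m \|\hat\rho\|_{\ell^1}/|1+mN|$ together with a resolvent bound $\mu-\|QTQ\|\ge c>0$. For the coefficients I would use explicit Gamma-function bounds. If the constants come out too weak, I would push the expansion one more order, so that the remainder is $O(N^{-5})$, and redo the comparison.
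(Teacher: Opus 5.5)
Your architecture matches the paper's: conformal pullback to a weighted DtN problem on $\mathbb S^1$, residue classes mod $N$, the operator $|D|^{-1/2}\rho_N|D|^{-1/2}$ on the class $n\equiv1$, a Schur complement at $n=1$, Euler-sum coefficients, and a consecutive-difference estimate. However, the quantitative core is off by one order, and this changes what has to be proved.

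In your own Schur term, $\hat\rho_{mN}\approx 1/(|m|N)$ and $|1+mN|\approx |m|N$. So $\sum_{m\neq0}|\hat\rho_{mN}|^2/|1+mN|\approx 2\zeta(3)/N^3$; rewriting it as $\sum_k\hat\rho_{kN}^2/k$ drops the factor $1/N$ coming from the denominator. Hence $a=0$, and in fact $\sigma_1(\Omega_N)=1-2\zeta(3)N^{-3}-8\zeta(4)N^{-4}-26\zeta(5)N^{-5}+O(N^{-6})$.

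The consecutive difference is therefore only about $6\zeta(3)/N^4$, not $2a/N^3$. An expansion with $O(N^{-4})$ remainder certifies nothing, since the omitted $N^{-4}$ term alone has coefficient $8\zeta(4)\approx 8.7$. Even your one-extra-order fallback, $1-c_3N^{-3}-c_4N^{-4}+O(N^{-5})$, would need an $N^{-5}$ remainder constant below roughly $90$ at $N=20$. The paper instead computes all three coefficients, which requires:
\begin{itemize}
\item the first three Schur moments $M_0,M_1,M_2$, not just the leading sum;
\item the harmonic-number corrections $\hat\rho_{kN}=\frac{1}{kN}\bigl(1+(2H_{k-1}+\frac1k)\frac1N+\cdots\bigr)$, which feed Euler sums such as $\sum_m H_{m-1}/m^3=\zeta(4)/4$ and $\sum_m H_{m-1}/m^4=2\zeta(5)-\zeta(2)\zeta(3)$;
\item the off-diagonal part of $M_1$, with leading double sum $\sum_{m\neq\ell;\,m,\ell\neq0}|m|^{-2}|\ell|^{-2}|m-\ell|^{-1}=8\zeta(2)\zeta(3)-12\zeta(5)$.
\end{itemize}
It then closes with an explicit $N^{-6}$ remainder constant $E_\sigma\le 1187$ against an available margin of about $1583$.

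Three further steps fail or are missing as written.

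First, your operator-norm bound uses $\|\hat\rho\|_{\ell^1}$, which is infinite. Near the preimages of the vertices $\rho_N\asymp|\theta-\theta_j|^{-2/N}$ is unbounded, and indeed $\hat\rho_{kN}=\frac{\Gamma(1-1/N)}{\Gamma(1/N)}\frac{\Gamma(k+1/N)}{\Gamma(k+1-1/N)}\asymp N^{-1}k^{2/N-1}$, which is not summable in $k$. The paper uses the Hilbert--Schmidt bound $\|K\|_{\mathrm{HS}}^2\le\|\hat\rho\|_{\ell^2}^2\sum_{m\neq0}|1+mN|^{-2}$ instead. This needs only $\rho_N\in L^2$ (true for $N\ge5$) and gives $\|K\|\le 1.921/N$.

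The same decay shows that $\hat\rho_{kN}=\frac1{kN}+O(N^{-2})$ is not uniform in $k$. The coefficients of the $1/N$-expansion grow like powers of $\log k$, and the remainder carries a factor $k^{2/N}$. So every summation must be run with weighted bounds of the type $|\hat\rho_{kN}|\le C\,N^{-1}k^{-9/10}$ (for $N\ge20$) together with log-sum estimates.

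Second, your separation argument via $\min|n|\ge\min(j,N-j)\ge2$ does not cover the class $j=0$, where $\min|n|=0$. There the constant mode must be eliminated through the weighted constraint $\int_0^{2\pi}\varphi\,\rho_N\,d\theta=0$, which couples the zero mode to every other mode in that class. A separate estimate is needed; the paper proves $\sigma_{1,0}\ge N-3$.

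Third, exact multiplicity two also requires simplicity inside the class $n\equiv1$. That comes from the Schur reduction ($\lambda_*$ is the only eigenvalue of the block above $\|K\|<1$), not from the comparison across classes.
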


We note that constructing a general perimeter-preserving path that deforms an arbitrary shape into a disk while monotonically increasing $\sigma_1$ is a challenging open problem. Recently, in \cite{Huang2025Flow}, the authors construct several continuous deformations of domains independent of the continuous Steiner symmetrization, and successfully prove the monotonicity of the torsional rigidity and the first eigenvalue of the Dirichlet Laplacian along these deformations via a flow approach. Furthermore, Chen, Gui, and Yao \cite{CGY26} exploited continuos deformation (flow) argument to confirm a conjecture of Siudeja \cite{Siu16} on the ordering of mixed Dirichlet--Neumann Laplacian eigenvalues for triangles. For the Steklov problem, however, much difficulty in verifying monotonic paths arises because both traditional symmetrization argument and the flow approach fail in the presence of the Robin-type boundary condition inherent to the Steklov problem. Crucially, even in the more restricted setting where the initial domain is a regular polygon and the natural discrete path is taken as increasing the number of sides, this question had not been previously investigated.

Recent work by Bucur and Nahon \cite{BucurNahon2021} highlights the subtlety of such problems by demonstrating that, quite different from the stability of the Faber–Krahn inequality (see \cite{Brasco2015}), the Weinstock inequality is, in general, not stable. Furthermore, Li, Xie, and Yang \cite{li2023torsion} show that for a boundary mean-zero torsional energy $T(\cdot)$, the ball is a local minimizer under smooth perturbations when the area is fixed. Yet, surprisingly, $T(\cdot)$ increases as the number of sides of a regular polygon with prescribed area grows. Altogether, these results illustrate that, in stark contrast to the Dirichlet case, shape functionals under different boundary conditions may exhibit delicate and sometimes unexpected monotonicity or ordering phenomena. Within this nuanced landscape, our Theorem~\ref{thm:monotone_N0_20} demonstrates that the Steklov eigenvalue, in particular, follows a favorable monotone trend.
%These examples illustrate that, in stark contrast to the Dirichlet case, shape functionals with different boundary conditions can exhibit unexpected monotonicity behaviors with respect to the number of sides of regular polygons. Within this nuanced landscape, our Theorem \ref{thm:monotone_N0_20} demonstrates that the Steklov eigenvalue, in particular, follows a favorable monotone trend. 

The proof of Theorem~\ref{thm:monotone_N0_20} is based on a refined asymptotic expansion of $\sigma_{1}(\Omega_N)$ as $N\rightarrow \infty$, up to order $\mathcal O\left(N^{-5} \right)$ with explicit coefficients given by zeta values and an effective remainder bound. Much more beyond the monotonicity result, our approach can in fact gives a quantitative approximation of the Steklov eigenvalue of the unit disk by regular polygons, together with a precise rate of convergence, as seen in the following theorem.

%As the number of sides tends to infinity, regular polygons become more and more ``round", so it is natural to expect that their first nonzero Steklov eigenvalues converge to that of the unit disk. By Weinstock’s inequality (see~\cite{Weinstock1954Inequalities, Girouard2010Shape}), the unit disk is the unique maximizer among all simply connected planar domains with fixed perimeter. In a preceding paper~\cite{Huang2025Flow}, one of the authors raised the question whether, since the maximizer is unique, a shape closer to the disk should attain a larger Steklov eigenvalue, and in what sense such closeness should be understood. The next theorem gives a quantitative approximation of the Steklov eigenvalue of the unit disk by regular polygons, together with a precise rate of convergence.
%%Weinstock, Robert
%Inequalities for a classical eigenvalue problem.
%J. Rational Mech. Anal. 3 (1954), 745–753.?
%Shape optimization for low Neumann and Steklov eigenvalues
%Girouard, Alexandre; Polterovich, Iosif
%Math. Methods Appl. Sci. 33 (2010), no. 4, 501–516.
%Flow approach on the monotonicity of shape functionals
%Y Huang, Q Li, S Xie, H Yang - arXiv preprint arXiv:2502.09485, 2025
\begin{theorem}[Asymptotic expansion with effective remainder estimate]\label{thm:expansion}
Assume $N\ge 20$.  Define
\[
c_3:=2\zeta(3),\qquad c_4:=8\zeta(4),\qquad c_5:=26\zeta(5).
\]
Then there exists a remainder $R_N$ such that
\[
\sigma_1(\Omega_N)=\sigma_2(\Omega_N)
=
1-\frac{c_3}{N^3}-\frac{c_4}{N^4}-\frac{c_5}{N^5}+R_N,
\qquad
|R_N|\le \frac{E_\sigma}{N^6},
\]
where $E_\sigma$ is an explicit positive constant.
Moreover, the eigenspace of $\sigma_1(\Omega_N)$ is the direct sum
of two one-dimensional eigenspaces supported in two orthogonal subspaces of $W^{\frac12,2}(\partial\Omega_N)$.
\end{theorem}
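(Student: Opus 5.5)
Following the three-step outline in the abstract, I would conformally transplant the problem to the unit disk. Let $f:\mathbb D\to\Omega_N$ be the Schwarz--Christoffel map $f'(z)=C(1-z^N)^{-2/N}$, with $C$ fixed by $\mathrm{Per}=2\pi$. The Steklov problem becomes the weighted problem $\partial_r v=\sigma\,|f'(e^{i\theta})|\,v$ on $\partial\mathbb D$ with $v$ harmonic in $\mathbb D$, where the weight is $\rho(\theta)=|f'(e^{i\theta})|=C|1-e^{iN\theta}|^{-2/N}$ and has mean $1$ because the perimeter is $2\pi$. In Fourier variables, with $v=\sum a_k e^{ik\theta}$, the equation reads $|k|a_k=\sigma\sum_m \hat\rho_{k-m}a_m$. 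Since $\hat\rho$ is supported on $N\mathbb Z$, the space splits into the invariant residue classes $k\equiv j \pmod N$. Reflection symmetry shows that the classes $j=1$ and $j=-1$ give the same eigenvalue. I expect the first eigenvalue to come from $j=\pm1$, which yields the two orthogonal one-dimensional eigenspaces. To rewrite this as a maximal-eigenvalue problem, set $b=|K|^{1/2}a$. Then $1/\sigma$ is an eigenvalue of $T=|K|^{-1/2}\,\mathrm{Toep}(\hat\rho)\,|K|^{-1/2}$ restricted to the class $\{1+Nm\}$. This operator is compact, self-adjoint and positive, so $1/\sigma_1$ is its maximal eigenvalue, and Rayleigh quotients give variational control. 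For the classes $j\ne\pm1$ (including $j=0$ with constants removed), I would bound the largest eigenvalue of $T$ by $1/2+O(N^{-1})$ using $|k|\ge2$ and the fact that $\|\hat\rho\|_{\ell^1}$-type bounds make the Toeplitz part close to the identity. This separates those classes from the candidate near $1$.

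Next I would compute the Fourier coefficients of the weight explicitly. Writing $\alpha=2/N$, we have $|1-e^{i\phi}|^{-\alpha}=|1-e^{i\phi}|^{-\alpha}$ with coefficients $\Gamma(1-\alpha)/(\Gamma(1-\alpha/2+m)\Gamma(1-\alpha/2-m))$. This gives $\hat\rho_{Nm}=c_m$, where $c_0=1$ and $c_m=O(\alpha/m^{1-\alpha})$ has an explicit expansion in $\alpha$ that is uniform in $m$. I would then apply the Schur complement to the distinguished mode $k=1$. Setting $Q$ for the projection onto the modes $k=1+Nm$ with $m\ne0$, the equation for $\sigma$ becomes exact:
\[
\frac1\sigma = 1+\langle u, (\mu-T_{QQ})^{-1}u\rangle,\qquad \mu=1/\sigma,
\]
where $u_m=c_m/|1+Nm|^{1/2}$. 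Because $\|T_{QQ}\|\lesssim 1/N$ and $\mu\approx1$, I can expand the resolvent in a Neumann series: $\langle u,u\rangle+\langle u,T_{QQ}u\rangle+\dots$. The eigenvector is obtained at the same time from the Schur formula.

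The main obstacle is the third step: extracting the exact coefficients $2\zeta(3)$, $8\zeta(4)$ and $26\zeta(5)$ while keeping the remainder effective. The leading term $\sum_{m\ne0} c_m^2/|1+Nm|$ has $c_m\sim \alpha/|m|\cdot(1+O(\alpha\log m))$, which gives size $N^{-3}\sum_m m^{-3}$ to leading order. The corrections come from three sources: expanding $1/|1+Nm|=1/(N|m|)\,(1\mp 1/(Nm)+\dots)$, the $\alpha$-expansion of the $c_m$ (which involves harmonic numbers), and the second Neumann term $\sum c_mc_{m'-m}c_{m'}/\dots$. These produce Euler sums $\sum H_m/m^4$ and double convolution sums. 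I would evaluate them with the classical Euler identities (for example $\sum H_m/m^{q}$ in terms of zeta values) and check that they collapse to the stated constants. To make the remainder effective, I would use explicit Gamma-ratio bounds valid for $\alpha\le 1/10$, tail bounds for $\sum m^{-6}\log^2 m$, and the fact that $\mu\in[1,1+N^{-2}]$ a priori, which controls the resolvent. Collecting everything gives an explicit $E_\sigma$.

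Theorem~\ref{thm:monotone_N0_20} would then follow by comparing the expansions at $N$ and $N+1$. The difference of the main terms is at least $3c_3/(N^3(N+1))$ up to lower order, and for $N\ge20$ this dominates $2E_\sigma/N^6$ provided $E_\sigma$ is moderate. This last check must be done numerically once $E_\sigma$ is known.
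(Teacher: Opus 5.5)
Your outline follows the paper's proof step for step: conformal pullback to the weight $C|1-e^{iN\theta}|^{-2/N}$, residue classes mod $N$, the normalized operator $|k|^{-1/2}\mathrm{Toep}(\hat\rho)|k|^{-1/2}$ with $\mu=1/\sigma$, a Schur complement on the mode $k=1$ (which also gives simplicity in the $r=1$ class), a Neumann expansion, and Euler sums. However, two steps would fail as you describe them.

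\emph{Separation of the non-critical classes.} This cannot rest on $\ell^1$-type bounds for $\hat\rho$. The weight blows up like $|\theta-\theta_j|^{-2/N}$ at the prevertices, so $\hat\rho_{Nm}$ decays only like $|m|^{2/N-1}$. Hence $\hat\rho\notin\ell^1$, and $\mathrm{Toep}(\hat\rho)$ is not even bounded on $\ell^2$, let alone a small perturbation of $I$. What is small is the weighted off-diagonal part, $\|D_r^{-1/2}(\mathrm{Toep}(\hat\rho)-I)D_r^{-1/2}\|_{\mathrm{HS}}^2=\sum_{j\ne0}|\hat\rho_{Nj}|^2\sum_{\ell\in\mathbb Z}|r+\ell N|^{-1}|r+(\ell+j)N|^{-1}=O(N^{-3})$. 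The paper controls this through an exact partial-fraction identity for the inner sum. For $r=0$ there is a further point. The weighted mean-zero condition does not remove the $k=0$ coefficient; it fixes it as $\varphi_0=-\sum_{m\ne0}\hat\rho_{Nm}\varphi_{mN}$. Since $|k|\ge2$ fails there, this coupling must be eliminated explicitly; the paper does so inside the Rayleigh quotient and obtains $\sigma_{1,0}\ge N-3$.

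\emph{The $N^{-5}$ coefficient.} Your list of $N^{-5}$ corrections stops at the second Neumann term $\langle u,T_{QQ}u\rangle$. Because $\hat\rho_0=1$, $T_{QQ}$ has diagonal entries $1/|1+Nm|\asymp 1/(N|m|)$. So the third term $\langle u,T_{QQ}^2u\rangle$ is still of order $N^{-5}$: its diagonal part $\sum_{m\ne0}c_m^2/|1+Nm|^3$ equals $2\zeta(5)N^{-5}+O(N^{-6})$, and its off-diagonal parts are $O(N^{-6})$. The $N^{-5}$ coefficient of $\mu-1$ is $16\zeta(5)+8\zeta(5)+2\zeta(5)$, coming from the zeroth, first and second moments. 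In the first moment, $\zeta(2)\zeta(3)$ cancels between the diagonal and off-diagonal parts. Dropping the third term therefore yields $24\zeta(5)$ rather than $c_5=26\zeta(5)$. Only the terms $j\ge3$ and the replacement of $\mu^{-(j+1)}$ by $1$ are $O(N^{-6})$.

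\emph{Normalization of the coefficients.} With your $\alpha=2/N$ one has $c_m\sim 1/(N|m|)=\alpha/(2|m|)$ to leading order, not $\alpha/|m|$. The latter would give $8\zeta(3)$ instead of $2\zeta(3)$ as the leading coefficient. Your Gamma expression is also off by a factor $(-1)^m$; this is harmless for the spectrum, since it amounts to conjugation by $\mathrm{diag}((-1)^m)$, which commutes with the diagonal weights.
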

\begin{remark}\label{rmk:constant}
We keep track of the constant $E_\sigma$ in Appendix~\ref{app:constants}. It may be taken slightly larger than $1187$ to ensure the validity of Theorem~\ref{thm:expansion} for all $N\geq 20$.
\end{remark}
It is worth pointing out that the expansion obtained in Theorem~\ref{thm:expansion} is completely new. Our result also partially answers the open Problem 1.1 posed by N. Nigam at the AIM workshop~\cite{AimPL_Steklov}, concerning the approximation of the $k$-th disk eigenvalue by those of inscribed regular polygons. In fact, the scenario of the first nonzero eigenvalue is completely resolved with a perimeter penalization, and our framework also extends to the regime where the index $k$ is small compared to the number of sides $N$. This expansion is crucial for proving eventual monotonicity, as it provides two-sided bounds for the eigenvalue. We would also like to stress that the monotonicity result of J. Dahne et al.~\cite{Dahne2026Monotonicity} relies on a landmark asymptotic expansion, first discovered numerically by D. Berghaus et al. in~\cite{Berghaus2024Computation} and proved rigorously in~\cite{Berghaus2024Dirichlet}. Remarkably, these asymptotic formulas all feature zeta values and multiple zeta values. The formal discovery of these zeta coefficients traces back to P. Grinfeld and G. Strang~\cite{GrinfeldStrang2012}, though their setting lacked area normalization.
%AimPL: Steklov eigenproblems, available at http://aimpl.org/stekloveigen.
%D. Berghaus, B. Georgiev, H. Monien, and D. Radchenko (2024)On Dirichlet eigenvalues of regular polygons.J. Math. Anal. Appl. 538 (2), pp. Paper No. 128460, 17.
%D. Berghaus, R. S. Jones, H. Monien, and D. Radchenko (2024)Computation of Laplacian eigenvalues of two-dimensional shapes with dihedral symmetry.Adv. Comput. Math. 50 (3), pp. Paper No. 38, 27.
%Grinfeld, P., & Strang, G. (2012). Laplace eigenvalues on regular polygons: A series in $1/N$. Journal of Mathematical Analysis and Applications, 385, 135–149. 

Now that the eventual monotonicity of the Steklov eigenvalue for $N \ge 20$ has been established in Theorem~\ref{thm:monotone_N0_20}, it remains only to prove monotonicity in the finite range $3\leq N\leq 20$. A key ingredient, both in the analytic proof of Theorem~\ref{thm:expansion} and in the numerical verification of these finite cases, is the characterization of the first nonzero Steklov eigenvalue as the largest eigenvalue of an infinite-dimensional matrix. We implement a truncated numerical scheme to enclose this maximal eigenvalue in a computable interval. The finite-range monotonicity then follows by comparing the endpoints of these intervals. This leads to the global monotonicity result stated below.
\begin{theorem}[Global monotonicity]\label{thm:global_monotone}
For perimeter-normalized regular polygons $\Omega_N$, the first nonzero Steklov eigenvalue is strictly increasing:
\[
\sigma_1(\Omega_{N+1})>\sigma_1(\Omega_N)\qquad\text{for all }N\ge 3.
\]
\end{theorem}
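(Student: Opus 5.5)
The plan is to split the range of $N$ at $20$. For $N\ge 20$ the claim is exactly Theorem~\ref{thm:monotone_N0_20}, which I take as given; it follows from Theorem~\ref{thm:expansion}. Writing $f(N)=1-c_3N^{-3}-c_4N^{-4}-c_5N^{-5}$, it is enough to check
\[
f(N+1)-f(N) > E_\sigma\bigl(N^{-6}+(N+1)^{-6}\bigr)\qquad (N\ge 20).
\]
Since $f(N+1)-f(N)\ge c_3\bigl(N^{-3}-(N+1)^{-3}\bigr)\approx 3c_3N^{-4}$ with $3c_3\approx 7.2$, the right-hand side $\approx 2E_\sigma N^{-6}$ with $E_\sigma\approx 1187$ is smaller once $N^2\gtrsim 330$. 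This is consistent with the threshold $20$. The comparison should be made rigorous by bounding both sides explicitly at $N=20$ and observing that the ratio of the two sides is monotone in $N$.

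The remaining cases $3\le N\le 20$ form a finite list of eighteen strict inequalities $\sigma_1(\Omega_{N+1})>\sigma_1(\Omega_N)$, and I would settle them with rigorous interval enclosures $\sigma_1(\Omega_N)\in[\ell_N,u_N]$ satisfying $u_N<\ell_{N+1}$. To obtain the enclosures, I would use the characterization mentioned before the theorem: $\sigma_1(\Omega_N)$ is the largest eigenvalue of an infinite Toeplitz-type Hermitian matrix $A^{(N)}$. This matrix comes from the conformal map of the disk onto $\Omega_N$, whose derivative $|\phi'|$ on the circle has Fourier modes only at multiples of $N$. The dihedral symmetry then decouples the problem into the Fourier sectors $\{1+kN\}_{k\in\mathbb Z}$ and $\{-1+kN\}$, and Theorem~\ref{thm:expansion} states that the top eigenvalue sits in these sectors.

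Within one sector, truncate to $|k|\le K$. A lower bound $\ell_N$ comes from the Rayleigh quotient of an approximate eigenvector, padded with zeros and evaluated in interval arithmetic. That quotient is a genuine lower bound for the top eigenvalue of the infinite matrix, provided one also checks, for instance through the variational characterization of $\sigma_1$, that the Rayleigh quotient bounds $\sigma_1$ itself and not some higher eigenvalue. An upper bound $u_N$ comes from a Schur-complement (Temple/Lehmann-type) argument. I would write $A=\begin{pmatrix}A_{11}&A_{12}\\A_{21}&A_{22}\end{pmatrix}$ with the tail block $A_{22}$, and bound the top eigenvalue by $\lambda$ whenever $\lambda I-A_{22}$ is positive and $\lambda$ exceeds the top eigenvalue of $A_{11}+A_{12}(\lambda-A_{22})^{-1}A_{21}$. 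The tail quantities $\|A_{22}\|$ and $\|A_{12}\|$ would be controlled through the explicit decay of the Fourier coefficients of $|\phi'|$, which behave like $|kN|^{-1-2/N}$ times Gamma-function factors. Finally, one must confirm that no other symmetry sector contains a larger eigenvalue; I would do this with crude analytic bounds on those sectors, or by enclosing them in the same way.

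The main obstacle is the small-$N$ regime, especially $N=3,4,5$. There the corners are sharp, the Fourier coefficients decay slowly (the exponent $1+2/N$ is small), and consecutive eigenvalues differ noticeably but the tail estimates converge slowly. So the rigorous tail bound for $A_{12}$ and $A_{22}$ must be sharp enough that $u_N<\ell_{N+1}$ holds at a truncation size $K$ that can actually be computed. I would first try an explicit integral comparison for the tail sums, combined with a large $K$ (several hundred modes per sector). If that proves insufficient, I would switch to a Lehmann–Goerisch bound using a computed approximation of the second eigenvalue within the sector.
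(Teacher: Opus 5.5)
Your overall plan is the paper's: split at $N=20$, cite Theorem~\ref{thm:monotone_N0_20} above the threshold, and below it use finite sections of the block operators plus tail bounds. Your Schur-complement test, once the infinite resolvent $(\lambda-A_{22})^{-1}$ is replaced by the computable bound $\|A_{12}\|^2/(\lambda-\|A_{22}\|)$, reduces to the paper's $2\times2$ comparison bound \eqref{eq:app_2by2_bound}. However, the small-$N$ part is set up backwards, and as written it would fail.

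\emph{The bounds are inverted.} The top eigenvalue of the normalized block operator is $\lambda=1/\sigma$ (Proposition~\ref{prop:block_standard_evp}). So $\sigma_1(\Omega_N)=1/\Lambda_N$ with $\Lambda_N=\max_r\lambda_{\max}^{(r)}(N)$; $\sigma_1$ is not itself the largest eigenvalue. Consequently a Rayleigh quotient of $A$ bounds $\Lambda_N$ from below, hence $\sigma_1$ from above, and your Schur-complement test bounds $\sigma_1$ from below. With your labels, the test $u_N<\ell_{N+1}$ asserts $\Lambda_N<\Lambda_{N+1}$, which is the reverse monotonicity, so the check would simply fail. The correct test is $\overline\Lambda_{N+1}<\underline\Lambda_N$.

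\emph{Every block needs an upper bound.} With the correct orientation, $\underline\Lambda_N$ needs only one trial vector in one block, with no further check; your caveat about ``some higher eigenvalue'' is vacuous. By contrast, $\overline\Lambda_{N+1}$ must dominate the top eigenvalue of every block. Below $N=20$ you cannot invoke Theorem~\ref{thm:expansion} or the separation lemmas, since their constants are computed at $\alpha_0=1/20$. So, as the paper does, you must enclose all blocks.

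\emph{The $r=0$ block needs separate treatment.} It is not of the form $A_ry=\lambda y$, because $d_0=0$. You must eliminate the constant mode through the weighted mean-zero constraint. This gives the rank-one modified operator $\widetilde K_0=K_0-v_0^{-1}\,b_0\otimes b_0$ on $\ell^2(\mathbb Z_*)$, which is no longer entrywise nonnegative.

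\emph{The coefficient decay is wrong.} By Lemma~\ref{lem:vm_gamma_formula},
$v_k=\frac{\Gamma(1-\alpha)}{\Gamma(\alpha)}\frac{\Gamma(k+\alpha)}{\Gamma(k+1-\alpha)}\sim\frac{\Gamma(1-\alpha)}{\Gamma(\alpha)}\,k^{2/N-1}$.
So the coefficients decay like $k^{-(1-2/N)}$, more slowly than $1/k$, because $w_N$ is unbounded at the prevertices. They do not decay like $|kN|^{-1-2/N}$. Bounds on $\|A_{12}\|$ and $\|A_{22}\|$ built on your exponent would not be rigorous.

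\emph{The finite section needs a certified bound too.} You also need a rigorous upper bound for the top eigenvalue of the finite section itself. The paper gets this from Collatz--Wielandt for $r\neq0$, using $v_m>0$, and from the row-sum norm for $\widetilde K_0$.

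\emph{The difficulty is elsewhere.} At small $N$ the gaps are large, about $0.25$ between $N=3$ and $N=4$. The six smallest certified gaps occur for $N\in\{13,15,16,17,18,19\}$, the smallest being about $6.5\times10^{-5}$ at $N=19$ (Tables~\ref{tab:app_sigma_smallN} and~\ref{tab:app_sigma_smallN_gaps_tight}). That is where sharp enclosures are needed, not at $N=3,4,5$.
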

%one more remark
% Note for review:
% This introductory paragraph is intended to summarize the overall idea of the paper,
% the main proof strategy, and the organization of the manuscript in a concise form.
% In particular, it emphasizes the conformal pullback to the disk, the block/Schur reduction,
% the asymptotic expansion with explicit remainder control, and the finite verification for small N.
% Please revise the level of detail, terminology, and emphasis as appropriate, especially if a more concise
% or more standard introduction style is preferred.
%===========================================================================
\begin{remark}\label{rmk:area-norm}
Another global monotonicity property is a direct consequence of Theorem~\ref{thm:global_monotone}. Among regular \(N\)-gons normalized to have area \(\pi\), the perimeter is strictly decreasing as \(N\) increases. Hence, by Theorem~\ref{thm:global_monotone}, the first nonzero Steklov eigenvalue is strictly increasing in \(N\). It is also worth noting that, for any planar domain $\Omega$, the quantity $\sigma_1(\Omega)\,\mathrm{Per}(\Omega)$ is scale invariant under dilations, which makes the perimeter normalization in Theorems~\ref{thm:monotone_N0_20} and~\ref{thm:global_monotone} natural.
\end{remark}

We briefly describe below how we estabilish the main results. 

First, rather than comparing the Steklov spectra of consecutive polygons directly, we pull the problem back from $\Omega_N$ to the unit disk by a conformal map. The dependence on $N$ is thereby transferred from the polygonal geometry to a boundary weight $w_N$. This recasts the problem as a weighted Dirichlet-to-Neumann problem on $\mathbb S^1$, or equivalently as an isospectral weighted Steklov problem on the disk, thus placing it on a fixed domain compatible with the symmetry of the regular polygon. 

Within this setting, we observe that the $N$-fold rotational symmetry forces a sparsity pattern in the Fourier coefficients of $w_N$, which in turn induces a residue-class block decomposition of the associated infinite-dimensional symmetric matrix of Toeplitz type. The spectral branch corresponding to the first nonzero Steklov eigenvalue is then identified within this block system. After passing to the reciprocal parameter $\lambda=1/\sigma$, we prove that the spectral window containing this branch is separated from all noncritical blocks. The problem is thus reduced to the critical blocks, where a Schur-complement argument leads to a scalar characteristic equation.

The remainder of our argument is quantitative. The Schur term admits a moment expansion, and the resulting moments can be expressed in terms of Euler-type multi-sums determined by the Fourier coefficients of $w_N$. This yields a high-order asymptotic expansion for the critical branch together with an explicit remainder bound strong enough to control $\sigma_1(\Omega_{N+1})-\sigma_1(\Omega_N)$. Combined with elementary difference inequalities for reciprocal powers, this gives eventual strict monotonicity. The remaining finite range of $N$ is treated by a certified computation based on finite-section bounds, rigorous tail estimates, and interval arithmetic within the same block framework. The global monotonicity result then follows from the weak formulation on polygons, symmetry-based spectral reduction, quantitative asymptotic analysis, and rigorous finite verification.

\vskip 0.2cm

The paper is organized as follows. Section~\ref{sec:conformal} establishes the weak formulation on polygons
and derives the conformal pullback to a weighted Dirichlet-to-Neumann problem on $\mathbb S^1$.
Section~\ref{sec:fourier_toeplitz} develops the Fourier representation of the boundary weight and the induced
residue-class block decomposition. Section~\ref{sec:schur} proves the separation lemma and reduces the first
nonzero Steklov eigenvalue problem to a scalar Schur equation on the critical block.
Section~\ref{sec:moment} derives the moment expansion and evaluates the relevant Euler-type sums, leading to
a quantitative asymptotic formula. Section~\ref{sec:monotonicity} uses this expansion to prove eventual
monotonicity. The appendices collect the auxiliary analytic estimates, the special-function constants, the
closure of the explicit constants, and the certified finite verification for the remaining values of $N$.
%===========================================================================
\section*{Notations}
\phantomsection
\label{sec:notation}
\begin{description}[leftmargin=2.9cm,style=nextline]
\item[$\mathbb R,\mathbb C,\mathbb Z$]
the sets of real, complex, and integer numbers, respectively.

\item[$z=re^{i\theta}$]
for $z\in\mathbb C$, $z=re^{i\theta}$ denotes its polar form, where $r=|z|\ge 0$ and $\theta=\arg z$.

\item[$\overline z$]
for $z\in\mathbb C$, $\overline z$ denotes the complex conjugate of $z$.

% \item[Pairing convention]
% all Hilbert-space inner products and duality pairings are linear in the first argument and conjugate-linear in the second.

\item[$\Omega$]
by default, a bounded convex domain in $\mathbb R^2$, unless specified otherwise in the context.

\item[$\Omega_N$]
the regular $N$-gon in $\mathbb R^2$ centered at the origin, with one vertex on the positive horizontal axis, normalized so that $\mathrm{Per}(\partial\Omega_N)=2\pi$.

\item[$\mathbb D$]
the open unit disk in $\mathbb C$, $\mathbb D:=\{z\in\mathbb C:\ |z|<1\}$.

\item[$\mathbb S^1:=\partial\mathbb D$]
the unit circle, $\mathbb S^1:=\{z\in\mathbb C:\ |z|=1\}$.

\item[$R_\omega$]
the rotation map on $\mathbb C$ (or $\mathbb S^1$), $R_\omega(\zeta):=\omega\zeta$, where $|\omega|=1$ (typically $\omega=e^{2\pi i/N}$).

\item[$W^{k,p}(\Omega)$]
the Sobolev space of functions on $\Omega$ whose weak derivatives up to order $k$ belong to $L^p(\Omega)$.

\item[$H^p(\mathbb D)$]
the Hardy space on the unit disk. For $1\le p<\infty$,
\[
H^p(\mathbb D)
:=
\left\{f:\ f\text{ analytic in }\mathbb D,
\sup_{0<r<1}\frac{1}{2\pi}\int_0^{2\pi}|f(re^{i\theta})|^p\,d\theta<\infty\right\}.
\]

%\item[$W^{\frac{1}{2},2}(\mathbb S^1)$]
%the fractional Sobolev trace space on $\mathbb S^1$ used throughout the paper.

\item[$f_N$]
the Schwarz--Christoffel conformal map $f_N:\mathbb D\to\Omega_N$.

\item[$w_N$]
the pullback boundary weight on $\mathbb S^1$, $w_N(\theta):=\lim_{r\to1^-}|f_N'(re^{i\theta})|$.
In the abstract variational framework we write $w$ for a general positive weight,
and $w_N$ is the distinguished polygonal specialization.

\item[$\mathrm{Tr}$]
the trace operator $\mathrm{Tr}:W^{1,2}(\Omega)\to W^{\frac{1}{2},2}(\partial\Omega)$.

\item[$P_\Omega$]
the harmonic extension operator: for $u\in L^2(\partial\Omega)$, $P_\Omega u$ denotes the unique harmonic extension of $u$ into $\Omega$, i.e. $\Delta(P_\Omega u)=0$ in $\Omega$ and $P_\Omega u=u$ on $\partial\Omega$ in the weak sense.

\item[$\Lambda$]
the Dirichlet-to-Neumann operator on $\partial\Omega$ or, after conformal pullback, on $\mathbb S^1$; DtN is short for D-to-N.

\item[$\sigma_k(\Omega_N)$]
the $k$-th nonzero Steklov eigenvalue of $\Omega_N$.

\item[$\mathbf 1_E$]
the characteristic function of a set $E$, with $\mathbf 1_E(x)=1$ for $x\in E$ and $\mathbf 1_E(x)=0$ for $x\notin E$.

\item[$\Pi_I$]
the coordinate projection associated with an index set $I$; for a sequence $x=(x_m)$, $(\Pi_Ix)_m:=\mathbf 1_I(m)\,x_m$.
In particular, $\Pi_{\mathbb Z_*}$ denotes the projection onto the nonzero modes.

\item[$e_k$]
the Fourier mode $e_k(\theta):=e^{ik\theta}$, $k\in\mathbb Z$.

\item[$\widehat g(k)$]
for $g\in L^1(\mathbb S^1)$, the Fourier coefficient is $\widehat g(k):=\frac{1}{2\pi}\int_0^{2\pi} g(\theta)e^{-ik\theta}\,d\theta$.

\item[$v_m$]
the sparse Fourier coefficients of $w_N$, $v_m:=\widehat w_N(mN)$, $m\in\mathbb Z$.

\item[$\mathbb Z_*$]
the punctured integer set (nonzero integers), $\mathbb Z_*:=\mathbb Z\setminus\{0\}$.

\item[$\sqcup$]
disjoint union of sets.

\item[$\langle\cdot,\cdot\rangle_{L^2}$]
the $L^2$ inner product on the boundary space:
$\langle \phi,\psi\rangle_{L^2}:=\int_{\partial\Omega}\phi\,\overline\psi\,ds$.

\item[$\|\cdot\|_{L^2}$]
the induced $L^2$ norm on the boundary space:
$\|\phi\|_{L^2}^2:=\langle \phi,\phi\rangle_{L^2}$.

\item[$\|\cdot\|_{L^p(\mathbb S^1)}$]
the standard $L^p$ norm on $\mathbb S^1$ ($1\le p<\infty$):
$\|g\|_{L^p(\mathbb S^1)}:=\left(\int_0^{2\pi}|g(\theta)|^p\,d\theta\right)^{1/p}$; and
$\|g\|_{L^\infty(\mathbb S^1)}:=\operatorname*{ess\,sup}_{\theta\in[0,2\pi)}|g(\theta)|$.

\item[$\|\cdot\|_{W^{\frac12,2}(\mathbb S^1)}$]
for $u\in W^{\frac12,2}(\mathbb S^1)$,
$\|u\|_{W^{\frac12,2}(\mathbb S^1)}^2:=\|u\|_{L^2(\mathbb S^1)}^2+\int_{\mathbb D}|\nabla(P_{\mathbb D}u)|^2\,dx$.

\item[$\|\cdot\|_{\ell^p(\mathbb Z)}$]
the standard sequence $\ell^p$ norm ($1\le p<\infty$):
$\|a\|_{\ell^p(\mathbb Z)}:=\left(\sum_{m\in\mathbb Z}|a_m|^p\right)^{1/p}$; and
$\|a\|_{\ell^\infty(\mathbb Z)}:=\sup_{m\in\mathbb Z}|a_m|$.

\item[$\langle\cdot,\cdot\rangle_{\ell^2}$]
the sequence inner product on $\ell^2=\ell^2(\mathbb Z)$:
$\langle a,b\rangle_{\ell^2}:=\sum_{m\in\mathbb Z}a_m\,\overline{b_m}$.

\item[$\|\cdot\|_{\ell^2}$]
the induced norm on $\ell^2=\ell^2(\mathbb Z)$:
$\|a\|_{\ell^2}^2:=\langle a,a\rangle_{\ell^2}$.

\item[$\langle\cdot,\cdot\rangle_{\ell^2(\mathbb Z_*)}$]
the corresponding inner product on the punctured-mode space:
\[
\langle a,b\rangle_{\ell^2(\mathbb Z_*)}
:=\sum_{m\ne 0}a_m\,\overline{b_m}.
\]

\item[$\|\cdot\|_{\ell^2(\mathbb Z_*)}$]
the induced norm on $\ell^2(\mathbb Z_*)$, used when the zero mode is removed:
$\|a\|_{\ell^2(\mathbb Z_*)}^2:=\langle a,a\rangle_{\ell^2(\mathbb Z_*)}$.

\item[$H_Q$]
$\ell^2(\mathbb Z_*)$.

\item[$\langle\cdot,\cdot\rangle_{w}$]
the weighted inner product on $\mathbb S^1$, $\langle \phi,\psi\rangle_{w}:=\int_0^{2\pi}\phi(\theta)\overline{\psi(\theta)}\,w(\theta)\,d\theta$.

\item[$B_{w}$]
the weighted boundary form $B_w(\phi,\psi):=\langle\phi,\psi\rangle_w$.

\item[$\|T\|$]
the operator norm of a bounded operator $T$ on the ambient Hilbert space
(typically $\ell^2$ or $\ell^2(\mathbb Z_*)$).

\item[$\|T\|_{\mathrm{HS}}$]
the Hilbert--Schmidt norm of an operator $T$ on $\ell^2$ (or on $\ell^2(\mathbb Z_*)$ in the reduced block setting),
$\|T\|_{\mathrm{HS}}^2:=\sum_{m,n}|T_{m,n}|^2$.

\item[$\mathcal I_r$]
the residue class $\mathcal I_r:=\{r+mN:\ m\in\mathbb Z\}$.

\item[$\mathcal H_r$]
the residue-class subspace generated by $\{e_{r+mN}:m\in\mathbb Z\}$.

\item[$\mathcal H_r^\circ$]
the weighted mean-zero subspace $\mathcal H_r^\circ:=\mathcal H_r\cap\{\varphi:\langle\varphi,1\rangle_{w_N}=0\}$.

\item[$\mathcal H_r^{-1/2}$]
the corresponding dual residue-class block in $W^{-1/2,2}(\mathbb S^1)$.

\item[$A_r$]
the normalized block operator associated with the residue class $r$.

\item[$\lambda$]
the reciprocal spectral parameter, $\lambda=\frac1\sigma$.

\item[$H_M,\ H_M^{(s)}$]
the (generalized) harmonic numbers:
$H_M:=\sum_{k=1}^M\frac1k$ and $H_M^{(s)}:=\sum_{k=1}^M\frac1{k^s}$ for $M\ge1$;
when needed, set $H_0=H_0^{(s)}=0$.

\item[$(a)_n$]
the rising factorial (Pochhammer symbol), $(a)_0:=1$ and $(a)_n:=a(a+1)\cdots(a+n-1)$ for $n\ge1$.

\item[$A\lesssim B$]
for nonnegative quantities $A,B$, this means $A\le CB$ for some constant $C>0$ independent of the relevant parameters.

\item[$A\gtrsim B$]
for nonnegative quantities $A,B$, this means $A\ge cB$ for some constant $c>0$ independent of the relevant parameters.

\item[$A\asymp B$]
for nonnegative quantities $A,B$, this means both $A\lesssim B$ and $A\gtrsim B$ hold.

\item[$\zeta(s)$]
the Riemann zeta function.

\item[$\zeta(s_1,\dots,s_r)$]
the multiple zeta function; in the convergent range $\Re(s_1+\cdots+s_j)>j$ $(1\le j\le r)$, it is defined by $\zeta(s_1,\dots,s_r):=\sum_{n_1>\cdots>n_r\ge 1}\frac{1}{n_1^{s_1}\cdots n_r^{s_r}}$.
For positive integers with $s_1\ge 2$, this is the corresponding multiple zeta value (MZV).
\end{description}

\section{Equivalent formulation: a Steklov eigenvalue problem on the unit disk with polygonal weight}\label{sec:conformal}
In this section we transfer the Steklov problem on the regular $N$-gon to an equivalent weighted problem on the unit disk. More precisely, we establish the weak conformal pullback, identify the induced boundary weight, and derive the Rayleigh--Ritz characterization of the first nonzero eigenvalue.

\subsection{Boundary regularity and conformal invariance of the Dirichlet form}\label{subsec:boundary} 
%\newline
The celebrated Riemann mapping theorem guaranties the existence of conformal mappings from the unit disk to the regular polygon $\Omega_N$, a prototypical convex domain. Among them, the canonical Schwarz-Christoffel transformation $f_N: \mathbb{D}\rightarrow \Omega_N$ reads as follows: 
\begin{equation}\label{S-C}
    f_N(z) = C_N \int_0^z \frac{d\zeta}{(1 - \zeta^N)^{\frac{2}{N}}},
\end{equation}
where $C_N>0$ is a constant chosen in a suitable way to preserve the perimeter and the line integral is taken within the unit disk. Another way of writing this map is through a product involving roots of unity:
\begin{equation*}\label{S-C:product}
    f_N(z) = C_N \int_0^z \prod_{j=0}^{N-1} \left(1 - \overline{\zeta_j}\zeta\right)^{-\frac{2}{N}} \, d\zeta,
\end{equation*}
where
\begin{equation}\label{eq:N-th roots}
\zeta_j = e^{i\theta_j}, \qquad \theta_j=\frac{2\pi j}{N},
\end{equation}
are the $N$-th roots of unity with $j=0,1,\cdots,N-1$.

As an immediate consequence of the classic regularity theorem by F. Riesz and M. Riesz~\cite{Riesz1916Randwerte, Riesz1923Randwerte, Duren1970Theory}, we conclude that $f'_N$ belongs to the Hardy space $H^1(\mathbb{D})$. Therefore, the conformal mapping could be extended radially to $\mathbb{S}^1:=\partial\mathbb{D}$ so that a natural length element can be defined by the perimeter-normalized polygonal weight.
%F. and M. Riesz: Über die Randwerte einer analytischen Funktion," Quat. J. Math., 44 (1916), 87–96.
%Riesz, Freidrich
%Über die Randwerte einer analytischen Funktion.(German)
%Math. Z. 18 (1923), no. 1, 87–95.
%Peter L. Duren, Theory of $H^p$ Spaces, Academic Press, 1970.Specifically, you want Theorem 3.12 (often found on page 42 in the original edition)
\begin{lemma}[Boundary pullback measure and perimeter normalization]\label{lem:ds_pullback_weight}
Let $f_N:\mathbb D\to\Omega_N$ be as defined in \eqref{S-C} and define the boundary weight by the radial limit
\begin{equation}\label{eq:def_weight_wn}
w_N(\theta):=\lim_{r\rightarrow 1^{-}}\bigl|f_N'(re^{i\theta})\bigr|
\qquad\text{for a.e. }\theta\in[0,2\pi).
\end{equation}
Then the boundary parametrization $\gamma_N(\theta):=f_N(e^{i\theta})$ is absolutely continuous on $[0,2\pi]$,
$\gamma_N'(\theta)$ exists for a.e. $\theta$, and
\begin{equation*}\label{eq:gamma_prime_formula}
\gamma_N'(\theta)= i e^{i\theta}\,\lim_{r\rightarrow 1^{-}} f_N'(re^{i\theta})
\qquad\text{for a.e. }\theta.
\end{equation*}
Consequently, the arclength element $ds$ on $\partial\Omega_N$ satisfies, for a.e. $\theta$,
\begin{equation}\label{eq:ds_weight}
ds = w_N(\theta)\,d\theta,
\end{equation}
and
\begin{equation}\label{eq:wn_integral_2pi}
\int_0^{2\pi} w_N(\theta)\,d\theta = \mathrm{Per}(\partial\Omega_N)=2\pi.
\end{equation}
Moreover, $w_N(\theta)>0$ for a.e. $\theta$.
\end{lemma}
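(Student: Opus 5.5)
The plan is to combine the Hardy space fact already quoted, namely $f_N'\in H^1(\mathbb D)$ by the F. and M. Riesz theorem, with the classical boundary theory of $H^1$ functions. First, since $f_N'\in H^1(\mathbb D)$, the radial limit $F(\theta):=\lim_{r\to1^-}f_N'(re^{i\theta})$ exists for a.e.\ $\theta$. It satisfies $F\in L^1(\mathbb S^1)$ and $f_N'(re^{i\cdot})\to F$ in $L^1(\mathbb S^1)$ as $r\to1^-$. Moreover, $f_N'$ is the Poisson (indeed Cauchy) integral of its boundary values. In particular $w_N=|F|$ is well defined a.e.\ and integrable. By the same theorem (Duren, Thm.~3.11--3.12), $f_N$ extends continuously to $\overline{\mathbb D}$. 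Concretely, the extended map is a homeomorphism onto $\overline{\Omega_N}$ by Carath\'eodory, since $\partial\Omega_N$ is a Jordan curve. Its boundary function $\gamma_N(\theta)=f_N(e^{i\theta})$ is absolutely continuous.

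To identify $\gamma_N'$, I would write, for $0<r<1$,
\[
f_N(re^{i\theta})-f_N(re^{i\theta_0})=\int_{\theta_0}^{\theta} i re^{it} f_N'(re^{it})\,dt .
\]
Then I let $r\to1^-$. The left side converges by continuity of $f_N$ on $\overline{\mathbb D}$, and the right side converges by the $L^1$ convergence of $f_N'(re^{i\cdot})$. This gives $\gamma_N(\theta)-\gamma_N(\theta_0)=\int_{\theta_0}^\theta ie^{it}F(t)\,dt$. Hence $\gamma_N$ is absolutely continuous, and by Lebesgue differentiation $\gamma_N'(\theta)=ie^{i\theta}F(\theta)$ a.e., which is the claimed formula. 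Since $\gamma_N$ is an absolutely continuous injective parametrization of the rectifiable Jordan curve $\partial\Omega_N$, arclength is $ds=|\gamma_N'(\theta)|\,d\theta=w_N(\theta)\,d\theta$. Integrating over one period gives $\int_0^{2\pi}w_N\,d\theta=\mathrm{Per}(\partial\Omega_N)=2\pi$, using the choice of $C_N$. Injectivity and orientation preservation of $\gamma_N$ guarantee that the total variation equals the perimeter without multiplicity.

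For positivity, I plan to use the standard fact that a nonzero $H^1$ function, here $f_N'$, has boundary values vanishing only on a null set. This follows because $\log|F|\in L^1$ by the F.\ and M.\ Riesz / Jensen inequality for $H^p$ functions not identically zero. Alternatively, and more concretely, the explicit formula gives $F(\theta)=C_N(1-e^{iN\theta})^{-2/N}$ away from the $N$ points $\theta_j$. From this one reads off $w_N(\theta)=C_N|2\sin(N\theta/2)|^{-2/N}>0$ for all $\theta\notin\{\theta_j\}$. I would include this explicit computation since it is used later anyway.

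The main obstacle is justifying the limit interchange and the identification of arclength, namely that the $L^1$ boundary convergence holds and that $f_N$ is continuous up to the boundary. In the explicit polygonal case I would sidestep the general theory by direct domination. For $r$ close to $1$, $|1-r^Ne^{iN\theta}|\ge c\,|1-e^{iN\theta}|$ holds with an absolute constant (e.g.\ $c=1/2$). This makes $|f_N'(re^{i\theta})|\le C\,|\sin(N\theta/2)|^{-2/N}$, which is integrable because $2/N<1$. Dominated convergence then gives all the needed limits directly, making the argument self-contained.
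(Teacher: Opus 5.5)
Your proposal is correct and follows the same route as the paper: $f_N'\in H^1(\mathbb D)$ gives a.e.\ radial limits, absolute continuity of $\gamma_N$ yields $ds=w_N\,d\theta$ and the perimeter identity, and positivity a.e.\ comes from the uniqueness theorem for Hardy functions. The one difference is that you actually carry out the step the paper dismisses as ``readily checked''. You integrate $ire^{it}f_N'(re^{it})$ along circles and pass to the limit, using $L^1$ boundary convergence or your explicit bound $|1-r^Ne^{iN\theta}|\ge\frac12|1-e^{iN\theta}|$. This fills in the paper's argument rather than changing its method.
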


\begin{proof}
Since $\partial\Omega_N$ is a rectifiable Jordan curve, classical boundary regularity for conformal maps onto
rectifiable Jordan domains implies that $f'_N \in H^1(\mathbb{D})$. Thus, the limit in \eqref{eq:def_weight_wn} is well-defined since the
radial limits of $f_N'(re^{i\theta})$ exist a.e. by the canonical factorization theorem. It is readily checked that $\gamma_N(\theta)=f_N(e^{i\theta})$ is absolutely continuous. Hence $|\gamma'_N(\theta)|=w_N(\theta)$ a.e., and the arclength formula for absolutely continuous curves yields
$ds=|\gamma'_N(\theta)|\,d\theta=w_N(\theta)\,d\theta$, proving \eqref{eq:ds_weight}.
Integrating over $[0,2\pi]$ gives \eqref{eq:wn_integral_2pi}.

For the positivity a.e., note that $f_N'$ is analytic in $\mathbb D$ and not identically zero. If the radial boundary values of $f_N'$ vanished on a set of positive measure, the uniqueness theorem for Hardy functions would force
$f_N'\equiv 0$, a contradiction. Therefore $w_N>0$ a.e.
\end{proof}
% new up to here
%\begin{remark}[Weighted boundary space and orthogonality]\label{rem:weighted_L2}

%\end{remark}

%\subsection{Dirichlet-to-Neumann operator}\label{subsec:dtn_and_conformal}

%We work over complex-valued functions and use the convention that all Hilbert/duality pairings
%are linear in the first argument and conjugate-linear in the second.
%W^{k,p} may need to rewrite?
From this point on we use the notation fixed in the notation page. In particular, $\mathrm{Tr}$ denotes the trace operator, $P_\Omega$ the harmonic extension operator, and $\Lambda$ the Dirichlet-to-Neumann operator. We allow complex-valued boundary data and adopt the convention that every Hilbert/duality pairing is linear in the first argument and conjugate-linear in the second. For a positive weight $w$ on $\mathbb S^1$ we write
\begin{equation}\label{eq:weighted_inner_product_recall}
\langle \phi,\psi\rangle_w
:=
\int_0^{2\pi}\phi(\theta)\,\overline{\psi(\theta)}\,w(\theta)\,d\theta,
\qquad
\|\phi\|_w^2:=\langle \phi,\phi\rangle_w.
\end{equation}
On $\mathbb S^1$, for the Fourier modes $e_k(\theta)$, one has
\begin{equation}\label{eq:Lambda_eigenvalues}
\Lambda e_k=|k|\,e_k,\qquad k\in\mathbb Z.
\end{equation}
%\begin{proof}
%Let $u:=H\varphi$ and $v:=H\psi$. Since $u,v$ are harmonic in $\mathbb D$, Green's identity (in the weak sense) gives
%\[
%\int_{\mathbb D}\nabla u\cdot\overline{\nabla v}\,dx
%=
%\left\langle \partial_\nu u|_{\mathbb S^1},\,\mathrm{Tr}\,v\right\rangle_{H^{-1/2},H^{1/2}}
%=
%\langle \Lambda\varphi,\psi\rangle_{H^{-1/2},H^{1/2}},
%\]
%which is \eqref{eq:Lambda_green_identity}; taking %$\psi=\varphi$ yields \eqref{eq:Lambda_energy}.
%For \eqref{eq:Lambda_eigenvalues}, note that $H e_k(r,\theta)=r^{|k|}e^{ik\theta}$, hence
%$\partial_r(H e_k)|_{r=1}=|k|e_k$.
%\end{proof}

%\subsection{Conformal invariance of the Dirichlet form}\label{subsec:dirichlet_invariance}

\begin{lemma}[Conformal invariance of the Dirichlet form]\label{lem:conformal_energy_invariance}
Let $\Omega\subset\mathbb C$ be a bounded, simply-connected and Lipschitz domain and let $f:\mathbb D\to\Omega$ be conformal.
For any $u_1,u_2\in W^{1,2}(\Omega)$, set $v_j:=u_j\circ f\in W^{1,2}(\mathbb D)$.
Then
\begin{equation}\label{eq:dirichlet_bilinear_invariance}
\int_{\Omega} \nabla u_1\cdot \overline{\nabla u_2}\,dx
=
\int_{\mathbb D} \nabla v_1\cdot \overline{\nabla v_2}\,dx.
\end{equation}
In particular,
\begin{equation}\label{eq:dirichlet_invariance}
\int_{\Omega} |\nabla u_1|^2\,dx
=
\int_{\mathbb D} |\nabla v_1|^2\,dx.
\end{equation}
Moreover, if $u_1$ is harmonic in $\Omega$, then $v_1$ is harmonic in $\mathbb D$.
\end{lemma}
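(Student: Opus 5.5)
The plan is to reduce the identity to a pointwise computation based on the Cauchy--Riemann equations, and then integrate it with the change-of-variables formula for the conformal diffeomorphism $f$. Write $z=x+iy\in\mathbb D$ and $f=a+ib$. Since $f$ is holomorphic on $\mathbb D$, we have $a_x=b_y$ and $a_y=-b_x$. The real Jacobian matrix $Df(z)$ is therefore $|f'(z)|$ times a rotation, so
\[
Df(z)\,Df(z)^{T}=|f'(z)|^2 I,\qquad \det Df(z)=|f'(z)|^2 .
\]
We first take $u_j\in C^1(\Omega)\cap W^{1,2}(\Omega)$, or more generally work on compactly contained subdomains. By the chain rule, $\nabla v_j(z)=Df(z)^T\,(\nabla u_j)(f(z))$. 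This gives
\[
\nabla v_1\cdot\overline{\nabla v_2}(z)=|f'(z)|^2\,(\nabla u_1\cdot\overline{\nabla u_2})(f(z)).
\]
This holds for complex-valued $u_j$, because $Df$ is real and we can apply it to real and imaginary parts separately. Integrating over $\mathbb D$ and using the change of variables $x=f(z)$ with Jacobian $|f'|^2$, valid because $f$ is a $C^\infty$ diffeomorphism of $\mathbb D$ onto $\Omega$, yields \eqref{eq:dirichlet_bilinear_invariance} for such $u_j$. The diagonal case $u_1=u_2$ is \eqref{eq:dirichlet_invariance}.

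To pass to general $u_j\in W^{1,2}(\Omega)$, we argue on an exhaustion. Take $\Omega'\Subset\Omega$ and let $D'=f^{-1}(\Omega')\Subset\mathbb D$; on $D'$ the map $f$ is bi-Lipschitz, with $|f'|$ bounded above and below. Approximate $u_j$ by smooth functions in $W^{1,2}(\Omega')$, for example by mollification. The composition $v_j$ then lies in $W^{1,2}_{\mathrm{loc}}(\mathbb D)$, the chain rule holds a.e., and the identity holds on $\Omega'$ and $D'$. We then let $\Omega'\uparrow\Omega$ and apply monotone convergence to $|\nabla u_1|^2$. This shows $\int_{\mathbb D}|\nabla v_1|^2=\int_\Omega|\nabla u_1|^2<\infty$. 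Together with $v_1\in L^2(\mathbb D)$, this gives $v_1\in W^{1,2}(\mathbb D)$.

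The $L^2$ bound is the main obstacle, since $f$ need not be bi-Lipschitz up to the boundary; for the polygon, $f'$ blows up at the prevertices. I would first try the Poincaré/Sobolev route. The function $v_1$ has finite Dirichlet energy on $\mathbb D$, and $\mathbb D$ is a bounded Lipschitz domain, so every $W^{1,2}_{\mathrm{loc}}$ function with $\nabla v_1\in L^2$ lies in $L^2(\mathbb D)$. This follows from Poincaré on the disk, applied via exhaustion by concentric disks with uniform constants. Hence the statement that $v_j\in W^{1,2}(\mathbb D)$ is justified. The sesquilinear identity then follows by dominated convergence, using Cauchy--Schwarz. Alternatively, it follows from polarization applied to the quadratic identity.

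Finally, suppose $u_1$ is harmonic. Locally $u_1=\operatorname{Re}F$ for some holomorphic $F$ (applied to the real and imaginary parts separately if $u_1$ is complex). Then $v_1=\operatorname{Re}(F\circ f)$ locally, which is harmonic. Equivalently, $\Delta v_1=|f'|^2(\Delta u_1)\circ f=0$ by the same Cauchy--Riemann computation applied to second derivatives.
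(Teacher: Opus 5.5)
Your proposal is correct and follows the same route as the paper. Both use the Cauchy--Riemann identity $Df\,Df^{T}=|f'|^2 I$, the chain rule $\nabla v_j=Df^{T}(\nabla u_j)\circ f$, and the change of variables with Jacobian $|f'|^2$, and both note that harmonicity is preserved under composition with a conformal map. Your exhaustion argument and the Poincar\'e estimate on concentric disks add rigor the paper omits, since it simply asserts $v_j\in W^{1,2}(\mathbb D)$ in the statement. One correction to your motivation: a direct change of variables for $v_j\in L^2(\mathbb D)$ is obstructed by $|f'|$ becoming small, not large (for the polygon, $f'$ blowing up at the prevertices is harmless here), but your Poincar\'e argument avoids the issue entirely.
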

\begin{proof}
Since $f$ is conformal, $Df(z)=|f'(z)|\,R(z)$ where $R(z)$ is a rotationla matrix and $\det Df(z)=|f'(z)|^2$.
By the chain rule, $\nabla v_j(z)=(Df(z))^{T}\,\bigl(\nabla u_j\bigr)(f(z))$. Since $Df$ is real-valued,
\[\overline{\nabla v_2(z)}=(Df(z))^{T}\,\overline{(\nabla u_2)(f(z))},
\]
hence
\[
\nabla v_1(z)\cdot \overline{\nabla v_2(z)}
=
\bigl(\nabla u_1(f(z))\bigr)^{T}\,Df(z)\,Df(z)^{T}\,\overline{\nabla u_2(f(z))}
=
|f'(z)|^2\,\nabla u_1(f(z))\cdot \overline{\nabla u_2(f(z))}.
\]
Changing variables $x=f(z)$ with Jacobian $|f'(z)|^2$ yields \eqref{eq:dirichlet_bilinear_invariance}.
The identity \eqref{eq:dirichlet_invariance} follows by taking $u_2=u_1$.

Harmonicity is preserved under conformal maps in dimension two.
\end{proof}

\subsection{Weak formulation of the Steklov eigenvalue problem}\label{subsec:bc_pullback}

\begin{lemma}[Steklov boundary condition under conformal pullback: weak form]\label{lem:boundary_condition_pullback}
Let $\Omega\subset\mathbb C$ be a bounded, simply-connected and Lipschitz domain, let $f:\mathbb D\to\Omega$ be conformal,
and set $w(\theta):=\lim_{r\rightarrow 1^{-}}|f'(re^{i\theta})|$ for a.e. $\theta$.
Let $u\in W^{1,2}(\Omega)$ be harmonic in $\Omega$ and set $v:=u\circ f$ with boundary trace
$\varphi:=\mathrm{Tr}\,v\in W^{\frac{1}{2},2}(\mathbb S^1)$.

Fix $\sigma\ge 0$. Then the Steklov boundary condition $\partial_\nu u=\sigma u$ on $\partial\Omega$ is equivalent to
the following weak identity: for all $\psi\in W^{\frac{1}{2},2}(\mathbb S^1)$,
\begin{equation}\label{eq:weighted_dtn_weak}
\left( \Lambda\varphi,\psi\right)
=
\sigma\int_0^{2\pi}\varphi(\theta)\,\overline{\psi(\theta)}\,w(\theta)\,d\theta.
\end{equation}
Equivalently, $\Lambda\varphi=\sigma\,w\,\varphi$ in $W^{-\frac{1}{2},2}(\mathbb S^1)$.
\end{lemma}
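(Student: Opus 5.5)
The plan is to interpret the Steklov condition $\partial_\nu u=\sigma u$ in its natural weak sense on $\Omega$ and then transport both sides of that identity to $\mathbb D$. For a Lipschitz domain and harmonic $u\in W^{1,2}(\Omega)$, the normal derivative is defined in $W^{-\frac12,2}(\partial\Omega)$ by Green's identity. Hence the boundary condition means that for every $\eta\in W^{1,2}(\Omega)$,
\begin{equation*}
\int_\Omega \nabla u\cdot\overline{\nabla \eta}\,dx=\sigma\int_{\partial\Omega}u\,\overline{\eta}\,ds .
\end{equation*}
Since $u$ is harmonic, the left side depends only on $\mathrm{Tr}\,\eta$: it is unchanged if $\eta$ is replaced by $P_\Omega(\mathrm{Tr}\,\eta)$, because $u$ is orthogonal in energy to $W^{1,2}_0(\Omega)$. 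So it suffices to test against harmonic $\eta$.

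The first step is to transport the left-hand side. Given $\psi\in W^{\frac12,2}(\mathbb S^1)$, let $\eta:=(P_{\mathbb D}\psi)\circ f^{-1}$. By Lemma~\ref{lem:conformal_energy_invariance}, $\eta$ is harmonic with finite Dirichlet energy on $\Omega$, and
\begin{equation*}
\int_\Omega\nabla u\cdot\overline{\nabla\eta}\,dx=\int_{\mathbb D}\nabla v\cdot\overline{\nabla P_{\mathbb D}\psi}\,dx=(\Lambda\varphi,\psi),
\end{equation*}
where the last equality uses that $v$ is harmonic, so $v=P_{\mathbb D}\varphi$. Conversely, every harmonic $\eta$ with finite energy on $\Omega$ arises this way. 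Thus the map $\psi\mapsto \mathrm{Tr}(\eta)$ is a bijection between the two trace spaces, with the $L^2$ parts handled by the next step.

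The second step is to transport the right-hand side. It becomes $\sigma\int_0^{2\pi}\varphi\,\overline{\psi}\,w\,d\theta$ through the change of variables $ds=w\,d\theta$, exactly as in Lemma~\ref{lem:ds_pullback_weight}. That lemma was proved for $\Omega_N$, but the argument only used $f'\in H^1$ (F. and M. Riesz for rectifiable Jordan boundaries) and the absolute continuity of $\theta\mapsto f(e^{i\theta})$, so it applies verbatim here. We also need the boundary values of $\eta$ on $\partial\Omega$ to correspond to $\psi$, i.e.\ $\mathrm{Tr}_{\partial\Omega}\eta\circ f=\psi$ a.e. on $\mathbb S^1$. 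This follows from the continuous extension of $f$ to the closed disk (Carathéodory) together with nontangential convergence of finite-energy harmonic functions.

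The main obstacle is the integrability of the pulled-back right-hand side. We must justify that $\int|\varphi||\psi|w\,d\theta<\infty$, i.e.\ that $W^{\frac12,2}(\mathbb S^1)$ embeds in $L^2(w\,d\theta)$. This is equivalent to the trace embedding $W^{1,2}(\Omega)\to L^2(\partial\Omega)$ for Lipschitz $\Omega$. I would argue it first for smooth $\psi$ (for instance trigonometric polynomials, whose pushforwards are Lipschitz up to the boundary away from a null set) and then pass to general $\psi$ by density, using the continuity of both sides. With both sides identified, the two weak formulations coincide, which gives \eqref{eq:weighted_dtn_weak} and equivalently $\Lambda\varphi=\sigma w\varphi$ in $W^{-\frac12,2}(\mathbb S^1)$. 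Running the same chain of identities backwards gives the converse implication.
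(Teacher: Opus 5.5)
Your proposal is correct and follows essentially the same route as the paper. Both test with $(P_{\mathbb D}\psi)\circ f^{-1}$, transfer the Dirichlet pairing by conformal invariance and Green's identity, transfer the boundary term via $ds=w\,d\theta$ and the correspondence of traces, and reverse the chain for the converse. Your extra justifications fill in points the paper's proof leaves implicit: that $v=P_{\mathbb D}\varphi$, that every finite-energy harmonic test function on $\Omega$ arises this way, and that $\int|\psi|^2w\,d\theta=\int_{\partial\Omega}|\mathrm{Tr}\,\eta|^2\,ds$ is finite by the trace theorem.
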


\begin{proof}
Let $\psi\in W^{\frac{1}{2},2}(\mathbb S^1)$.
Define $U:=P_\mathbb{D}\psi\circ f^{-1}$. Since $\Omega$ is Lipschitz, the trace map $\mathrm{Tr}$ and the Neumann trace map
$u\mapsto \partial_\nu u\in W^{-\frac{1}{2},2}(\partial\Omega)$ for harmonic $u$ are available in the standard weak framework,
and $U\in W^{1,2}(\Omega)$.

By the definition of $\Lambda$ on $\mathbb D$ and Lemma~\ref{lem:conformal_energy_invariance},
\[
\left( \Lambda\varphi,\psi\right)
=
\int_{\mathbb D}\nabla v\cdot \overline{\nabla (P_\mathbb{D}\psi)}\,dx
=
\int_{\Omega}\nabla u\cdot \overline{\nabla U}\,dx.
\]
Since $u$ is harmonic, Green's identity yields
\[
\int_{\Omega}\nabla u\cdot \overline{\nabla U}\,dx
=
\left( \partial_\nu u,\ \mathrm{Tr}\,U\right).
\]
Thus the condition $\partial_\nu u=\sigma\,u$ on $\partial\Omega$ is equivalent to
\[
\left( \partial_\nu u,\ \mathrm{Tr}\,U\right)
=
\sigma\int_{\partial\Omega} (\mathrm{Tr}\,u)\,\overline{(\mathrm{Tr}\,U)}\,ds
\qquad\text{for all }U\in H^1(\Omega)\text{ harmonic}.
\]
Under the boundary parametrization $f(e^{i\theta})$ one has $ds=w(\theta)\,d\theta$ a.e.,
and $(\mathrm{Tr}\,u)\circ f=\mathrm{Tr}(u\circ f)=\varphi$. Moreover, since $U=P_\mathbb{D}\psi\circ f^{-1}$ and
$f(e^{i\theta})\in\partial\Omega$ for a.e.\ $\theta$, the boundary traces satisfy
\[
(\mathrm{Tr}\,U)\bigl(f(e^{i\theta})\bigr)=\mathrm{Tr}(P_\mathbb{D}\psi)(e^{i\theta})=\psi(\theta)
\qquad\text{for a.e.\ }\theta\in[0,2\pi).
\]
Therefore the right-hand side becomes
$\sigma\int_0^{2\pi}\varphi(\theta)\,\overline{\psi(\theta)}\,w(\theta)\,d\theta$, which yields \eqref{eq:weighted_dtn_weak}.
Reversing the argument proves the converse, i.e.\ the weak identity for all $\psi$ implies
$\Lambda\varphi=\sigma w\varphi$ in $W^{-\frac{1}{2},2}(\mathbb S^1)$.
\end{proof}
Lemma~\ref{lem:boundary_condition_pullback} shows that the Steklov problem on $\Omega_N$ is converted, under conformal pullback, into a weighted Dirichlet-to-Neumann eigenvalue problem on $\mathbb D$ in the weak sense. For convex polygons, the corresponding weak eigenfunctions enjoy the standard elliptic regularity needed in the sequel. (See~\cite{Grisvard2011,LambertiProvenzano2025H2Regularity}.)
%Grisvard, P. (1985). "Elliptic Problems in Nonsmooth Domains." (SIAM).
%H^2-regularity of Steklov eigenfunctions on convex domains via Rellich-Pohozaev identity
%Pier Domenico Lamberti, Luigi Provenzano
%\subsection{An $L^p$ control for the polygon weight}\label{subsec:weight_Lp}

%\begin{lemma}[$L^p$ integrability of the polygon weight]\label{lem:wn_Lp}
Let $N\geq 3$ and $w_N$ be defined by Lemma~\ref{lem:ds_pullback_weight}.
Then
\begin{equation}\label{eq:wn_Lp_range}
w_N\in L^p(\mathbb S^1)\qquad\text{for every }1\le p<\frac N2.
\end{equation}
%In particular, for every $N\ge 3$ one may fix
%\begin{equation}\label{eq:fixed_p0}
%p_0:=\frac{4}{3}\in(1,\tfrac32)
%\end{equation}
%and conclude $w_N\in L^{p_0}(\mathbb S^1)$.
%\end{lemma}
%\begin{proof}
Indeed, by the Schwarz--Christoffel formula for polygons, the derivative of the conformal map $f_N:\mathbb D\to\Omega_N$
admits a representation of the form
\[
f_N'(z)=C_N\prod_{j=0}^{N-1} (1-\overline{\zeta_j}\,z)^{-2/N},
\]
where $\zeta_j\in\mathbb S^1$ is defined in \eqref{eq:N-th roots}. Taking radial limits $z=re^{i\theta}\to e^{i\theta}$ gives, for a.e.\ $\theta$,
\[
w_N(\theta)=C_N\prod_{j=0}^{N-1} |e^{i\theta}-\zeta_j|^{-2/N}.
\]
Using
\[
|e^{i\theta}-e^{i\theta_j}|=2\Bigl|\sin\frac{\theta-\theta_j}{2}\Bigr|
\sim |\theta-\theta_j|\qquad(\theta\to\theta_j),
\]
we obtain $|e^{i\theta}-\zeta_j|\sim|\theta-\theta_j|$ near $\theta_j$. Moreover, for $\theta$ in a sufficiently small neighborhood of $\theta_j$ one has
$|e^{i\theta}-\zeta_\ell|\asymp 1$ uniformly for all $\ell\neq j$. Therefore, there exist constants $0<c<C<\infty$ and $\varepsilon>0$ such that for all $|\theta-\theta_j|<\varepsilon$,
\[
c\,|\theta-\theta_j|^{-2/N}\le w_N(\theta)\le C\,|\theta-\theta_j|^{-2/N}.
\]
Consequently, $w_N(\theta)^p$ is integrable near $\theta_j$ if and only if
\[
\int_0^\varepsilon t^{-2p/N}\,dt<\infty
\quad\Longleftrightarrow\quad
p<\frac N2.
\]
Summing over the finitely many corners yields \eqref{eq:wn_Lp_range}. 
%The choice \eqref{eq:fixed_p0} then follows
%since for $N\ge 3$ one has $p_0=4/3<3/2\le N/2$.
%\end{proof}
\subsection{Weighted Rayleigh--Ritz Quotient}\label{subsec:weighted_RR}

\begin{lemma}[Continuity of the weighted $L^2$ norm on $W^{\frac{1}{2},2}$]\label{lem:weighted_form_continuous}
Let $w>0$ a.e. and assume $w\in L^p(\mathbb S^1)$ for some $p>1$.
Then the map
\[
W^{\frac{1}{2},2}(\mathbb S^1)\ni \varphi \longmapsto \|\varphi\|_w^2:=\int_0^{2\pi}|\varphi(\theta)|^2\,w(\theta)\,d\theta
\]
is well-defined and continuous on $W^{\frac{1}{2},2}(\mathbb S^1)$. More precisely,
\begin{equation}\label{eq:weighted_continuity_est}
\|\varphi\|_w^2
\le \|w\|_{L^p}\,\|\varphi\|_{L^{2p/(p-1)}}^2
\le C(p)\,\|w\|_{L^p}\,\|\varphi\|_{W^{\frac{1}{2},2}}^2.
\end{equation}
\end{lemma}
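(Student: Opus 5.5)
The plan is to combine Hölder's inequality with the one-dimensional fractional Sobolev embedding. Fix $p>1$ and let $p'=p/(p-1)$ be its conjugate exponent.

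\emph{First inequality.} Apply Hölder with exponents $p$ and $p'$ to the product $|\varphi|^2\cdot w$:
\[
\int_0^{2\pi}|\varphi|^2 w\,d\theta\le \|w\|_{L^p}\,\bigl\||\varphi|^2\bigr\|_{L^{p'}}=\|w\|_{L^p}\,\|\varphi\|_{L^{2p'}}^2 .
\]
Since $2p'=2p/(p-1)$, this is exactly the first inequality in \eqref{eq:weighted_continuity_est}. Positivity of $w$ a.e. only matters for later definiteness; here it guarantees the integrand is nonnegative, so the integral is well defined in $[0,\infty]$.

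\emph{Second inequality.} This reduces to the embedding $W^{\frac12,2}(\mathbb S^1)\hookrightarrow L^q(\mathbb S^1)$ for every $q<\infty$, applied with $q=2p/(p-1)$, which is finite because $p>1$. Half a derivative in one dimension is the critical case, so the embedding holds for all finite $q$ but not $L^\infty$. I would prove it directly with Fourier series to stay self-contained. By \eqref{eq:Lambda_eigenvalues} and Lemma~\ref{lem:conformal_energy_invariance} (applied with $f$ the identity), the Dirichlet energy of $P_{\mathbb D}\varphi$ equals $2\pi\sum_k|k||\widehat\varphi(k)|^2$. Hence
\[
\|\varphi\|_{W^{\frac12,2}}^2\asymp\sum_k(1+|k|)|\widehat\varphi(k)|^2 .
\]
Then do the following.
\begin{enumerate}
\item For $q\ge 2$, use Hausdorff--Young to get $\|\varphi\|_{L^q}\lesssim\|\widehat\varphi\|_{\ell^{q'}}$ with $q'=q/(q-1)\in(1,2]$.
\item Apply Hölder on the sequence side:
\[
\sum|\widehat\varphi(k)|^{q'}\le\Bigl(\sum(1+|k|)|\widehat\varphi(k)|^2\Bigr)^{q'/2}\Bigl(\sum(1+|k|)^{-q'/(2-q')}\Bigr)^{(2-q')/2}.
\]
\item Note that the last series converges because $q'/(2-q')>1$ is equivalent to $q'>1$.
\end{enumerate}
This gives $\|\varphi\|_{L^q}\le C(q)\|\varphi\|_{W^{\frac12,2}}$, and hence the constant $C(p)$. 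For $1<q<2$ the embedding is trivial since $L^2\subset L^q$ on a finite measure space, though we only need $q=2p'>2$.

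\emph{Continuity.} The estimate shows the quadratic form is bounded. To get continuity of the map itself, I would use polarization, or directly the identity $\bigl|\|\varphi\|_w-\|\psi\|_w\bigr|\le\|\varphi-\psi\|_w$, since $\|\cdot\|_w$ is a seminorm. Combining this with the bound gives
\[
\bigl|\|\varphi\|_w^2-\|\psi\|_w^2\bigr|\le C\,\|w\|_{L^p}\,\|\varphi-\psi\|_{W^{\frac12,2}}\bigl(\|\varphi\|_{W^{\frac12,2}}+\|\psi\|_{W^{\frac12,2}}\bigr).
\]

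The main obstacle is really just the critical Sobolev embedding step. Using the Fourier/Hausdorff--Young route avoids citing Trudinger-type results, and its only delicate point is checking the exponent arithmetic so that the sequence series converges. For $w=w_N$ the hypothesis holds by \eqref{eq:wn_Lp_range}: taking $p\in(1,N/2)$, which is possible since $N\ge3$, yields the result.
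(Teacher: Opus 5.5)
Your proof is correct and follows the paper's route: Hölder with exponents $p$ and $p/(p-1)$, then the critical embedding $W^{\frac12,2}(\mathbb S^1)\hookrightarrow L^q(\mathbb S^1)$ for all finite $q$. The paper simply cites that embedding (and does not address continuity explicitly). You instead prove the embedding self-containedly via Hausdorff--Young plus a weighted Hölder inequality on the Fourier side, and you also spell out the continuity of $\varphi\mapsto\|\varphi\|_w^2$ through the seminorm triangle inequality.
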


\begin{proof}
The first inequality in \eqref{eq:weighted_continuity_est} is just H\"older. One has $W^{\frac{1}{2},2}(\mathbb S^1)\hookrightarrow L^q(\mathbb S^1)$
for every finite $q$, hence $\|\varphi\|_{L^{2p/(p-1)}}\le C(p)\|\varphi\|_{W^{\frac{1}{2},2}}^2$.
\end{proof}

\begin{proposition}[Weighted Rayleigh--Ritz Quotient on $\mathbb S^1$]\label{prop:weighted_RR}
Let $w\in L^p(\mathbb S^1)$ for some $p>1$ and assume $w(\theta)>0$ for a.e. $\theta$.
Consider the generalized eigenproblem
\begin{equation}\label{eq:generalized_problem}
\Lambda\varphi=\sigma\,w\,\varphi \quad\text{on }\mathbb S^1
\end{equation}
in the weak sense (equivalently, $\left( \Lambda\varphi,\psi\right)=\sigma\int \varphi\,\overline{\psi}\,w$ for all $\psi$).
Then the first positive eigenvalue admits the variational characterization
\begin{equation}\label{eq:RR_weighted}
\sigma_1(w)
=
\inf_{\substack{\varphi\in W^{\frac{1}{2},2}(\mathbb S^1)\setminus\{0\}\\ \langle \varphi,1\rangle_w=0}}
\frac{\left(\Lambda\varphi,\varphi\right)}{\|\varphi\|_w^2}.
\end{equation}
Moreover, the infimum is attained by some $\varphi_1\in W^{\frac{1}{2},2}(\mathbb S^1)$, and $\sigma_0(w)=0$ with constants as
eigenfunctions. Finally,
\begin{equation}\label{eq:RR_weighted_harmonic}
\sigma_1(w)
=
\inf_{\substack{v\in W^{1,2}(\mathbb D)\setminus\{0\}\\ \Delta v=0,\ \langle \mathrm{Tr}\,v,1\rangle_w=0}}
\frac{\int_{\mathbb D}|\nabla v|^2 dx}{\int_0^{2\pi}|\mathrm{Tr}\,v|^2\,w\,d\theta}.
\end{equation}
\end{proposition}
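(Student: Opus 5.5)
The plan is the standard spectral theory for a pair of forms: the energy form $a(\varphi,\psi):=(\Lambda\varphi,\psi)$ and the weighted boundary form $B_w(\varphi,\psi)=\langle\varphi,\psi\rangle_w$. Since $w\in L^p$ with $p>1$ and $w>0$ a.e., Lemma~\ref{lem:weighted_form_continuous} shows that $B_w$ is a bounded Hermitian form on $W^{\frac12,2}(\mathbb S^1)$. It is also definite: $\|\varphi\|_w=0$ forces $\varphi=0$ a.e. Consider the closed subspace $V:=\{\varphi\in W^{\frac12,2}:\langle\varphi,1\rangle_w=0\}$; closedness follows from continuity of $\varphi\mapsto\langle\varphi,1\rangle_w$, which uses $w\in L^1$ and the embedding into $L^{p'}$. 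On $V$, the first step is a Poincaré-type inequality
\[
\|\varphi\|_{W^{\frac12,2}}^2\le C\,(\Lambda\varphi,\varphi)\qquad\text{for all }\varphi\in V.
\]
I would prove this by contradiction. Take a normalized sequence in $V$ with energy tending to $0$. By Rellich compactness of $W^{\frac12,2}\hookrightarrow L^q$, a subsequence converges strongly in $L^q$ for every finite $q$, and in particular in the weighted norm by \eqref{eq:weighted_continuity_est}. Since the energy tends to $0$, the limit has zero Dirichlet energy, so it is a constant $c$. The constraint then gives $c\int w=0$, hence $c=0$. This contradicts the normalization, because the $L^2$ part of the norm converges and the energy part tends to zero.

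Next comes compactness. The embedding $(V,a)\to(V,B_w)$ is compact: by the inequality above, $a$ is an equivalent inner product on $V$, and bounded sets in $W^{\frac12,2}$ are precompact in $L^{2p'}$. Hölder's inequality then transfers this to the weighted norm. I would pick $q$ slightly larger than $2p/(p-1)$ to be safe, although Rellich already gives every finite $q$ on the circle. Consequently the operator $T$ on $(V,a)$ defined by $a(T\varphi,\psi)=B_w(\varphi,\psi)$ is compact, self-adjoint and positive.

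The direct method then gives attainment of the infimum in \eqref{eq:RR_weighted}. Take a minimizing sequence with $\|\varphi_k\|_w=1$. It is bounded in $W^{\frac12,2}$, so a subsequence converges weakly in $W^{\frac12,2}$ and strongly in the weighted norm. The constraint passes to the limit, the energy is weakly lower semicontinuous, and so the limit $\varphi_1$ is a minimizer.

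To see that the infimum is the first positive eigenvalue, there are three points. First, the minimizer satisfies the Euler--Lagrange equation $a(\varphi_1,\psi)=\sigma_1 B_w(\varphi_1,\psi)$ for $\psi\in V$. This extends to all $\psi$ because $W^{\frac12,2}=V\oplus\mathbb C\cdot1$, with $a(\cdot,1)=0$ and $B_w(\varphi_1,1)=0$; so $\varphi_1$ solves \eqref{eq:generalized_problem} weakly. Second, conversely, any weak eigenpair with $\sigma>0$ has $\langle\varphi,1\rangle_w=0$, obtained by testing against $\psi=1$. Its Rayleigh quotient equals $\sigma$, hence $\sigma\ge\sigma_1(w)$. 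Third, $\sigma=0$ forces zero energy, so the eigenfunction is constant, which gives $\sigma_0(w)=0$ with constants as eigenfunctions. Finally, \eqref{eq:RR_weighted_harmonic} is a restatement. Harmonic $v\in W^{1,2}(\mathbb D)$ correspond bijectively to traces $\varphi=\mathrm{Tr}\,v\in W^{\frac12,2}$ via $v=P_{\mathbb D}\varphi$, and by definition $\int_{\mathbb D}|\nabla v|^2=(\Lambda\varphi,\varphi)$.

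The main obstacle is the step that handles the singular weight without an $L^\infty$ bound. It enters twice: in the compactness argument, and in ensuring that the constraint defines a closed complement of the constants. In both places it is resolved by combining $w\in L^p$ for some $p>1$ with the embedding of $W^{\frac12,2}(\mathbb S^1)$ into every $L^q$ for finite $q$, together with its compact version.
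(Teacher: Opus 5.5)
Your proof is correct. Its overall architecture matches the paper's: direct method, then the Euler--Lagrange equation, then the harmonic extension for \eqref{eq:RR_weighted_harmonic}. The genuine difference is how you get boundedness of a minimizing sequence in $W^{\frac12,2}$.

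The paper proves no Poincar\'e inequality on the constrained space. It splits $\varphi_j=c_j+\tilde\varphi_j$ into the unweighted mean and a mean-zero part, and bounds $\tilde\varphi_j$ in $L^2$ by the Fourier-series Poincar\'e inequality. It then controls $c_j$ on a superlevel set $E=\{w\ge\varepsilon\}$ of positive measure, using $\|\varphi_j\|_{L^2(E)}^2\le\varepsilon^{-1}\|\varphi_j\|_w^2=\varepsilon^{-1}$. That argument is explicit and uses only the normalization, not the constraint.

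Your compactness--contradiction argument instead gives a uniform coercivity estimate $\|\varphi\|_{W^{\frac12,2}}^2\le C\,(\Lambda\varphi,\varphi)$ on all of $V$, with a non-explicit constant. In return you get more structure:
\begin{itemize}
\item It shows at once that the infimum is strictly positive.
\item It makes $(V,a)$ a Hilbert space on which $B_w$ is represented by a compact, self-adjoint, positive operator. This yields the discrete spectrum $0=\sigma_0<\sigma_1\le\sigma_2\le\cdots$ that the introduction takes for granted.
\end{itemize}

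You are also more careful than the paper at the end. The paper's Step~3 asserts the Euler--Lagrange identity for all $\psi\in W^{\frac12,2}$ without comment. You justify the extension from $V$ via $W^{\frac12,2}=V\oplus\mathbb C\cdot 1$, together with $a(\cdot,1)=0$ and $B_w(\varphi_1,1)=0$. Your converse step is also something the paper leaves implicit: testing an eigenpair with $\sigma>0$ against $\psi=1$ places it in $V$, so $\sigma\ge\sigma_1(w)$. That step is what actually identifies the infimum as the \emph{first} positive eigenvalue.
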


\begin{proof}
Constants satisfy $\Lambda 1=0$ by \eqref{eq:Lambda_eigenvalues}, hence $\sigma_0(w)=0$.

\smallskip
\noindent\emph{Step 1: well-posedness.}
By Lemma~\ref{lem:weighted_form_continuous}, $\|\cdot\|_w$ is continuous on $W^{\frac{1}{2},2}$.
Since $w>0$ a.e., $\|\varphi\|_w=0$ implies $\varphi=0$ a.e., so the quotient in \eqref{eq:RR_weighted} is well-defined
on the constraint space $\{\langle \varphi,1\rangle_w=0\}$.

\smallskip
\noindent\emph{Step 2: existence of a minimizer (direct method; control of the constant mode).}
Let $\{\varphi_j\}$ be a minimizing sequence normalized by $\|\varphi_j\|_w=1$ and $\langle \varphi_j,1\rangle_w=0$.
Then $\left( \Lambda\varphi_j,\varphi_j\right)$ is bounded.

Since $\int_{\mathbb S^1} w>0$, there exists $\varepsilon>0$ such that
$E:=\{\theta\in[0,2\pi):\, w(\theta)\ge \varepsilon\}$ has positive measure. From $\|\varphi_j\|_w=1$ we get
\begin{equation}\label{eq:L2_on_E_bound}
\|\varphi_j\|_{L^2(E)}^2\le \varepsilon^{-1}.
\end{equation}
Write $\varphi_j=c_j+\tilde\varphi_j$ with $\int_0^{2\pi}\tilde\varphi_j(\theta)\,d\theta=0$.
Then $\Lambda c_j=0$ and hence
\[
\left( \Lambda\varphi_j,\varphi_j\right)=\left( \Lambda\tilde\varphi_j,\tilde\varphi_j\right).
\]
By the fractional Poincar\'e inequality on $\mathbb S^1$ (equivalently by Fourier series, applied to real and imaginary parts),
\begin{equation}\label{eq:fractional_poincare}
\|\tilde\varphi_j\|_{L^2(\mathbb S^1)}^2
\lesssim
\left( \Lambda\tilde\varphi_j,\tilde\varphi_j\right)
=
\left( \Lambda\varphi_j,\varphi_j\right),
\end{equation}
so $\{\tilde\varphi_j\}$ is bounded in $L^2(\mathbb S^1)$.
Combining \eqref{eq:L2_on_E_bound} with \eqref{eq:fractional_poincare} yields a bound on the constant mode.
Indeed, by the triangle inequality and Cauchy--Schwarz,
\[
|c_j|\,|E|^{1/2}=\|c_j\|_{L^2(E)}
\le \|\varphi_j\|_{L^2(E)}+\|\tilde\varphi_j\|_{L^2(E)}
\le \varepsilon^{-1/2}+\|\tilde\varphi_j\|_{L^2(\mathbb S^1)},
\]
and the last term is bounded by \eqref{eq:fractional_poincare}. Hence $\{c_j\}$ is bounded, and therefore
$\{\varphi_j\}$ is bounded in $L^2(\mathbb S^1)$.

Together with the boundedness of $\left(\Lambda\varphi_j,\varphi_j\right)=\sum_{k\neq0}|k||(\varphi_j)_k|^2$,
this implies $\{\varphi_j\}$ is bounded in $W^{\frac{1}{2},2}(\mathbb S^1)$.

Passing to a subsequence, $\varphi_j\rightharpoonup \varphi$ weakly in $W^{\frac12,2}$.
By compact embedding on $\mathbb S^1$, $\varphi_j\to\varphi$ strongly in $L^{2p/(p-1)}$.
Therefore, using H\"older,
\[
\|\varphi_j\|_w^2-\|\varphi\|_w^2
=\int w\bigl(|\varphi_j|^2-|\varphi|^2\bigr)
\to 0,
\]
so $\|\varphi\|_w=1$.

Moreover, since $\varphi_j\to\varphi$ strongly in $L^{2p/(p-1)}$ and $w\in L^p$,
\[
\langle \varphi_j,1\rangle_w-\langle \varphi,1\rangle_w
=
\int w(\varphi_j-\varphi)\to 0,
\]
hence $\langle \varphi,1\rangle_w=0$.

Finally, the quadratic form $\varphi\mapsto \left(\Lambda\varphi,\varphi\right)$ is weakly lower semicontinuous,
thus $\varphi$ attains the infimum.

\smallskip
\noindent\emph{Step 3: Euler--Lagrange equation.}
The minimizer satisfies
\[
\left( \Lambda\varphi,\psi\right)
=
\sigma_1(w)\int_0^{2\pi}\varphi(\theta)\,\overline{\psi(\theta)}\,w(\theta)\,d\theta
\qquad\text{for all }\psi\in W^{\frac{1}{2},2}(\mathbb S^1),
\]
i.e. $\Lambda\varphi=\sigma_1(w)\,w\,\varphi$ in $W^{-\frac12,2}$.

\smallskip
\noindent\emph{Step 4: harmonic formulation.}
For any $\varphi\in W^{\frac12,2}$ let $v:=P_\mathbb{D}\varphi$. Then
$\left( \Lambda\varphi,\varphi\right)=\int_{\mathbb D}|\nabla v|^2 dx$ and $\mathrm{Tr}\,v=\varphi$,
yielding \eqref{eq:RR_weighted_harmonic}.
\end{proof}

% ============================================================
\subsection{Isospectral reduction to a weighted DtN problem on $\mathbb S^1$}\label{subsec:isospectral_reduction}
% ============================================================

In this subsection we record the exact equivalence between the Steklov spectrum on $\Omega_N$
and the generalized Dirichlet-to-Neumann eigenproblem on $\mathbb S^1$ with weight $w_N$. In the later context, we will study the Steklov eigenvalue problem on the unit disk with the polygonal weight $w_N$ instead of the original problem on the regular $N$-gon. This closes the current section and prepares the Fourier/Toeplitz model of
Section~\ref{sec:fourier_toeplitz}.

\begin{definition}[Weighted DtN eigenpairs]\label{def:weighted_dtn_eigenpair}
Let $w\in L^p(\mathbb S^1)$ for some $p>1$ with $w(\theta)> 0$ a.e.
A pair $(\sigma,\varphi)$ with $\sigma\in\mathbb R$ and $\varphi\in W^{\frac12,2}(\mathbb S^1)\setminus\{0\}$
is called a \emph{weighted DtN eigenpair} if
\begin{equation}\label{eq:weighted_dtn_evp}
\Lambda\varphi=\sigma\,w\,\varphi \quad\text{in }W^{-\frac12,2}(\mathbb S^1),
\end{equation}
i.e.
\[
\left( \Lambda\varphi,\psi\right)
=
\sigma\int_0^{2\pi}\varphi(\theta)\,\overline{\psi(\theta)}\,w(\theta)\,d\theta
\qquad(\forall\,\psi\in W^{\frac{1}{2},2}(\mathbb S^1)).
\]
\end{definition}

\begin{proposition}[Isospectrality under conformal pullback]\label{prop:isospectral_pullback}
Let $\Omega\subset\mathbb C$ be bounded, simply-connected and Lipschitz domain, and let $f:\mathbb D\to\Omega$
be conformal with boundary weight $w(\theta)=\lim_{r\rightarrow 1^-}|f'(re^{i\theta})|$.
Then the Steklov spectrum of $\Omega$ coincides with the weighted DtN spectrum of $w$:
\begin{equation}\label{eq:isospectral_identity}
\sigma_j(\Omega)=\sigma_j(w)\qquad (j\ge 0),
\end{equation}
with multiplicities.

More precisely:
\begin{itemize}
\item[(i)] If $(\sigma,u)$ is a Steklov eigenpair on $\Omega$ with $\Delta u=0$ in $\Omega$ and $\partial_\nu u=\sigma u$
on $\partial\Omega$, then $\varphi:=\mathrm{Tr}(u\circ f)\in W^{\frac12,2}(\mathbb S^1)$ satisfies
$\Lambda\varphi=\sigma\,w\,\varphi$ in $W^{-\frac12,2}(\mathbb S^1)$.
\item[(ii)] Conversely, if $(\sigma,\varphi)$ solves $\Lambda\varphi=\sigma\,w\,\varphi$, and $v:=P_\mathbb{D}\varphi$ is the harmonic
extension to $\mathbb D$, then $u:=v\circ f^{-1}\in W^{1,2}(\Omega)$ is harmonic and satisfies the Steklov condition
$\partial_\nu u=\sigma u$ on $\partial\Omega$ in the usual weak sense.
\end{itemize}
In particular, $\sigma_0(\Omega)=\sigma_0(w)=0$ with constants as eigenfunctions, and for the first positive eigenvalue
\begin{equation}\label{eq:sigma1_identification}
\sigma_1(\Omega)=\sigma_1(w)
=
\inf_{\substack{\varphi\in W^{\frac12,2}(\mathbb S^1)\setminus\{0\}\\ \langle \varphi,1\rangle_w=0}}
\frac{\left(\Lambda\varphi,\varphi\right)}{\|\varphi\|_w^2}.
\end{equation}
\end{proposition}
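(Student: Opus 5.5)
The plan is to build a linear bijection between Steklov eigenfunctions on $\Omega$ and weighted DtN eigenfunctions on $\mathbb S^1$, and then check that it preserves both quadratic forms in the min--max principle. Once that is done, equality of the spectra with multiplicities follows from the variational characterizations.

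\textbf{Step 1: the eigenpair correspondence.} Part (i) is Lemma~\ref{lem:boundary_condition_pullback} applied to the harmonic function $u$. The map $v=u\circ f$ is harmonic and lies in $W^{1,2}(\mathbb D)$ by Lemma~\ref{lem:conformal_energy_invariance}, so the lemma gives $\Lambda\varphi=\sigma w\varphi$ directly.

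For (ii), let $v=P_{\mathbb D}\varphi$ and set $u=v\circ f^{-1}$. Then:
\begin{itemize}
\item $u$ is harmonic, since harmonicity is preserved by the conformal map $f^{-1}$;
\item $u\in W^{1,2}(\Omega)$, because its Dirichlet energy equals that of $v$ by \eqref{eq:dirichlet_invariance};
\item $u\in L^2(\Omega)$, because $u$ is harmonic with finite energy on a bounded Lipschitz domain.
\end{itemize}
The weak Steklov condition then follows from the converse direction of Lemma~\ref{lem:boundary_condition_pullback}. Every $W^{1,2}$ harmonic test function $U$ on $\Omega$ has the form $P_{\mathbb D}\psi\circ f^{-1}$ with $\psi=\mathrm{Tr}(U\circ f)$, so testing against all such $\psi$ is the same as testing against all such $U$.

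\textbf{Step 2: the map is an isomorphism.} I would show that $T:u\mapsto \mathrm{Tr}(u\circ f)$ is a linear bijection from $\mathcal H(\Omega)=\{u\in W^{1,2}(\Omega):\Delta u=0\}$ onto $W^{\frac12,2}(\mathbb S^1)$, with inverse $\varphi\mapsto P_{\mathbb D}\varphi\circ f^{-1}$. Injectivity comes from uniqueness of the harmonic extension on $\mathbb D$. The same map preserves both forms that enter the min--max:
\[
\int_\Omega |\nabla u|^2\,dx=(\Lambda\varphi,\varphi)\quad\text{by Lemma~\ref{lem:conformal_energy_invariance}},
\qquad
\int_{\partial\Omega}|u|^2\,ds=\|\varphi\|_w^2\quad\text{by } ds=w\,d\theta \text{ (Lemma~\ref{lem:ds_pullback_weight})}.
\]
The second identity requires boundary traces to commute with composition. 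This holds because $f$ extends to a homeomorphism $\overline{\mathbb D}\to\overline\Omega$ (Carathéodory) and $f'\in H^1$, which makes $f|_{\mathbb S^1}$ absolutely continuous.

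\textbf{Step 3: min--max and multiplicities.} The Steklov eigenvalues are characterized by min--max of the Rayleigh quotient $\int_\Omega|\nabla u|^2\big/\int_{\partial\Omega}|u|^2$ over subspaces of $\mathcal H(\Omega)$. The weighted DtN eigenvalues are characterized by min--max of $(\Lambda\varphi,\varphi)/\|\varphi\|_w^2$ over subspaces of $W^{\frac12,2}(\mathbb S^1)$. The discreteness needed for the latter comes from the compactness argument of Proposition~\ref{prop:weighted_RR}, in which $W^{\frac12,2}\hookrightarrow L^{2p/(p-1)}$ is compact.

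Because $T$ maps $k$-dimensional subspaces bijectively onto $k$-dimensional subspaces and preserves both quotients, the two min--max values agree for every $j$, so $\sigma_j(\Omega)=\sigma_j(w)$ with multiplicity. For $j=0$ the eigenfunctions on both sides are the constants, and $T$ maps constants to constants. Restricting to the orthogonal complement of the constants, the orthogonality condition $\int_{\partial\Omega}u\,ds=0$ becomes $\langle\varphi,1\rangle_w=0$. This yields \eqref{eq:sigma1_identification}, which then coincides with \eqref{eq:RR_weighted}.

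\textbf{Main obstacle.} The delicate point is the trace compatibility $(\mathrm{Tr}\,u)\circ f=\mathrm{Tr}(u\circ f)$ for merely $W^{1,2}$ harmonic functions, together with the fact that $\varphi\in W^{\frac12,2}(\mathbb S^1)$ pulls forward to a function in the trace space on $\partial\Omega$. I would first prove the identity for $u$ continuous on $\overline\Omega$, where it is immediate, and then pass to the limit by density. The limit is justified by the continuity of $\|\cdot\|_w$ on $W^{\frac12,2}$ from Lemma~\ref{lem:weighted_form_continuous}, using $w\in L^p$ with $p>1$ (valid for polygons, where $w_N\in L^p$ for $p<N/2$), combined with the energy identity of Lemma~\ref{lem:conformal_energy_invariance}.
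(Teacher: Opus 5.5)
Your proposal is correct. Parts (i) and (ii) follow the paper exactly: both directions are read off Lemma~\ref{lem:boundary_condition_pullback}, with Lemma~\ref{lem:conformal_energy_invariance} supplying harmonicity and finite energy.

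The difference is in how you obtain the indexed identity $\sigma_j(\Omega)=\sigma_j(w)$ and \eqref{eq:sigma1_identification}. The paper argues eigenspace by eigenspace. The correspondence of (i)--(ii) is linear and injective, so it carries each eigenspace onto the one with the same $\sigma$, and the Rayleigh formula is simply quoted from Proposition~\ref{prop:weighted_RR}. You instead show that $T:u\mapsto \mathrm{Tr}(u\circ f)$ is an isomorphism of the harmonic $W^{1,2}(\Omega)$ functions onto $W^{\frac12,2}(\mathbb S^1)$ that intertwines both quadratic forms, so the two min--max problems coincide. This gives every $\sigma_j$ with multiplicity, the zero mode, and the constrained quotient at once. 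The cost is setting up min--max and discreteness on both sides. The real gain is that it forces you to prove the trace compatibility $(\mathrm{Tr}\,u)\circ f=\mathrm{Tr}(u\circ f)$, equivalently $\int_{\partial\Omega}|\mathrm{Tr}\,u|^2\,ds=\|\varphi\|_w^2$. The paper uses this inside Lemma~\ref{lem:boundary_condition_pullback} without justification.

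One refinement to your density step. Closing it with Lemma~\ref{lem:weighted_form_continuous} needs $w\in L^p$ for some $p>1$. That holds for $w_N$, but it is not among the hypotheses for a general Lipschitz $\Omega$; the paper's appeal to Proposition~\ref{prop:weighted_RR} carries the same implicit restriction. You can avoid it as follows.
\begin{itemize}
\item[(a)] Take trigonometric polynomials $\varphi_n\to\varphi$ in $W^{\frac12,2}(\mathbb S^1)$. Then $u_n:=P_{\mathbb D}\varphi_n\circ f^{-1}\to u$ in $W^{1,2}(\Omega)$. The gradients converge by conformal invariance. The $L^2$ part converges by Poincar\'e--Wirtinger, using the mean-value property together with $u_n(f(0))-u(f(0))=\frac1{2\pi}\int_0^{2\pi}(\varphi_n-\varphi)\,d\theta$.
\item[(b)] The trace theorem on the Lipschitz domain $\Omega$ then gives $\varphi_n=u_n|_{\partial\Omega}\circ f\to(\mathrm{Tr}\,u)\circ f$ in $L^2(w\,d\theta)$.
\item[(c)] Comparing a.e.\ limits along a subsequence, using $w>0$ a.e., yields $(\mathrm{Tr}\,u)\circ f=\varphi$.
\end{itemize}
The same factorization through the compact trace $W^{1,2}(\Omega)\to L^2(\partial\Omega)$ also gives compactness of $W^{\frac12,2}(\mathbb S^1)\hookrightarrow L^2(w\,d\theta)$. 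That is exactly what your Step 3 needs for discreteness.
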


\begin{proof}
The forward implication is exactly Lemma~\ref{lem:boundary_condition_pullback}:
if $u$ is harmonic in $\Omega$ and satisfies $\partial_\nu u=\sigma u$ on $\partial\Omega$,
then with $\varphi=\mathrm{Tr}(u\circ f)$ one has
\[
\left( \Lambda\varphi,\psi\right)
=
\sigma\int_0^{2\pi}\varphi(\theta)\,\overline{\psi(\theta)}\,w(\theta)\,d\theta
\qquad(\forall\,\psi\in W^{\frac12,2}(\mathbb S^1)),
\]
i.e.\ $\Lambda\varphi=\sigma w\varphi$ in $W^{-\frac12,2}$.

Conversely, assume $\Lambda\varphi=\sigma w\varphi$ in $W^{-\frac12,2}(\mathbb S^1)$.
Let $v:=P_\mathbb{D}\varphi$ be the harmonic extension in $\mathbb D$ and set $u:=v\circ f^{-1}$.
Then $u\in W^{1,2}(\Omega)$ and is harmonic in $\Omega$ (Lemma~\ref{lem:conformal_energy_invariance}).
By Lemma~\ref{lem:boundary_condition_pullback} (applied in reverse), the identity
$\Lambda\varphi=\sigma w\varphi$ is equivalent to $\partial_\nu u=\sigma u$ on $\partial\Omega$ in the weak sense.

The correspondence is linear and injective modulo the constant mode, hence preserves multiplicities.
Finally, the Rayleigh--Ritz characterization \eqref{eq:sigma1_identification} is
Proposition~\ref{prop:weighted_RR} applied to the weight $w$.
\end{proof}
Applying Proposition~\ref{prop:isospectral_pullback} with $f=f_N$ and $w=w_N$, we get the following corollary.
\begin{corollary}[Reduction for perimeter-normalized regular polygons]\label{cor:polygon_reduction}
For the perimeter-normalized regular $N$-gon $\Omega_N$ and its boundary weight $w_N$ from
Lemma~\ref{lem:ds_pullback_weight}, one has
\[
\sigma_j(\Omega_N)=\sigma_j(w_N)\qquad (j\ge 0),
\]
and in particular
\[
\sigma_1(\Omega_N)
=
\inf_{\substack{\varphi\in W^{\frac12,2}(\mathbb S^1)\setminus\{0\}\\ \langle \varphi,1\rangle_{w_N}=0}}
\frac{\left(\Lambda\varphi,\varphi\right)}{\|\varphi\|_{w_N}^2}.
\]
\end{corollary}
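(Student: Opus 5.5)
The corollary follows by specializing Proposition~\ref{prop:isospectral_pullback} to $\Omega=\Omega_N$, $f=f_N$, $w=w_N$. So the plan is simply to check each hypothesis of that proposition in this setting.

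\emph{Hypotheses on the domain.} $\Omega_N$ is a convex polygon, so it is bounded, simply connected and Lipschitz.

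\emph{Hypotheses on the map.} The Schwarz--Christoffel map $f_N$ of \eqref{S-C} is a conformal bijection $\mathbb D\to\Omega_N$, by the Riemann mapping theorem and the normalization of $C_N$. Its boundary weight is $w(\theta)=\lim_{r\to1^-}|f_N'(re^{i\theta})|$, which is exactly $w_N$ from \eqref{eq:def_weight_wn}.

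\emph{Hypotheses on the weight.} Proposition~\ref{prop:weighted_RR}, which feeds the variational identity \eqref{eq:sigma1_identification}, needs $w_N\in L^p(\mathbb S^1)$ for some $p>1$ and $w_N>0$ a.e.
\begin{itemize}
\item Positivity a.e. is given by Lemma~\ref{lem:ds_pullback_weight}.
\item Integrability follows from the corner estimate \eqref{eq:wn_Lp_range}, $w_N\in L^p$ for $1\le p<N/2$. Since $N\ge3$ gives $N/2\ge 3/2$, we may take for instance $p=4/3$, uniformly in $N$.
\end{itemize}

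With these hypotheses verified, \eqref{eq:isospectral_identity} gives $\sigma_j(\Omega_N)=\sigma_j(w_N)$ for all $j\ge0$, with multiplicities. The displayed infimum formula for $\sigma_1(\Omega_N)$ is then \eqref{eq:sigma1_identification} read off with $w=w_N$.

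The only point requiring any care is the integrability exponent: it must exceed $1$ so that Lemma~\ref{lem:weighted_form_continuous} and the compactness step in Proposition~\ref{prop:weighted_RR} apply, and the singularity $|\theta-\theta_j|^{-2/N}$ at each vertex preimage permits this for every $N\ge3$. I expect no genuine obstacle; the argument is a direct citation.
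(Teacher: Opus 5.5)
Your proposal is correct and follows the paper's approach: the paper obtains this corollary in one line by applying Proposition~\ref{prop:isospectral_pullback} with $f=f_N$ and $w=w_N$. Your explicit check that $w_N>0$ a.e.\ and $w_N\in L^{4/3}(\mathbb S^1)$ (via \eqref{eq:wn_Lp_range}, since $N/2\ge 3/2$) usefully spells out the hypotheses of Proposition~\ref{prop:weighted_RR}, which the paper uses for the Rayleigh--Ritz identity but leaves implicit.
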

This reduction places the problem on the fixed boundary $\mathbb S^1$ and makes the polygonal symmetry accessible through Fourier analysis. The next section exploits this structure to derive the residue-class block decomposition and the associated Toeplitz model.
%=====================================================================
\section{Block Decomposition and The Eigenvalue of an Infinite Dimensional Symmetric Matrix}\label{sec:fourier_toeplitz}
In this section we pass from the weighted problem on $\mathbb S^1$ to an explicit Fourier representation. The $N$-fold symmetry of the boundary weight yields a residue-class decomposition, and in each nonzero residue class the Steklov problem is reduced to a standard eigenvalue problem for an infinite-dimensional symmetric Toeplitz-type operator.
%=====================================================================
\subsection{Fourier coefficients of the boundary weight: symmetry and sparsity}\label{subsec:fourier_weight}

Recall that the conformal pullback produces a positive boundary weight $w_N$ on $\mathbb S^1$ satisfying
\[
\int_0^{2\pi}w_N(\theta)\,d\theta=2\pi.
\]
% We write $\widehat g(k)$ for Fourier coefficients on $\mathbb S^1$ and retain all notation from Section~\ref{sec:conformal}. In particular, for $g\in L^1(\mathbb S^1)$,
% \begin{equation}\label{eq:fourier_convention}
% \widehat g(k):=\frac1{2\pi}\int_0^{2\pi} g(\theta)\,e^{-ik\theta}\,d\theta,
% \qquad k\in\mathbb Z.
% \end{equation}

\begin{lemma}[Rotational invariance of the conformal map]\label{lem:fn_rotation_equivariance}
Let $\omega:=e^{2\pi i/N}$. Then
\begin{equation}\label{eq:fn_rotation_equivariance}
f_N(\omega z)=\omega\,f_N(z)\qquad\text{for all }z\in\mathbb D.
\end{equation}
In particular,
\begin{equation}\label{eq:fnprime_rotation_invariance}
f_N'(\omega z)=f_N'(z)\qquad\text{for all }z\in\mathbb D.
\end{equation}
\end{lemma}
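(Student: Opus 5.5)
The plan is to work directly with the integral representation \eqref{S-C}, $f_N(z)=C_N\int_0^z(1-\zeta^N)^{-2/N}\,d\zeta$. The integrand $g(\zeta):=(1-\zeta^N)^{-2/N}$ is holomorphic on $\mathbb D$, because $1-\zeta^N$ takes values in the half-plane $\{\Re>0\}$ for $|\zeta|<1$. On that half-plane we use the principal branch of the power, so $g$ is a well-defined holomorphic function with $g(0)=1$ and path-independent primitives. The key observation is that $(\omega\zeta)^N=\omega^N\zeta^N=\zeta^N$ since $\omega^N=1$. Hence
\[
g(\omega\zeta)=g(\zeta)\qquad\text{for all }\zeta\in\mathbb D.
\]
No branch ambiguity arises here, because both sides are the same principal power of the same number $1-\zeta^N$.

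Next I would prove \eqref{eq:fn_rotation_equivariance} by the substitution $\zeta=\omega\eta$ in the line integral. Integrate along the segment from $0$ to $\omega z$, parametrized as $\zeta=\omega\eta$ with $\eta$ running along the segment from $0$ to $z$; then $d\zeta=\omega\,d\eta$. This gives
\[
f_N(\omega z)=C_N\int_0^{\omega z}g(\zeta)\,d\zeta=C_N\,\omega\int_0^{z}g(\omega\eta)\,d\eta=\omega\,C_N\int_0^z g(\eta)\,d\eta=\omega f_N(z).
\]
An equivalent and cleaner route avoids the change of variables. Set $h(z):=f_N(\omega z)-\omega f_N(z)$. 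Then $h(0)=0$ and $h'(z)=\omega C_N g(\omega z)-\omega C_N g(z)=0$ on the connected set $\mathbb D$, so $h\equiv0$.

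Finally, \eqref{eq:fnprime_rotation_invariance} follows either by differentiating \eqref{eq:fn_rotation_equivariance} in $z$, which gives $\omega f_N'(\omega z)=\omega f_N'(z)$ and then cancellation of $\omega\neq0$, or directly from $f_N'=C_Ng$ together with the invariance of $g$.

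The only point needing care, and a minor one, is the branch choice for the power $-2/N$. I would state explicitly that the principal branch on $\{\Re w>0\}$ is used, which is consistent with the normalization $f_N'(0)=C_N>0$ implicit in \eqref{S-C}.
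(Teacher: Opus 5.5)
Your proof is correct, but it takes a different route from the paper's.

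\textbf{The paper's route.} The paper never uses the integrand. It observes that $\Omega_N$ is invariant under $R_\omega$. Hence $z\mapsto\omega^{-1}f_N(\omega z)$ is again a conformal map of $\mathbb D$ onto $\Omega_N$ that fixes $0$ and has the same positive derivative there. Uniqueness of the normalized Riemann map then gives $\omega^{-1}f_N(\omega z)=f_N(z)$. The derivative identity follows by differentiation, exactly as in your proof.

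\textbf{Your route.} You verify the identity directly from the explicit formula \eqref{S-C}. You use only $\omega^N=1$ and the fact that $1-\zeta^N$ lies in $\{\Re w>0\}$, so the principal power is single-valued and holomorphic on $\mathbb D$.

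\textbf{What each buys.} Your argument is more elementary and self-contained. It needs neither the uniqueness part of the Riemann mapping theorem nor the fact that \eqref{S-C} actually maps $\mathbb D$ conformally onto the rotation-invariant polygon $\Omega_N$. It uses only $f_N(0)=0$ and the invariance $g(\omega\zeta)=g(\zeta)$ of the integrand. The paper's argument is structural: it shows the equivariance is forced by the geometry of $\Omega_N$ alone. It therefore applies to any normalized conformal map onto a domain with $N$-fold rotational symmetry. It is also reused verbatim for the reflection symmetry in Lemma~\ref{lem:wn_reflection_real}. Your method would handle that lemma just as easily, since $\overline{g(\overline{\zeta})}=g(\zeta)$ for the principal branch.
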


\begin{proof}
Since $\Omega_N$ is invariant under the rotation $R_\omega(\zeta)=\omega\zeta$, the map
\[
g(z):=\omega^{-1}f_N(\omega z)
\]
is conformal from $\mathbb D$ onto $\Omega_N$. Moreover $g(0)=0$ and
\[
g'(0)=\omega^{-1}f_N'(0)\,\omega=f_N'(0)>0.
\]
By uniqueness of the perimeter-normalized conformal map, $g=f_N$, which is exactly \eqref{eq:fn_rotation_equivariance}.
Differentiating \eqref{eq:fn_rotation_equivariance} gives
$f_N'(\omega z)\,\omega=\omega f_N'(z)$, hence \eqref{eq:fnprime_rotation_invariance}.
\end{proof}

\begin{lemma}[Periodicity of $w_N$ and Fourier support]\label{lem:wn_periodic_sparse}
The boundary weight is $2\pi/N$--periodic:
\begin{equation}\label{eq:wn_periodic}
w_N\!\left(\theta+\frac{2\pi}{N}\right)=w_N(\theta)\qquad\text{for a.e. }\theta.
\end{equation}
Consequently,
\begin{equation}\label{eq:wn_sparse}
\widehat w_N(k)=0\qquad\text{whenever }k\not\equiv 0\pmod N.
\end{equation}
\end{lemma}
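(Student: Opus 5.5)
The plan is to derive the periodicity of $w_N$ from the rotational invariance in Lemma~\ref{lem:fn_rotation_equivariance}, and then read off the Fourier sparsity by a change of variables.

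\emph{Periodicity.} By \eqref{eq:fnprime_rotation_invariance}, for every $0<r<1$ and every $\theta$,
\[
f_N'\bigl(re^{i(\theta+2\pi/N)}\bigr)=f_N'(\omega\, re^{i\theta})=f_N'(re^{i\theta}).
\]
The radial limit in \eqref{eq:def_weight_wn} exists for a.e.\ $\theta$ by Lemma~\ref{lem:ds_pullback_weight}. Suppose it exists at $\theta$. Then the identity above shows that it also exists at $\theta+2\pi/N$, and that the two limits coincide. Taking absolute values gives $w_N(\theta+2\pi/N)=w_N(\theta)$. The set of exceptional $\theta$ is null, and rotation preserves null sets, so this holds for a.e.\ $\theta$. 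This proves \eqref{eq:wn_periodic}.

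Alternatively, one can use the explicit formula $w_N(\theta)=C_N\prod_{j}|e^{i\theta}-\zeta_j|^{-2/N}$ derived before Lemma~\ref{lem:weighted_form_continuous}. Shifting $\theta$ by $2\pi/N$ multiplies $e^{i\theta}$ by $\omega$, and $\omega\zeta_j=\zeta_{j+1}$ only permutes the roots of unity, so the product is unchanged.

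\emph{Sparsity.} The weight $w_N$ lies in $L^1(\mathbb S^1)$ by \eqref{eq:wn_integral_2pi}, so $\widehat w_N(k)$ is well defined. In the defining integral substitute $\theta=\phi+2\pi/N$. Both $w_N$ and $e^{-ik\theta}$ are $2\pi$-periodic, so the integral over a full period is unchanged by the shift. Combined with \eqref{eq:wn_periodic} this gives
\[
\widehat w_N(k)=\frac1{2\pi}\int_0^{2\pi}w_N(\phi)\,e^{-ik\phi}e^{-2\pi i k/N}\,d\phi=\omega^{-k}\,\widehat w_N(k).
\]
If $k\not\equiv 0\pmod N$, then $\omega^{-k}\neq1$, and therefore $\widehat w_N(k)=0$. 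This is \eqref{eq:wn_sparse}.

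No step is a serious obstacle. The only care needed is the almost-everywhere bookkeeping in the periodicity step, and it is handled by the invariance of null sets under rotation.
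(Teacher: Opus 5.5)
Your proof is correct and follows the paper's argument. In both, periodicity comes from taking radial limits in $f_N'(\omega z)=f_N'(z)$, and sparsity comes from the shift $\theta\mapsto\theta+2\pi/N$ in the Fourier integral, which yields $\widehat w_N(k)=e^{-2\pi ik/N}\widehat w_N(k)$. Your explicit handling of the null sets and your alternative check via the product formula for $w_N$ are fine additions, though neither is needed.
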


\begin{proof}
By Lemma~\ref{lem:fn_rotation_equivariance}, $f_N'(\omega z)=f_N'(z)$ for all $z\in\mathbb D$.
Taking radial limits along $z=re^{i\theta}$ yields, for a.e.\ $\theta$,
\[
w_N\!\left(\theta+\frac{2\pi}{N}\right)
=\lim_{r\rightarrow1^-}\bigl|f_N'(r e^{i(\theta+2\pi/N)})\bigr|
=\lim_{r\rightarrow1^-}\bigl|f_N'(\omega r e^{i\theta})\bigr|
=\lim_{r\rightarrow1^-}\bigl|f_N'(r e^{i\theta})\bigr|
=w_N(\theta),
\]
which is \eqref{eq:wn_periodic}.

For \eqref{eq:wn_sparse}, fix $k\in\mathbb Z$ and use \eqref{eq:wn_periodic} together with the change of variables
$\theta\mapsto \theta+2\pi/N$:
\[
\widehat w_N(k)
=\frac1{2\pi}\int_0^{2\pi} w_N(\theta)\,e^{-ik\theta}\,d\theta
=\frac1{2\pi}\int_0^{2\pi} w_N\!\left(\theta+\frac{2\pi}{N}\right)e^{-ik(\theta+2\pi/N)}\,d\theta
=e^{-i2\pi k/N}\widehat w_N(k).
\]
If $k\not\equiv 0\pmod N$, then $e^{-i2\pi k/N}\neq 1$, hence $\widehat w_N(k)=0$.
\end{proof}

\begin{lemma}[Reflection symmetry and real coefficients]\label{lem:wn_reflection_real}
The weight satisfies
\begin{equation}\label{eq:wn_even}
w_N(-\theta)=w_N(\theta)\qquad\text{for a.e. }\theta,
\end{equation}
and therefore
\begin{equation}\label{eq:wn_fourier_real_even}
\widehat w_N(k)\in\mathbb R,\qquad \widehat w_N(-k)=\widehat w_N(k)\qquad (k\in\mathbb Z).
\end{equation}
\end{lemma}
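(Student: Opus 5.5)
The plan mirrors the proof of Lemma~\ref{lem:fn_rotation_equivariance}, with the rotation replaced by complex conjugation. The regular polygon $\Omega_N$ has a vertex on the positive real axis, so it is symmetric under $z\mapsto\overline z$. I would first show that $f_N$ commutes with conjugation, i.e.\ $f_N(\overline z)=\overline{f_N(z)}$ on $\mathbb D$. There are two routes, and I would give both, since each is short.

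\emph{Uniqueness route.} Set $g(z):=\overline{f_N(\overline z)}$. This is holomorphic, being the composition of two antiholomorphic maps, and it is a bijection of $\mathbb D$ onto $\overline{\Omega_N}^{\,\mathrm{conj}}=\Omega_N$. It satisfies $g(0)=0$ and $g'(0)=\overline{f_N'(0)}=f_N'(0)>0$. By the uniqueness of the normalized Riemann map, $g=f_N$.

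\emph{Direct route.} From the Schwarz--Christoffel formula \eqref{S-C}, the integrand $(1-\zeta^N)^{-2/N}$ is defined with the principal branch, which is real and positive on the real segment $(-1,1)$. Its Taylor coefficients are therefore real, and the same holds for $f_N$ since $C_N>0$.

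Differentiating the identity $f_N(\overline z)=\overline{f_N(z)}$ in $z$ gives
\[
f_N'(\overline z)=\overline{f_N'(z)}, \qquad z\in\mathbb D.
\]
Now take radial limits along $z=re^{i\theta}$, noting that $\overline{re^{i\theta}}=re^{-i\theta}$. For a.e.\ $\theta$ this yields
\[
w_N(-\theta)=\lim_{r\to1^-}\bigl|f_N'(re^{-i\theta})\bigr|=\lim_{r\to1^-}\bigl|f_N'(re^{i\theta})\bigr|=w_N(\theta),
\]
which is \eqref{eq:wn_even}. One small point needs a word. The set of $\theta$ where the radial limit exists has full measure, and its reflection also has full measure, so the identity holds on their intersection. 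This is still a.e., since the map $\theta\mapsto-\theta$ preserves null sets.

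For \eqref{eq:wn_fourier_real_even}, note that $w_N$ is real-valued and lies in $L^1$ by Lemma~\ref{lem:ds_pullback_weight}. Being real gives $\overline{\widehat w_N(k)}=\widehat w_N(-k)$. Evenness, via the substitution $\theta\mapsto-\theta$ on a full period, gives $\widehat w_N(-k)=\widehat w_N(k)$. Combining the two, $\widehat w_N(k)=\overline{\widehat w_N(k)}$, so $\widehat w_N(k)\in\mathbb R$.

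There is no real obstacle here. The only step needing care is justifying the conjugation symmetry of $f_N$, which requires a consistent branch choice in \eqref{S-C}, or else the symmetry of $\Omega_N$ about the real axis combined with uniqueness of the normalized map. Everything after that is routine.
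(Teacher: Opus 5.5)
Your proposal is correct and follows essentially the same route as the paper. You set $g(z)=\overline{f_N(\overline z)}$, use uniqueness of the normalized conformal map to get $f_N'(\overline z)=\overline{f_N'(z)}$, take radial limits, and then combine reality and evenness of $w_N$ to conclude that the Fourier coefficients are real and even. Your extra ``direct route'' via the real Taylor coefficients of the Schwarz--Christoffel integrand is also valid, and your remark that the a.e.\ set of radial limits is reflection-invariant makes explicit a point the paper leaves implicit.
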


\begin{proof}
The regular $N$-gon $\Omega_N$ is symmetric with respect to the real axis. Define
\[
g(z):=\overline{f_N(\overline z)}.
\]
Then $g$ is conformal from $\mathbb D$ onto $\Omega_N$, with $g(0)=0$ and
$g'(0)=\overline{f_N'(0)}=f_N'(0)>0$. By uniqueness of the perimeter-normalized conformal map, $g=f_N$, i.e.
$f_N(\overline z)=\overline{f_N(z)}$ for all $z\in\mathbb D$.
Differentiating yields $f_N'(\overline z)=\overline{f_N'(z)}$. Taking radial limits along $z=re^{i\theta}$ gives
\[
w_N(-\theta)=\lim_{r\rightarrow1^-}|f_N'(re^{-i\theta})|
=\lim_{r\rightarrow1^-}|f_N'(\overline{re^{i\theta}})|
=\lim_{r\rightarrow1^-}|\overline{f_N'(re^{i\theta})}|
=w_N(\theta),
\]
which is \eqref{eq:wn_even}. For \eqref{eq:wn_fourier_real_even}, note that $w_N$ is real-valued and even, hence
\[
\widehat w_N(-k)=\overline{\widehat w_N(k)}=\widehat w_N(k),
\]
so $\widehat w_N(k)\in\mathbb R$ and is even in $k$.
\end{proof}

\begin{corollary}[Cosine series supported on multiples of $N$]\label{cor:wn_cosine_series}
Set
\begin{equation}\label{eq:def_vm_from_fourier}
v_m:=\widehat w_N(mN)\in\mathbb R,\qquad m\in\mathbb Z.
\end{equation}
Then $v_{-m}=v_m$, $v_0=\widehat w_N(0)=1$, and $w_N$ admits the Fourier expansion
\begin{equation}\label{eq:wn_fourier_expansion}
w_N(\theta)=\sum_{m\in\mathbb Z} v_m\,e^{imN\theta}
=1+2\sum_{m\ge1} v_m \cos(mN\theta),
\end{equation}
where the partial sums converge to $w_N$ in $L^p(\mathbb S^1)$ for every $1<p<\frac N2$.
\end{corollary}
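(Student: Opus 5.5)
The corollary follows by combining Lemmas~\ref{lem:wn_periodic_sparse} and~\ref{lem:wn_reflection_real} with the integrability statement \eqref{eq:wn_Lp_range}.

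\emph{Properties of the coefficients.} By Lemma~\ref{lem:wn_reflection_real}, $\widehat w_N(k)$ is real and even in $k$. Hence $v_m=\widehat w_N(mN)$ is real and satisfies $v_{-m}=\widehat w_N(-mN)=\widehat w_N(mN)=v_m$. The normalization \eqref{eq:wn_integral_2pi} gives
\[
v_0=\widehat w_N(0)=\frac{1}{2\pi}\int_0^{2\pi}w_N\,d\theta=1.
\]

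\emph{Form of the series.} By the sparsity \eqref{eq:wn_sparse}, the formal Fourier series $\sum_{k}\widehat w_N(k)e^{ik\theta}$ has nonzero terms only for $k=mN$. It therefore coincides term by term with $\sum_m v_m e^{imN\theta}$. Grouping the terms $m$ and $-m$ and using $v_{-m}=v_m$ gives $v_0+\sum_{m\ge1}2v_m\cos(mN\theta)$. With $v_0=1$ this is the cosine form in \eqref{eq:wn_fourier_expansion}.

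\emph{Convergence.} This is the only step with analytic content. By \eqref{eq:wn_Lp_range}, $w_N\in L^p(\mathbb S^1)$ for every $1\le p<N/2$. For $1<p<\infty$, M.~Riesz's theorem says the conjugate function, equivalently the Riesz projection, is bounded on $L^p(\mathbb S^1)$. Consequently the symmetric partial sums $S_Kg=\sum_{|k|\le K}\widehat g(k)e_k$ are uniformly bounded on $L^p$, and since trigonometric polynomials are dense, $S_Kg\to g$ in $L^p$.

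Apply this with $g=w_N$. The symmetric partial sums of order $K=MN$ equal $\sum_{|m|\le M}v_me^{imN\theta}$, so these converge to $w_N$ in $L^p$ for every $1<p<N/2$. The same holds for all intermediate $K$, since $S_K w_N$ is constant for $MN\le K<(M+1)N$.

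Two remarks on the range of $p$. At $p=1$ norm convergence of Fourier series can fail, which is why the statement requires $p>1$. For $N\ge3$ the interval $(1,N/2)$ is nonempty, since $N/2\ge 3/2$.
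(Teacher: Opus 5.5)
Your proof is correct and follows the paper's route: reality, evenness and sparsity of the coefficients from Lemmas~\ref{lem:wn_periodic_sparse} and~\ref{lem:wn_reflection_real}, $v_0=1$ from the perimeter normalization, and $L^p$ convergence from \eqref{eq:wn_Lp_range} plus standard Fourier theory, which you make explicit via M.~Riesz's theorem and the observation that $S_Kw_N$ is constant for $MN\le K<(M+1)N$. One small correction to your closing remark: the circle has finite measure, so convergence in $L^p$ for some $p\in(1,N/2)$ already gives $L^1$ convergence of these partial sums, and the exclusion of $p=1$ reflects the method rather than an actual failure for $w_N$.
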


\begin{proof}
By Lemma~\ref{lem:wn_periodic_sparse} and Lemma~\ref{lem:wn_reflection_real}, the only nonzero Fourier modes are
at $k=mN$, and these coefficients are real and even. The normalization $\int_0^{2\pi}w_N=2\pi$ gives $v_0=1$.
The $L^p$ convergence follows from \eqref{eq:wn_Lp_range} and standard Fourier theory on $\mathbb S^1$.
\end{proof}

\subsection{The Toeplitz operator induced by $w_N$}
\label{subsec:vm_definition_matrix}

Let $w\in L^p(\mathbb S^1)$ for some $p>1$.  In the weighted Steklov formulation the weight enters through the
Hermitian form
\begin{equation}\label{eq:def_Bw}
B_w(\phi,\psi):=\int_0^{2\pi} w(\theta)\,\phi(\theta)\,\overline{\psi(\theta)}\,d\theta,
\qquad \phi,\psi\in W^{\frac12,2}(\mathbb S^1),
\end{equation}
which we interpret as the duality pairing $\left(T_w\phi,\psi\right)$, where
\begin{equation}\label{eq:def_Tw}
T_w:W^{\frac12,2}(\mathbb S^1)\to W^{-\frac12,2}(\mathbb S^1),\qquad
\left( T_w\phi,\psi\right):=B_w(\phi,\psi).
\end{equation}
%Fourier conventione, Fourier modes
We recall the Fourier convention and the modes $e_k(\theta)$.
\begin{lemma}[Toeplitz matrix of the weight]\label{lem:Toeplitz_weight_matrix}
Assume $w\in L^p(\mathbb S^1)$ for some $p>1$. Then the form $B_w$ in \eqref{eq:def_Bw} is well-defined and continuous on
$W^{\frac12,2}(\mathbb S^1)\times W^{\frac12,2}(\mathbb S^1)$, and for Fourier modes $e_\ell,e_k$ one has
\begin{equation}\label{eq:Toeplitz_entries_general}
B_w(e_\ell,e_k)=2\pi\,\widehat w(k-\ell).
\end{equation}
Equivalently, the multiplication operator $T_w$ defined in \eqref{eq:def_Tw} has the Toeplitz matrix
$\bigl(\widehat w(k-\ell)\bigr)_{k,\ell\in\mathbb Z}$ in the sense that
\begin{equation}\label{eq:Toeplitz_matrix_general_duality}
\left( T_w e_\ell, e_k\right) = 2\pi\,\widehat w(k-\ell),
\qquad k,\ell\in\mathbb Z.
\end{equation}
\end{lemma}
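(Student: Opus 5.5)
The plan has two parts: well-definedness and continuity, which we take directly from Lemma~\ref{lem:weighted_form_continuous}, and the matrix entries, which are a one-line computation from the Fourier convention.

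\emph{Continuity.} By polarization, or directly from H\"older's inequality, we have
\[
|B_w(\phi,\psi)|\le \|w\|_{L^p}\,\|\phi\|_{L^{2p'}}\,\|\psi\|_{L^{2p'}},\qquad p'=\frac{p}{p-1}.
\]
The embedding $W^{\frac12,2}(\mathbb S^1)\hookrightarrow L^{q}(\mathbb S^1)$ holds for every finite $q$; this is the same fact already used in the proof of Lemma~\ref{lem:weighted_form_continuous}. It gives
\[
|B_w(\phi,\psi)|\le C(p)\,\|w\|_{L^p}\,\|\phi\|_{W^{\frac12,2}}\,\|\psi\|_{W^{\frac12,2}}.
\]
So $B_w$ is a bounded sesquilinear form, and $T_w$ in \eqref{eq:def_Tw} is a bounded operator $W^{\frac12,2}\to W^{-\frac12,2}$ by the Riesz/duality identification.

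\emph{Entries.} The modes $e_\ell,e_k$ are bounded, so $w\,e_\ell\,\overline{e_k}\in L^1$ and the integral can be evaluated directly:
\[
B_w(e_\ell,e_k)=\int_0^{2\pi}w(\theta)e^{i\ell\theta}e^{-ik\theta}\,d\theta=\int_0^{2\pi}w(\theta)e^{-i(k-\ell)\theta}\,d\theta=2\pi\,\widehat w(k-\ell).
\]
This is \eqref{eq:Toeplitz_entries_general}. Then \eqref{eq:Toeplitz_matrix_general_duality} is the same statement read through the definition of $T_w$. Since the entry depends only on $k-\ell$, the matrix is Toeplitz.

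Optionally, I would add that bilinearity and continuity let one expand $B_w(\phi,\psi)=2\pi\sum_{k,\ell}\widehat w(k-\ell)\,\widehat\phi(\ell)\,\overline{\widehat\psi(k)}$ for trigonometric polynomials and then extend by density. This is the only step with any subtlety, since for general $W^{\frac12,2}$ functions the double sum need not converge absolutely. I would therefore state the matrix identity only on modes, as the lemma does, and rely on continuity for everything else. No real obstacle is expected.
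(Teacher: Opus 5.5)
Your proposal is correct and follows essentially the same route as the paper. Continuity comes from H\"older with exponents $(p,q,q)$, $q=\frac{2p}{p-1}$, combined with the embedding $W^{\frac12,2}(\mathbb S^1)\hookrightarrow L^q(\mathbb S^1)$. The entries come from directly evaluating $\frac1{2\pi}\int_0^{2\pi} w(\theta)\,e^{i\ell\theta}e^{-ik\theta}\,d\theta=\widehat w(k-\ell)$, and the duality form is then just the definition of $T_w$.
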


\begin{proof}
Let $q:=\frac{2p}{p-1}\in(2,\infty)$. By H\"older and the fractional Sobolev embedding on $\mathbb S^1$ in the critical case $s=\tfrac12$,
one has $W^{\frac12,2}(\mathbb S^1)\hookrightarrow L^q(\mathbb S^1)$ for every $q<\infty$,
\[
|B_w(\phi,\psi)|
\le \|w\|_{L^p}\,\|\phi\|_{L^q}\,\|\psi\|_{L^q}
\le C(p)\,\|w\|_{L^p}\,\|\phi\|_{W^{\frac12,2}}\,\|\psi\|_{W^{\frac12,2}},
\]
so $B_w$ is continuous on $W^{\frac12,2}\times W^{\frac12,2}$.

For \eqref{eq:Toeplitz_entries_general},
\[
\frac1{2\pi}B_w(e_\ell,e_k)
=\frac1{2\pi}\int_0^{2\pi} w(\theta)\,e^{i\ell\theta}\,e^{-ik\theta}\,d\theta
=\frac1{2\pi}\int_0^{2\pi} w(\theta)\,e^{-i(k-\ell)\theta}\,d\theta
=\widehat w(k-\ell).
\]
This is exactly \eqref{eq:Toeplitz_matrix_general_duality} by the definition of $T_w$.
\end{proof}

\begin{lemma}[Toeplitz structure for the polygonal weight]\label{lem:Toeplitz_polygon_sparse}
Let $w_N$ be the boundary weight of the perimeter-normalized regular $N$-gon, and let $T_{w_N}$ be defined by
\eqref{eq:def_Tw} with $w=w_N$.
Write $v_m:=\widehat w_N(mN)$ as in Corollary~\ref{cor:wn_cosine_series}. Then for all $k,\ell\in\mathbb Z$,
\begin{equation}\label{eq:Toeplitz_polygon_entries}
\left( T_{w_N} e_\ell, e_k\right)
=2\pi\,\widehat w_N(k-\ell)
=
\begin{cases}
2\pi\,v_{(k-\ell)/N}, & k-\ell\equiv 0\pmod N,\\
0, & k-\ell\not\equiv 0\pmod N.
\end{cases}
\end{equation}
In particular, the weight couples only Fourier indices in the same residue class modulo $N$.
\end{lemma}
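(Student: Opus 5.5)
This is a direct specialization of Lemma~\ref{lem:Toeplitz_weight_matrix} to $w=w_N$, combined with the sparsity from Lemma~\ref{lem:wn_periodic_sparse}. The plan has three short steps.

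\emph{Step 1: admissibility.} We need the hypothesis $w\in L^p(\mathbb S^1)$ for some $p>1$. By \eqref{eq:wn_Lp_range}, $w_N\in L^p(\mathbb S^1)$ for every $1\le p<N/2$. Since $N\ge3$ gives $N/2\ge 3/2$, we may fix, say, $p=4/3$. Hence $B_{w_N}$ is a continuous form on $W^{\frac12,2}\times W^{\frac12,2}$, and Lemma~\ref{lem:Toeplitz_weight_matrix} yields
\[
\left(T_{w_N}e_\ell,e_k\right)=2\pi\,\widehat w_N(k-\ell)\qquad(k,\ell\in\mathbb Z).
\]

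\emph{Step 2: case split on the residue of $k-\ell$.}
\begin{itemize}
\item If $k-\ell\not\equiv0\pmod N$, then \eqref{eq:wn_sparse} gives $\widehat w_N(k-\ell)=0$.
\item If $k-\ell=mN$, then by the definition \eqref{eq:def_vm_from_fourier} we have $\widehat w_N(mN)=v_m=v_{(k-\ell)/N}$.
\end{itemize}
Here $v_m$ is real and even by Corollary~\ref{cor:wn_cosine_series}; this is not needed for the identity itself, but it confirms the matrix is Hermitian. This gives \eqref{eq:Toeplitz_polygon_entries}.

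\emph{Step 3: the "in particular" clause.} A nonzero entry requires $k\equiv\ell\pmod N$. Therefore $T_{w_N}$ maps each residue-class span $\mathcal H_r$ of $\{e_{r+mN}\}_{m\in\mathbb Z}$ into the corresponding dual block. Equivalently, $B_{w_N}(\phi,\psi)=0$ whenever $\phi\in\mathcal H_r$ and $\psi\in\mathcal H_{r'}$ with $r\not\equiv r'$.

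For trigonometric polynomials this follows by sesquilinearity. For general elements of $W^{\frac12,2}$ it follows by density of trigonometric polynomials together with the continuity of $B_{w_N}$ from Step 1.

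There is no real obstacle. The only point needing care is this density and continuity passage, which ensures that the entrywise statement really describes the operator on the full energy space.
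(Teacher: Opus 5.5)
Your proposal is correct and follows the paper's own argument: specialize Lemma~\ref{lem:Toeplitz_weight_matrix} to $w=w_N$, then use the sparsity \eqref{eq:wn_sparse} and the definition of $v_m$. Your admissibility check ($p=4/3<N/2$ for $N\ge3$) and the density extension of the block decoupling are sound additions; the paper carries out the latter separately, in Lemmas~\ref{lem:block_invariance} and~\ref{lem:block_orthogonality_forms}.
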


\begin{proof}
Apply Lemma~\ref{lem:Toeplitz_weight_matrix} with $w=w_N$. The sparsity $\widehat w_N(j)=0$ for $j\not\equiv0\pmod N$
follows from Lemma~\ref{lem:wn_periodic_sparse}, and $v_m=\widehat w_N(mN)$ by definition.
\end{proof}

%\begin{remark}[Why $\{v_m\}$ is the only nontrivial input from geometry]\label{rem:vm_only_input}
%In the Fourier representation, $\Lambda$ is diagonal ($\Lambda e_k=|k|e_k$), while the weight enters only through
%the Toeplitz coefficients $\widehat w_N(k-\ell)$, which for regular polygons collapse to the even real sequence
%$\{v_m\}_{m\ge0}$ with $\alpha:=1/N$. All subsequent reductions and all asymptotic constants depend on
%quantitative control of $v_m$.
%\end{remark}

%\subsubsection*{A note on explicit formulas for $v_m$}
%We only fix the Fourier--coefficient definition here; an explicit integral / $\Gamma$-type representation and its
%uniform asymptotic expansion will be derived later (see Section~\ref{sec:moment} and Appendix~A).

%\begin{definition}[Coefficient sequence $v_m$]\label{def:vm_alpha}
%Let $\alpha:=1/N$. For $m\in\mathbb Z$ define
%\begin{equation}\label{eq:def_vm_alpha}
%v_m:=\widehat w_N(mN).
%\end{equation}
%Then $v_0=1$ and $v_{-m}=v_m\in\mathbb R$.
%\end{definition}

%\begin{remark}[Normalization]\label{rem:vm_normalization}
%The perimeter normalization $\int_0^{2\pi}w_N=2\pi$ is exactly $v_0=1$.
%No further normalization will be imposed on $v_m$.
%\end{remark}

\subsection{Matrix form of the weighted eigenproblem and residue-class block decomposition}
\label{subsec:matrix_block_decomposition}

Let $w_N$ be the polygonal weight and $T_{w_N}:W^{\frac12,2}(\mathbb S^1)\to W^{-\frac12,2}(\mathbb S^1)$ be the multiplication
operator defined by \eqref{eq:def_Tw}. The weighted eigenproblem on $\mathbb S^1$ reads
\begin{equation}\label{eq:weighted_eigenproblem_Tw}
\Lambda\varphi=\sigma\,T_{w_N}\varphi
\qquad\text{in }W^{-\frac12,2}(\mathbb S^1),
\end{equation}
with the constraint $\langle \varphi,1\rangle_{w_N}=0$ when characterizing the first positive eigenvalue.
%\begin{lemma}[Fourier diagonalization of $\Lambda$ in duality]\label{lem:Lambda_duality_diagonal}
By \eqref{eq:Lambda_eigenvalues}, we have 
\begin{equation}\label{eq:Lambda_duality_entries}
\left( \Lambda e_\ell, e_k\right)
=
2\pi\,|\ell|\,\delta_{k\ell},
\end{equation}
for $k,\ell\in\mathbb Z$. Consequently, if $\varphi=\sum_{\ell\in\mathbb Z}\varphi_\ell e_\ell\in W^{\frac12,2}(\mathbb S^1)$, then
\begin{equation}\label{eq:Lambda_coeff_action}
\left( \Lambda\varphi, e_k\right)=2\pi\,|k|\,\varphi_k.
\end{equation}
%\end{lemma}

%\begin{proof}
%Fix $\ell\in\mathbb Z$. The harmonic extension of %$e_\ell(\theta)=e^{i\ell\theta}$ is
%\[
%H e_\ell(r,\theta)=r^{|\ell|}e^{i\ell\theta},
%\]
%hence $\partial_r(H e_\ell)|_{r=1}=|\ell|e_\ell$. By the definition of $\Lambda$ as the Neumann trace of the harmonic
%extension, for any $\psi\in H^{1/2}(\mathbb S^1)$ we have
%\[
%\langle \Lambda e_\ell,\psi\rangle_{H^{-1/2},H^{1/2}}
%=\left\langle \partial_r(H e_\ell)\big|_{r=1},\psi\right\rangle_{H^{-1/2},H^{1/2}}
%=\langle |\ell|e_\ell,\psi\rangle_{H^{-1/2},H^{1/2}}.
%\]
%Therefore $\Lambda e_\ell=|\ell|e_\ell$ in $H^{-1/2}(\mathbb S^1)$. Taking $\psi=e_k$ gives
%\[
%\langle \Lambda e_\ell, e_k\rangle_{H^{-1/2},H^{1/2}}
%=\langle |\ell|e_\ell,e_k\rangle_{H^{-1/2},H^{1/2}}
%=2\pi\,|\ell|\,\delta_{k\ell},
%\]
%which is \eqref{eq:Lambda_duality_entries}. Finally, \eqref{eq:Lambda_coeff_action} follows by linearity.
%\end{proof}
\begin{lemma}[Fourier coefficient system]\label{lem:fourier_system_general}
Let $\varphi(\theta)=\sum_{\ell\in\mathbb Z}\varphi_\ell e^{i\ell\theta}\in W^{\frac12,2}(\mathbb S^1)$.
Then \eqref{eq:weighted_eigenproblem_Tw} is equivalent to the infinite linear system
\begin{equation}\label{eq:fourier_system_entries}
|k|\,\varphi_k
=\sigma\sum_{\ell\in\mathbb Z}\widehat w_N(k-\ell)\,\varphi_\ell,
\qquad k\in\mathbb Z.
\end{equation}
Moreover, the weighted orthogonality constraint can be written as
\begin{equation}\label{eq:constraint_fourier}
0=\langle \varphi,1\rangle_{w_N}
=B_{w_N}(\varphi,1)
=2\pi\sum_{\ell\in\mathbb Z}\widehat w_N(-\ell)\,\varphi_\ell.
\end{equation}

\end{lemma}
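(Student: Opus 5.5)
The plan is to test the weak identity \eqref{eq:weighted_eigenproblem_Tw} against the Fourier modes $e_k$. Then I will show that testing against these modes alone is enough, by density together with the continuity of both sides on $W^{\frac12,2}(\mathbb S^1)$.

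\emph{Necessity.} Suppose $\Lambda\varphi=\sigma T_{w_N}\varphi$ in $W^{-\frac12,2}(\mathbb S^1)$ and take $\psi=e_k$. By \eqref{eq:Lambda_coeff_action}, the left side equals $2\pi|k|\varphi_k$. For the right side, write $\varphi=\lim_{L\to\infty}S_L\varphi$ with $S_L\varphi=\sum_{|\ell|\le L}\varphi_\ell e_\ell$; this converges in $W^{\frac12,2}$ because $\|\varphi\|^2_{W^{1/2,2}}\asymp\sum(1+|\ell|)|\varphi_\ell|^2$. Lemma~\ref{lem:Toeplitz_weight_matrix} says $B_{w_N}$ is continuous on $W^{\frac12,2}\times W^{\frac12,2}$, and it gives $B_{w_N}(e_\ell,e_k)=2\pi\widehat w_N(k-\ell)$. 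Hence
\[
\left(T_{w_N}\varphi,e_k\right)=\lim_{L\to\infty}2\pi\sum_{|\ell|\le L}\widehat w_N(k-\ell)\varphi_\ell .
\]
This shows that the series in \eqref{eq:fourier_system_entries} converges, at least as a limit of symmetric partial sums. Dividing by $2\pi$ yields \eqref{eq:fourier_system_entries}.

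\emph{Sufficiency.} Assume \eqref{eq:fourier_system_entries} holds for every $k$. By sesquilinearity, the two functionals $\psi\mapsto(\Lambda\varphi,\psi)$ and $\psi\mapsto\sigma B_{w_N}(\varphi,\psi)$ then agree on every trigonometric polynomial $\psi$. Both functionals are bounded on $W^{\frac12,2}$: the first because $|(\Lambda\varphi,\psi)|\le\|\varphi\|\,\|\psi\|$ in the energy norm, the second by Lemma~\ref{lem:Toeplitz_weight_matrix}. Trigonometric polynomials are dense in $W^{\frac12,2}(\mathbb S^1)$, so the two functionals agree everywhere, and this is \eqref{eq:weighted_eigenproblem_Tw}.

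\emph{The constraint.} Identity \eqref{eq:constraint_fourier} is the case $k=0$ of the same computation with the roles of the arguments fixed. We have $\langle\varphi,1\rangle_{w_N}=B_{w_N}(\varphi,e_0)=\lim_{L}\sum_{|\ell|\le L}\varphi_\ell B_{w_N}(e_\ell,e_0)=2\pi\sum_\ell\widehat w_N(-\ell)\varphi_\ell$.

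\emph{Main obstacle.} The only delicate point is the meaning of the infinite sum in \eqref{eq:fourier_system_entries}. Since $w_N\notin L^\infty$ in general, only the symmetric-partial-sum convergence inherited from $W^{\frac12,2}$-convergence of $S_L\varphi$ is guaranteed. I would state the series in that sense.

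Absolute convergence can be obtained as a bonus. The coefficients $\widehat w_N(j)$ are supported on $j\in N\mathbb Z$, and Lemma~\ref{lem:Toeplitz_polygon_sparse} together with the corner singularity $|\theta-\theta_j|^{-2/N}$ suggests $|v_m|\lesssim |m|^{-1+2/N}$. By Cauchy--Schwarz,
\[
\sum_\ell|\widehat w_N(k-\ell)||\varphi_\ell|\le\Big(\sum_\ell\frac{|\widehat w_N(k-\ell)|^2}{1+|\ell|}\Big)^{1/2}\Big(\sum_\ell(1+|\ell|)|\varphi_\ell|^2\Big)^{1/2},
\]
and the first factor is finite when $N\ge 3$. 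This is not needed for the equivalence itself, so I would present it only as a remark.
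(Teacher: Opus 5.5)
Your proposal is correct and follows essentially the same route as the paper: test against $e_k$, approximate $\varphi$ by its Fourier partial sums, pass to the limit using continuity of $B_{w_N}$ on $W^{\frac12,2}(\mathbb S^1)\times W^{\frac12,2}(\mathbb S^1)$, and obtain the constraint as the $k=0$ case. Your density argument for the converse direction spells out the implication the paper leaves implicit in the word ``equivalent''.
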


\begin{proof}

Test \eqref{eq:weighted_eigenproblem_Tw} against $e_k\in W^{\frac12,2}$:
\[
\left( \Lambda\varphi,e_k\right)
=\sigma\left( T_{w_N}\varphi,e_k\right).
\]
By \eqref{eq:Lambda_coeff_action}, the left-hand side is $2\pi|k|\varphi_k$.
For the right-hand side, first set $\varphi^{(M)}:=\sum_{|\ell|\le M}\varphi_\ell e_\ell$.
By linearity and Lemma~\ref{lem:Toeplitz_weight_matrix},
\[
\left( T_{w_N}\varphi^{(M)},e_k\right)
=\sum_{|\ell|\le M}\varphi_\ell\,\left( T_{w_N}e_\ell,e_k\right)
=2\pi\sum_{|\ell|\le M}\widehat w_N(k-\ell)\,\varphi_\ell.
\]
Since $\varphi^{(M)}\to\varphi$ in $W^{\frac12,2}(\mathbb S^1)$ and $T_{w_N}:W^{\frac12,2}\to W^{-\frac12,2}$ is continuous, we may pass
to the limit $M\to\infty$ and obtain
\[
\left( T_{w_N}\varphi,e_k\right)
=2\pi\sum_{\ell\in\mathbb Z}\widehat w_N(k-\ell)\,\varphi_\ell,
\]
where the series is interpreted via the continuous functional $\psi\mapsto \left( T_{w_N}\varphi,\psi\right)$
on $W^{\frac12,2}$. Dividing by $2\pi$ yields \eqref{eq:fourier_system_entries}.

For \eqref{eq:constraint_fourier}, note that $\langle \varphi,1\rangle_{w_N}=B_{w_N}(\varphi,1)
=\left( T_{w_N}\varphi,1\right)$. Writing first $\varphi^{(M)}:=\sum_{|\ell|\le M}\varphi_\ell e_\ell$ and using Lemma~\ref{lem:Toeplitz_weight_matrix}
with $k=0$ gives
\[
\frac1{2\pi}B_{w_N}(\varphi^{(M)},1)
=\sum_{|\ell|\le M}\varphi_\ell\frac1{2\pi}B_{w_N}(e_\ell,1)
=\sum_{|\ell|\le M}\widehat w_N(-\ell)\,\varphi_\ell.
\]
Since $\varphi^{(M)}\to\varphi$ in $W^{\frac12,2}$ and $B_{w_N}(\cdot,1)$ is continuous on $W^{\frac12,2}$, we may pass to the
limit $M\to\infty$ to obtain \eqref{eq:constraint_fourier}.
\end{proof}

\begin{proposition}[Residue-class block decomposition]\label{prop:residue_class_blocks}
For each residue $r\in\{0,1,\dots,N-1\}$, recall that
\[
\mathcal I_r:=\{k\in\mathbb Z:\ k\equiv r\!\!\pmod N\}=\{r+mN:\ m\in\mathbb Z\}.
\]
Then the coefficient system \eqref{eq:fourier_system_entries} decouples across residues:
if $\varphi_\ell=0$ for all $\ell\notin\mathcal I_r$, then the right-hand side of \eqref{eq:fourier_system_entries}
vanishes for all $k\notin\mathcal I_r$. Equivalently, $T_{w_N}$ is block diagonal with respect to
$\mathbb Z=\bigsqcup_{r=0}^{N-1}\mathcal I_r$.

More precisely, fix $r$ and set $x_m:=\varphi_{r+mN}$. Then the restriction of \eqref{eq:fourier_system_entries}
to indices $k=r+mN$ becomes
\begin{equation}\label{eq:block_system_r}
|r+mN|\,x_m
=\sigma\sum_{m'\in\mathbb Z} v_{m-m'}\,x_{m'},
\qquad m\in\mathbb Z,
\end{equation}
where $v_{m-m'}=\widehat w_N(N(m-m'))$.
\end{proposition}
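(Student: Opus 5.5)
The plan is to read the decoupling directly off the sparsity of the Fourier coefficients of $w_N$ established in Lemma~\ref{lem:wn_periodic_sparse} and Lemma~\ref{lem:Toeplitz_polygon_sparse}. I will start from the coefficient system \eqref{eq:fourier_system_entries} of Lemma~\ref{lem:fourier_system_general}. Suppose $\varphi_\ell=0$ for all $\ell\notin\mathcal I_r$. Then for any $k$ the right-hand side reduces to $\sigma\sum_{\ell\in\mathcal I_r}\widehat w_N(k-\ell)\varphi_\ell$. If $k\notin\mathcal I_r$, then $k-\ell\not\equiv 0\pmod N$ for every $\ell\in\mathcal I_r$, so each coefficient $\widehat w_N(k-\ell)$ vanishes by \eqref{eq:wn_sparse}. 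Hence the whole sum is zero.

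To make this rigorous, I will not manipulate the series termwise. Instead I will use the continuity argument already given in the proof of Lemma~\ref{lem:fourier_system_general}: the sum there is the limit of the truncations $\varphi^{(M)}$, and each truncation has vanishing $k$-th entry. Equivalently, via \eqref{eq:Toeplitz_polygon_entries}, I will have $(T_{w_N}e_\ell,e_k)=0$ whenever $k$ and $\ell$ lie in different residue classes. Extending by linearity and the $W^{\frac12,2}\to W^{-\frac12,2}$ continuity of $T_{w_N}$ then shows that $T_{w_N}$ maps $\mathcal H_r\cap W^{\frac12,2}$ into the dual block $\mathcal H_r^{-1/2}$. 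Since $\mathbb Z=\bigsqcup_r\mathcal I_r$, this is exactly block diagonality. The operator $\Lambda$ is diagonal by \eqref{eq:Lambda_duality_entries}, so the full system \eqref{eq:weighted_eigenproblem_Tw} also splits blockwise.

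For the explicit block equation \eqref{eq:block_system_r}, I will fix $r$, write $k=r+mN$, and reparametrize the sum over $\ell$. Only $\ell=r+m'N$ can contribute, since all other terms vanish either because of the block support or, in the general case, because the $\ell$-component from another class has zero coupling coefficient. Then $k-\ell=(m-m')N$, so $\widehat w_N(k-\ell)=v_{m-m'}$ by \eqref{eq:def_vm_from_fourier}, and the left-hand side is $|r+mN|x_m$. This gives \eqref{eq:block_system_r} for all $m\in\mathbb Z$.

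The only mildly delicate point is the justification of exchanging the infinite sum with the duality pairing, given that $w_N$ is only in $L^p$. This is settled by the truncation-plus-continuity argument of Lemma~\ref{lem:fourier_system_general}. I will also note that projecting onto $\mathcal H_r$ commutes with the truncation, so the restricted series converges in the same sense. Everything else is bookkeeping.
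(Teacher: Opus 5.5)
Your proposal is correct and matches the paper's proof: the sparsity $\widehat w_N(j)=0$ for $j\not\equiv 0\pmod N$ kills every cross-residue coupling, and reindexing $k=r+mN$, $\ell=r+m'N$ turns $\widehat w_N(k-\ell)$ into $v_{m-m'}$. Your extra truncation-plus-continuity justification and your dual-block formulation go slightly beyond the paper's two-line argument; the latter corresponds to what the paper proves separately as Lemma~\ref{lem:block_invariance}.
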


\begin{proof}
By Lemma~\ref{lem:wn_periodic_sparse}, $\widehat w_N(j)=0$ unless $j\equiv 0\pmod N$.
Thus $\widehat w_N(k-\ell)\neq 0$ only if $k\equiv \ell\pmod N$, giving the decoupling.

For \eqref{eq:block_system_r}, write $k=r+mN$ and $\ell=r+m'N$. Then $k-\ell=N(m-m')$ and hence
$\widehat w_N(k-\ell)=\widehat w_N(N(m-m'))=v_{m-m'}$, which yields \eqref{eq:block_system_r}.
\end{proof}

%\begin{remark}[Connection to later Schur reduction]\label{rem:block_to_schur}
%In Section~\ref{sec:cluster_blocks} we will identify which residue blocks can contain the first positive eigenvalue.
%In the critical blocks ($r=\pm1$) we will conjugate \eqref{eq:block_system_r} by $D_r^{1/2}$ to obtain normalized
%matrices $A_r=D_r^{-1/2}VD_r^{-1/2}$ and then apply Schur complement analysis.
%Here $D_r$ denotes the diagonal operator $D_r=\mathrm{diag}(|r+mN|)_{m\in\mathbb Z}$ on the block $\ell^2(\mathbb Z)$;
%it is unbounded but is well-defined on its natural dense domain
%\[
%\mathcal D(D_r)=\Bigl\{x=(x_m)_{m\in\mathbb Z}:\ \sum_{m\in\mathbb Z}|r+mN|^2|x_m|^2<\infty\Bigr\}.
%\]
%\end{remark}

\subsection{Normalized block operators and the reciprocal parameter $\lambda=1/\sigma$}
\label{subsec:normalized_blocks}
%l^2 inner product convention
% We use the common convention that $\langle \cdot,\cdot\rangle_{\ell^2}$ is the canonical inner product in the Hilbert space $\ell^2=\ell^2(\mathbb{Z})$.
When the zero mode is removed, we specify the space explicitly as $\ell^2(\mathbb Z_*)$.

Fix a residue class $r\in\{0,1,\dots,N-1\}$ and recall from Proposition~\ref{prop:residue_class_blocks} that the
weighted eigenproblem decouples into the block coefficient system
\begin{equation*}\label{eq:block_system_r_recall}
|r+mN|\,x_m=\sigma\sum_{m'\in\mathbb Z} v_{m-m'}\,x_{m'},\qquad m\in\mathbb Z,
\end{equation*}
where $x_m:=\varphi_{r+mN}$ and $v_{m-m'}=\widehat w_N(N(m-m'))$. 

\begin{definition}[Block diagonal weights and the normalized form]\label{def:Dr_V_A}
For each residue $r$ define the diagonal operator
\[
(D_r x)_m:=|r+mN|\,x_m,\qquad m\in\mathbb Z.
\]
We view $D_r$ as an (unbounded) diagonal operator on $\ell^2(\mathbb Z)$ with its natural dense domain
\[
\mathrm{Dom}(D_r):=\Bigl\{x=(x_1,\cdots, x_m,\cdots) \in\ell^2(\mathbb Z):\ (|r+mN|x_m)_{m\in\mathbb Z}\in\ell^2(\mathbb Z)\Bigr\}.
\]
Since finitely supported sequences belong to $\mathrm{Dom}(D_r)$, the domain is dense in $\ell^2(\mathbb Z)$.
Its positive square root is the diagonal operator
\[
(D_r^{1/2}x)_m:=\sqrt{|r+mN|}\,x_m,\qquad m\in\mathbb Z,
\]
and we also write
\[
\mathrm{Dom}(D_r^{1/2})
:=\Bigl\{x\in\ell^2(\mathbb Z):\ (\sqrt{|r+mN|}\,x_m)_{m\in\mathbb Z}\in\ell^2(\mathbb Z)\Bigr\}.
\]
If $r\neq 0$, then $|r+mN|\ge 1$ for all $m$, hence $D_r\ge I$ in the sense of quadratic forms on $\mathrm{Dom}(D_r)$,
and $D_r^{-1/2}$ is a bounded diagonal operator on $\ell^2(\mathbb Z)$ given by
\[
(D_r^{-1/2}y)_m:=\frac{y_m}{\sqrt{|r+mN|}},\qquad m\in\mathbb Z.
\]
For $r=0$, the same inverse formula is used on nonzero modes $m\in\mathbb Z_*$.

For $r\neq 0$ we define the normalized Hermitian form $a_r:\ell^2(\mathbb Z)\times\ell^2(\mathbb Z)\to\mathbb C$ by
\begin{equation}\label{eq:def_Ar_form}
a_r(y,z)
:=\frac1{2\pi}\,
B_{w_N}\!\Bigl(\sum_{m\in\mathbb Z}\frac{y_m}{\sqrt{|r+mN|}}\,e_{r+mN},\
\sum_{m\in\mathbb Z}\frac{z_m}{\sqrt{|r+mN|}}\,e_{r+mN}\Bigr).
\end{equation}
The bounded operator $A_r:\ell^2(\mathbb Z)\to\ell^2(\mathbb Z)$ is then defined via Riesz representation from the form
$a_r$; equivalently, $\langle A_r y,z\rangle_{\ell^2}:=a_r(y,z)$ once $A_r$ is identified.

For $r=0$ the diagonal entry $|r+mN|$ vanishes at $m=0$ (the constant mode); whenever needed we work on
$\mathbb Z_*$ and apply the same definition there.
\end{definition}

\begin{lemma}[Self-adjointness, positivity, and boundedness of $A_r$]\label{lem:Ar_selfadjoint_positive}
Assume $r\neq 0$. Then the form $a_r$ defined in \eqref{eq:def_Ar_form} is well-defined and bounded on
$\ell^2(\mathbb Z)\times\ell^2(\mathbb Z)$. Moreover it is Hermitian and strictly positive:
\[
a_r(y,z)=\overline{a_r(z,y)},\qquad a_r(y,y)> 0.
\]
Consequently, by Riesz representation there exists a unique bounded self-adjoint operator
$A_r:\ell^2(\mathbb Z)\to\ell^2(\mathbb Z)$ such that
\[
a_r(y,z)=\langle A_r y,z\rangle_{\ell^2}\qquad \forall\,y,z\in\ell^2(\mathbb Z).
\]
Its matrix entries are
\begin{equation}\label{eq:def_Ar_entries}
(A_r)_{m,m'}=\frac{v_{m-m'}}{\sqrt{|r+mN|\,|r+m'N|}},\qquad m,m'\in\mathbb Z,
\end{equation}
in the sense that for finitely supported $y,z$ one has
$\langle A_r y,z\rangle_{\ell^2}=\sum_{m,m'}(A_r)_{m,m'}\,y_m\overline{z_{m'}}$.
\end{lemma}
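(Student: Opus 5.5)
The plan is to realize $a_r$ as a pullback of $B_{w_N}$ along a bounded linear map, and then read off all the claimed properties from those of $B_{w_N}$. For $r\neq 0$ define
\[
J_r:\ell^2(\mathbb Z)\to W^{\frac12,2}(\mathbb S^1),\qquad J_r y:=\sum_{m\in\mathbb Z}\frac{y_m}{\sqrt{|r+mN|}}\,e_{r+mN}.
\]
The first step is to check that $J_r$ is bounded. The frequencies $r+mN$ are distinct and satisfy $|r+mN|\ge 1$. Using the Fourier description of the norm,
\[
\|J_r y\|_{W^{\frac12,2}}^2=2\pi\sum_m\frac{|y_m|^2}{|r+mN|}+2\pi\sum_m |y_m|^2\le 4\pi\|y\|_{\ell^2}^2,
\]
where the second sum is the Dirichlet energy $(\Lambda J_ry,J_ry)$, computed from \eqref{eq:Lambda_duality_entries}. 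In the same way, $J_r y^{(M)}\to J_r y$ in $W^{\frac12,2}$ when $y^{(M)}$ denotes the truncation of $y$ to $|m|\le M$.

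Next, by \eqref{eq:def_Ar_form} we have $a_r(y,z)=\frac1{2\pi}B_{w_N}(J_ry,J_rz)$. Lemma~\ref{lem:Toeplitz_weight_matrix} states that $B_{w_N}$ is continuous on $W^{\frac12,2}\times W^{\frac12,2}$, since $w_N\in L^p$ for some $p\in(1,N/2)$ by \eqref{eq:wn_Lp_range}. Combining this with the bound on $J_r$ gives
\[
|a_r(y,z)|\le C\|y\|_{\ell^2}\|z\|_{\ell^2}.
\]
The form is sesquilinear because $J_r$ is linear. It is Hermitian because $w_N$ is real, so $B_{w_N}(\phi,\psi)=\overline{B_{w_N}(\psi,\phi)}$.

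For strict positivity, note that $a_r(y,y)=\frac1{2\pi}\int_0^{2\pi} w_N\,|J_ry|^2\,d\theta\ge0$. If this integral vanishes, then $w_N|J_ry|^2=0$ a.e. Since $w_N>0$ a.e. by Lemma~\ref{lem:ds_pullback_weight}, we get $J_ry=0$ a.e. The Fourier coefficients of $J_r y$ are $y_m/\sqrt{|r+mN|}$, so every $y_m=0$. Hence $a_r(y,y)>0$ for $y\neq0$.

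With a bounded Hermitian form in hand, the Riesz representation theorem (equivalently, Lax–Milgram for bounded forms) gives a unique bounded operator $A_r$ with $a_r(y,z)=\langle A_ry,z\rangle_{\ell^2}$. This operator is self-adjoint because $a_r$ is Hermitian.

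For the matrix entries, take finitely supported $y,z$ and expand $a_r$ by sesquilinearity using \eqref{eq:Toeplitz_polygon_entries}:
\[
a_r(y,z)=\sum_{m,m'}\frac{y_m\overline{z_{m'}}}{\sqrt{|r+mN||r+m'N|}}\cdot\frac{1}{2\pi}B_{w_N}(e_{r+mN},e_{r+m'N}).
\]
Here $\frac1{2\pi}B_{w_N}(e_{r+mN},e_{r+m'N})=\widehat w_N(N(m'-m))=v_{m'-m}$. Since $v$ is even by Corollary~\ref{cor:wn_cosine_series}, this equals $v_{m-m'}$, which gives \eqref{eq:def_Ar_entries}.

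I expect the only delicate points to be bookkeeping rather than genuine obstacles. One is the index and sign convention in $\widehat w_N(k-\ell)$, which the evenness of $v$ absorbs. The other is justifying continuity of $B_{w_N}$ on the image of $J_r$; this is covered by the embedding argument in Lemma~\ref{lem:weighted_form_continuous}, and it requires $p>1$, which holds because $N\ge3$ gives $N/2>1$.
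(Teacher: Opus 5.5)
Your proof is correct and is essentially the paper's argument: you bound $J_r y$ (the paper's $\Phi_y$) in $W^{\frac12,2}$ via its Fourier coefficients and invoke continuity of $B_{w_N}$. You then get Hermitian symmetry and strict positivity from $w_N$ being real and a.e.\ positive, apply Riesz representation, and read off the entries on finitely supported vectors. Your explicit use of evenness, $v_{m'-m}=v_{m-m'}$, to fix the index order is a small improvement, since the paper writes $\widehat w_N(N(m-m'))$ at that step without comment.
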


\begin{proof}
Fix $r\neq 0$ and set
\[
\Phi_y(\theta):=\sum_{m\in\mathbb Z}\frac{y_m}{\sqrt{|r+mN|}}\,e^{i(r+mN)\theta},
\qquad
\Phi_z(\theta):=\sum_{m\in\mathbb Z}\frac{z_m}{\sqrt{|r+mN|}}\,e^{i(r+mN)\theta}.
\]

\emph{Step 1: $\Phi_y,\Phi_z\in W^{\frac12,2}(\mathbb S^1)$ and the form is bounded.}
Using the Fourier characterization of $W^{\frac{1}{2},2}(\mathbb S^1)$,
\[
\|\Phi_y\|_{H^{1/2}}^2
\simeq \sum_{m\in\mathbb Z}\bigl(1+|r+mN|\bigr)\left|\frac{y_m}{\sqrt{|r+mN|}}\right|^2
\le 2\sum_{m\in\mathbb Z}|y_m|^2,
\]
since $|r+mN|\ge 1$. Thus $\|\Phi_y\|_{W^{\frac12,2
}}\lesssim \|y\|_{\ell^2}$, and similarly
$\|\Phi_z\|_{W^{\frac12,2}}\lesssim \|z\|_{\ell^2}$.
By continuity of $B_{w_N}$ on $W^{\frac12,2}\times W^{\frac12,2}$ (Lemma~\ref{lem:Toeplitz_weight_matrix}),
\[
|a_r(y,z)|
=\frac1{2\pi}|B_{w_N}(\Phi_y,\Phi_z)|
\lesssim \|\Phi_y\|_{W^{\frac12,2}}\|\Phi_z\|_{W^{\frac12,2}}
\lesssim \|y\|_{\ell^2}\|z\|_{\ell^2}.
\]
Hence $a_r$ defines a bounded Hermitian form on $\ell^2\times\ell^2$.

\emph{Step 2: Hermitian symmetry and positivity.}
Since $w_N(\theta)> 0$ a.e., $B_{w_N}$ is strictly positive and so is $a_r$: 
\[B_{w_N}(\Phi_y,\Phi_y)> 0,\qquad a_r(y,y)> 0
\]
unless $\Phi_y$ and $y$ are vanishing. Hermitian symmetry follows from $B_{w_N}(\phi,\psi)=\overline{B_{w_N}(\psi,\phi)}$.

\emph{Step 3: From bounded form to bounded operator.}
For each fixed $y$, the map $z\mapsto a_r(y,z)$ is a bounded conjugate-linear functional on $\ell^2(\mathbb Z)$.
By the Riesz representation theorem, there exists a unique vector $A_r y\in\ell^2(\mathbb Z)$ such that
\[
a_r(y,z)=\langle A_r y,z\rangle_{\ell^2}\qquad\forall\,z\in\ell^2(\mathbb Z).
\]
This defines a bounded operator $A_r:\ell^2\to\ell^2$; Hermitian symmetry of $a_r$ implies that $A_r$ is self-adjoint.

\emph{Step 4: Identification of matrix entries (finite truncation).}
For finitely supported $y,z$, $\Phi_y,\Phi_z$ are finite Fourier sums and Lemma~\ref{lem:Toeplitz_weight_matrix}
together with Lemma~\ref{lem:Toeplitz_polygon_sparse} gives
\[
\frac1{2\pi}B_{w_N}(\Phi_y,\Phi_z)
=\sum_{m,m'\in\mathbb Z}\frac{y_m}{\sqrt{|r+mN|}}\,
\overline{\frac{z_{m'}}{\sqrt{|r+m'N|}}}\,\widehat w_N\!\bigl(N(m-m')\bigr)
=\sum_{m,m'\in\mathbb Z}\frac{v_{m-m'}\,y_m\,\overline{z_{m'}}}{\sqrt{|r+mN|\,|r+m'N|}}.
\]
This proves \eqref{eq:def_Ar_entries} in the stated sense.
\end{proof}

\begin{proposition}[Reduction to a standard eigenvalue problem]\label{prop:block_standard_evp}
Assume $r\neq 0$. Recall that $\lambda:=1/\sigma$ and $y:=D_r^{1/2}x$, i.e. $y_m=\sqrt{|r+mN|}\,x_m$.
Then the block coefficient system \eqref{eq:block_system_r} is equivalent to the standard eigenvalue equation
\begin{equation}\label{eq:standard_block_evp}
A_r y=\lambda y
\qquad\text{in }\ell^2(\mathbb Z),
\end{equation}
with $A_r$ defined by \eqref{eq:def_Ar_entries}. In particular, $\sigma>0$ corresponds to $\lambda>0$.
\end{proposition}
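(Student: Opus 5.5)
The plan is to pass between the two formulations through the substitution $x=D_r^{-1/2}y$ and to read \eqref{eq:block_system_r} in its weak (duality) form rather than entrywise, since $D_r$ is unbounded. Throughout, $r\neq0$, so $|r+mN|\ge1$ for every $m$ and $D_r^{-1/2}$ is bounded.

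\emph{Step 1: the correct function spaces.} I would first pin down the spaces on which the two problems live. A block solution $\varphi=\sum_m x_m e_{r+mN}\in W^{\frac12,2}(\mathbb S^1)$ satisfies $\sum_m |r+mN|\,|x_m|^2<\infty$. This says exactly that $x\in\mathrm{Dom}(D_r^{1/2})$, or equivalently that $y:=D_r^{1/2}x\in\ell^2(\mathbb Z)$. Conversely, every $y\in\ell^2$ produces $x=D_r^{-1/2}y$ and $\Phi_y\in W^{\frac12,2}$, as in Step 1 of the proof of Lemma~\ref{lem:Ar_selfadjoint_positive}. The map $x\leftrightarrow y$ is therefore a bijection between admissible block coefficient sequences and $\ell^2(\mathbb Z)$.

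\emph{Step 2: the weak form of the block system.} By Lemma~\ref{lem:fourier_system_general} and Proposition~\ref{prop:residue_class_blocks}, \eqref{eq:block_system_r} is equivalent to the weak identity
\[
(\Lambda\varphi,\psi)=\sigma B_{w_N}(\varphi,\psi)
\]
for all $\psi$ supported in $\mathcal I_r$. Modes outside $\mathcal I_r$ contribute zero to both sides, by the decoupling. It suffices to test with $\psi=\Phi_z$ for finitely supported $z$, since these are dense and both sides are continuous in $\psi\in W^{\frac12,2}$.

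Using \eqref{eq:Lambda_duality_entries}, the left-hand side equals
\[
2\pi\sum_m |r+mN|\,x_m\overline{z_m}|r+mN|^{-1/2}=2\pi\langle y,z\rangle_{\ell^2}.
\]
By \eqref{eq:def_Ar_form} and Lemma~\ref{lem:Ar_selfadjoint_positive}, the right-hand side equals
\[
2\pi\sigma\, a_r(y,z)=2\pi\sigma\langle A_r y,z\rangle.
\]
Hence the system holds if and only if $\langle y,z\rangle=\sigma\langle A_ry,z\rangle$ for a dense set of $z$. Because $A_r$ is bounded, this is equivalent to $y=\sigma A_r y$ in $\ell^2$.

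\emph{Step 3: passing to $\lambda=1/\sigma$.} Dividing by $\sigma$ would require $\sigma\neq0$. This holds automatically: if $\sigma=0$, then $y=0$, so $\varphi=0$, which is not an eigenfunction. The identity $y=\sigma A_r y$ thus becomes $A_ry=\lambda y$. Conversely, $A_ry=\lambda y$ with $y\ne0$ forces $\lambda\neq0$, because $a_r(y,y)>0$. Moreover,
\[
\lambda\|y\|^2=a_r(y,y)>0
\]
gives $\lambda>0$ and hence $\sigma>0$, which proves the final claim about signs.

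The main point requiring care is Step 2. The entrywise system involves the unbounded $D_r$ and the series $\sum_{m'}v_{m-m'}x_{m'}$, which need not converge absolutely, since $w_N$ is only in $L^p$. I would handle this by working throughout with the duality pairing and continuity of $B_{w_N}$ from Lemma~\ref{lem:Toeplitz_weight_matrix}, exactly as in the proof of Lemma~\ref{lem:fourier_system_general}, rather than manipulating the infinite sums directly.
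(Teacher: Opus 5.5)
Your proposal is correct and follows essentially the paper's route: you use the change of variables $y=D_r^{1/2}x$, read \eqref{eq:block_system_r} in the duality sense, identify the rescaled right-hand side with $A_r y$, and use the bijection $\ell^2(\mathbb Z)\leftrightarrow\mathrm{Dom}(D_r^{1/2})$ on the coefficient side. The differences are small improvements: you identify $\sigma B_{w_N}(\varphi,\Phi_z)$ with $2\pi\sigma\langle A_r y,z\rangle_{\ell^2}$ directly through the form $a_r$, where the paper uses the truncation $\Pi_M y\to y$ and boundedness of $A_r$, and you explicitly rule out $\sigma=0$ and derive $\lambda>0$ from $a_r(y,y)>0$, which the paper leaves implicit.
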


\begin{proof}
Starting from \eqref{eq:block_system_r}, since $r\neq 0$, we may multiply the $m$-th equation by
$|r+mN|^{-1/2}$ and set $y_m=\sqrt{|r+mN|}\,x_m$. This yields
\begin{equation}\label{eq:ChangeOfVar}
y_m
=\sigma\sum_{m'\in\mathbb Z}\frac{v_{m-m'}}{\sqrt{|r+mN|\,|r+m'N|}}\,y_{m'},
\qquad m\in\mathbb Z,
\end{equation}
again in the same limiting/duality sense as in Lemma~\ref{lem:fourier_system_general}.

Let $\Pi_M:\ell^2(\mathbb Z)\to\ell^2(\mathbb Z)$ be the coordinate projection defined by
\[
(\Pi_M y)_m := \mathbf 1_{\{|m|\le M\}}\,y_m.
\]

To justify the coefficient identity, let $y^{(M)}:=\Pi_M y=\sum_{|m'|\le M}y_{m'}\delta_{m'}$.
Here $(\delta_m)_{m\in\mathbb Z}$ denotes the standard orthonormal basis of $\ell^2(\mathbb Z)$, i.e.
\[
\delta_m := \mathbf 1_{\{k=m\}},\qquad k\in\mathbb Z.
\]

Then for each fixed $m$,
\[
(A_r y^{(M)})_m=\sum_{|m'|\le M}\frac{v_{m-m'}}{\sqrt{|r+mN|\,|r+m'N|}}\,y_{m'}.
\]
Since $A_r$ is bounded on $\ell^2(\mathbb Z)$, we have $A_r y^{(M)}\to A_r y$ in $\ell^2(\mathbb Z)$, hence
$(A_r y^{(M)})_m\to (A_r y)_m$ as $M\to\infty$.
By Lemma~\ref{lem:Ar_selfadjoint_positive} (matrix identification on finitely supported vectors), the right-hand side of \eqref{eq:ChangeOfVar} is $\sigma(A_r y^{(M)})_m$ for each $M$, and letting $M\to\infty$ gives $\sigma(A_r y)_m$. Therefore $A_r y=\lambda y$ with $\lambda=1/\sigma$.

Conversely, reversing the change of variables gives back \eqref{eq:block_system_r}.

Note that $D_r^{-1/2}$ is bounded on $\ell^2(\mathbb Z)$, hence the map
\[
y\longmapsto x:=D_r^{-1/2}y
\]
is a bounded bijection from $\ell^2(\mathbb Z)$ onto the domain
\[
\mathrm{Dom}(D_r^{1/2})=\Bigl\{x\in\ell^2(\mathbb Z):\ D_r^{1/2}x\in\ell^2(\mathbb Z)\Bigr\},
\]
with inverse $x\mapsto y=D_r^{1/2}x$ defined on $\mathrm{Dom}(D_r^{1/2})$.
In particular, if $y\in\ell^2$ then $x=D_r^{-1/2}y$ automatically satisfies $D_r^{1/2}x=y\in\ell^2$, hence
$x\in\mathrm{Dom}(D_r^{1/2})$. Moreover, for eigenvectors coming from $\varphi\in W^{\frac12,2}(\mathbb S^1)$ supported in
the $r$-block, the Fourier characterization of $W^{\frac12,2}$ gives
\[
\sum_{m\in\mathbb Z}(1+|r+mN|)\,|x_m|^2<\infty,
\]
hence $\sum_m |r+mN|\,|x_m|^2<\infty$ and therefore $x\in\mathrm{Dom}(D_r^{1/2})$, so $y=D_r^{1/2}x\in\ell^2(\mathbb Z)$.
Therefore the correspondence $y\leftrightarrow x$ preserves eigenspaces (and thus multiplicities) within the $r$-block.
\end{proof}
Proposition~\ref{prop:block_standard_evp} shows that, in each nonzero residue class, the Steklov eigenvalues are the reciprocals of the eigenvalues of the normalized Toeplitz-type operator $A_r$. The next step is to compare these block spectra, isolate the critical residues $r=1$ and $r=N-1$, and identify the first nonzero Steklov eigenvalue through a Schur reduction on the critical block.
%A detailed analysis will be presented in the next section. 
%\begin{remark}[Distinguished low mode and preparation for Schur reduction]\label{rem:distinguished_mode}
%In the critical blocks $r=\pm 1$, the diagonal entry $|r+0\cdot N|=1$ produces a distinguished coordinate
%$m=0$ (corresponding to the Fourier mode $k=\pm 1$). In Sections~\ref{sec:conformal}--\ref{sec:schur} we will
%split the index set as $\{0\}\cup(\mathbb Z\setminus\{0\})$ inside these blocks and apply a Schur complement argument
%to track the eigenvalue $\lambda$ near $1$, which corresponds to $\sigma$ near $1$.
%\end{remark}

%up to now
\section{The Separation Lemma and the Identification of the First Non‑Zero Steklov Eigenvalue: the Schur Reduction}\label{sec:schur}
%=====================================================================
In this section we identify the first nonzero Steklov eigenvalue within the block decomposition obtained in Section~\ref{sec:fourier_toeplitz}. We first show that the noncritical blocks are uniformly separated from the two critical blocks $r=1$ and $r=N-1$ as $N$ becomes sufficiently large. We then reduce the critical problem to a scalar Schur equation and use it to characterize the spectral branch corresponding to the first positive Steklov eigenvalue.
\subsection{Spectral separation of non-critical blocks}\label{subsec:separation_noncritical}

Throughout this section we assume $N\ge 3$ and recall that $\alpha:=1/N$. We retain the weighted formulation and the residue-class decomposition introduced in Sections~\ref{sec:conformal} and \ref{sec:fourier_toeplitz}. In particular, $\mathcal H_1$ and $\mathcal H_{N-1}$ are the critical blocks, while all remaining residue classes are called noncritical.

For later reference, the Steklov spectrum of $\Omega_N$ coincides with the generalized eigenvalues of
\begin{equation}\label{eq:weighted_evp_recall}
\Lambda\varphi=\sigma\,T_{w_N}\varphi\qquad\text{in }W^{-\frac12,2}(\mathbb S^1),
\end{equation}
with the orthogonality constraint $\langle\varphi,1\rangle_{w_N}=0$.

\begin{definition}[Residue-class subspaces]\label{def:residue_subspaces}
For each residue $r\in\{0,1,\dots,N-1\}$ define the closed subspace
\[
\mathcal H_r
:=\overline{\mathrm{span}}\{e_{r+mN}:m\in\mathbb Z\}\subset W^{\frac12,2}(\mathbb S^1).
\]
We also set $\mathcal H_r^\circ:=\mathcal H_r\cap\{\varphi:\langle \varphi,1\rangle_{w_N}=0\}$.
For each $r$ we further define the corresponding dual block
\[
\mathcal H_r^{-1/2}
:=\overline{\mathrm{span}}\{e_{r+mN}:m\in\mathbb Z\}\subset W^{-\frac12,2}(\mathbb S^1),
\]
where the closure is taken in $W^{-\frac12,2}(\mathbb S^1)$.
\end{definition}
\begin{lemma}[Block invariance]\label{lem:block_invariance}
Each $\mathcal H_r$ is invariant under $\Lambda$ and $T_{w_N}$ in the natural sense:
if $\varphi\in\mathcal H_r$ then $\Lambda\varphi\in\mathcal H_r^{-1/2}$ and $T_{w_N}\varphi\in \mathcal H_r^{-1/2}$.
In particular, the generalized eigenproblem \eqref{eq:weighted_evp_recall} decouples across residues.
\end{lemma}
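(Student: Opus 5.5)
The invariance of $\mathcal H_r$ under $\Lambda$ and $T_{w_N}$ is checked on Fourier modes and then extended by density and continuity.

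\emph{The operator $\Lambda$.} By \eqref{eq:Lambda_eigenvalues}, $\Lambda e_{r+mN}=|r+mN|\,e_{r+mN}$, which lies in $\mathcal H_r^{-1/2}$. For a general $\varphi\in\mathcal H_r$, take the truncations $\varphi^{(M)}:=\sum_{|m|\le M}\varphi_{r+mN}e_{r+mN}$. These converge to $\varphi$ in $W^{\frac12,2}(\mathbb S^1)$, and $\Lambda:W^{\frac12,2}\to W^{-\frac12,2}$ is bounded (it is diagonal with symbol $|k|$). Hence $\Lambda\varphi^{(M)}\to\Lambda\varphi$ in $W^{-\frac12,2}$. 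Each $\Lambda\varphi^{(M)}$ lies in $\mathcal H_r^{-1/2}$, which is closed by definition, so $\Lambda\varphi\in\mathcal H_r^{-1/2}$.

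\emph{The operator $T_{w_N}$.} The argument is identical, using that $T_{w_N}:W^{\frac12,2}\to W^{-\frac12,2}$ is continuous by Lemma~\ref{lem:Toeplitz_weight_matrix}. The only extra point is that $T_{w_N}e_\ell\in\mathcal H_r^{-1/2}$ when $\ell\in\mathcal I_r$.

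To see this, identify $W^{-\frac12,2}(\mathbb S^1)$ with sequences $(g_k)$ satisfying $\sum(1+|k|)^{-1}|g_k|^2<\infty$, where $g_k$ is obtained by pairing with $e_k$. In this picture, $\mathcal H_r^{-1/2}$ is exactly the set of elements with $g_k=0$ for $k\notin\mathcal I_r$. By Lemma~\ref{lem:Toeplitz_polygon_sparse}, $(T_{w_N}e_\ell,e_k)=2\pi\widehat w_N(k-\ell)$, and this vanishes unless $k\equiv\ell\pmod N$. So all coefficients of $T_{w_N}e_\ell$ outside $\mathcal I_r$ vanish, and it remains to check that $T_{w_N}e_\ell$ actually lies in $W^{-\frac12,2}$. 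This follows from the continuity of $T_{w_N}$ on $W^{\frac12,2}$. This coefficient characterization of the closed span is the only point needing care, and it is also the main obstacle. It holds because finite sums are dense in the Fourier-weighted space.

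\emph{Decoupling.} Write $\varphi=\sum_r\varphi_r$ with $\varphi_r\in\mathcal H_r$; this is an orthogonal decomposition in both $W^{\pm\frac12,2}$ norms. Then $\Lambda\varphi-\sigma T_{w_N}\varphi=\sum_r(\Lambda\varphi_r-\sigma T_{w_N}\varphi_r)$, and the $r$-th summand lies in $\mathcal H_r^{-1/2}$. These subspaces have disjoint Fourier supports, so the sum vanishes if and only if each summand vanishes.

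\emph{The constraint.} By \eqref{eq:constraint_fourier} and sparsity, $\langle\varphi,1\rangle_{w_N}=2\pi\sum_m v_{-m}\varphi_{mN}$. This involves only the $r=0$ block. Hence the constraint restricts only $\varphi_0$, and the constrained problem decouples as well, with $\mathcal H_r^\circ=\mathcal H_r$ for $r\neq0$.
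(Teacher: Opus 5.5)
Your proof is correct and follows the paper's own route. In both, the sparsity of $\widehat w_N$ (Lemma~\ref{lem:Toeplitz_polygon_sparse}) kills all pairings between Fourier modes of different residue classes. This is then extended to all of $\mathcal H_r$ by density and by continuity of $\Lambda,T_{w_N}:W^{\frac12,2}\to W^{-\frac12,2}$. Finally, the Fourier-coefficient description of $W^{-\frac12,2}$ places the image in $\mathcal H_r^{-1/2}$. The only differences are that you apply the coefficient characterization before the density step rather than after, and that you spell out the decoupling and the fact that the mean-zero constraint lives only in the $r=0$ block. The paper merely asserts the decoupling here and treats the constraint separately in Lemmas~\ref{lem:block_orthogonality_forms} and~\ref{lem:mean_zero_nonzero_blocks}.
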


\begin{proof}
Since $\Lambda e_k=|k|e_k$ in $W^{-\frac12,2}$ and $\mathcal H_r$ is generated by $\{e_{r+mN}\}$, we have
$\Lambda(\mathrm{span}\{e_{r+mN}\})\subset \mathcal H_r^{-1/2}$.

For $T_{w_N}$, Lemma~\ref{lem:Toeplitz_weight_matrix} gives for $m,m'\in\mathbb Z$ and residues $r,s$,
\[
\left( T_{w_N}e_{r+mN},e_{s+m'N}\right)
=2\pi\,\widehat w_N\bigl((s+m'N)-(r+mN)\bigr).
\]
By sparsity, the right-hand side vanishes unless $(s-r)+(m'-m)N$ is a multiple of $N$, i.e. unless $s=r$.
Thus all mixed pairings between $T_{w_N}e_{r+mN}$ and $e_{s+m'N}$ vanish for $s\neq r$.

Since $\mathrm{span}\{e_{r+mN}\}$ is dense in $\mathcal H_r$ and $T_{w_N}:W^{\frac12,2}\to W^{-\frac12,2}$ is continuous,
for every $\varphi\in\mathcal H_r$ and every $k\not\equiv r\pmod N$ we have
$\left( T_{w_N}\varphi,e_k\right)=0$.
Equivalently, the Fourier coefficients of $T_{w_N}\varphi$ vanish outside the residue class $r$, and hence
$T_{w_N}\varphi\in \mathcal H_r^{-1/2}$ by the Fourier characterization of $W^{-\frac12,2}(\mathbb S^1)$.

The same density argument applies to $\Lambda$ since $\Lambda:W^{\frac12,2}\to W^{-\frac12,2}$ is continuous and diagonal on
$\{e_k\}$.
\end{proof}
\begin{proposition}[Block Rayleigh quotients]\label{prop:block_Rayleigh}
For each $r$, define the block Rayleigh quotient on $\mathcal H_r^\circ\setminus\{0\}$ by
\[
\mathcal R_r(\varphi):=\frac{\left( \Lambda\varphi,\varphi\right)}{B_{w_N}(\varphi,\varphi)}.
\]
Then the generalized eigenvalues of \eqref{eq:weighted_evp_recall} restricted to $\mathcal H_r^\circ$
are precisely the reciprocals of the spectrum of the normalized operator $A_r$ constructed in
Section~\ref{subsec:normalized_blocks} when $r\neq 0$. In particular,
\[
\sigma_{1,r}:=\inf_{\varphi\in\mathcal H_r^\circ\setminus\{0\}}\mathcal R_r(\varphi)
\]
is attained and equals the smallest positive Steklov eigenvalue arising from the $r$-block.
\end{proposition}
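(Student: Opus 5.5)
The plan is to treat $r\neq 0$ and $r=0$ separately, and for $r\neq 0$ to move everything into the $y$-variables of Section~\ref{subsec:normalized_blocks}. There the quotient $\mathcal R_r$ becomes the reciprocal of the Rayleigh quotient of $A_r$, and the claims reduce to spectral theory of a compact, positive, injective self-adjoint operator.

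\emph{The constraint is void for $r\neq0$.} For $\varphi=\sum_m x_m e_{r+mN}\in\mathcal H_r$, formula \eqref{eq:constraint_fourier} gives
\[
\langle\varphi,1\rangle_{w_N}=2\pi\sum_m \widehat w_N\bigl(-(r+mN)\bigr)\,x_m .
\]
Since $r+mN\not\equiv 0\pmod N$, Lemma~\ref{lem:wn_periodic_sparse} makes every term vanish. Hence $\mathcal H_r^\circ=\mathcal H_r$.

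\emph{The quotient in the $y$-variables.} With $y=D_r^{1/2}x$, the map $y\mapsto \Phi_y=\sum_m |r+mN|^{-1/2}y_m e_{r+mN}$ is a bounded bijection $\ell^2(\mathbb Z)\to\mathcal H_r$. This follows from the Fourier characterization of $W^{\frac12,2}$ together with $|r+mN|\ge1$, exactly as in the proof of Proposition~\ref{prop:block_standard_evp}. By \eqref{eq:Lambda_duality_entries} and Lemma~\ref{lem:Ar_selfadjoint_positive},
\[
(\Lambda\Phi_y,\Phi_y)=2\pi\|y\|_{\ell^2}^2,\qquad B_{w_N}(\Phi_y,\Phi_y)=2\pi\langle A_ry,y\rangle_{\ell^2}.
\]
Therefore
\[
\mathcal R_r(\Phi_y)=\frac{\|y\|^2}{\langle A_r y,y\rangle},\qquad
\sigma_{1,r}=\Bigl(\sup_{y\ne0}\frac{\langle A_ry,y\rangle}{\|y\|^2}\Bigr)^{-1}=\|A_r\|^{-1},
\]
using the positivity of $A_r$ from Lemma~\ref{lem:Ar_selfadjoint_positive}.

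\emph{Compactness of $A_r$ (the main step).} Multiplication by $w_N$ is not bounded on $L^2$, so $A_r$ cannot be handled as a bounded Toeplitz matrix conjugated by $D_r^{-1/2}$. Instead I would factor $A_r$ through the form. Fix $p\in(1,N/2)$, which is allowed by \eqref{eq:wn_Lp_range}, and set $q=2p/(p-1)$. Then $a_r(y,z)$ is the composition of three maps:
\begin{itemize}
\item the bounded map $y\mapsto\Phi_y$ from $\ell^2$ into $W^{\frac12,2}$;
\item the compact embedding $W^{\frac12,2}(\mathbb S^1)\hookrightarrow L^q(\mathbb S^1)$;
\item the continuous form $(\phi,\psi)\mapsto\frac1{2\pi}\int w_N\phi\bar\psi$ on $L^q\times L^q$, by H\"older.
\end{itemize}
Take $y_j\rightharpoonup 0$ in $\ell^2$. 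Then $\Phi_{y_j}\to0$ strongly in $L^q$, so
\[
\|A_ry_j\|=\sup_{\|z\|\le1}|a_r(y_j,z)|\lesssim \|w_N\|_{L^p}\,\|\Phi_{y_j}\|_{L^q}\,\sup_{\|z\|\le 1}\|\Phi_z\|_{L^q}\to0.
\]
Hence $A_r$ maps weakly null sequences to norm-null sequences, so it is compact. It is also injective, since $a_r(y,y)>0$ for $y\neq0$.

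\emph{Conclusion for $r\neq0$.} By the spectral theorem for compact self-adjoint operators, $\mathrm{spec}(A_r)\setminus\{0\}$ consists of positive eigenvalues of finite multiplicity accumulating only at $0$, and $0$ itself is not an eigenvalue. Proposition~\ref{prop:block_standard_evp} identifies these eigenvalues $\lambda$ exactly with the generalized eigenvalues $\sigma=1/\lambda$ on $\mathcal H_r$, with multiplicities. The top eigenvalue is $\|A_r\|$, attained by an eigenvector $y_1$. Then $\Phi_{y_1}$ attains $\sigma_{1,r}=1/\|A_r\|$, which is the smallest positive Steklov eigenvalue arising from the block.

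\emph{The case $r=0$.} Attainment and the Euler--Lagrange equation follow by repeating the direct method of Proposition~\ref{prop:weighted_RR}, Steps 2--3, inside the closed subspace $\mathcal H_0^\circ$. The only extra point is that admissible test functions should stay in $\mathcal H_0$. Block invariance, Lemma~\ref{lem:block_invariance}, handles this: testing against $e_k$ with $k\notin\mathcal I_0$ gives zero on both sides, so the Euler--Lagrange equation holds on all of $W^{\frac12,2}$.
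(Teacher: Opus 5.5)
Your proposal is correct. For $r=0$ it is the paper's argument: the direct method of Proposition~\ref{prop:weighted_RR} run in the closed subspace, together with block invariance. For $r\neq0$, however, you take a genuinely different route.

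The paper obtains attainment of $\sigma_{1,r}$ for every $r$ by that same direct method. It then identifies the block spectrum with $\{\lambda^{-1}:\lambda\in\sigma(A_r)\}$ in one line by citing Proposition~\ref{prop:block_standard_evp}.

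You instead proceed in three steps:
\begin{enumerate}
\item you rewrite the quotient as $\mathcal R_r(\Phi_y)=\|y\|^2/\langle A_ry,y\rangle$;
\item you prove that $A_r$ is compact by factoring $a_r$ through the compact embedding $W^{\frac12,2}(\mathbb S^1)\hookrightarrow L^q(\mathbb S^1)$ and H\"older's inequality with $w_N\in L^p$;
\item you read off attainment and $\sigma_{1,r}=1/\|A_r\|$ from the spectral theorem for compact positive operators.
\end{enumerate}

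Both arguments rest on the same analytic input, namely the compact embedding plus $w_N\in L^p$ for some $p>1$. The paper's version is shorter and treats all residues uniformly.

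Yours yields more. First, it proves compactness of $A_r$ for every $N\ge3$ without any kernel summation. The paper later takes this property for granted, e.g.\ in Appendix~\ref{app:finite_check_smallN}, where it is attributed to Lemma~\ref{lem:Ar_selfadjoint_positive}; that lemma only establishes boundedness. Second, it makes the phrase ``reciprocals of the spectrum'' precise. You show that the nonzero spectrum of $A_r$ consists only of eigenvalues and that $0\in\sigma(A_r)$ is not an eigenvalue, points the paper's one-line identification leaves implicit. Third, it gives directly the characterization $\sigma_{1,r}=1/\lambda_{\max}(A_r)$, which is exactly what the later Schur analysis and the finite verification use.
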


\begin{proof}
Since $w_N(\theta)>0$ for a.e.\ $\theta$ (see Lemma~\ref{lem:ds_pullback_weight}), $B_{w_N}(\varphi,\varphi)>0$ for all $\varphi\neq 0$ and $\mathcal R_r$ is well-defined.

By Lemma~\ref{lem:block_invariance}, the minimization over $\mathcal H_r^\circ$ decouples from other residues.
Existence of a minimizer follows from the direct method as in Proposition~\ref{prop:weighted_RR}, restricted to the
closed subspace $\mathcal H_r$, using continuity and positivity of $B_{w_N}$ on $W^{\frac12,2}$ and closedness of the
constraint $\langle\cdot,1\rangle_{w_N}=0$.

For $r\neq 0$, Section~\ref{subsec:normalized_blocks} shows that the coefficient system in $\mathcal H_r$ is
equivalent to the standard eigenvalue problem $A_r y=\lambda y$ with $\lambda=1/\sigma$.
Hence the block spectrum is $\{\sigma=\lambda^{-1}:\lambda\in\sigma(A_r)\}$.
\end{proof}
In the next two lemmas, we give explicit lower bounds for the smallest eigenvalues in non-critical blocks for large $N$.
\begin{lemma}[Quantitative Bounds for the Eigenvalue in Block Zero]\label{lem:B0}
Assume that $N\geq 20$. Let $\sigma_{1,0}$ be as defined in Propositon~\ref{prop:block_Rayleigh}. We have the following quantitative bound for smallest eigenvalue in the block $r=0$:
\begin{equation}\label{eq:Block0bound}
\sigma_{1,0}\,\geq\,N-3.
\end{equation}
\end{lemma}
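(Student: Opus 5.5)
The plan is to exploit that on the block $r=0$ everything is $2\pi/N$-periodic, rescale to a single period, and then control the weighted mass by a Schur-test bound on the Toeplitz coefficients $v_m$.

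\textbf{Rescaling.} Every $\varphi\in\mathcal H_0$ has the form $\varphi(\theta)=g(N\theta)$ with $g=\sum_{m}x_m e_m$. The Schwarz--Christoffel product collapses because $\prod_{j}(e^{i\theta}-\zeta_j)=e^{iN\theta}-1$, so $w_N(\theta)=C_N|2\sin(N\theta/2)|^{-2/N}=:W(N\theta)$. This $W$ is $2\pi$-periodic, has Fourier coefficients $\widehat W(m)=v_m$, and satisfies $v_0=1$. Changing variables $t=N\theta$ gives
\[
(\Lambda\varphi,\varphi)=2\pi N\sum_{m\neq0}|m|\,|x_m|^2 ,\qquad
B_{w_N}(\varphi,\varphi)=\int_0^{2\pi}|g(t)|^2W(t)\,dt=2\pi\sum_{m,m'}v_{m-m'}x_m\overline{x_{m'}} .
\]
The constraint becomes $\int gW=0$. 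Hence $\sigma_{1,0}=N\mu$, where $\mu$ is the first nonzero weighted Steklov eigenvalue of the weight $W$ (Proposition~\ref{prop:weighted_RR}). It therefore suffices to show $\mu\ge 1-3/N$, i.e.
\[
\int|g|^2W\,dt\le \frac{2\pi}{1-3/N}\sum_{m\neq0}|m||x_m|^2 \qquad\text{whenever } \int gW=0 .
\]

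\textbf{Removing the constant mode.} Since the $W$-weighted mean minimizes $c\mapsto\int|g-c|^2W$, the constraint gives $\int|g|^2W\le\int|h|^2W$ with $h:=g-x_0$, which has no zero mode. So we need to bound the quadratic form $\sum_{m,m'\ne0}v_{m-m'}x_m\overline{x_{m'}}$. The diagonal part equals $\sum_{m\ne0}|x_m|^2\le\sum|m||x_m|^2$. For the off-diagonal part we apply the Schur test with weights $\sqrt{|m|}$:
\[
\Bigl|\sum_{m\neq m'}v_{m-m'}x_m\overline{x_{m'}}\Bigr|\le K_N\sum_{m\ne0}|m||x_m|^2,\qquad
K_N:=\sup_{m\neq0}\sum_{m'\neq0,m}\frac{|v_{m-m'}|}{\sqrt{|m||m'|}} .
\]
The target then reduces to $1+K_N\le(1-3/N)^{-1}$. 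It suffices to prove $K_N\le 3/N$ for $N\ge20$, since $(1-3/N)^{-1}\ge1+3/N$.

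\textbf{Coefficient estimates (main obstacle).} The explicit Fourier expansion of $|1-e^{it}|^{-2\alpha}$ with $\alpha=1/N$ gives
\[
v_j=C_N\frac{\Gamma(1-2\alpha)}{\Gamma(1-\alpha)\Gamma(\alpha)}\cdot\frac{\Gamma(|j|+\alpha)}{\Gamma(|j|+1-\alpha)} .
\]
Here $C_N$ is fixed by the normalization $v_0=1$. Thus $v_j>0$, and $v_j\le c\,\alpha\,|j|^{2\alpha-1}$ with an explicit $c$ close to $1$, using $1/\Gamma(\alpha)\sim\alpha$ and Gautschi/Wendel inequalities for the Gamma ratio. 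The Schur sum then becomes
\[
\alpha\, c\,\sup_{m\ne0}\sum_{m'\ne0,m}\frac{|m-m'|^{2\alpha-1}}{\sqrt{|m||m'|}} .
\]
This sum converges because the terms decay like $|m'|^{2\alpha-3/2}$. The hard part is getting a numerical bound of at most about $3$, uniformly in $m$ and for all $N\ge20$.

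I would attack this sum in the following order. First, treat $m'$ of the same sign as $m$ and of opposite sign separately. Second, compare each piece with an integral (a Beta-type integral $\int_0^\infty s^{-1/2}|1-s|^{2\alpha-1}ds$ after scaling by $|m|$), together with an exact evaluation at $m=\pm1$. Third, use $2\alpha\le 1/10$ so that $|j|^{2\alpha-1}\le|j|^{-9/10}$ and the constants become explicit. If the crude Schur test loses too much, the fallback is to weight it with $|m|^{1/2-\epsilon}$ instead of $|m|^{1/2}$, or to bound the near-diagonal terms $|m-m'|=1$ exactly and only the tail by integrals. Any slack in $3/N$ can absorb the error, and Appendix-style interval checks can handle the smallest values of $N$ if needed.
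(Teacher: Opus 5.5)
Your reduction is sound and tracks the paper's proof closely. The rescaling $\sigma_{1,0}=N\mu(W)$ is the paper's passage to $\tilde y_m=\sqrt{|mN|}\,\tilde\varphi_m$. Discarding the constant mode via the weighted-mean minimisation is exactly the paper's dropping of the $-|\varphi_0|^2$ term, and the diagonal part is handled identically.

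The gap is the off-diagonal step. The inequality $K_N\le 3/N$ to which you reduce is false for every $N$, so the unweighted Schur (maximal row sum) bound cannot produce the constant $3$. Indeed $v_j\ge\alpha/|j|$ for all $j\neq0$: one has $v_1=\alpha/(1-\alpha)$, and for $j\ge2$ every factor in $u_j=\frac{j}{j-\alpha}\,\frac{\Gamma(1-\alpha)}{\Gamma(1+\alpha)}\,\frac{\Gamma(j+\alpha)}{\Gamma(j-\alpha)}$ is $\ge1$. So the row $m=1$ alone gives
\[
K_N\;\ge\;\alpha\Bigl(\sum_{j\ge1}\frac{1}{j\sqrt{j+1}}+\sum_{k\ge1}\frac{1}{(k+1)\sqrt{k}}\Bigr)\;\approx\;(2.18+1.86)\,\alpha\;\approx\;4.04\,\alpha .
\]
Even the exact requirement $K_N\le 3/(N-3)$, which is equivalent to $1/(1+K_N)\ge 1-3/N$, then fails for all $N\ge 12$. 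This is a $35\%$ overshoot already at leading order in $\alpha$. It cannot be absorbed by evaluating near-diagonal terms exactly, by interval checks at small $N$, or by moving the exponent $1/2$ by a small $\epsilon$. As it stands, your argument only gives about $\sigma_{1,0}\ge N^2/(N+4.04)$, i.e.\ roughly $N-4$.

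What is needed is a bound on the off-diagonal operator $T$ (entries $v_{m-m'}/\sqrt{|mm'|}$, $m\neq m'$) that does not charge every row the worst row's mass. The paper uses the Hilbert--Schmidt norm. With $|v_j|\le V_\infty\alpha|j|^{-9/10}$,
\[
\|T\|\le\|T\|_{\mathrm{HS}}\le V_\infty\,\alpha\Bigl(\sum_{\substack{m,m'\in\mathbb Z_*\\ m\neq m'}}\frac{1}{|m-m'|^{1.8}\,|mm'|}\Bigr)^{1/2}=V_\infty\sqrt{4\zeta(3.8)+8\zeta(2.8,1)}\;\alpha\approx 2.98\,\alpha .
\]
The leading-order value is $\sqrt{6\zeta(4)}\,\alpha\approx2.55\,\alpha$. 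This gives $\mu\ge N/(N+3)$ and hence $\sigma_{1,0}\ge N^2/(N+3)\ge N-3$.

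A Schur test with a substantially different weight is another viable replacement, e.g.\ the bound $\sup_{m}\sum_{m'\neq 0,m}|v_{m-m'}|/|m'|$. Its leading-order value is $\max_{m\ge1}2(H_m+H_{m-1})/m=5/2$, attained at $m=2$. However, the margin below $3\alpha$ at $N=20$ is thin, so this route requires sharp coefficient bounds.
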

\begin{proof}
By the weighted Rayleigh-Ritz Quotient formulation of the smallest eigenvalue, we have
\begin{equation}\label{eq:RRQ0}
\sigma_{1,0}:=\inf_{\varphi\in\mathcal H_0^\circ\setminus\{0\}} \frac{\left(\Lambda\varphi,\varphi\right)}{B_{w_N}(\varphi,\varphi)}.
\end{equation}
For any $\varphi\in\mathcal H_0^\circ\setminus\{0\}$, we write $\varphi=\varphi_0 + \sum_{m\in \mathbb Z_*} \varphi_m e^{imN\theta}$. We abuse the notation a bit here to denote the correspoding element $\varphi\in\ell^2(\mathbb Z)$:
\begin{equation*}
\varphi=(\varphi_0,\tilde{\varphi}),    
\end{equation*}
where $\tilde{\varphi}:=\varphi-\varphi_0$. It is also known that $\varphi\in\mathrm{Dom}(D_r^{1/2})$. However, the orthogonality constraint implies that
\begin{equation}\label{eq:orthogonality}
\varphi_0 + \sum_{m\in \mathbb Z_*} \varphi_m v_m = 0,   
\end{equation}
from which we can conclude that $\tilde{\varphi}\neq 0$. Actually, using \eqref{eq:Lambda_coeff_action}, we can derive the following lower bound estimate on the numerator of the quotient:
\begin{equation}\label{eq:Numerator}
\left(\Lambda\varphi,\varphi\right)\geq 2\pi N \|\tilde{\varphi}\|_{\ell^2}^2.
\end{equation}
While the denominator can be calculated as follows:
\begin{eqnarray}\label{eq:denominatorExp}
B_{w_N}(\varphi,\varphi)&\,=\,& \langle \varphi,\varphi\rangle_{w_N}\nonumber\\
&\,=\,& \langle \varphi_0+\tilde{\varphi},\varphi_0+\tilde{\varphi}\rangle_{w_N}\nonumber\\
&\,=\,& 2\pi\,\varphi^2_0+ 2\langle\varphi_0,\tilde{\varphi}\rangle_{w_N}+\langle\tilde{\varphi},\tilde{\varphi}\rangle_{w_N}.
\end{eqnarray}
For the second term in \eqref{eq:denominatorExp}, by \eqref{eq:orthogonality}, we have
\begin{equation}\label{eq:2ndterm}
\langle\varphi_0,\tilde{\varphi}\rangle_{w_N}\,=\,\varphi_0\, \langle\tilde{\varphi},w_N\rangle_{L^2}\, =-2\pi\,\varphi^2_0.
\end{equation}
For the third term in \eqref{eq:denominatorExp}, recalling that $w_N(\theta)=1+\sum_{m\in Z_*}v_m e^{imN\theta}$, we have the following expansion:
\begin{eqnarray}\label{eq:3rdterm}
\langle\tilde{\varphi},\tilde{\varphi}\rangle_{w_N}&\,=\,&\int^{2\pi}_0 \left(1+\sum_{m\in \mathbb Z_*}v_m\,e^{imN\theta}\right) \,|\tilde{\varphi}|^2 d\theta\nonumber\\
&\,=\,& 2\pi\left( \|\tilde{\varphi}\|_{\ell^2}^2 + \sum_{\substack{m, m'\in \mathbb Z_*\\ m\neq m'}}\varphi_m v_{m-m'}\overline{\varphi_{m'}}\right)\nonumber\\
&\,=\,& 2\pi\left( \|\tilde{\varphi}\|_{\ell^2}^2 + \langle K^{\mathrm{off}}\,\tilde{y},\,\tilde{y}\rangle_{\ell^2}\right),
\end{eqnarray}
where $K^{\mathrm{off}}$ denotes the off-diagonal part of the matrix $A_0$ by zeroing the diagonal entries and $\tilde{y}_m=\sqrt{|mN|}\tilde{\varphi}_m$ for all $m\in \mathbb Z_*$. Recall that in \eqref{eq:def_Ar_entries}, the matrix reprentation also works for the block $r=0$ for $m,\, m'\in \mathbb Z_*$. Thus, we have
\begin{equation}\label{eq:offdiag_entries_0}
(K^{\mathrm{off}})_{m,m'}
=
\begin{cases}
\frac{v_{m-m'}}{\sqrt{|m\,m'|}\,N}, & m\neq m',\\
0, & m=m'.
\end{cases}
\end{equation}
Actually, by \eqref{eq:vm_uniform_bound} we can show that the off-diagonal part is Hilbert--Schmidt:
\begin{equation}\label{eq:offdiagHSEst}
\|K^{\mathrm{off}}\|_{\mathrm{HS}}^2
=\sum_{\substack{m, m'\in \mathbb Z_*\\ m\neq m'}}\frac{|v_{m-m'}|^2}{|m\,m'|\, N^2} \leq \frac{V_\infty^2}{N^4}   \sum_{\substack{m, m'\in \mathbb Z_*\\ m\neq m'}} \frac{1}{|m-m'|^{1.8}|m\,m'|}.
\end{equation}
While the multi-sum in \eqref{eq:offdiagHSEst} can be evaluated by zeta value and multiple-zeta value:
\begin{equation*}\label{}
\sum_{\substack{m, m'\in \mathbb Z_*\\ m\neq m'}} \frac{1}{|m-m'|^{1.8}|m\,m'|} = 4\,\zeta(3.8) + 8\, \zeta(2.8,1).
\end{equation*}
Recall that in \eqref{eq:Numerator}, we can bound $\|\tilde{\varphi}\|_{\ell^2}^2$ by $\|\tilde{y}\|_{\ell^2}^2$:
\begin{equation*}
  2\pi N \|\tilde{\varphi}\|_{\ell^2}^2 \leq  \left(\Lambda\varphi,\varphi\right)=2\pi \|\tilde{y}\|_{\ell^2}^2.
\end{equation*}
Therefore, we can get a lower bound for the Rayleigh-Ritz quotient of any $\varphi$ in the admissible set:
\begin{eqnarray}\label{eq:lb}
\frac{\left(\Lambda\varphi,\varphi\right)}{B_{w_N}(\varphi,\varphi)}&\,=\,& \frac{\|\tilde{y}\|_{\ell^2}^2}{\|\tilde{\varphi}\|_{\ell^2}^2 - \varphi^2_0 + \langle  K^{\mathrm{off}} \tilde{y},\tilde{y}\rangle_{\ell^2}}\nonumber\\
&\,\geq\,& \frac{\|\tilde{y}\|_{\ell^2}^2}{\frac1N \|\tilde{y}\|_{\ell^2}^2 + \frac{C}{N^2}\|\tilde{y}\|_{\ell^2}^2}\\
&\,\geq\,& N-3,\nonumber
\end{eqnarray}
where $C$ in \eqref{eq:lb} can be chosen as $3$ by Proposition~\ref{prop:vm_constants_app_new}. This finishes the proof of the lemma.
\end{proof}
\begin{lemma}[Quantitative Bounds for the Eigenvalue in Other Non-critical Blocks]\label{lem:BOther}
Assume that $N\geq 20$. Let $\sigma_{1,r}$ be as defined in Propositon~\ref{prop:block_Rayleigh} with $r=2,\cdots,N-2$. We have the following uniform bounds for smallest eigenvalues:
\begin{equation*}\label{eq:BlockOther}
\sigma_{1,r}\geq \frac{9}{5}.
\end{equation*}
\end{lemma}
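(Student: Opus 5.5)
Fix $r\in\{2,\dots,N-2\}$. By Proposition~\ref{prop:block_Rayleigh}, $\sigma_{1,r}=1/\lambda_{\max}(A_r)$, so it suffices to show $\|A_r\|\le 5/9$. Since $r\neq0$, the constraint $\langle\varphi,1\rangle_{w_N}=0$ holds automatically on $\mathcal H_r$: the weight only couples indices congruent modulo $N$, so $B_{w_N}(e_{r+mN},1)=0$. We may therefore work with the full operator $A_r$ of Lemma~\ref{lem:Ar_selfadjoint_positive}.

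We split $A_r=D+K$, where $D$ is diagonal with entries $v_0/|r+mN|=1/|r+mN|$ and $K$ is the off-diagonal part with entries $v_{m-m'}/\sqrt{|r+mN||r+m'N|}$ for $m\neq m'$. By the residue-class symmetry $r\leftrightarrow N-r$ (the map $m\mapsto -m$ together with $v_{-m}=v_m$ gives a unitary equivalence $A_r\cong A_{N-r}$), we may assume $2\le r\le N/2$. Then
\[
\min_m|r+mN|=\min(r,N-r)=r\ge 2,
\]
so $\|D\|\le 1/2$. It then remains to show $\|K\|\le 5/9-1/2=1/18$.

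For $K$ we use the Hilbert--Schmidt bound, exactly as in Lemma~\ref{lem:B0}, with the uniform decay bound \eqref{eq:vm_uniform_bound}, $|v_m|\le V_\infty N^{-1}|m|^{-0.9}$ for $m\neq0$ (this is the form used in \eqref{eq:offdiagHSEst}). This gives
\[
\|K\|_{\mathrm{HS}}^2\le\frac{V_\infty^2}{N^2}\sum_{m\neq m'}\frac{1}{|m-m'|^{1.8}\,|r+mN|\,|r+m'N|}.
\]
We split the sum according to whether one of the indices is $m=0$ or $m'=0$ (denominator at least $r\ge2$), or both are nonzero (then $|r+mN|\ge N/2\cdot|m|$ roughly, more precisely $|r+mN|\ge (|m|-\tfrac12)N$). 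In the first case the other factor is $|r+m'N|\gtrsim N|m'|$, so the contribution is $O(N^{-1})\cdot 2\zeta(2.8)$-type, giving a total of $O(N^{-3})$. In the second case the contribution is $O(N^{-2})$ times the double zeta sum $4\zeta(3.8)+8\zeta(2.8,1)$ appearing in Lemma~\ref{lem:B0}, giving $O(N^{-4})$. Hence $\|K\|\le C V_\infty N^{-3/2}$, and inserting the explicit constant from Proposition~\ref{prop:vm_constants_app_new} with $N\ge20$ yields $\|K\|\le 1/18$.

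The main obstacle is the $m=0$ row and column of $K$, whose entries carry only the factor $1/\sqrt r$ rather than $1/\sqrt N$. The naive estimate must therefore be carried out with explicit constants, so that the total stays below $1/18$ already at $N=20$. If the HS estimate is too crude, we would instead bound that row and column separately by Schur's test and treat the remaining $\mathbb Z_*\times\mathbb Z_*$ block, which is $O(N^{-2})$, by the HS bound. If needed, we would also use $\|D\|\le 1/r$ to gain slack for $r\ge3$, leaving only $r=2$ (and $r=N-2$) as the tight case.
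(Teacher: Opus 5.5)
Your argument is correct and is essentially the paper's: split $A_r$ into its diagonal part (norm $1/\min(r,N-r)\le\frac12$) and its off-diagonal part, bound the latter by its Hilbert--Schmidt norm $O(N^{-3/2})$ using $|v_m|\le V_\infty N^{-1}|m|^{-0.9}$, and check $\frac12+\|A_r^{\mathrm{off}}\|_{\mathrm{HS}}\le\frac59$ at $N=20$. The difference is only bookkeeping in the Hilbert--Schmidt sum: the paper sums along the diagonals $m-m'=j$ using an exact telescoping identity for $\sum_{\ell}|r+(\ell+j)N|^{-1}|r+\ell N|^{-1}$, whereas your split into the row/column with denominator $r$ plus the $\mathbb Z_*\times\mathbb Z_*$ block already gives $\|K\|_{\mathrm{HS}}\approx 0.032<\frac1{18}$ at $N=20$, so the Schur-test fallback is not needed. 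One small slip: the unitary equivalence $A_r\cong A_{N-r}$ is implemented by $m\mapsto -m-1$, not $m\mapsto -m$, and the reduction to $r\le N/2$ is unnecessary anyway, since $\min_m|r+mN|=\min(r,N-r)$ for every $r$.
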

\begin{proof}
By the matrix representation of $A_r$ in \eqref{eq:def_Ar_entries}, we can rewrite the Rayleigh-Ritz quotient using the induced quadratic form:
\begin{equation}\label{eq:RRQMatrix}
\frac{\left(\Lambda\varphi,\varphi\right)}{B_{w_N}(\varphi,\varphi)} \,=\, \frac{\|y\|_{\ell^2}^2}{\langle A_r y,\,y\rangle_{\ell^2}},    
\end{equation}
where $y_m=\sqrt{|r+mN|}\, \varphi_m$. We can estimate the denominator in a similar way as in Lemma~\ref{lem:B0} by decomposing it into the diagonal part $D_r^{-1}$ and the off-diagonal part $A^{\mathrm{off}}_r$:
\begin{eqnarray}\label{eq:quadraticform}
\langle A_r y,\,y\rangle_{\ell^2}&\,=\,&\langle D_r^{-1} y,\,y\rangle_{\ell^2} + \langle A^{\mathrm{off}}_r y\, ,y\rangle_{\ell^2}\nonumber\\
&\,\leq\,& \left(\frac{1}{\min{\{r, N-r\}}} + \|A^{\mathrm{off}}_r\|_{\mathrm{HS}}\right)\|y\|_{\ell^2}^2.
\end{eqnarray}
Similarly to \eqref{eq:offdiagHSEst}, we try to estimate $\|A^{\mathrm{off}}_r\|_{\mathrm{HS}}$:
\begin{equation}\label{eq:AoffHS}
\|A^{\mathrm{off}}_r\|_{\mathrm{HS}}^2
=\sum_{m\neq \ell}\frac{|v_{m-\ell}|^2}{|r+mN|\,|r+\ell N|}
=\sum_{j\neq 0}|v_j|^2\,\chi_r(j),
\end{equation}
where
\[
\chi_r(j):=\sum_{\ell\in\Z}\frac{1}{|r+(\ell+j)N|\,|r+\ell N|}.
\]
Fix $2\le r\le N-1$ and set $\xi:=r/N\in(0,1)$. For $j\in \mathbb Z_*$ define
\[
\tau_\xi(j):=\sum_{\ell\in\Z}\frac{1}{|\ell+\xi|\,|\ell-j+\xi|}.
\]
Then $\chi_r(j)=\frac{1}{N^2}\,\tau_\xi(j)$.
For all $\xi\in(0,1)$ and $j\neq 0$, we have the following exact identity:
\begin{equation}\label{eq:tau}
\tau_\xi(j)=\frac{2}{|j|}
\left(\sum_{k=0}^{|j|-1}\frac{1}{k+\xi}+\sum_{k=0}^{|j|-1}\frac{1}{k+1-\xi}\right).
\end{equation}
By symmetry it suffices to prove the identity for $j\ge 1$; the case $j\le -1$ follows by replacing $j$ with $|j|$.
Split the sum over $\ell\in\Z$ into three parts: $\ell\ge j$, $\ell\le -1$, and $0\le \ell\le j-1$.

\emph{Case 1: $\ell\ge j$.} Then both factors are positive and
\[
\frac{1}{(\ell+\xi)(\ell-j+\xi)}
=\frac{1}{j}\left(\frac{1}{\ell-j+\xi}-\frac{1}{\ell+\xi}\right).
\]
Summing from $\ell=j$ to $\infty$ telescopes to
\[
\sum_{\ell=j}^{\infty}\frac{1}{(\ell+\xi)(\ell-j+\xi)}
=\frac{1}{j}\sum_{k=0}^{j-1}\frac{1}{k+\xi}.
\]

\emph{Case 2: $\ell\le -1$.} Write $\ell=-1-k$ with $k\ge 0$; then
$|\ell+\xi|=k+1-\xi$ and $|\ell-j+\xi|=k+j+1-\xi$, and similarly
\[
\sum_{\ell=-\infty}^{-1}\frac{1}{|\ell+\xi|\,|\ell-j+\xi|}
=\frac{1}{j}\sum_{k=0}^{j-1}\frac{1}{k+1-\xi}.
\]

\emph{Case 3: $0\le \ell\le j-1$.} Here $\ell+\xi>0$ and $\ell-j+\xi<0$, so
$|\ell-j+\xi|=j-\ell-\xi$ and
\[
\frac{1}{(\ell+\xi)(j-\ell-\xi)}
=\frac{1}{j}\left(\frac{1}{\ell+\xi}+\frac{1}{j-\ell-\xi}\right).
\]
Summing $\ell=0,\dots,j-1$ gives
\[
\sum_{\ell=0}^{j-1}\frac{1}{(\ell+\xi)(j-\ell-\xi)}
=\frac{1}{j}\sum_{k=0}^{j-1}\frac{1}{k+\xi}
+\frac{1}{j}\sum_{k=0}^{j-1}\frac{1}{k+1-\xi},
\]
where the second sum is obtained by the change of variables $k=j-1-\ell$.
Adding the three cases yields the identity \eqref{eq:tau}.
Using $\xi=r/N$,
\[
\sum_{k=0}^{|j|-1}\frac{1}{k+\xi}\le \frac{1}{\xi}+H_{|j|-1}=\frac{N}{r}+H_{|j|-1},
\quad
\sum_{k=0}^{|j|-1}\frac{1}{k+1-\xi}\le \frac{1}{1-\xi}+H_{|j|-1}=\frac{N}{N-r}+H_{|j|-1}.
\]
Thus, we can obtain that:
\begin{eqnarray}\label{eq:chi}
\chi_r(j) &\,\le\,& \frac{2}{N^2|j|}\left(\frac{N}{r}+\frac{N}{N-r}+2H_{|j|-1}\right)\nonumber\\
&\,\le\,& \frac{4}{N|j|} + \frac{4H_{|j|-1}}{N^2 |j|}.
\end{eqnarray}
By \eqref{eq:AoffHS} and \eqref{eq:chi},
\[
\|A^{\mathrm{off}}_r\|_{\mathrm{HS}}^2=\sum_{j\neq 0}|v_j|^2 S_r(j)
\le \sum_{j\neq 0}|v_j|^2\left(\frac{4}{N|j|}+\frac{4H_{|j|-1}}{N^2|j|}\right).
\]
Thus
\[
\|A^{\mathrm{off}}_r\|_{\mathrm{HS}}^2\le \frac{4}{N}\sum_{j\neq 0}\frac{|v_j|^2}{|j|}
+\frac{4}{N^2}\sum_{j\neq 0}|v_j|^2\frac{H_{|j|-1}}{|j|}.
\]
Again, similarly to the proof of Lemma~\ref{lem:B0}, by \eqref{eq:vm_uniform_bound} and the assumption that $N\geq 20$, we can bound the Hilbert--Schmidt norm of $A^{\mathrm{off}}$ using zeta values:
\begin{equation}\label{eq:AoffHSEst}
\|A^{\mathrm{off}}_r\|_{\mathrm{HS}}\leq \sqrt{\frac{8V^2_{\infty}}{N^3}\left(\zeta(2.8)+\frac{\zeta(2.8,1)}{N}\right)}\leq C N^{-\frac{3}{2}},
\end{equation}
where the constant $C$ can be chosen as $4$ by Proposition~\ref{prop:vm_constants_app_new}. Finally, combining \eqref{eq:RRQMatrix}, \eqref{eq:quadraticform} and \eqref{eq:AoffHSEst}, together with the fact that $\min{\{r, N-r\}}\geq 2$, we conclude
\[
\sigma_{1,r}\,\geq\,\frac{1}{\frac12 + \frac{4}{20^{3/2}}}\,\geq \, \frac{9}{5}. 
\]
\end{proof}
We have established two sepration lemmas if we assume that $N$ is sufficiently large. In particular, we can conclude that for $N\ge 20$:
\begin{equation*}
\min_{r\in\{0,2,\cdots,N-2\}}{\sigma_{1,r}}\,>\, 1,
\end{equation*}
which will be used later in the next subsection to identify the first non-zero Steklov eigenvalue.
% notation remark removed: we use $v_m$ globally and keep $\alpha=1/N$ only where expansions require it.
\subsection{The first cluster as a critical-block minimum}\label{subsec:first_cluster}

We record the orthogonal splitting of the Rayleigh quotient across residue blocks and reduce the global minimization
to a minimum of blockwise minima. This will identify the first cluster since the noncritical blocks are shown to be
separated from the critical ones in Lemma~\ref{lem:B0} and Lemma~\ref{lem:BOther}.

\begin{lemma}[Block orthogonality for $\Lambda$ and $B_{w_N}$]\label{lem:block_orthogonality_forms}
Let $r,s\in\{0,1,\dots,N-1\}$ with $r\neq s$, and let
$\varphi\in\mathcal H_r$, $\psi\in\mathcal H_s$. Then
\[
\left( \Lambda\varphi,\psi\right)=0,
\qquad
B_{w_N}(\varphi,\psi)=0.
\]
Consequently, if $\varphi=\sum_{r=0}^{N-1}\varphi^{(r)}$ with $\varphi^{(r)}\in\mathcal H_r$, then
\[
\left( \Lambda\varphi,\varphi\right)
=\sum_{r=0}^{N-1}\left( \Lambda\varphi^{(r)},\varphi^{(r)}\right),
\qquad
B_{w_N}(\varphi,\varphi)=\sum_{r=0}^{N-1}B_{w_N}(\varphi^{(r)},\varphi^{(r)}).
\]
Moreover,
\[
\langle \varphi,1\rangle_{w_N}=B_{w_N}(\varphi,1)=B_{w_N}(\varphi^{(0)},1),
\]
so the weighted mean constraint only involves the $r=0$ component.
\end{lemma}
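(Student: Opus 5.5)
The plan is to verify the two orthogonality identities on Fourier modes, extend them by density and continuity to $\mathcal H_r\times\mathcal H_s$, and then expand the forms bilinearly.

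\emph{Step 1 (modes).} For $\Lambda$, \eqref{eq:Lambda_duality_entries} gives $(\Lambda e_k,e_\ell)=2\pi|k|\delta_{k\ell}$. If $k=r+mN$ and $\ell=s+m'N$ with $r\neq s$ in $\{0,\dots,N-1\}$, then $k\neq\ell$, so the pairing vanishes. For $B_{w_N}$, Lemma~\ref{lem:Toeplitz_weight_matrix} gives $B_{w_N}(e_k,e_\ell)=2\pi\,\widehat w_N(\ell-k)$. Here $\ell-k=(s-r)+(m'-m)N$ is not divisible by $N$, so Lemma~\ref{lem:wn_periodic_sparse} makes this coefficient zero.

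\emph{Step 2 (density).} Both $(\varphi,\psi)\mapsto(\Lambda\varphi,\psi)$ and $B_{w_N}$ are continuous sesquilinear forms on $W^{\frac12,2}(\mathbb S^1)\times W^{\frac12,2}(\mathbb S^1)$. For $\Lambda$ this follows from $|(\Lambda\varphi,\psi)|\le\|\nabla P_{\mathbb D}\varphi\|_{L^2}\|\nabla P_{\mathbb D}\psi\|_{L^2}$; for $B_{w_N}$ it is Lemma~\ref{lem:Toeplitz_weight_matrix} together with \eqref{eq:wn_Lp_range}. By Definition~\ref{def:residue_subspaces}, finite linear combinations of $\{e_{r+mN}\}$ are dense in $\mathcal H_r$, and likewise for $\mathcal H_s$. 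Sesquilinearity and Step 1 make both forms vanish on these finite combinations, and passing to limits in each argument gives the vanishing on $\mathcal H_r\times\mathcal H_s$. This density passage is the only point needing care, and continuity of the two forms is exactly what controls it.

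\emph{Step 3 (decomposition).} Write $\varphi=\sum_r\varphi^{(r)}$ as a finite sum. Expanding $(\Lambda\varphi,\varphi)$ and $B_{w_N}(\varphi,\varphi)$ sesquilinearly yields a double sum over pairs $(r,s)$, and Step 2 kills every term with $r\neq s$, leaving the diagonal sums. Finally, the constant function $1=e_0$ lies in $\mathcal H_0$. Hence $B_{w_N}(\varphi,1)=\sum_r B_{w_N}(\varphi^{(r)},1)$, and Step 2 with $s=0$ removes all terms with $r\neq0$, leaving $B_{w_N}(\varphi^{(0)},1)$. The identity $\langle\varphi,1\rangle_{w_N}=B_{w_N}(\varphi,1)$ holds by definition. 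No step is a real obstacle.
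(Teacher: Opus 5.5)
Your proof is correct and follows essentially the same route as the paper: vanishing on Fourier modes (via $\Lambda e_k=|k|e_k$ and the sparsity of $\widehat w_N$), then extension to $\mathcal H_r\times\mathcal H_s$ by density and continuity of the two forms on $W^{\frac12,2}\times W^{\frac12,2}$, then a sesquilinear expansion. The differences are cosmetic: the paper treats $\Lambda$ directly through $(\Lambda\varphi,\psi)=2\pi\sum_k|k|\varphi_k\overline{\psi_k}$ with disjoint Fourier supports, and gets the last claim by citing Lemma~\ref{lem:mean_zero_nonzero_blocks}, whereas you derive it from your orthogonality with $s=0$, which also avoids the paper's forward reference.
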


\begin{proof}
For the $\Lambda$--pairing, by \eqref{eq:Lambda_coeff_action} we have
\[
\left(\Lambda\varphi,\psi\right)
=2\pi\sum_{k\in\mathbb Z}|k|\,\varphi_k\,\overline{\psi_k}.
\]
Since the Fourier supports of $\mathcal H_r$ and $\mathcal H_s$ are disjoint for $r\neq s$, the sum vanishes.

For $B_{w_N}$, first take trigonometric polynomials
$\varphi=\sum_{m} a_m e_{r+mN}$ and $\psi=\sum_{m'} b_{m'} e_{s+m'N}$ (finite sums). Then by
Lemma~\ref{lem:Toeplitz_weight_matrix},
\[
\frac1{2\pi}B_{w_N}(e_{r+mN},e_{s+m'N})=\widehat w_N\bigl((s+m'N)-(r+mN)\bigr).
\]
The integer $(s-r)+(m'-m)N$ is divisible by $N$ iff $r=s$, hence it is not divisible by $N$ when $r\neq s$.
By sparsity of $\widehat w_N$ (Section~\ref{sec:fourier_toeplitz}), the right-hand side vanishes and thus
$B_{w_N}(\varphi,\psi)=0$ for such finite sums. Since $\mathrm{span}\{e_{r+mN}\}$ is dense in $\mathcal H_r$ and
$B_{w_N}(\cdot,\cdot)$ is continuous on $W^{\frac12,2}\times W^{\frac12,2}$, the identity extends to all
$\varphi\in\mathcal H_r$, $\psi\in\mathcal H_s$.

The decomposition identities follow by expanding $\varphi=\sum_r\varphi^{(r)}$ and using the vanishing of mixed
pairings. The last statement uses $1=e_0\in\mathcal H_0$ and $B_{w_N}(\mathcal H_r,1)=0$ for $r\neq 0$
(Lemma~\ref{lem:mean_zero_nonzero_blocks}).
\end{proof}
The residue $r=0$ block contains the constant mode and thus yields the trivial Steklov eigenvalue $\sigma_0=0$.
Positive Steklov eigenvalues correspond to the mean-zero subspace; for $r\neq0$ this constraint is automatic
(Lemma~\ref{lem:mean_zero_nonzero_blocks}), while for $r=0$ one has to remove the constant mode.
The first \emph{nontrivial} cluster is expected to come from the two \emph{critical} residues
$r\equiv \pm1\pmod N$; we now show the detailed proof that the first nonzero Steklov eigenvalue reduces to the $r=1$ block.

\begin{lemma}[Automatic weighted mean-zero on nonzero residue blocks]\label{lem:mean_zero_nonzero_blocks}
Let $r\in\{1,\dots,N-1\}$ and $\varphi\in\mathcal H_r$. Then $\langle \varphi,1\rangle_{w_N}=0$.
Equivalently, $\mathcal H_r^\circ=\mathcal H_r$ for every $r\neq 0$.
\end{lemma}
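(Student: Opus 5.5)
The plan is to compute $\langle \varphi,1\rangle_{w_N}=B_{w_N}(\varphi,1)$ directly in Fourier variables, first for trigonometric polynomials in $\mathcal H_r$ and then for all of $\mathcal H_r$ by density.

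Step 1. Take a finite sum $\varphi=\sum_{|m|\le M} a_m e_{r+mN}$. Since $1=e_0$, Lemma~\ref{lem:Toeplitz_weight_matrix} gives
\[
B_{w_N}(\varphi,1)=\sum_{|m|\le M} a_m\,B_{w_N}(e_{r+mN},e_0)=2\pi\sum_{|m|\le M}a_m\,\widehat w_N\bigl(-(r+mN)\bigr).
\]
Because $1\le r\le N-1$, the integer $-(r+mN)$ is never divisible by $N$. Lemma~\ref{lem:wn_periodic_sparse} (equation \eqref{eq:wn_sparse}) then makes every coefficient vanish, so $B_{w_N}(\varphi,1)=0$.

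Step 2. Let $\varphi\in\mathcal H_r$ be arbitrary. By Definition~\ref{def:residue_subspaces}, $\varphi$ is the $W^{\frac12,2}$-limit of such finite sums $\varphi^{(M)}$; concretely, one can take the Fourier truncations, which converge in $W^{\frac12,2}$ by the Fourier characterization of that norm. The map $\psi\mapsto B_{w_N}(\psi,1)$ is continuous on $W^{\frac12,2}(\mathbb S^1)$. This follows from Lemma~\ref{lem:weighted_form_continuous} (or Lemma~\ref{lem:Toeplitz_weight_matrix}), since $w_N\in L^p$ for some $p>1$ by \eqref{eq:wn_Lp_range} and $N\ge 3$, and since $1\in W^{\frac12,2}$. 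Passing to the limit gives $\langle\varphi,1\rangle_{w_N}=\lim_M B_{w_N}(\varphi^{(M)},1)=0$.

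Alternatively, one can use \eqref{eq:constraint_fourier} from Lemma~\ref{lem:fourier_system_general}. It writes $\langle\varphi,1\rangle_{w_N}=2\pi\sum_\ell \widehat w_N(-\ell)\varphi_\ell$, where only $\ell\in\mathcal I_r$ contribute, and every such term is zero by sparsity.

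Conclusion. Every $\varphi\in\mathcal H_r$ satisfies the constraint, so $\mathcal H_r\subset\mathcal H_r^\circ$. The reverse inclusion holds by definition, hence $\mathcal H_r^\circ=\mathcal H_r$.

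There is no real obstacle here. The only point that needs care is the justification of exchanging the limit with the pairing, which rests on the continuity of $B_{w_N}$ on $W^{\frac12,2}$. Establishing that continuity requires $w_N\in L^p$ with $p>1$, and that integrability is already available from \eqref{eq:wn_Lp_range}.
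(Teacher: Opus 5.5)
Your proof is correct and follows essentially the same route as the paper. The paper also checks the spanning modes via $B_{w_N}(e_{r+mN},e_0)=2\pi\,\widehat w_N(-(r+mN))=0$, using Lemma~\ref{lem:Toeplitz_weight_matrix} together with the sparsity in Lemma~\ref{lem:wn_periodic_sparse}, and then passes to the closure by continuity of $B_{w_N}(\cdot,1)$ on $W^{\frac12,2}(\mathbb S^1)$.
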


\begin{proof}
It suffices to check the claim on the spanning set $\{e_{r+mN}:m\in\mathbb Z\}$ and then pass to the closure by
continuity of $B_{w_N}(\cdot,\cdot)$ on $W^{\frac12,2}\times W^{\frac12,2}$.
By Lemma~\ref{lem:Toeplitz_weight_matrix},
\[
\frac1{2\pi}\,B_{w_N}(e_{r+mN},1)
=\frac1{2\pi}\,B_{w_N}(e_{r+mN},e_0)
=\widehat w_N(0-(r+mN))
=\widehat w_N(-(r+mN)).
\]
By sparsity of $\widehat w_N$ (Section~\ref{sec:fourier_toeplitz}), $\widehat w_N(\ell)=0$ unless $\ell\equiv 0\pmod N$.
Since $r\in\{1,\dots,N-1\}$, the integer $r+mN$ is never divisible by $N$, hence $B_{w_N}(e_{r+mN},1)=0$ for all $m$.
Therefore $\langle \varphi,1\rangle_{w_N}=B_{w_N}(\varphi,1)=0$ for all $\varphi\in\mathcal H_r$.
\end{proof}
We write
\[
\mathcal H_{+}:=\mathcal H_1,\qquad \mathcal H_{-}:=\mathcal H_{N-1}.
\]
\begin{lemma}[Conjugacy of the critical blocks]\label{lem:critical_conjugacy}
Complex conjugation maps $\mathcal H_{+}$ onto $\mathcal H_{-}$:
if $\varphi\in\mathcal H_{+}$ then $\overline{\varphi}\in\mathcal H_{-}$.
Moreover, if $\varphi\in\mathcal H_{+}$ solves \eqref{eq:weighted_evp_recall} with eigenvalue $\sigma>0$, then
$\overline{\varphi}\in\mathcal H_{-}$ solves \eqref{eq:weighted_evp_recall} with the same $\sigma$.
In particular, the positive spectra in the $r=1$ and $r=N-1$ blocks coincide:
\[
\sigma(A_1)\setminus\{0\}=\sigma(A_{N-1})\setminus\{0\},\qquad
\sigma_{1,1}=\sigma_{1,N-1}.
\]
\end{lemma}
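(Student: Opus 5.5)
The proof is a direct Fourier computation that uses two facts: the weight $w_N$ is real-valued, and $\Lambda$ commutes with complex conjugation.

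\emph{Step 1: conjugation maps $\mathcal H_+$ onto $\mathcal H_-$.} If $\varphi=\sum_m\varphi_{1+mN}e_{1+mN}\in\mathcal H_+$, then
\[
\overline{\varphi}=\sum_m\overline{\varphi_{1+mN}}\,e_{-1-mN}.
\]
Since $-1-mN=(N-1)+(-m-1)N$, every frequency lies in $\mathcal I_{N-1}$. Conjugation preserves the $W^{\frac12,2}$ norm, because it only permutes the moduli of the Fourier coefficients via $k\mapsto -k$ and the weights $1+|k|$ are even. Hence conjugation is an isometric (antilinear) involution taking $\mathcal H_+$ into $\mathcal H_-$; applying the same argument in reverse gives surjectivity.

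\emph{Step 2: the eigenvalue equation is preserved.} Let $\varphi\in\mathcal H_+$ satisfy
\[
(\Lambda\varphi,\psi)=\sigma\int\varphi\,\overline{\psi}\,w_N\,d\theta
\qquad\text{for all }\psi\in W^{\frac12,2}.
\]
Given an arbitrary $\psi$, test this identity with $\overline\psi$ and take complex conjugates of both sides.

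\begin{itemize}
\item For the left side, use the harmonic-extension definition: $P_{\mathbb D}\overline\varphi=\overline{P_{\mathbb D}\varphi}$. This gives $\overline{(\Lambda\varphi,\overline\psi)}=(\Lambda\overline\varphi,\psi)$. Equivalently, in Fourier terms, $2\pi\sum|k|\varphi_k\overline{\psi_{-k}}$ conjugates to $2\pi\sum|k|\overline{\varphi_{-k}}\,\overline{\psi_{k}}$ after reindexing.
\item For the right side, use that $w_N$ and $\sigma$ are real. This gives $\sigma\int\overline\varphi\,\overline\psi\,w_N\,d\theta$.
\end{itemize}

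So $\overline\varphi$ solves \eqref{eq:weighted_evp_recall} with the same $\sigma$. By Lemma~\ref{lem:mean_zero_nonzero_blocks}, the mean-zero constraint holds automatically in both blocks.

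\emph{Step 3: equality of the spectra.} By Proposition~\ref{prop:block_Rayleigh} and Proposition~\ref{prop:block_standard_evp}, the nonzero spectrum of $A_r$ is the reciprocal set of the block Steklov eigenvalues. Step 2 gives inclusion in one direction; applying conjugation from $\mathcal H_-$ to $\mathcal H_+$ gives the reverse inclusion. Since the map is an antilinear bijection, multiplicities are preserved.

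At the matrix level this is transparent. Reindexing by $m\mapsto -m-1$ turns $|1+mN|$ into $|N-1+(-m-1)N|$. Because $v_{m-m'}=v_{m'-m}$ is real and even (Corollary~\ref{cor:wn_cosine_series}), this gives
\[
(A_{N-1})_{-m-1,-m'-1}=(A_1)_{m,m'}.
\]
Thus $A_{N-1}=UA_1U^{*}$ for the unitary permutation $U$ induced by this reindexing, so $\sigma(A_1)=\sigma(A_{N-1})$ outright. This also yields $\sigma_{1,1}=\sigma_{1,N-1}$, whether one argues via the infimum of the Rayleigh quotients, which conjugation preserves, or via the maximal eigenvalue of $A_r$.

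\emph{Main obstacle.} The only delicate point is justifying the duality manipulation in Step 2 for general $W^{\frac12,2}$ functions. I would handle it either by the harmonic-extension identity above or by checking it on trigonometric polynomials and extending by density. Presenting the unitary-equivalence argument for $A_1$ and $A_{N-1}$ sidesteps this issue entirely.
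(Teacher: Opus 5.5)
Your proof is correct. Steps 1--2 are essentially the paper's proof. The paper also reads off the residue class of $\overline{\varphi}$ from the frequencies $-(1+mN)\equiv N-1$. It then conjugates the weak equation, using that $\Lambda$ has real eigenvalues $|k|$ that are even in $k$, and that $w_N$ is real. It phrases the latter as $\overline{T_{w_N}\psi}=T_{w_N}\overline{\psi}$ in $W^{-\frac12,2}$, via the same ``test with the conjugate and conjugate back'' computation you use. Your identity $P_{\mathbb D}\overline{\varphi}=\overline{P_{\mathbb D}\varphi}$ already makes the duality step rigorous on all of $W^{\frac12,2}$, so no density argument is needed.

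What you add is the reindexing $m\mapsto -m-1$, which gives $A_{N-1}=UA_1U^{*}$ with $U\delta_m=\delta_{-m-1}$; the paper does not do this. It is the cleaner route to the spectral part of the lemma. It yields $\sigma(A_1)=\sigma(A_{N-1})$ in full, including $0$ and multiplicities. It also gives $\sigma_{1,1}=1/\|A_1\|=1/\|A_{N-1}\|=\sigma_{1,N-1}$ without passing through the reciprocal correspondence of Proposition~\ref{prop:block_Rayleigh}.

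There are two caveats to this extra argument. First, it uses the evenness $v_{-k}=v_k$, i.e.\ the reflection symmetry of the polygon, and not just the realness of $w_N$. Second, on $\mathbb S^1$ the intertwiner $U$ corresponds to $\varphi(\theta)\mapsto\varphi(-\theta)$, not to $\varphi\mapsto\overline{\varphi}$. So it does not by itself prove the lemma's assertion about $\overline{\varphi}$: Step~2 is still needed for that, and ``sidesteps this issue entirely'' overstates the case.

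Finally, your Fourier parenthetical has a slip. Since $(\overline{\psi})_k=\overline{\psi_{-k}}$, one has $(\Lambda\varphi,\overline{\psi})=2\pi\sum_k|k|\,\varphi_k\,\psi_{-k}$, with no bar on $\psi_{-k}$. With the bar as you wrote it, conjugating and reindexing would give $\overline{\varphi_{-k}}\,\psi_k$ rather than $\overline{\varphi_{-k}}\,\overline{\psi_k}$.
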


\begin{proof}
If $\varphi(\theta)=\sum_{m\in\mathbb Z}\varphi_m\,e^{i(1+mN)\theta}\in\mathcal H_{+}$, then
\[
\overline{\varphi(\theta)}
=\sum_{m\in\mathbb Z}\overline{\varphi_m}\,e^{-i(1+mN)\theta}.
\]
Since $-(1+mN)\equiv -1\equiv N-1\pmod N$, we have $\overline{\varphi}\in\mathcal H_{-}$.

Next, $\Lambda$ is real (it is diagonal on $\{e_k\}$ with real eigenvalues $|k|$), and $w_N$ is real-valued a.e.,
hence $T_{w_N}$ is real in the sense that $\overline{T_{w_N}\psi}=T_{w_N}\overline{\psi}$ in $W^{-\frac12,2}$.
Indeed, for any $\eta\in W^{\frac12,2}(\mathbb S^1)$ one has
\[
\left(\overline{T_{w_N}\psi},\eta\right)
=\overline{\left(T_{w_N}\psi,\overline{\eta}\right)}
=\overline{B_{w_N}(\psi,\overline{\eta})}
=B_{w_N}(\overline{\psi},\eta)
=\left( T_{w_N}\overline{\psi},\eta\right),
\]
hence $\overline{T_{w_N}\psi}=T_{w_N}\overline{\psi}$ in $W^{-\frac12,2}(\mathbb S^1)$.
Taking complex conjugates in \eqref{eq:weighted_evp_recall} yields
\[
\Lambda\,\overline{\varphi}=\sigma\,T_{w_N}\overline{\varphi}\qquad\text{in }W^{-\frac12,2}(\mathbb S^1),
\]
so $\overline{\varphi}$ is an eigenfunction with the same $\sigma$ in the conjugate block.

Therefore the generalized eigenvalues $\sigma>0$ in the $r=1$ and $r=N-1$ blocks coincide.
By Proposition~\ref{prop:block_Rayleigh} (equivalently, the reduction $\lambda=1/\sigma$),
this is equivalent to $\sigma(A_1)\setminus\{0\}=\sigma(A_{N-1})\setminus\{0\}$ and in particular
$\sigma_{1,1}=\sigma_{1,N-1}$.
\end{proof}

\begin{remark}[Multiplicity in the full problem]\label{rem:critical_multiplicity}
By Lemma~\ref{lem:critical_conjugacy}, every positive eigenvalue produced by the $r$ block also occurs in the
$N-r$ block for $r=1, \cdots, N-1$. Later we will show that the first positive Steklov eigenvalue arises from the critical residues,
which yields $\sigma_1(\Omega_N)=\sigma_2(\Omega_N)$.
\end{remark}
\begin{proposition}[Global minimization reduces to block minima]\label{prop:sigma1_min_over_blocks}
Let $\sigma_1(\Omega_N)$ be the first positive Steklov eigenvalue for the perimeter-normalized regular $N$-gon.
Then
\begin{equation}\label{eq:sigma1_min_blocks_stepE}
\sigma_1(\Omega_N)=\min_{0\le r\le N-1}\sigma_{1,r}.
\end{equation}
Moreover, if the minimum is strict, i.e.
$\sigma_{1,r_*}<\sigma_{1,r}$ for all $r\neq r_*$ and $r\neq N-r_*$, then any minimizer for the global Rayleigh quotient lies in $\overline{\mathrm{span}}\{\mathcal H_{r_*},\, \mathcal H_{N-r_*}\}$.
\end{proposition}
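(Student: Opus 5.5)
We decompose an arbitrary admissible $\varphi$ along the residue blocks, use the orthogonality of Lemma~\ref{lem:block_orthogonality_forms} to split the Rayleigh quotient, and then apply the elementary mediant inequality. Concretely, let $\varphi\in W^{\frac12,2}(\mathbb S^1)\setminus\{0\}$ with $\langle\varphi,1\rangle_{w_N}=0$. Write $\varphi=\sum_{r=0}^{N-1}\varphi^{(r)}$ with $\varphi^{(r)}\in\mathcal H_r$, obtained by sorting Fourier coefficients by residue. This splitting is continuous on $W^{\frac12,2}$, since the norm is diagonal in Fourier modes. By Lemma~\ref{lem:block_orthogonality_forms}, the constraint becomes $B_{w_N}(\varphi^{(0)},1)=0$, so $\varphi^{(0)}\in\mathcal H_0^\circ$. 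For $r\neq 0$, Lemma~\ref{lem:mean_zero_nonzero_blocks} gives $\varphi^{(r)}\in\mathcal H_r^\circ=\mathcal H_r$ automatically. The same lemma splits numerator and denominator:
\[
\frac{(\Lambda\varphi,\varphi)}{B_{w_N}(\varphi,\varphi)}=\frac{\sum_r(\Lambda\varphi^{(r)},\varphi^{(r)})}{\sum_r B_{w_N}(\varphi^{(r)},\varphi^{(r)})}.
\]

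For the lower bound, fix a nonzero component $\varphi^{(r)}$. Proposition~\ref{prop:block_Rayleigh} gives $(\Lambda\varphi^{(r)},\varphi^{(r)})\ge \sigma_{1,r}B_{w_N}(\varphi^{(r)},\varphi^{(r)})$, and every $B_{w_N}(\varphi^{(r)},\varphi^{(r)})>0$ because $w_N>0$ a.e. Zero components contribute nothing to either sum. Summing over $r$, the quotient is at least $\min_r\sigma_{1,r}$. Taking the infimum over admissible $\varphi$ and applying Corollary~\ref{cor:polygon_reduction} yields $\sigma_1(\Omega_N)\ge\min_r\sigma_{1,r}$.

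For the upper bound, each $\sigma_{1,r}$ is attained by Proposition~\ref{prop:block_Rayleigh} at some $\varphi_r\in\mathcal H_r^\circ$. This $\varphi_r$ is an admissible competitor in \eqref{eq:sigma1_identification}, so $\sigma_1(\Omega_N)\le\sigma_{1,r}$ for every $r$. This proves \eqref{eq:sigma1_min_blocks_stepE}.

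For the strict-minimum statement, let $\varphi$ be a global minimizer and set $\sigma^*:=\sigma_{1,r_*}$, which by Lemma~\ref{lem:critical_conjugacy} also equals $\sigma_{1,N-r_*}$ (the conjugacy argument there works for any $r$, as noted in Remark~\ref{rem:critical_multiplicity}). Equality in the mediant argument forces
\[
\sum_r\bigl[(\Lambda\varphi^{(r)},\varphi^{(r)})-\sigma^*B_{w_N}(\varphi^{(r)},\varphi^{(r)})\bigr]=0,
\]
and each bracket is nonnegative. For $r\notin\{r_*,N-r_*\}$, the bracket is at least $(\sigma_{1,r}-\sigma^*)B_{w_N}(\varphi^{(r)},\varphi^{(r)})$, so it can vanish only if $\varphi^{(r)}=0$. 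Hence $\varphi\in\overline{\mathrm{span}}\{\mathcal H_{r_*},\mathcal H_{N-r_*}\}$.

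The main obstacle is the $r=0$ block and the passage to the limit in the decomposition. Two points need checking. First, the sum $\sum_r\varphi^{(r)}$ is finite ($N$ terms), so the only analytic issue is that $B_{w_N}$ be defined on each $\varphi^{(r)}\in W^{\frac12,2}$; this follows from Lemma~\ref{lem:weighted_form_continuous} since $w_N\in L^p$ for some $p>1$. Second, the quantity $\sigma_{1,0}$ must be the block-zero mean-zero infimum, which excludes the constant mode. With these verified the argument is purely algebraic.
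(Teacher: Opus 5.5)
Your proposal is correct and follows essentially the same route as the paper. You split the Rayleigh quotient across residue blocks via Lemma~\ref{lem:block_orthogonality_forms}, bound each block from below by $\sigma_{1,r}$, use single-block competitors for the reverse inequality, and read off the vanishing of the non-minimal components from the equality case; your equality-case argument is in fact slightly cleaner than the paper's, which briefly asserts vanishing for all $r\neq r_*$ before restricting to $r\notin\{r_*,N-r_*\}$, and you could even drop the conjugacy $\sigma_{1,N-r_*}=\sigma_{1,r_*}$ there by simply setting $\sigma^*:=\min_r\sigma_{1,r}$.
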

\begin{proof}
Let $\varphi\in W^{\frac12,2}(\mathbb S^1)$ satisfy $\langle\varphi,1\rangle_{w_N}=0$, and decompose
$\varphi=\sum_{r=0}^{N-1}\varphi^{(r)}$ with $\varphi^{(r)}\in\mathcal H_r$. By
Lemma~\ref{lem:block_orthogonality_forms}, the mean-zero constraint implies $\varphi^{(0)}\in\mathcal H_0^\circ$ and
$\varphi^{(r)}\in\mathcal H_r$ for $r\neq 0$, and
\[
\frac{\left( \Lambda\varphi,\varphi\right)}{B_{w_N}(\varphi,\varphi)}
=
\frac{\sum_{r=0}^{N-1}\left( \Lambda\varphi^{(r)},\varphi^{(r)}\right)}
     {\sum_{r=0}^{N-1}B_{w_N}(\varphi^{(r)},\varphi^{(r)})}.
\]
If $\varphi^{(r)}\not\equiv 0$, then by definition of $\sigma_{1,r}$ we have
$\left( \Lambda\varphi^{(r)},\varphi^{(r)}\right) \ge \sigma_{1,r}\,B_{w_N}(\varphi^{(r)},\varphi^{(r)})$.
Summing over $r$ yields
\[
\frac{\left( \Lambda\varphi,\varphi\right)}{B_{w_N}(\varphi,\varphi)}
\ge
\min_{0\le r\le N-1}\sigma_{1,r}.
\]
Taking the infimum over all admissible $\varphi$ gives
$\sigma_1(\Omega_N)\ge \min_r \sigma_{1,r}$.

Conversely, for each fixed $r$ we may take $\varphi\in\mathcal H_r^\circ$ as a competitor in the global Rayleigh
quotient, hence $\sigma_1(\Omega_N)\le \sigma_{1,r}$ for all $r$, and therefore
$\sigma_1(\Omega_N)\le \min_r \sigma_{1,r}$. This proves the equality.

Assume now that the minimum is strict and that a minimizer $\varphi$ exists.
If equality holds in the global bound, then necessarily
\[
\left( \Lambda\varphi^{(r)},\varphi^{(r)}\right)
=\sigma_{1,r}\,B_{w_N}(\varphi^{(r)},\varphi^{(r)})
\quad\text{for every }r\text{ with }\varphi^{(r)}\not\equiv0.
\]
Moreover, equality in
\[
\sum_{r}\sigma_{1,r}B_{w_N}(\varphi^{(r)},\varphi^{(r)})
\ge \Bigl(\min_{0\le j\le N-1}\sigma_{1,j}\Bigr)\sum_{r}B_{w_N}(\varphi^{(r)},\varphi^{(r)})
\]
implies $B_{w_N}(\varphi^{(r)},\varphi^{(r)})=0$ for all $r\neq r_*$.
Since $w_N(\theta)>0$ for a.e.\ $\theta$ (Lemma~\ref{lem:ds_pullback_weight}), this forces $\varphi^{(r)}\equiv 0$
for all $r\neq r_*$ and $r\neq N-r_*$. Hence any minimizer lies in either $\mathcal H_{r_*}$ or $\mathcal H_{N-r_*}$ or the linear span of the two blocks.
\end{proof}

\begin{corollary}[The first cluster comes from the critical blocks under separation]\label{cor:first_cluster_critical}
Assume that $N\geq 20$.
Then
\[
\sigma_1(\Omega_N)=\sigma_{1,1}=\sigma_{1,N-1}.
\]
If, in addition, the infimum defining $\sigma_{1,1}$ is attained by some $\varphi_+\in\mathcal H_{+}$, then the first
eigenspace has dimension at least $2$ and
\[
\sigma_1(\Omega_N)=\sigma_2(\Omega_N).
\]
\end{corollary}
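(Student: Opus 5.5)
The plan is to combine Proposition~\ref{prop:sigma1_min_over_blocks} with the separation Lemmas~\ref{lem:B0} and~\ref{lem:BOther}. By Proposition~\ref{prop:sigma1_min_over_blocks}, $\sigma_1(\Omega_N)=\min_{0\le r\le N-1}\sigma_{1,r}$. For $N\ge 20$ we have $\sigma_{1,0}\ge N-3\ge 17$ and $\sigma_{1,r}\ge 9/5$ for $2\le r\le N-2$. It therefore suffices to show $\sigma_{1,1}\le 1$, which puts the minimum in the critical blocks. Lemma~\ref{lem:critical_conjugacy} then gives $\sigma_{1,1}=\sigma_{1,N-1}$.

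To prove $\sigma_{1,1}\le 1$, I will use the test function $\varphi=e_1\in\mathcal H_+$. By Lemma~\ref{lem:mean_zero_nonzero_blocks} it is admissible, since $\mathcal H_1^\circ=\mathcal H_1$. By \eqref{eq:Lambda_duality_entries}, $(\Lambda e_1,e_1)=2\pi$. By Lemma~\ref{lem:Toeplitz_weight_matrix}, $B_{w_N}(e_1,e_1)=2\pi\,\widehat w_N(0)=2\pi v_0=2\pi$, using the normalization in Corollary~\ref{cor:wn_cosine_series}. The Rayleigh quotient is therefore exactly $1$, so $\sigma_{1,1}\le 1<9/5\le\min_{r\ne 1,N-1}\sigma_{1,r}$. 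This yields $\sigma_1(\Omega_N)=\sigma_{1,1}=\sigma_{1,N-1}$.

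For the multiplicity claim, let $\varphi_+\in\mathcal H_+$ attain $\sigma_{1,1}$; attainment is in fact guaranteed by Proposition~\ref{prop:block_Rayleigh}. By Lemma~\ref{lem:block_invariance} and the Euler--Lagrange argument of Proposition~\ref{prop:weighted_RR}, Step~3, restricted to the invariant block, $\varphi_+$ is a weak eigenfunction of \eqref{eq:weighted_evp_recall} with eigenvalue $\sigma_1(\Omega_N)$. Testing against other blocks gives zero by Lemma~\ref{lem:block_orthogonality_forms}, so the eigen-equation holds against all $\psi\in W^{\frac12,2}$. Lemma~\ref{lem:critical_conjugacy} then shows that $\overline{\varphi_+}\in\mathcal H_-$ is an eigenfunction with the same eigenvalue. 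The blocks $\mathcal H_+$ and $\mathcal H_-$ have disjoint Fourier supports, since $1\not\equiv N-1\pmod N$ for $N\ge3$. Hence $\varphi_+$ and $\overline{\varphi_+}$ are linearly independent, the first eigenspace has dimension at least two, and $\sigma_1(\Omega_N)=\sigma_2(\Omega_N)$ via Corollary~\ref{cor:polygon_reduction}.

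The only step needing care is upgrading the blockwise minimizer to a global eigenfunction, i.e.\ checking that the Euler--Lagrange identity tested only on $\mathcal H_+$ extends to all test functions. This follows from the block orthogonality of both forms. Everything else is direct bookkeeping of earlier results.
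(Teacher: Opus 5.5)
Your proposal is correct and follows the paper's proof. The identification comes from the test-function bound $\sigma_{1,1}\le 1$, the separation Lemmas~\ref{lem:B0} and~\ref{lem:BOther}, and Proposition~\ref{prop:sigma1_min_over_blocks}; the multiplicity comes from conjugation via Lemma~\ref{lem:critical_conjugacy} together with disjoint Fourier supports. Two of your choices tighten steps the paper leaves loose or implicit: you test with $e_1\in\mathcal H_+$ instead of the paper's $\cos\theta,\sin\theta$, which do not lie in a single block, and you explicitly check that a blockwise minimizer is a global weak eigenfunction.
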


\begin{proof}
By Lemma~\ref{lem:critical_conjugacy}, $\sigma_{1,1}=\sigma_{1,N-1}$. Apparently, one can use test functions such as $\cos{\theta}$ and $\sin{\theta}$ to derive an upper bound of $\sigma_{1,1}$: 
\[\sigma_{1,1}=\sigma_{1,N-1}\leq 1.\]
Combining Lemma~\ref{lem:B0} and Lemma~\ref{lem:BOther}, we can verify the strict minimality of $\sigma_{1,1}$ among the blocks. Hence, by Proposition~\ref{prop:sigma1_min_over_blocks}, the first identity follows.

If $\varphi_+\in\mathcal H_{+}$ attains $\sigma_{1,1}$, then Lemma~\ref{lem:critical_conjugacy} shows that
$\varphi_-:=\overline{\varphi_+}\in\mathcal H_{-}$ attains the same eigenvalue. Since $\mathcal H_{+}\cap\mathcal H_{-}=\{0\}$,
the functions $\varphi_+$ and $\varphi_-$ are linearly independent. Indeed, the sets
$\{1+mN:m\in\mathbb Z\}$ and $\{-1+mN:m\in\mathbb Z\}$ are disjoint for $N\ge3$.
Thus the first eigenvalue has multiplicity at least $2$, and therefore $\sigma_1(\Omega_N)=\sigma_2(\Omega_N)$.
\end{proof}

\subsection{Critical blocks and reduction to the $r=1$ block}\label{subsec:critical_blocks}

We now fix the $r=1$ block and prepare the Schur reduction by isolating the distinguished low mode $k=1$.

\begin{definition}[Indexing of the $r=1$ block and the distinguished mode]\label{def:r1_indexing}
Let $r=1$ and write the frequencies in the $r=1$ residue class as $k=1+mN$, $m\in\mathbb Z$.
Set
\begin{equation}
    d_m:=|1+mN|,\qquad m\in\mathbb Z,\label{eq:dm_def_r1}
\end{equation}
and view $\mathbf D:=\mathrm{diag}(d_m)$ as an (unbounded) diagonal operator on $\ell^2(\mathbb Z)$ with natural domains
$\mathrm{Dom}(\mathbf D)$ and $\mathrm{Dom}(\mathbf D^{1/2})$ as in Definition~\ref{def:Dr_V_A}.
Recall that
\[
\mathbb Z_*:=\mathbb Z\setminus\{0\},\qquad \ell^2(\mathbb Z)=\mathrm{span}\{\delta_0\}\oplus \ell^2(\mathbb Z_*).
\]
We denote by $A:=A_1$ the normalized block operator from Section~\ref{subsec:normalized_blocks}.
\end{definition}

\begin{lemma}[Block matrix form of $A_1$]\label{lem:A1_block_matrix}
In the decomposition $\ell^2(\mathbb Z)=\mathrm{span}\{\delta_0\}\oplus \ell^2(\mathbb Z_*)$, the operator $A=A_1$ has the
block form
\begin{equation}\label{eq:A1_block_form}
A=\begin{pmatrix}
a_{00} & b^{H}\\[2pt]
b & K
\end{pmatrix},
\end{equation}
where $a_{00}=1$, $b\in\ell^2(\mathbb Z_*)$ and $K:\ell^2(\mathbb Z_*)\to \ell^2(\mathbb Z_*)$ is bounded self-adjoint and positive. Moreover,
in terms of the Fourier coefficients $v_m=\widehat w_N(mN)$ one has
\begin{equation}\label{eq:A1_entries}
a_{00}=\frac{v_0}{d_0}=v_0=1,\qquad
b_m=\frac{v_m}{\sqrt{d_m}}\ (m\in \mathbb Z_*),\qquad
K_{m,m'}=\frac{v_{m-m'}}{\sqrt{d_m d_{m'}}}\ (m,m'\in \mathbb Z_*).
\end{equation}
\end{lemma}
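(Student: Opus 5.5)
The plan is to read the block form \eqref{eq:A1_block_form} directly off the matrix entries \eqref{eq:def_Ar_entries} of Lemma~\ref{lem:Ar_selfadjoint_positive} with $r=1$. In that case $d_m=|1+mN|$ and $d_0=1$. We split $\ell^2(\mathbb Z)=\mathrm{span}\{\delta_0\}\oplus\ell^2(\mathbb Z_*)$ and let $\Pi_0$ and $\Pi_{\mathbb Z_*}$ denote the corresponding orthogonal coordinate projections. Then we set
\[
a_{00}:=\langle A\delta_0,\delta_0\rangle_{\ell^2},\qquad b:=\Pi_{\mathbb Z_*}A\delta_0,\qquad K:=\Pi_{\mathbb Z_*}A\Pi_{\mathbb Z_*}\big|_{\ell^2(\mathbb Z_*)}.
\]
Self-adjointness of $A$ shows that the remaining off-diagonal block $\Pi_0A\Pi_{\mathbb Z_*}$ is the functional $y\mapsto\langle y,b\rangle_{\ell^2}$, i.e.\ $b^H$. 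This gives \eqref{eq:A1_block_form} as a purely algebraic identity for bounded operators.

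For the entries, we evaluate the form on basis vectors. Since $\delta_m$ and $\delta_{m'}$ are finitely supported, Lemma~\ref{lem:Ar_selfadjoint_positive} gives
\[
\langle A\delta_{m'},\delta_m\rangle_{\ell^2}=\frac{v_{m-m'}}{\sqrt{d_md_{m'}}}.
\]
This needs the index convention checked against \eqref{eq:def_Ar_entries}; because $v$ is real and even by Corollary~\ref{cor:wn_cosine_series}, the matrix is real symmetric and the ordering does not matter. Taking $m=m'=0$ and using $d_0=1$ and $v_0=1$ (Corollary~\ref{cor:wn_cosine_series}) gives $a_{00}=1$. Taking $m'=0$ and $m\neq0$ gives $b_m=v_m/\sqrt{d_m}$, and $m,m'\in\mathbb Z_*$ gives $K_{m,m'}$ as in \eqref{eq:A1_entries}.

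Next we verify the claimed properties. We have $b=\Pi_{\mathbb Z_*}A\delta_0\in\ell^2(\mathbb Z_*)$ because $A$ is bounded. The operator $K$ is a compression of the bounded self-adjoint operator $A$, so it is bounded and self-adjoint with $\|K\|\le\|A\|$. For positivity, take $y\in\ell^2(\mathbb Z_*)$ and extend it by zero to $\tilde y\in\ell^2(\mathbb Z)$. Then
\[
\langle Ky,y\rangle=\langle A\tilde y,\tilde y\rangle=a_1(\tilde y,\tilde y)=\frac{1}{2\pi}B_{w_N}(\Phi_{\tilde y},\Phi_{\tilde y})=\frac{1}{2\pi}\int_0^{2\pi}|\Phi_{\tilde y}|^2w_N\,d\theta .
\]
This is strictly positive for $y\neq0$, since $w_N>0$ a.e.\ by Lemma~\ref{lem:ds_pullback_weight} and $\Phi_{\tilde y}\neq0$ by Fourier uniqueness in $W^{\frac12,2}$.

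The only mildly delicate point is identifying the infinite sums. The entry formulas hold a priori only on finitely supported vectors, and $b$ is defined via $A\delta_0$. To identify the $\ell^2$ vector $A\delta_0$ coordinatewise with $(v_m/\sqrt{d_m})_m$, we pair it with each $\delta_m$, which is legitimate because each $\delta_m$ is finitely supported. The resulting square-summability is then automatic from boundedness of $A$, so no separate estimate on $v_m$ is needed. I expect no real obstacle; the proof is a bookkeeping consequence of Lemma~\ref{lem:Ar_selfadjoint_positive}.
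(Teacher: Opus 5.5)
Your proposal is correct and follows essentially the same route as the paper. Both arguments read the entries off \eqref{eq:def_Ar_entries} with $r=1$ and $d_0=1$, obtain $b\in\ell^2(\mathbb Z_*)$ from $A\delta_0\in\ell^2(\mathbb Z)$, and get boundedness, self-adjointness and positivity of $K$ from the compression $\Pi_{\mathbb Z_*}A\Pi_{\mathbb Z_*}$. The only difference is that you sharpen the paper's $\langle Ky,y\rangle\ge 0$ to strict positivity using $w_N>0$ a.e.
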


\begin{proof}
By the definition of $A_r$ (Section~\ref{subsec:normalized_blocks}), for $r=1$ we have
\begin{equation}\label{eq:A1_entries_recall_45}
    A_{m,m'}=\frac{v_{m-m'}}{\sqrt{d_m d_{m'}}},\qquad m,m'\in\mathbb Z.
\end{equation}

Since $d_0=|1|=1$, the $(0,0)$ entry is $a_{00}=v_0=1$. The remaining formulas \eqref{eq:A1_entries} are obtained by reading off the $(m,0)$ and $(m,m')$ entries for
$m,m'\neq 0$.

Since $A$ is bounded on $\ell^2(\mathbb Z)$, we have $A\delta_0\in\ell^2(\mathbb Z)$; writing
$A\delta_0=a_{00}\delta_0+b$ shows $b\in\ell^2(\mathbb Z_*)$.

Self-adjointness and positivity of $K$ follow from the corresponding properties of $A$
(Lemma~\ref{lem:Ar_selfadjoint_positive}), since $K=\Pi_{\mathbb Z_*}\, A\, \Pi_{\mathbb Z_*}$ is the compression of $A$ to $\ell^2(\mathbb Z_*)$.
In particular, for $y\in\ell^2(\mathbb Z_*)$ one has
\[
\langle Ky,y\rangle_{\ell^2}
=\langle A(0\oplus y),0\oplus y\rangle_{\ell^2}\ge 0,
\]
and boundedness follows from $\|K\|\le \|A\|$.
\end{proof}

\begin{remark}[Why the mode $m=0$ is distinguished]\label{rem:distinguished_mode_gap}
For $m\neq 0$ one has $d_m=|1+mN|\ge N-1$, while $d_0=1$. Thus the mode $m=0$ (frequency $k=1$) is separated by a
large diagonal gap inside the $r=1$ residue class. This is the starting point for the Schur complement analysis of
\eqref{eq:A1_block_form} carried out in the next section.
\end{remark}
%up to now
\subsection{Schur reduction in the critical block: the scalar equation}\label{subsec:schur_scalar}

We now perform the Schur reduction for the critical block $r=1$ (equivalently $r=N-1$ by conjugacy).

\begin{lemma}[Schur complement criterion]\label{lem:schur_complement_scalar}
Let $H=\mathbb C\oplus H_Q$ be a Hilbert space and let
\[
A=\begin{pmatrix} a_{00} & b^H\\ b & K\end{pmatrix}
\]
with $a_{00}\in\mathbb R$, $b\in H_Q$, $b^H$ is the dual of $b$ in $H_Q$ and $K:H_Q\to H_Q$ bounded self-adjoint.
Fix $\lambda\in\mathbb R$ with $\lambda\notin\sigma(K)$ and define the scalar Schur complement
\begin{equation}\label{eq:schur_scalar_function_general}
F(\lambda):=\lambda-a_{00}-\langle (\lambda I-K)^{-1}b,\ b\rangle_{H_Q}.
\end{equation}
Then $\lambda$ is an eigenvalue of $A$ if and only if $F(\lambda)=0$.
In that case, the corresponding eigenspace is one-dimensional and is spanned by
\begin{equation}\label{eq:eigvec_schur_general}
x_\lambda:=\bigl(1,\ (\lambda I-K)^{-1}b\bigr)\in \mathbb C\oplus H_Q.
\end{equation}
When $\lambda\in\sigma(K)$, $A$ may also admit eigenvectors of the form $(0,y)$ with $Ky=\lambda y$; this
case is excluded here.
\end{lemma}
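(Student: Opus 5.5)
The plan is to write the eigenvalue equation $Ax=\lambda x$ componentwise in the splitting $H=\mathbb C\oplus H_Q$ and eliminate the $H_Q$-component. Since $\lambda\notin\sigma(K)$ and $K$ is bounded self-adjoint, the resolvent $(\lambda I-K)^{-1}$ is a bounded operator on $H_Q$. Moreover, because $\lambda$ is real, this resolvent is self-adjoint. These are the only operator-theoretic facts needed.

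\emph{Forward direction.} Let $x=(x_0,y)\neq 0$ satisfy $Ax=\lambda x$. The two component equations are
\[
a_{00}x_0+\langle y,b\rangle_{H_Q}=\lambda x_0,\qquad x_0 b+Ky=\lambda y .
\]
The second equation gives $(\lambda I-K)y=x_0 b$, hence $y=x_0(\lambda I-K)^{-1}b$.
\begin{itemize}
\item If $x_0=0$, then $y=0$, so $x=0$, a contradiction. Hence $x_0\neq0$, and we may normalize $x_0=1$.
\item Substituting into the first equation gives $\lambda-a_{00}-\langle (\lambda I-K)^{-1}b,b\rangle_{H_Q}=0$, that is, $F(\lambda)=0$.
\end{itemize}
Here we use that $b^H y$ means $\langle y,b\rangle_{H_Q}$.

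\emph{Reverse direction.} Suppose $F(\lambda)=0$ and define $x_\lambda$ as in \eqref{eq:eigvec_schur_general}. Then:
\begin{itemize}
\item the second row reads $b+K(\lambda I-K)^{-1}b=\lambda(\lambda I-K)^{-1}b$, which holds because $(\lambda I-K)(\lambda I-K)^{-1}b=b$;
\item the first row reads $a_{00}+\langle(\lambda I-K)^{-1}b,b\rangle=\lambda$, which is exactly $F(\lambda)=0$.
\end{itemize}
Since $x_\lambda$ has first component $1$, it is nonzero, so $\lambda$ is an eigenvalue with eigenvector $x_\lambda$.

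\emph{One-dimensionality.} The forward argument shows that every eigenvector equals $x_0\,x_\lambda$. Hence the eigenspace is $\mathrm{span}\{x_\lambda\}$.

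A minor point is that $F(\lambda)$ is real. This follows because $(\lambda I-K)^{-1}$ is self-adjoint for real $\lambda$, so the quadratic form $\langle(\lambda I-K)^{-1}b,b\rangle$ is real. The rest is algebra, so there is no genuine obstacle. The only care needed is to keep the sesquilinear conventions consistent, and to note that the hypothesis $\lambda\notin\sigma(K)$ is exactly what rules out eigenvectors of the form $(0,y)$.
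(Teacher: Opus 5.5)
Your proof is correct and follows the paper's argument. You split $Ax=\lambda x$ into components, solve the $H_Q$-equation as $y=x_0(\lambda I-K)^{-1}b$, and substitute into the scalar equation to get $F(\lambda)\,x_0=0$. Your explicit handling of the case $x_0=0$ and your direct check that $x_\lambda$ is an eigenvector spell out steps the paper compresses into ``a nontrivial solution exists iff $F(\lambda)=0$.''
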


\begin{proof}
Write an eigenvector as $(x_0,y)\in\mathbb C\oplus H_Q$. The equation $(A-\lambda I)(x_0,y)=0$ is equivalent to
\begin{equation}\label{eq:schur_system_general}
(a_{00}-\lambda)x_0+b^Hy=0,\qquad bx_0+(K-\lambda I)y=0.
\end{equation}
Since $\lambda\notin\sigma(K)$, the second equation gives
\[
y=(\lambda I-K)^{-1}bx_0.
\]
Substituting this into the first equation yields
\[
0=(a_{00}-\lambda)x_0+b^H(\lambda I-K)^{-1}bx_0
=-(\lambda-a_{00}-\langle (\lambda I-K)^{-1}b,b\rangle_{H_Q})x_0=-F(\lambda)\,x_0,
\]
where we used $b^Hz=\langle z,b\rangle_{H_Q}$. Thus a nontrivial solution exists iff $F(\lambda)=0$.
If $F(\lambda)=0$, then $x_0$ is free and $y$ is uniquely determined by the the second equation of \eqref{eq:schur_system_general}, hence the eigenspace is
one-dimensional and spanned by \eqref{eq:eigvec_schur_general}.
\end{proof}

\begin{proposition}[Scalar equation for the critical block]\label{prop:critical_scalar_equation}
Let $A=A_1$ be the normalized operator in the $r=1$ block, written as in \eqref{eq:A1_block_form}.
Let $\lambda\in\mathbb R$ satisfy $\lambda\notin\sigma(K)$. Then $\lambda$ is an eigenvalue of $A$ if and only if
\begin{equation}\label{eq:scalar_equation_r1}
\lambda-1=\langle (\lambda I-K)^{-1}b,\ b\rangle_{\ell^2(\mathbb Z_*)}.
\end{equation}
In particular, any such eigenvalue is simple in the $r=1$ block, and the corresponding eigenvector can be taken as
\[
x_\lambda=\bigl(1,\ (\lambda I-K)^{-1}b\bigr)\in \mathbb C\oplus \ell^2(\mathbb Z_*).
\]
\end{proposition}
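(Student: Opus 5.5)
This is a direct specialization of Lemma~\ref{lem:schur_complement_scalar}. We take $H=\ell^2(\mathbb Z)=\mathrm{span}\{\delta_0\}\oplus\ell^2(\mathbb Z_*)$, identify $\mathrm{span}\{\delta_0\}$ with $\mathbb C$, and set $H_Q=\ell^2(\mathbb Z_*)$. Lemma~\ref{lem:A1_block_matrix} supplies exactly the hypotheses the abstract criterion needs:
\begin{itemize}
\item $A=A_1$ has the block form \eqref{eq:A1_block_form};
\item $a_{00}=v_0/d_0=1$ is real;
\item $b\in\ell^2(\mathbb Z_*)$;
\item $K=\Pi_{\mathbb Z_*}A\Pi_{\mathbb Z_*}$ is bounded and self-adjoint.
\end{itemize}
In addition, $b^H$ acts as $z\mapsto\langle z,b\rangle_{\ell^2(\mathbb Z_*)}$, since the off-diagonal entries are the real symmetric quantities $v_m/\sqrt{d_m}$ (the $v_m$ are real and even by Lemma~\ref{lem:wn_reflection_real}). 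Hence the upper-right block of $A$ is indeed the adjoint of the lower-left column $b$.

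Now fix $\lambda\in\mathbb R\setminus\sigma(K)$. Then $(\lambda I-K)^{-1}$ is a bounded operator on $\ell^2(\mathbb Z_*)$. Lemma~\ref{lem:schur_complement_scalar} says $\lambda$ is an eigenvalue of $A$ if and only if $F(\lambda)=\lambda-a_{00}-\langle(\lambda I-K)^{-1}b,b\rangle=0$. Substituting $a_{00}=1$ gives exactly \eqref{eq:scalar_equation_r1}. The same lemma shows that the eigenspace is one-dimensional and spanned by $x_\lambda=(1,(\lambda I-K)^{-1}b)$. This vector lies in $\ell^2(\mathbb Z)$ because $b\in\ell^2(\mathbb Z_*)$ and the resolvent is bounded.

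The only point needing care is what ``simple in the $r=1$ block'' means. Proposition~\ref{prop:block_standard_evp} gives a multiplicity-preserving correspondence between eigenvectors $y$ of $A_1$ and eigenfunctions $\varphi\in\mathcal H_1$ via $x=D_1^{-1/2}y$. So one-dimensionality of $\ker(A-\lambda I)$ transfers to one-dimensionality of the corresponding Steklov eigenspace inside $\mathcal H_1$, with $\sigma=1/\lambda$.

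No real obstacle arises here. The substantive issue, namely checking that the relevant $\lambda$ near $1$ actually avoids $\sigma(K)$, is not part of this statement: it is assumed as a hypothesis, and will be addressed later using $\|K\|\lesssim 1/N$.
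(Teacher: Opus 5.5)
Your proof is correct and follows the paper, whose proof of this proposition is the one-line specialization of Lemma~\ref{lem:schur_complement_scalar} with $a_{00}=1$. Your additional checks only spell out hypotheses the paper takes for granted: that $a_{00}$ is real, that $b^H$ acts as $z\mapsto\langle z,b\rangle_{\ell^2(\mathbb Z_*)}$, and that simplicity transfers to $\mathcal H_1$ via Proposition~\ref{prop:block_standard_evp}.
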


\begin{proof}
This is Lemma~\ref{lem:schur_complement_scalar} with $a_{00}=1$.
\end{proof}

\begin{remark}[Positivity and monotonicity of the Schur term]\label{rem:schur_term_positive}
If $K\ge 0$ and $\lambda>\|K\|$, then $\lambda I-K$ is strictly positive and hence $(\lambda I-K)^{-1}$ is a positive
bounded operator on $\ell^2(\mathbb Z_*)$. Therefore
\[
\langle (\lambda I-K)^{-1}b,\ b\rangle_{\ell^2(\mathbb Z_*)}> 0.
\]
Consequently, any eigenvalue $\lambda>\|K\|$ solving \eqref{eq:scalar_equation_r1} satisfies $\lambda> 1$.
In later sections we will obtain quantitative bounds on $\|K\|$ ensuring that the relevant root of
\eqref{eq:scalar_equation_r1} lies in a fixed interval where $\lambda\notin\sigma(K)$.
\end{remark}

\begin{remark}[From $\lambda$ to Steklov eigenvalues]\label{rem:lambda_sigma_relation}
Recall that on the $r=1$ block the Steklov eigenvalue problem
$\Lambda\varphi=\sigma T_{w_N}\varphi$ reduces to the standard eigenvalue problem
$A_1 y=\lambda y$ with $\lambda=1/\sigma$ (see Section~\ref{subsec:normalized_blocks}).
Thus solving \eqref{eq:scalar_equation_r1} for $\lambda$ yields the corresponding Steklov eigenvalue
$\sigma=1/\lambda$ in the $r=1$ block; the conjugate block $r=N-1$ gives the same positive spectrum
(Lemma~\ref{lem:critical_conjugacy}).
\end{remark}

% ============================================================
% 4.5. Resolvent control for the Schur term in the critical block
% ============================================================

\subsection{Hilbert--Schmidt and resolvent bounds for the critical compression}\label{subsec:critical_resolvent}

\begin{lemma}[A convenient $\ell^2$ weight]\label{lem:S2_bound_r1}
Define
\[
S_2(N):=\sum_{m\in \mathbb Z_*}\frac{1}{d_m^2}.
\]
Then for all $N\ge 3$,
\[
S_2(N)\le \frac{2\zeta(2)}{(N-1)^2}.
\]
\end{lemma}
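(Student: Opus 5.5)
The plan is to bound each term of $S_2(N)$ directly, splitting $\mathbb Z_*$ into positive and negative indices. For $m\ge 1$ one has $d_m=1+mN$, while for $m\le -1$, writing $m=-j$ with $j\ge 1$, one has $d_m=jN-1$. So
\[
S_2(N)=\sum_{j\ge1}\frac{1}{(1+jN)^2}+\sum_{j\ge1}\frac{1}{(jN-1)^2}.
\]
The whole proof consists of estimating the two series separately against $\zeta(2)/(N-1)^2$.

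The key step is the elementary inequality $jN-1\ge j(N-1)$ for all $j\ge1$. It holds because $jN-1-j(N-1)=j-1\ge 0$. Trivially also $1+jN\ge jN-1\ge j(N-1)$. Hence each of the two series is bounded termwise by
\[
\sum_{j\ge1}\frac{1}{j^2(N-1)^2}=\frac{\zeta(2)}{(N-1)^2}.
\]
Adding the two bounds gives $S_2(N)\le 2\zeta(2)/(N-1)^2$.

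The argument uses only $N\ge 2$, which ensures $N-1\ge1$ and that all $d_m>0$ for $m\neq0$. Convergence of the series is immediate from the same comparison. There is no real obstacle; the only point needing care is the negative-index branch, where $d_m$ is smallest. There the comparison $jN-1\ge j(N-1)$, with equality exactly at $j=1$, is what gives the factor $(N-1)^{-2}$ rather than $N^{-2}$. If a sharper constant were wanted later, one could keep the positive-index sum as $\le \zeta(2)/N^2$, but the stated bound does not need this.
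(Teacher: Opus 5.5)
Your proof is correct and is essentially the paper's argument: the paper likewise shows $|1+mN|\ge (N-1)|m|$ for $m\neq 0$ (using $1+mN\ge m(N-1)$ for $m\ge1$ and $kN-1\ge k(N-1)$ for $m=-k$) and then sums $\sum_{m\neq0}m^{-2}=2\zeta(2)$. Your observation that only $N\ge 2$ is needed is also accurate.
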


\begin{proof}
For $m\neq 0$ one has $|1+mN|\ge (N-1)|m|$:
if $m\ge 1$, then $1+mN\ge m(N-1)$; if $m=-k\le -1$, then $|1-kN|=kN-1\ge kN-k=k(N-1)$.
Therefore
\[
S_2(N)=\sum_{m\neq 0}\frac{1}{|1+mN|^2}
\le \frac{1}{(N-1)^2}\sum_{m\neq 0}\frac{1}{m^2}
=\frac{2\zeta(2)}{(N-1)^2}.
\]
\end{proof}

\medskip

\begin{lemma}[Hilbert--Schmidt bound for $K$]\label{lem:K_HS_bound_r1}
Assume $N\ge 5$ and $w_N\in L^2(\mathbb S^1)$ (cf.~\eqref{eq:wn_Lp_range} with $p=2$),
so that $(v_m)_{m\in\mathbb Z}\in \ell^2(\mathbb Z)$ by Parseval.
Then $K$ is Hilbert--Schmidt on $\ell^2(\mathbb Z_*)$ and
\[
\|K\|_{\mathrm{HS}}\le \sqrt{S_2(N)}\,\|v\|_{\ell^2},
\qquad
\|K\|\le \|K\|_{\mathrm{HS}}.
\]
\end{lemma}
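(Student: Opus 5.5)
The plan is a direct computation of the Hilbert--Schmidt norm from the explicit entries \eqref{eq:A1_entries}, followed by a summation over one index with the other held fixed. First, since $N\ge5$ we have $2<N/2$, so \eqref{eq:wn_Lp_range} gives $w_N\in L^2(\mathbb S^1)$. By Parseval, $\sum_{k}|\widehat w_N(k)|^2=\frac1{2\pi}\|w_N\|_{L^2}^2<\infty$. By Lemma~\ref{lem:wn_periodic_sparse} the only nonzero coefficients are $v_m=\widehat w_N(mN)$, hence $\|v\|_{\ell^2}^2=\sum_{m}|v_m|^2<\infty$.

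Next I compute, using $K_{m,m'}=v_{m-m'}/\sqrt{d_md_{m'}}$ for $m,m'\in\mathbb Z_*$:
\[
\|K\|_{\mathrm{HS}}^2=\sum_{m,m'\in\mathbb Z_*}\frac{|v_{m-m'}|^2}{d_m d_{m'}}.
\]
To bound the double sum I apply the elementary inequality $\frac{1}{d_md_{m'}}\le \frac12\bigl(\frac1{d_m^2}+\frac1{d_{m'}^2}\bigr)$ and use the symmetry in $(m,m')$. This gives
\[
\|K\|_{\mathrm{HS}}^2\le\sum_{m\in\mathbb Z_*}\frac1{d_m^2}\sum_{m'\in\mathbb Z_*}|v_{m-m'}|^2\le S_2(N)\,\|v\|_{\ell^2}^2,
\]
because for each fixed $m$ the inner sum is at most $\sum_{j\in\mathbb Z}|v_j|^2$. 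An equivalent route is Cauchy--Schwarz on the double sum. Taking square roots yields the first bound, and Lemma~\ref{lem:S2_bound_r1} then makes it explicit.

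It remains to check that this finite matrix norm really is the Hilbert--Schmidt norm of the operator $K$ defined via the form in Lemma~\ref{lem:A1_block_matrix}. I would argue as follows. The matrix with square-summable entries defines a Hilbert--Schmidt operator $\tilde K$. Both $K$ and $\tilde K$ agree on finitely supported vectors, by the matrix identification in Lemma~\ref{lem:Ar_selfadjoint_positive}, and both are bounded, so $K=\tilde K$ by density. The inequality $\|K\|\le\|K\|_{\mathrm{HS}}$ is then the standard estimate, obtained by applying Cauchy--Schwarz row by row.

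No step is a real obstacle. The only points needing care are this identification of the form-defined operator with its matrix, and the observation that the removed index $m=0$ and the condition $m\neq m'$ only shrink the sum.
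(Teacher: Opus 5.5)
Your proof is correct and is essentially the paper's argument: the paper writes $\|K\|_{\mathrm{HS}}^2=\langle c,u*c\rangle$ with $u_j=|v_j|^2$ and $c_m=d_m^{-1}$, then applies Cauchy--Schwarz and Young's inequality, while your AM--GM-plus-symmetry step is simply another proof of the same bound $\|u\|_{\ell^1}\|c\|_{\ell^2}^2$. You also check explicitly that the form-defined $K$ coincides with its matrix, which the paper leaves implicit. The one slip is in your closing remark: $K$ keeps its diagonal entries $1/d_m$, so there is no restriction $m\neq m'$, and this diagonal is exactly why $v_0=1$ must stay in $\|v\|_{\ell^2}$.
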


\begin{proof}
Set $c\in\ell^2(\mathbb Z)$ by $c_0:=0$ and $c_m:=d_m^{-1}$ for $m\in \mathbb Z_*$.
Also set $u_j:=|v_j|^2\ge 0$ for $j\in\mathbb Z$.
By \eqref{eq:A1_entries_recall_45} and the definition of $K=\Pi_{\mathbb Z_*}\,A\,\Pi_{\mathbb Z_*}$,
\[
\|K\|_{\mathrm{HS}}^2
=\sum_{m,m'\in \mathbb Z_*}\frac{|v_{m-m'}|^2}{d_m\,d_{m'}}
=\sum_{m,m'\in\mathbb Z}u_{m-m'}\,c_m\,c_{m'}
=\langle c,\ u*c\rangle_{\ell^2(\mathbb Z)},
\]
where
\[
(u*c)_m:=\sum_{m'\in\mathbb Z}u_{m-m'}c_{m'}.
\]
Since $u\in\ell^1(\mathbb Z)$ and $c\in\ell^2(\mathbb Z)$, Young's inequality gives $u*c\in\ell^2(\mathbb Z)$.

By Cauchy--Schwarz and the Young's inequality,
\[
\|K\|_{\mathrm{HS}}^2\le \|c\|_{\ell^2}\,\|u*c\|_{\ell^2}
\le \|c\|_{\ell^2}\,\|u\|_{\ell^1}\,\|c\|_{\ell^2}
=\|u\|_{\ell^1}\,\|c\|_{\ell^2}^2.
\]
Since $\|u\|_{\ell^1}=\sum_{j\in\mathbb Z}|v_j|^2=|v_0|^2+\sum_{j\in \mathbb Z_*}|v_j|^2=\|v\|_{\ell^2}^2$
and $\|c\|_{\ell^2}^2=\sum_{m\in \mathbb Z_*}d_m^{-2}=S_2(N)$, the stated bound follows.
Finally, $\|K\|\le \|K\|_{\mathrm{HS}}$ holds for every Hilbert--Schmidt operator.
\end{proof}
\begin{corollary}[Resolvent bounds]\label{cor:resolvent_bound_K}
Let
\[
\kappa_N:=\|K\|\le \|K\|_{\mathrm{HS}}.
\]
Then for every $\lambda>\kappa_N$, the resolvent $(\lambda I-K)^{-1}$ exists on $\ell^2(\mathbb Z_*)$ and
\[
\|(\lambda I-K)^{-1}\|\le \frac{1}{\lambda-\kappa_N}.
\]
%In particular, if $\kappa_N\le \tfrac12$, then $(\lambda I-K)^{-1}$ exists for all $\lambda\in(1/2,2]$ and
%\[
%\|(\lambda I-K)^{-1}\|\le 2\qquad\text{for all }\lambda\in[1,2].
%\]
\end{corollary}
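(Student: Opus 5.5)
The statement is Corollary~\ref{cor:resolvent_bound_K}. It is a direct consequence of the spectral theorem for the bounded self-adjoint operator $K$ on $\ell^2(\mathbb Z_*)$, and I would argue as follows.

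First, identify where the spectrum of $K$ sits. By Lemma~\ref{lem:A1_block_matrix}, $K=\Pi_{\mathbb Z_*}A\Pi_{\mathbb Z_*}$ is bounded, self-adjoint and positive. Hence $\sigma(K)\subset[0,\|K\|]=[0,\kappa_N]$. For a general bounded self-adjoint operator one only has $\sigma(K)\subset[-\|K\|,\|K\|]$, and positivity is not even needed for the argument below.

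Second, show that $\lambda I-K$ is bounded below. For $\lambda>\kappa_N$ and any $y\in\ell^2(\mathbb Z_*)$, the Cauchy--Schwarz inequality gives
\[
\langle(\lambda I-K)y,y\rangle_{\ell^2(\mathbb Z_*)}\ \ge\ \lambda\|y\|^2-\|K\|\,\|y\|^2=(\lambda-\kappa_N)\|y\|^2 .
\]
Applying Cauchy--Schwarz once more yields $\|(\lambda I-K)y\|\ge(\lambda-\kappa_N)\|y\|$.

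Third, deduce invertibility. The lower bound shows that $\lambda I-K$ is injective and has closed range. Since $\lambda I-K$ is self-adjoint, the orthogonal complement of its range equals its kernel, which is $\{0\}$. So $\lambda I-K$ is a bijection of $\ell^2(\mathbb Z_*)$, and the lower bound gives $\|(\lambda I-K)^{-1}\|\le(\lambda-\kappa_N)^{-1}$.

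An alternative route is the Neumann series. Write
\[
(\lambda I-K)^{-1}=\lambda^{-1}\sum_{j\ge0}(K/\lambda)^j ,
\]
which converges since $\|K/\lambda\|<1$. This gives the bound $(\lambda-\kappa_N)^{-1}$ directly, and it also provides the moment expansion that the paper will need later.

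Fourth, check the inequality $\kappa_N\le\|K\|_{\mathrm{HS}}$ stated in the corollary. It is already contained in Lemma~\ref{lem:K_HS_bound_r1}.

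There is no genuine obstacle here. The only point requiring care is surjectivity in the third step, and self-adjointness (or the Neumann series) handles it.
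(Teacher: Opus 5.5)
Your proposal is correct, but it takes a different route from the paper.

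\textbf{The paper's route.} The proof is a one-line appeal to the spectral theorem. Since $K$ is self-adjoint with $K\ge 0$, one has $\sigma(K)\subset[0,\kappa_N]$. For self-adjoint operators $\|(\lambda I-K)^{-1}\|=1/\mathrm{dist}(\lambda,\sigma(K))$, and this is at most $1/(\lambda-\kappa_N)$. In fact it is an equality, since $\kappa_N=\max\sigma(K)$ when $K\ge 0$.

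\textbf{Your route.} You never invoke that identity and instead build invertibility and the bound from scratch, in one of two ways.
\begin{enumerate}
\item The coercivity estimate $\langle(\lambda I-K)y,y\rangle\ge(\lambda-\kappa_N)\|y\|^2$ gives a lower bound on $\|(\lambda I-K)y\|$. Self-adjointness then turns injective with closed range into surjective.
\item The Neumann series gives existence and the bound $1/(\lambda-\kappa_N)$ at once. It works for \emph{any} bounded $K$ with $\|K\|<\lambda$, using neither self-adjointness nor positivity.
\end{enumerate}

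\textbf{Comparison.} Your argument is more elementary and more general. The Neumann-series variant is exactly the expansion the paper introduces right afterwards in Lemma~\ref{lem:resolvent_neumann_series}, so it ties in naturally. The paper's argument is shorter once the spectral theorem is available, and it gives the sharper statement that the resolvent norm equals the reciprocal distance to the spectrum, though only the inequality is needed here.

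Your first step, locating $\sigma(K)$ in $[0,\kappa_N]$, is what the paper's proof rests on. In your own argument it plays no role, as you essentially note yourself.
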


\begin{proof}
Since $K$ is self-adjoint and $K\ge0$ (Lemma~\ref{lem:A1_block_matrix}), one has $\sigma(K)\subset[0,\kappa_N]$.
For $\lambda>\kappa_N$,
\[
\|(\lambda I-K)^{-1}\|
=\frac{1}{\mathrm{dist}(\lambda,\sigma(K))}
\le \frac{1}{\lambda-\kappa_N}.
\]
%If $\kappa_N\le 1/2$ and $\lambda\in[1,2]$, then $\lambda-\kappa_N\ge 1/2$, hence the last estimate gives $\le 2$.
\end{proof}

\subsection{The unique Schur root and the top eigenvalue in the critical block}\label{subsec:unique_schur_root}
Set
\[
\beta_N:=\|b\|_{\ell^2(\mathbb Z_*)}^2.
\]

\begin{definition}[Schur function]\label{def:schur_function_F}
For $\lambda\in\rho(K)$ we define
\begin{equation}\label{eq:def_F_lambda}
F(\lambda):=\lambda-1-\big\langle (\lambda I-K)^{-1}b,\; b\big\rangle_{\ell^2(\mathbb Z_*)}.
\end{equation}
\end{definition}

\begin{lemma}[Monotonicity of the Schur function]\label{lem:F_monotone}
For every $\lambda>\kappa_N$ one has $\lambda\in\rho(K)$ and $F(\lambda)\in\mathbb R$.
Moreover $F$ is $C^1$ on $(\kappa_N,\infty)$ and strictly increasing, with
\begin{equation}\label{eq:Fprime}
F'(\lambda)=1+\big\langle (\lambda I-K)^{-2}b,\; b\big\rangle_{\ell^2(\mathbb Z_*)}\ \ge\ 1.
\end{equation}
\end{lemma}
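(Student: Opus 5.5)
The plan is to work with the resolvent $R(\lambda):=(\lambda I-K)^{-1}$ on $\ell^2(\mathbb Z_*)$ and use the spectral theorem for the bounded self-adjoint operator $K\ge 0$.

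\emph{Step 1: the resolvent exists.} By Lemma~\ref{lem:A1_block_matrix} we have $\sigma(K)\subset[0,\kappa_N]$. Hence every $\lambda>\kappa_N$ lies in $\rho(K)$, and Corollary~\ref{cor:resolvent_bound_K} gives $\|R(\lambda)\|\le(\lambda-\kappa_N)^{-1}$.

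\emph{Step 2: $F$ is real.} Since $K$ is self-adjoint and $\lambda$ is real, $R(\lambda)$ is self-adjoint. Therefore $\langle R(\lambda)b,b\rangle=\overline{\langle b,R(\lambda)b\rangle}=\overline{\langle R(\lambda)b,b\rangle}$, so this number is real and $F(\lambda)\in\mathbb R$.

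\emph{Step 3: $F$ is differentiable.} I would use the resolvent identity
\[
R(\lambda)-R(\mu)=(\mu-\lambda)R(\lambda)R(\mu),
\]
valid for $\lambda,\mu\in(\kappa_N,\infty)$. Together with the uniform bound from Step~1 on compact subintervals, it gives $\|R(\lambda)-R(\mu)\|\le |\lambda-\mu|\,(\lambda-\kappa_N)^{-1}(\mu-\kappa_N)^{-1}$. So $R$ is norm-continuous. Dividing the identity by $\mu-\lambda$ and letting $\mu\to\lambda$ then shows that $R$ is norm-differentiable with $R'(\lambda)=-R(\lambda)^2$. It follows that
\[
\frac{d}{d\lambda}\langle R(\lambda)b,b\rangle=-\langle R(\lambda)^2b,b\rangle,
\]
which gives \eqref{eq:Fprime}. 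This derivative is continuous in $\lambda$, again because $R$ is norm-continuous, so $F\in C^1$.

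\emph{Step 4: the lower bound and monotonicity.} Since $R(\lambda)$ is self-adjoint,
\[
\langle R(\lambda)^2b,b\rangle=\|R(\lambda)b\|^2_{\ell^2(\mathbb Z_*)}\ge 0.
\]
Hence $F'(\lambda)\ge 1$, and by the mean value theorem $F$ is strictly increasing on $(\kappa_N,\infty)$.

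Alternatively, one could write $\langle R(\lambda)b,b\rangle=\int_{[0,\kappa_N]}(\lambda-t)^{-1}\,d\mu_b(t)$, where $\mu_b$ is the spectral measure of $b$, and differentiate under the integral sign by dominated convergence. Either route works.

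There is no real obstacle here. The only step needing care is justifying differentiation in the operator norm, which the resolvent identity settles.
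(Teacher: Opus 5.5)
Your proposal is correct and follows essentially the same route as the paper: realness from self-adjointness of $(\lambda I-K)^{-1}$, the derivative formula $\frac{d}{d\lambda}(\lambda I-K)^{-1}=-(\lambda I-K)^{-2}$, and $\langle(\lambda I-K)^{-2}b,b\rangle=\|(\lambda I-K)^{-1}b\|^2\ge 0$. Your resolvent-identity step adds the norm-differentiability and continuity of the resolvent, which the paper takes for granted, and this justifies the $C^1$ claim.
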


\begin{proof}
Since $K$ is bounded self-adjoint and $\sigma(K)\subset[0,\kappa_N]$, every $\lambda>\kappa_N$
belongs to the resolvent set $\rho(K)$ and $(\lambda I-K)^{-1}$ is bounded and self-adjoint.
Hence the quadratic form in \eqref{eq:def_F_lambda} is real, so $F(\lambda)\in\mathbb R$.

Differentiating \eqref{eq:def_F_lambda} in $\lambda$ and using
$\frac{d}{d\lambda}(\lambda I-K)^{-1}=-(\lambda I-K)^{-2}$ gives \eqref{eq:Fprime}.
Since $(\lambda I-K)^{-2}\ge 0$, we obtain $F'(\lambda)\ge 1$ and in particular strict monotonicity.
\end{proof}

\begin{proposition}[Existence and uniqueness of the Schur root]\label{prop:unique_schur_root}
Assume $\kappa_N<1$ and $b$ is as defined in \eqref{eq:A1_entries}. Then there exists a unique $\lambda_\ast\in(1,\infty)$ such that $F(\lambda_\ast)=0$.
Moreover, $\lambda_\ast$ is a simple eigenvalue of $A$ and it is the \emph{only} eigenvalue of $A$ in $(\kappa_N,\infty)$.
Consequently,
\begin{equation}\label{eq:lambda_star_is_top}
\lambda_\ast=\|A\|.
\end{equation}
The associated eigenspace is one-dimensional and is spanned by
\begin{equation}\label{eq:eigvec_lambda_star}
x_\ast=\Bigl(1,\;(\lambda_\ast I-K)^{-1}b\Bigr)\in \mathbb C\oplus \ell^2(\mathbb Z_*).
\end{equation}
\end{proposition}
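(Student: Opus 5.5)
The plan is to work on the interval $(\kappa_N,\infty)$, where Lemma~\ref{lem:F_monotone} shows that $F$ is $C^1$ with $F'\ge 1$. This gives uniqueness of a zero for free, so the task reduces to existence, localization and the spectral identification.

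\emph{Existence.} Since $\kappa_N<1$, the point $\lambda=1$ lies in $(\kappa_N,\infty)$. Because $(I-K)^{-1}$ is positive, we have
\[
F(1)=-\langle (I-K)^{-1}b,b\rangle\le 0,
\]
and the inequality is strict when $b\neq0$. If $b=0$ (all $v_m=0$ for $m\neq0$, which is impossible for a polygon since $w_N$ is not constant), then $F(\lambda)=\lambda-1$ and the root is exactly $1$. To be safe we note that $v_1\neq0$, so $F(1)<0$. For large $\lambda$, Corollary~\ref{cor:resolvent_bound_K} gives
\[
F(\lambda)\ge \lambda-1-\frac{\beta_N}{\lambda-\kappa_N}\to+\infty .
\]
The intermediate value theorem then yields $\lambda_\ast>1$ with $F(\lambda_\ast)=0$, and strict monotonicity gives uniqueness on all of $(\kappa_N,\infty)$, hence in particular on $(1,\infty)$.

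\emph{Eigenvalue statement.} By Proposition~\ref{prop:critical_scalar_equation}, since $\lambda_\ast\notin\sigma(K)$, $\lambda_\ast$ is an eigenvalue of $A$ with one-dimensional eigenspace spanned by $x_\ast$ as in \eqref{eq:eigvec_lambda_star}. Conversely, any eigenvalue $\lambda>\kappa_N$ of $A$ lies in $\rho(K)$, so it satisfies $F(\lambda)=0$ and must equal $\lambda_\ast$. Thus $\lambda_\ast$ is the only eigenvalue in $(\kappa_N,\infty)$.

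\emph{Identifying $\lambda_\ast=\|A\|$.} This is the main obstacle: we must rule out continuous spectrum of $A$ above $\kappa_N$, because the Schur criterion only detects eigenvalues. I would first try to prove the resolvent-set characterization directly: for $\lambda>\kappa_N$ with $F(\lambda)\ne0$, the operator $\lambda I-A$ is boundedly invertible. One can write down the inverse explicitly by block Gaussian elimination (Schur complement formula), with $F(\lambda)^{-1}$ appearing as the $(0,0)$ entry, and all other blocks bounded since $(\lambda I-K)^{-1}$ is bounded. Hence
\[
\sigma(A)\cap(\kappa_N,\infty)=\{\lambda_\ast\}.
\]
An alternative route is to note that $A$ is a rank-two perturbation, via $b$, of $1\oplus K$, so Weyl's theorem implies that the essential spectrum of $A$ equals that of $K$, contained in $[0,\kappa_N]$. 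Either way, since $A\ge0$ is self-adjoint (Lemma~\ref{lem:Ar_selfadjoint_positive}), we get
\[
\|A\|=\max\sigma(A)=\max\bigl(\lambda_\ast,\ \sup(\sigma(A)\cap[0,\kappa_N])\bigr)=\lambda_\ast,
\]
because $\lambda_\ast>1>\kappa_N$. This gives \eqref{eq:lambda_star_is_top}.
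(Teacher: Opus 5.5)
Your proposal is correct. Existence, uniqueness and the eigenvector description follow the paper's argument: the sign of $F(1)$, growth of $F$ at infinity, $F'\ge 1$, and the Schur criterion of Proposition~\ref{prop:critical_scalar_equation}. You also justify $b\neq0$ via $v_1>0$, which the paper simply asserts.

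The two arguments differ only in the final identification $\lambda_\ast=\|A\|$.

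\emph{The paper's route.} It argues that $A\ge0$ is compact and self-adjoint, so $\|A\|$ is its largest eigenvalue. That eigenvalue is at least $\lambda_\ast>\kappa_N$, hence equals $\lambda_\ast$. Compactness is what licenses the paper's step from ``$\lambda\in\sigma(A)$'' to ``$F(\lambda)=0$'', since the Schur criterion only detects eigenvalues. In the paper's setting, compactness comes from the Hilbert--Schmidt bound on $K$ (Lemma~\ref{lem:K_HS_bound_r1}, which needs $w_N\in L^2$) together with $b\in\ell^2$.

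\emph{Your route.} You show directly that every $\lambda\in(\kappa_N,\infty)\setminus\{\lambda_\ast\}$ belongs to $\rho(A)$. You do this either through the explicit block Schur-complement inverse, with $F(\lambda)^{-1}$ in the corner, or through stability of the essential spectrum under the rank-two perturbation of $1\oplus K$. This uses only that $K$ is bounded self-adjoint with $\sigma(K)\subset[0,\kappa_N]$ and that $A\ge0$. It therefore needs no compactness, and it makes the distinction between spectrum and point spectrum explicit.

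The paper's version is shorter once compactness is granted; yours is more self-contained and works for any bounded self-adjoint $K$.
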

\begin{proof}
Since $\kappa_N<1$ we have $1\in\rho(K)$ and thus $F(1)$ is well-defined. Moreover
\[
F(1)=-\big\langle (I-K)^{-1}b,\; b\big\rangle_{\ell^2(\mathbb Z_*)}<0,
\]
because $(I-K)^{-1}>0$ and $b\neq0$.
On the other hand, $\|(\lambda I-K)^{-1}\|\le 1/(\lambda-\kappa_N)$ for $\lambda>\kappa_N$
(Corollary~\ref{cor:resolvent_bound_K}), hence
\[
\big|\big\langle (\lambda I-K)^{-1}b,\; b\big\rangle_{\ell^2(\mathbb Z_*)}\big|
\le \frac{\beta_N}{\lambda-\kappa_N}=o(\lambda)\qquad(\lambda\to\infty),
\]
so $F(\lambda)\to+\infty$ as $\lambda\to\infty$.
By Lemma~\ref{lem:F_monotone}, $F$ is strictly increasing on $(\kappa_N,\infty)$, hence it has exactly one zero
$\lambda_\ast\in(1,\infty)$.

By Proposition~\ref{prop:critical_scalar_equation} (equivalently Lemma~\ref{lem:schur_complement_scalar}),
for $\lambda\in\rho(K)$ the equation $F(\lambda)=0$ is equivalent to $\lambda\in\sigma(A)$, and the corresponding eigenspace
is one-dimensional, generated by \eqref{eq:eigvec_lambda_star}. This gives simplicity of $\lambda_\ast$.

Finally, if $\lambda>\kappa_N$ and $\lambda\in\sigma(A)$, then automatically $\lambda\in\rho(K)$ since
$\sigma(K)\subset[0,\kappa_N]$. Hence $\lambda$ must satisfy $F(\lambda)=0$, and by strict monotonicity
it follows that $\lambda=\lambda_\ast$. Therefore $\lambda_\ast$ is the only eigenvalue of $A$ in $(\kappa_N,\infty)$.

Since $A\ge 0$ is compact self-adjoint, $\|A\|$ equals its largest eigenvalue; thus \eqref{eq:lambda_star_is_top} holds.
\end{proof}

\begin{corollary}[The first generalized eigenvalue in the $r=1$ block]\label{cor:sigma11_from_lambda_star}
Assume that $N\geq 20$. Let $\lambda_\ast$ be as in Proposition~\ref{prop:unique_schur_root}. Then the smallest positive Steklov eigenvalue
in the $r=1$ block satisfies
\begin{equation}\label{eq:sigma11_reciprocal}
\sigma_{1,1}=\frac{1}{\lambda_\ast},
\end{equation}
and it is simple within the $r=1$ block.
\end{corollary}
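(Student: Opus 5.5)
The plan is to combine Proposition~\ref{prop:block_standard_evp} and Proposition~\ref{prop:block_Rayleigh} with Proposition~\ref{prop:unique_schur_root}. The one hypothesis that still needs checking is $\kappa_N<1$ for $N\ge 20$. By Lemma~\ref{lem:K_HS_bound_r1} and Lemma~\ref{lem:S2_bound_r1},
\[
\kappa_N\le \|K\|_{\mathrm{HS}}\le \sqrt{S_2(N)}\,\|v\|_{\ell^2}\le \frac{\sqrt{2\zeta(2)}}{N-1}\,\|v\|_{\ell^2}.
\]
We have $N\ge 20>4$, so $w_N\in L^2$ by \eqref{eq:wn_Lp_range}. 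Parseval together with the uniform decay bound $|v_m|\le V_\infty N^{-1}|m|^{-0.9}$ (the bound \eqref{eq:vm_uniform_bound} already used in Lemma~\ref{lem:B0}) gives $\|v\|_{\ell^2}^2\le 1+2V_\infty^2\zeta(1.8)/N^2$. With the explicit constants of Proposition~\ref{prop:vm_constants_app_new}, this makes $\kappa_N\le \sqrt{2\zeta(2)}\,(1+o(1))/19<1$. This numerical check is the only step where something could fail. If the $\ell^2$ route were too lossy, I would instead bound $\kappa_N$ by $\|K\|\le \sup_m d_m^{-1}+\|K^{\mathrm{off}}\|_{\mathrm{HS}}\le (N-1)^{-1}+O(N^{-3/2})$, exactly as in \eqref{eq:quadraticform}. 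Either route gives a bound far below $1$. We also need $b\ne 0$. This holds because $v_m\ne0$ for some $m\ne0$; otherwise $w_N\equiv1$, which is impossible since $w_N$ has singularities at the $N$-th roots of unity.

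Once $\kappa_N<1$ is established, Proposition~\ref{prop:unique_schur_root} applies. It shows that $\lambda_\ast\in(1,\infty)$ is the largest element of the spectrum of $A=A_1$, that it is a simple eigenvalue, and that $\lambda_\ast=\|A\|$. Next, Proposition~\ref{prop:block_standard_evp} says the Steklov eigenvalues of the $r=1$ block are exactly the reciprocals $\sigma=1/\lambda$ of the positive eigenvalues $\lambda$ of $A_1$, with matching eigenspaces. This uses the bijection $y=\mathbf D^{1/2}x$, which preserves multiplicities. Lemma~\ref{lem:mean_zero_nonzero_blocks} gives $\mathcal H_1^\circ=\mathcal H_1$, so the mean-zero constraint plays no role in this block.

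It remains to identify $\sigma_{1,1}$. By \eqref{eq:RRQMatrix}, $\sigma_{1,1}=\inf_{y\ne0}\|y\|^2/\langle Ay,y\rangle=1/\sup_{y}\langle Ay,y\rangle/\|y\|^2=1/\|A\|$, since $A\ge0$ is self-adjoint. Proposition~\ref{prop:block_Rayleigh} guarantees that this infimum is attained. Hence $\sigma_{1,1}=1/\lambda_\ast$. The minimizers correspond to the eigenspace of $\lambda_\ast$, which is one-dimensional and spanned by $x_\ast$, so $\sigma_{1,1}$ is simple within the $r=1$ block.
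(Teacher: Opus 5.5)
Your proof is correct and follows essentially the same route as the paper: you check $\kappa_N<1$ for $N\ge 20$ via Lemma~\ref{lem:K_HS_bound_r1}, Lemma~\ref{lem:S2_bound_r1} and the decay bound \eqref{eq:vm_uniform_bound} (the same computation as in Lemma~\ref{lem:beta_kappa_bounds_stepD}), and then combine Proposition~\ref{prop:unique_schur_root} with the reduction $\lambda=1/\sigma$ from Proposition~\ref{prop:block_standard_evp}. You also verify $b\neq 0$ explicitly and obtain $\sigma_{1,1}=1/\sup_{y\neq 0}\langle Ay,y\rangle/\|y\|^2=1/\|A\|$ directly; the paper leaves both of these steps implicit.
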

\begin{proof}
Lemma~\ref{lem:K_HS_bound_r1} gives an explicit upper bound of $\kappa_N$. Together with Lemma~\ref{lem:Cv_bounds} and the fact that $N\ge 20$, one can easily show that $\kappa_N<1$ (see also Lemma \ref{lem:beta_kappa_bounds_stepD}), so that Proposition~\ref{prop:unique_schur_root} applies. In the $r=1$ block, the normalization $\lambda=1/\sigma$ reduces the generalized problem to the standard eigenvalue problem
$Ay=\lambda y$ (Proposition~\ref{prop:block_standard_evp}).
Since $\sigma>0$, ordering is reversed: the smallest $\sigma$ corresponds to the largest $\lambda$.
By \eqref{eq:lambda_star_is_top}, the largest eigenvalue of $A$ is $\lambda_\ast$, hence \eqref{eq:sigma11_reciprocal}.
Simplicity follows from Proposition~\ref{prop:unique_schur_root}.
\end{proof}

\begin{lemma}[A priori bounds for $\lambda_\ast$]\label{lem:lambda_star_bounds}
Assume $\kappa_N<1$ and let $\lambda_\ast$ be as in Proposition~\ref{prop:unique_schur_root}.
Then $\lambda_\ast> 1$ and the following bounds hold:
\begin{align}
\lambda_\ast
\ge \frac{1+\sqrt{1+4\beta_N}}{2},
\label{eq:lambda_lower_beta}
\\
(\lambda_\ast-1)(\lambda_\ast-\kappa_N)
\le \beta_N,
\label{eq:lambda_upper_quadratic}
\\
\lambda_\ast-1
\le \frac{\beta_N}{1-\kappa_N}.
\label{eq:lambda_upper_linear}
\end{align}
\end{lemma}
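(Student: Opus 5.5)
The plan is to work directly with the scalar Schur equation $F(\lambda_\ast)=0$ from Proposition~\ref{prop:unique_schur_root}, written as
\[
\lambda_\ast-1=\big\langle (\lambda_\ast I-K)^{-1}b,\,b\big\rangle_{\ell^2(\mathbb Z_*)},
\]
and to squeeze the right-hand side between two spectral bounds. Since $K$ is self-adjoint with $\sigma(K)\subset[0,\kappa_N]$ (Lemma~\ref{lem:A1_block_matrix} and Corollary~\ref{cor:resolvent_bound_K}), and $\lambda_\ast>1>\kappa_N$, the spectral theorem gives the operator inequalities
\[
\frac{1}{\lambda_\ast}\,I\;\le\;(\lambda_\ast I-K)^{-1}\;\le\;\frac{1}{\lambda_\ast-\kappa_N}\,I
\]
on $\ell^2(\mathbb Z_*)$. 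The function $t\mapsto(\lambda_\ast-t)^{-1}$ is increasing on $[0,\kappa_N]$, so its values lie between $1/\lambda_\ast$ at $t=0$ and $1/(\lambda_\ast-\kappa_N)$ at $t=\kappa_N$. Pairing with $b$ and using $\beta_N=\|b\|^2$ then yields
\[
\frac{\beta_N}{\lambda_\ast}\le\lambda_\ast-1\le\frac{\beta_N}{\lambda_\ast-\kappa_N}.
\]
The claim $\lambda_\ast>1$ is already part of Proposition~\ref{prop:unique_schur_root} (see also Remark~\ref{rem:schur_term_positive}).

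For \eqref{eq:lambda_lower_beta}, I will multiply the left inequality by $\lambda_\ast>0$ to get $\lambda_\ast^2-\lambda_\ast-\beta_N\ge0$. The quadratic $t^2-t-\beta_N$ has roots $\frac{1\pm\sqrt{1+4\beta_N}}{2}$, and $\lambda_\ast>1$ exceeds the negative root. So $\lambda_\ast$ must lie at or above the positive root, which is \eqref{eq:lambda_lower_beta}.

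For \eqref{eq:lambda_upper_quadratic}, I will multiply the right inequality by $\lambda_\ast-\kappa_N>0$, which is positive because $\lambda_\ast>1>\kappa_N$. For \eqref{eq:lambda_upper_linear}, I will use $\lambda_\ast-\kappa_N\ge1-\kappa_N>0$ in \eqref{eq:lambda_upper_quadratic} and divide.

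There is no serious obstacle here. The only point needing care is justifying the two-sided operator bound on the resolvent. That requires $K\ge0$, for the lower bound $1/\lambda_\ast$, and $\|K\|=\kappa_N$, for the upper bound. Both are already recorded in Lemma~\ref{lem:A1_block_matrix} and Corollary~\ref{cor:resolvent_bound_K}, so the argument reduces to the functional calculus applied to the monotone function above.
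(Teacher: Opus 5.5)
Your proposal is correct and is essentially the paper's own proof. Both sandwich the resolvent as $\lambda_\ast^{-1}I\le(\lambda_\ast I-K)^{-1}\le(\lambda_\ast-\kappa_N)^{-1}I$ using $0\le K\le\kappa_N I$, pair it with $b$ in the Schur equation, and finish with elementary algebra: the positive quadratic root for the lower bound, multiplication by $\lambda_\ast-\kappa_N>0$, and then $\lambda_\ast-\kappa_N\ge 1-\kappa_N$ for the two upper bounds.
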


\begin{proof}
Since $b\neq0$, we have $\lambda_\ast>1$. At $\lambda=\lambda_\ast$ we have $\lambda_\ast-1=\langle(\lambda_\ast I-K)^{-1}b,b\rangle_{\ell^2(\mathbb Z_*)}$.
Since $0\le K\le \kappa_N I$ and $\lambda_\ast> \kappa_N$, we have
\[
(\lambda_\ast I-K)^{-1}\ge \frac{1}{\lambda_\ast}I
\quad\text{and}\quad
\|(\lambda_\ast I-K)^{-1}\|\le \frac{1}{\lambda_\ast-\kappa_N}.
\]
Therefore
\[
\lambda_\ast-1=\langle(\lambda_\ast I-K)^{-1}b,b\rangle_{\ell^2(\mathbb Z_*)}
\ge \frac{\beta_N}{\lambda_\ast},
\]
which yields $\lambda_\ast^2-\lambda_\ast-\beta_N\ge0$ and hence \eqref{eq:lambda_lower_beta}.
Similarly,
\[
\lambda_\ast-1=\langle(\lambda_\ast I-K)^{-1}b,b\rangle_{\ell^2(\mathbb Z_*)}
\le \frac{\beta_N}{\lambda_\ast-\kappa_N},
\]
which is equivalent to \eqref{eq:lambda_upper_quadratic}.
Finally, since $\lambda_\ast>1$, we have $\lambda_\ast-\kappa_N\ge 1-\kappa_N$, and \eqref{eq:lambda_upper_linear} follows.
\end{proof} 
At this point the first nonzero Steklov branch has been reduced to the unique root $\lambda_\ast$ of a scalar Schur equation on the critical block. The next section is devoted to the quantitative expansion of this root, through the moment expansion of the Schur term and the evaluation of the resulting Euler-type sums.
%\begin{remark}[A convenient existence window]\label{rem:lambda_window}
%If in addition $\kappa_N\le \frac12$ and $\beta_N\le \frac12$, then by \eqref{eq:lambda_upper_linear}
%we obtain $\lambda_\ast\le 2$. In particular the Schur equation can be studied on the compact interval $[1,2]$,
%where the resolvent bounds from Corollary~\ref{cor:resolvent_bound_K} are uniform.
%\end{remark}
%up to now
\section{Moment Expansion and Quantitative Estimates involving the Euler Type Sums}\label{sec:moment}
We now analyze the scalar Schur equation on the critical block $r=1$ obtained in Section~\ref{sec:schur}. The purpose of this section is to derive a quantitative expansion of the corresponding root $\lambda_\ast$, and hence of the first nonzero Steklov branch, with an explicit remainder estimate.

We retain the orthogonal decomposition
\[
\ell^2(\mathbb Z)=\mathrm{span}\{\delta_0\}\oplus \ell^2(\mathbb Z_*),
% \qquad \mathbb Z_*:=\mathbb Z\setminus\{0\},
\]
under which the normalized block operator takes the form
\begin{equation}\label{eq:A_block_form_47}
A=\begin{pmatrix} 1 & b^H \\ b & K \end{pmatrix},
\qquad
b\in \ell^2(\mathbb Z_*),\quad K:\ell^2(\mathbb Z_*)\to\ell^2(\mathbb Z_*).
\end{equation}
Here $K$ is self-adjoint, nonnegative, and Hilbert--Schmidt; in particular, it is compact on $\ell^2(\mathbb Z_*)$.

Throughout this section we assume
\begin{equation}\label{eq:kappa_lt_1_assumption_47}
\kappa_N<1,
\end{equation}
which will be verified later in Lemma \ref{lem:beta_kappa_bounds_stepD} if $N \ge 20$. 
By Proposition~\ref{prop:unique_schur_root} the Schur function $F$ has a unique root
$\lambda_\ast>1$ such that
\begin{equation*}
\lambda_*-1=\big\langle (\lambda_* I-K)^{-1}b,\; b\big\rangle_{\ell^2(\mathbb Z_*)}.
\end{equation*}
Moreover $\lambda_\ast>\kappa_N$ and hence $\lambda_\ast\in\rho(K)$.
We write
\begin{equation}\label{eq:delta_def_stepD}
\vartheta_N:=\lambda_\ast-1> 0.
\end{equation}
%============================================================
\subsection{Moment expansion of the Schur term at $\lambda=\lambda_\ast$}\label{subsec:schur_moment_expansion}
%============================================================

%------------------------------------------------------------
\begin{definition}[Schur moments]\label{def:schur_moments}
For each integer $j\ge 0$ define the Schur moments
\[
M_j:=\langle K^j b,b\rangle_{\ell^2(\mathbb Z_*)}.
\]
\end{definition}

Since $K\ge 0$, functional calculus yields $K^j\ge 0$ and hence $M_j\ge 0$ for all $j\ge 0$.

%------------------------------------------------------------
\begin{lemma}[Neumann expansion for the resolvent]\label{lem:resolvent_neumann_series}
Let $\lambda>\kappa_N$ and let $J\ge 0$ be an integer. Then
\begin{equation}\label{eq:neumann_resolvent}
(\lambda I-K)^{-1}
=\sum_{j=0}^{J}\lambda^{-(j+1)}K^j+\mathcal T_J(\lambda)
\quad\text{in operator norm on }\ell^2(\mathbb Z_*),
\end{equation}
where the remainder satisfies
\begin{equation}\label{eq:neumann_tail_opnorm}
\|\mathcal T_J(\lambda)\|
\le \sum_{j\ge J+1}\lambda^{-(j+1)}\kappa_N^{\,j}
=\frac{\kappa_N^{\,J+1}}{\lambda^{J+1}(\lambda-\kappa_N)}.
\end{equation}
\end{lemma}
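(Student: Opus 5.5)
The plan is to obtain the identity \eqref{eq:neumann_resolvent} from an exact finite algebraic identity, so that no convergence question arises, and then bound the remainder term by term.

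\emph{Step 1: the finite identity.} For $\lambda>\kappa_N$, the resolvent $(\lambda I-K)^{-1}$ exists by Corollary~\ref{cor:resolvent_bound_K}. The telescoping relation
\[
(\lambda I-K)\sum_{j=0}^{J}\lambda^{-(j+1)}K^j=I-\lambda^{-(J+1)}K^{J+1}
\]
holds because all powers of $K$ commute. Multiplying on the left by $(\lambda I-K)^{-1}$ gives \eqref{eq:neumann_resolvent} with the explicit remainder
\[
\mathcal T_J(\lambda)=\lambda^{-(J+1)}K^{J+1}(\lambda I-K)^{-1}.
\]
This holds as an exact operator identity, so in particular it holds in operator norm.

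\emph{Step 2: bounding the remainder.} Since $K$ is self-adjoint and nonnegative, $\|K^{J+1}\|=\kappa_N^{J+1}$. Combining this with Corollary~\ref{cor:resolvent_bound_K} gives
\[
\|\mathcal T_J(\lambda)\|\le \frac{\kappa_N^{J+1}}{\lambda^{J+1}(\lambda-\kappa_N)}.
\]
It remains to identify this quantity with the tail series in \eqref{eq:neumann_tail_opnorm}. That series is geometric with ratio $\kappa_N/\lambda<1$:
\[
\sum_{j\ge J+1}\lambda^{-(j+1)}\kappa_N^j=\lambda^{-1}\,\frac{(\kappa_N/\lambda)^{J+1}}{1-\kappa_N/\lambda}=\frac{\kappa_N^{J+1}}{\lambda^{J+1}(\lambda-\kappa_N)},
\]
which is exactly the stated bound.

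\emph{Alternative route and main obstacle.} One could instead write $\mathcal T_J(\lambda)=\sum_{j\ge J+1}\lambda^{-(j+1)}K^j$, which converges absolutely in operator norm since $\|K^j\|\le\kappa_N^j$, and then apply the triangle inequality. That version needs the full Neumann series to be justified first. There is no real obstacle in either route; the only point requiring care is using $\lambda>\kappa_N$ to guarantee invertibility and convergence, and the finite identity in Step 1 sidesteps that issue.
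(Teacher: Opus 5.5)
Your proof is correct, but it takes a different route from the paper. The paper starts from the full Neumann series $(\lambda I-K)^{-1}=\sum_{j\ge0}\lambda^{-(j+1)}K^j$, which converges in operator norm because $\|\lambda^{-1}K\|=\kappa_N/\lambda<1$. It then takes $\mathcal T_J(\lambda)$ to be the tail $\sum_{j\ge J+1}\lambda^{-(j+1)}K^j$ and bounds it term by term with the triangle inequality. This is exactly the ``alternative route'' you mention.

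You instead use the finite telescoping identity, which needs no convergence argument and gives the remainder in closed form, $\mathcal T_J(\lambda)=\lambda^{-(J+1)}K^{J+1}(\lambda I-K)^{-1}$. You then bound it with the resolvent estimate of Corollary~\ref{cor:resolvent_bound_K}.

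\begin{itemize}
\item \emph{What your route gains.} You get an explicit remainder. Combined with functional calculus, it even shows that \eqref{eq:neumann_tail_opnorm} holds with equality: for $K\ge0$ one has $\kappa_N=\max\sigma(K)$, and $t\mapsto t^{J+1}\lambda^{-(J+1)}(\lambda-t)^{-1}$ is increasing on $[0,\kappa_N]$.
\item \emph{What your route costs.} It depends on the bound $\|(\lambda I-K)^{-1}\|\le(\lambda-\kappa_N)^{-1}$. The paper derives that bound from self-adjointness via the spectral theorem. For a general bounded operator, this bound is itself usually proved by the Neumann series. So the paper's argument is more self-contained and works unchanged for non-self-adjoint $K$.
\end{itemize}

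The paper also uses the full series $\sum_{j\ge0}\lambda^{-(j+1)}M_j$ in Corollary~\ref{cor:schur_term_series} (equation~\eqref{eq:schur_term_series}). Your closed form recovers that series equally well, since $\|\mathcal T_J(\lambda)\|\to0$ as $J\to\infty$.
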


\begin{proof}
Since $\|\lambda^{-1}K\|=\kappa_N/\lambda<1$, the Neumann series converges in operator norm:
\[
(\lambda I-K)^{-1}
=\frac1\lambda\,(I-\lambda^{-1}K)^{-1}
=\frac1\lambda\sum_{j\ge 0}(\lambda^{-1}K)^j
=\sum_{j\ge 0}\lambda^{-(j+1)}K^j.
\]
Truncating after $J$ terms yields \eqref{eq:neumann_resolvent}, and \eqref{eq:neumann_tail_opnorm} follows by summing the
resulting geometric series in operator norm.
\end{proof}

%------------------------------------------------------------
\begin{corollary}[Series for the Schur term]\label{cor:schur_term_series}
Let $\lambda>\kappa_N$. Then
\begin{equation}\label{eq:schur_term_series}
\langle(\lambda I-K)^{-1}b,b\rangle_{\ell^2(\mathbb Z_*)}
=\sum_{j\ge 0}\lambda^{-(j+1)}M_j,
\end{equation}
and for every $J\ge 0$,
\begin{equation}\label{eq:schur_term_tail}
\left|
\langle(\lambda I-K)^{-1}b,b\rangle_{\ell^2(\mathbb Z_*)}
-\sum_{j=0}^{J}\lambda^{-(j+1)}M_j
\right|
\le \beta_N\,\frac{\kappa_N^{\,J+1}}{\lambda^{J+1}(\lambda-\kappa_N)}.
\end{equation}
In particular,
\begin{equation}\label{eq:Mj_bounds}
0\le M_j\le \kappa_N^{\,j}\beta_N\qquad (j\ge 0).
\end{equation}
\end{corollary}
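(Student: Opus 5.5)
The plan is to pair the operator identity of Lemma~\ref{lem:resolvent_neumann_series} with the vector $b$ in $\ell^2(\mathbb Z_*)$, then control the tail using $\|b\|^2=\beta_N$.

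Fix $\lambda>\kappa_N$ and $J\ge0$. Taking the inner product of \eqref{eq:neumann_resolvent} with $b$ gives
\[
\langle(\lambda I-K)^{-1}b,b\rangle_{\ell^2(\mathbb Z_*)}
=\sum_{j=0}^{J}\lambda^{-(j+1)}\langle K^jb,b\rangle_{\ell^2(\mathbb Z_*)}+\langle\mathcal T_J(\lambda)b,b\rangle_{\ell^2(\mathbb Z_*)}.
\]
This uses only linearity of the inner product in its first argument, and the sum on the right is $\sum_{j=0}^J\lambda^{-(j+1)}M_j$ by Definition~\ref{def:schur_moments}. By Cauchy--Schwarz and \eqref{eq:neumann_tail_opnorm},
\[
|\langle\mathcal T_J(\lambda)b,b\rangle|\le\|\mathcal T_J(\lambda)\|\,\|b\|^2\le \beta_N\frac{\kappa_N^{J+1}}{\lambda^{J+1}(\lambda-\kappa_N)},
\]
which is exactly \eqref{eq:schur_term_tail}. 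Since $\kappa_N/\lambda<1$, this bound tends to $0$ as $J\to\infty$. The partial sums therefore converge to the Schur term, which proves \eqref{eq:schur_term_series}.

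For \eqref{eq:Mj_bounds}, nonnegativity $M_j\ge0$ was already noted after Definition~\ref{def:schur_moments}: it follows from $K\ge0$ (Lemma~\ref{lem:A1_block_matrix}) via functional calculus. For the upper bound, use submultiplicativity, $\|K^j\|\le\|K\|^j=\kappa_N^j$, and Cauchy--Schwarz to get $M_j\le\|K^jb\|\,\|b\|\le\kappa_N^j\beta_N$. The case $j=0$ gives equality $M_0=\beta_N$, with the convention $K^0=I$.

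There is no real obstacle here. The only point needing care is that the Neumann series converges in operator norm, which is guaranteed by $\lambda>\kappa_N$. This norm convergence is what justifies interchanging the infinite sum with the inner product.
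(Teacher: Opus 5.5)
Your proposal is correct and follows essentially the same route as the paper: pair the truncated Neumann expansion of Lemma~\ref{lem:resolvent_neumann_series} with $b$, bound the tail by $\|\mathcal T_J(\lambda)\|\,\beta_N$, let $J\to\infty$, and obtain $0\le M_j\le\|K\|^j\|b\|^2=\kappa_N^{\,j}\beta_N$ from $K\ge 0$ and Cauchy--Schwarz. Your remark that the explicit tail bound tends to zero because $\kappa_N/\lambda<1$ makes precise the paper's ``letting $J\to\infty$'' step.
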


\begin{proof}
Insert \eqref{eq:neumann_resolvent} into the quadratic form $\langle(\cdot)b,b\rangle_{\ell^2(\mathbb Z_*)}$ to obtain
\[
\langle(\lambda I-K)^{-1}b,b\rangle_{\ell^2(\mathbb Z_*)}
=\sum_{j=0}^{J}\lambda^{-(j+1)}\langle K^j b,b\rangle_{\ell^2(\mathbb Z_*)}
+\langle \mathcal T_J(\lambda)b,b\rangle_{\ell^2(\mathbb Z_*)}.
\]
Letting $J\to\infty$ yields \eqref{eq:schur_term_series}.
Moreover,
\[
|\langle \mathcal T_J(\lambda)b,b\rangle_{\ell^2(\mathbb Z_*)}|
\le \|\mathcal T_J(\lambda)\|\,\|b\|_{\ell^2(\mathbb Z_*)}^2
=\beta_N\,\|\mathcal T_J(\lambda)\|,
\]
and \eqref{eq:schur_term_tail} follows from \eqref{eq:neumann_tail_opnorm}.
Finally, \eqref{eq:Mj_bounds} follows from
$M_j=\langle K^j b,b\rangle_{\ell^2(\mathbb Z_*)}\le \|K\|^j\|b\|_{\ell^2(\mathbb Z_*)}^2=\kappa_N^{\,j}\beta_N$ together with $M_j\ge 0$.
\end{proof}

%------------------------------------------------------------
\begin{proposition}[Truncated Schur equation at $\lambda_\ast$]\label{prop:truncated_schur_equation}
Assume \eqref{eq:kappa_lt_1_assumption_47}, and let $\lambda_\ast>1$ be the unique root of $F(\lambda)=0$
(Proposition~\ref{prop:unique_schur_root}). For every integer $J\ge 0$,
\begin{equation}\label{eq:lambda_star_truncated}
\vartheta_N
=\sum_{j=0}^{J}\lambda_\ast^{-(j+1)}M_j+\mathcal R_J,
\end{equation}
where the remainder satisfies
\begin{equation}\label{eq:lambda_star_remainder_bound}
|\mathcal R_J|
\le \beta_N\,\frac{\kappa_N^{\,J+1}}{\lambda_\ast^{J+1}(\lambda_\ast-\kappa_N)}.
\end{equation}
If moreover $\lambda_\ast\in[1,2]$ and $\kappa_N\le \tfrac12$, then
\begin{equation}\label{eq:lambda_star_remainder_simplified}
|\mathcal R_J|\le 2\,\beta_N\,\kappa_N^{\,J+1}.
\end{equation}
\end{proposition}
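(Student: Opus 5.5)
This follows by evaluating Corollary~\ref{cor:schur_term_series} at the root $\lambda=\lambda_\ast$ and then using the Schur equation to replace the left-hand side. By Proposition~\ref{prop:unique_schur_root}, under \eqref{eq:kappa_lt_1_assumption_47} the root satisfies $\lambda_\ast>1>\kappa_N$. Hence $\lambda_\ast$ lies in the range $\lambda>\kappa_N$ where Lemma~\ref{lem:resolvent_neumann_series} and Corollary~\ref{cor:schur_term_series} apply. Moreover $F(\lambda_\ast)=0$ gives exactly
\[
\vartheta_N=\lambda_\ast-1=\big\langle(\lambda_\ast I-K)^{-1}b,b\big\rangle_{\ell^2(\mathbb Z_*)}.
\]

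Next I would fix $J\ge0$ and set
\[
\mathcal R_J:=\vartheta_N-\sum_{j=0}^{J}\lambda_\ast^{-(j+1)}M_j .
\]
With this definition \eqref{eq:lambda_star_truncated} holds trivially. Its content is the bound \eqref{eq:lambda_star_remainder_bound}. That bound is precisely \eqref{eq:schur_term_tail} at $\lambda=\lambda_\ast$, after the Schur-term quadratic form is replaced by $\vartheta_N$. Equivalently, $\mathcal R_J=\langle\mathcal T_J(\lambda_\ast)b,b\rangle$, and Cauchy--Schwarz together with \eqref{eq:neumann_tail_opnorm} and $\|b\|^2=\beta_N$ gives the estimate.

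For the simplified bound \eqref{eq:lambda_star_remainder_simplified}, assume $\lambda_\ast\in[1,2]$ and $\kappa_N\le\tfrac12$. Since $\lambda_\ast\ge1$, we have $\lambda_\ast^{-(J+1)}\le1$. Also $\lambda_\ast-\kappa_N\ge1-\tfrac12=\tfrac12$, so $(\lambda_\ast-\kappa_N)^{-1}\le2$. Substituting both into \eqref{eq:lambda_star_remainder_bound} yields $|\mathcal R_J|\le2\beta_N\kappa_N^{J+1}$. The upper endpoint $\lambda_\ast\le2$ is not actually needed.

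There is no real obstacle here. The only point requiring care is to confirm $\lambda_\ast>\kappa_N$, so that the Neumann series converges at $\lambda_\ast$. This is supplied by $\lambda_\ast>1>\kappa_N$.
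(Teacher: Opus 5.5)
Your proposal is correct and follows the paper's proof essentially verbatim. It uses $F(\lambda_\ast)=0$ to identify $\vartheta_N$ with the Schur quadratic form, applies the Neumann-tail estimate of the series corollary at $\lambda=\lambda_\ast>1>\kappa_N$, and then uses $\lambda_\ast^{-(J+1)}\le 1$ and $\lambda_\ast-\kappa_N\ge\tfrac12$ for the simplified bound; your remark that the upper endpoint $\lambda_\ast\le 2$ is never used is also right.
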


\begin{proof}
Since $F(\lambda_\ast)=0$, we have
\[
\vartheta_N=\lambda_\ast-1=\langle(\lambda_\ast I-K)^{-1}b,b\rangle_{\ell^2(\mathbb Z_*)}.
\]
Apply Corollary~\ref{cor:schur_term_series} with $\lambda=\lambda_\ast$ to obtain
\eqref{eq:lambda_star_truncated} and \eqref{eq:lambda_star_remainder_bound}.
Under $\lambda_\ast\in[1,2]$ and $\kappa_N\le \tfrac12$ one has
$\lambda_\ast-\kappa_N\ge \tfrac12$ and $\lambda_\ast^{-(J+1)}\le 1$, which yields
\eqref{eq:lambda_star_remainder_simplified}.
\end{proof}

%------------------------------------------------------------
\begin{proposition}[Replacing the prefactor $\lambda_\ast^{-(j+1)}$ by $1$]\label{prop:lambda_star_replace_one}
Assume $\lambda_\ast\in[1,2]$, and $\kappa_N\le \tfrac12$.
Let $\vartheta_N=\lambda_\ast-1$, and define the truncation error
\[
\mathcal E_J:=\vartheta_N-\sum_{j=0}^{J}M_j\qquad (J\ge 0).
\]
Then, for every $J\ge 0$,
\begin{equation}\label{eq:lambda_star_moment_sum_error}
|\mathcal E_J|
\le \vartheta_N\,\beta_N\sum_{j=0}^{J}(j+1)\kappa_N^{\,j}
+2\,\beta_N\,\kappa_N^{\,J+1}.
\end{equation}
In particular, using Lemma~\ref{lem:lambda_star_bounds} (so that $\vartheta_N\le \beta_N/(1-\kappa_N)\le 2\beta_N$) and
$\kappa_N\le \tfrac12$ (so that $\sum_{j\ge 0}(j+1)\kappa_N^{\,j}=(1-\kappa_N)^{-2}\le 4$), we obtain
\begin{equation}\label{eq:lambda_star_moment_sum_error_simplified}
|\mathcal E_J|\le 8\,\beta_N^2+2\,\beta_N\,\kappa_N^{\,J+1}.
\end{equation}
\end{proposition}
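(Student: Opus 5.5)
The plan is to start from the truncated Schur identity of Proposition~\ref{prop:truncated_schur_equation}, which holds because $\kappa_N<1$:
\[
\vartheta_N=\sum_{j=0}^{J}\lambda_\ast^{-(j+1)}M_j+\mathcal R_J .
\]
Subtracting $\sum_{j=0}^J M_j$ gives the decomposition
\[
\mathcal E_J=\sum_{j=0}^{J}\bigl(\lambda_\ast^{-(j+1)}-1\bigr)M_j+\mathcal R_J .
\]
The two pieces will be estimated separately. The remainder $\mathcal R_J$ is handled directly by \eqref{eq:lambda_star_remainder_simplified}. That bound applies under the standing hypotheses $\lambda_\ast\in[1,2]$ and $\kappa_N\le\tfrac12$, and gives $|\mathcal R_J|\le 2\beta_N\kappa_N^{J+1}$. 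This is exactly the second term of \eqref{eq:lambda_star_moment_sum_error}.

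For the prefactor differences, write $\lambda_\ast=1+\vartheta_N$ with $\vartheta_N>0$. The elementary inequality
\[
1-(1+t)^{-(j+1)}\le (j+1)t,\qquad t\ge0,
\]
follows from the mean value theorem, since the derivative of $(1+t)^{-(j+1)}$ has modulus at most $j+1$ for $t\ge 0$. Applying it with $t=\vartheta_N$ gives
\[
\bigl|\lambda_\ast^{-(j+1)}-1\bigr|\le (j+1)\vartheta_N .
\]
Combining this with the moment bound $0\le M_j\le\kappa_N^j\beta_N$ from \eqref{eq:Mj_bounds}, the sum is at most $\vartheta_N\beta_N\sum_{j=0}^J(j+1)\kappa_N^j$. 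Adding the two pieces by the triangle inequality yields \eqref{eq:lambda_star_moment_sum_error}.

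For the simplified form \eqref{eq:lambda_star_moment_sum_error_simplified}, first invoke \eqref{eq:lambda_upper_linear} from Lemma~\ref{lem:lambda_star_bounds}. With $\kappa_N\le\tfrac12$ it gives $\vartheta_N\le\beta_N/(1-\kappa_N)\le2\beta_N$. Next, extend the finite sum to the full series $\sum_{j\ge0}(j+1)\kappa_N^j=(1-\kappa_N)^{-2}\le4$. The first term is then bounded by $2\beta_N\cdot\beta_N\cdot4=8\beta_N^2$, which gives the stated bound.

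No step is a genuine obstacle. The only point needing care is that all terms involved are nonnegative, namely $M_j\ge0$ and $1-\lambda_\ast^{-(j+1)}\ge0$, so there are no sign issues and the triangle inequality suffices.
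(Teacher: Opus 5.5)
Your proposal is correct and matches the paper's proof: the paper also decomposes $\mathcal E_J=\sum_{j=0}^{J}\bigl(\lambda_\ast^{-(j+1)}-1\bigr)M_j+\mathcal R_J$ and bounds the prefactor differences by $(j+1)\vartheta_N$ via the mean value theorem. It then uses $M_j\le\kappa_N^{\,j}\beta_N$, controls $\mathcal R_J$ by the simplified estimate from Proposition~\ref{prop:truncated_schur_equation}, and obtains the simplified form from $\vartheta_N\le 2\beta_N$ and $(1-\kappa_N)^{-2}\le 4$, just as you do.
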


\begin{proof}
From Proposition~\ref{prop:truncated_schur_equation},
\[
\vartheta_N=\sum_{j=0}^{J}\lambda_\ast^{-(j+1)}M_j+\mathcal R_J,
\qquad |\mathcal R_J|\le 2\beta_N\kappa_N^{\,J+1}.
\]
Hence
\[
\mathcal E_J
=\sum_{j=0}^{J}(\lambda_\ast^{-(j+1)}-1)M_j+\mathcal R_J.
\]
Since $\lambda_\ast\in[1,2]$, the map $t\mapsto t^{-(j+1)}$ has derivative bounded by $j+1$ on $[1,2]$, and therefore
\[
|\lambda_\ast^{-(j+1)}-1|
\le (j+1)|\lambda_\ast-1|
=(j+1)\vartheta_N.
\]
Using $M_j\le \beta_N\kappa_N^{\,j}$ (Corollary~\ref{cor:schur_term_series}) gives
\[
\left|\sum_{j=0}^{J}(\lambda_\ast^{-(j+1)}-1)M_j\right|
\le \vartheta_N\sum_{j=0}^{J}(j+1)M_j
\le \vartheta_N\,\beta_N\sum_{j=0}^{J}(j+1)\kappa_N^{\,j}.
\]
Together with $|\mathcal R_J|\le 2\beta_N\kappa_N^{\,J+1}$ this yields \eqref{eq:lambda_star_moment_sum_error}.
The simplified bound \eqref{eq:lambda_star_moment_sum_error_simplified} follows from the two elementary estimates
stated in the proposition.
\end{proof}

%------------------------------------------------------------
\begin{corollary}[$\sigma_{1,1}$ as a $\vartheta_N$--expansion]\label{cor:sigma11_delta_expansion}
Assume $\lambda_\ast\in[1,2]$, and set $\vartheta_N=\lambda_\ast-1\in(0,1]$.
Then
\begin{equation}\label{eq:sigma11_delta_series}
\sigma_{1,1}=\frac1{\lambda_\ast}=\frac1{1+\vartheta_N}
=1-\vartheta_N+\mathcal R_\sigma,
\qquad |\mathcal R_\sigma|\le \vartheta_N^2.
\end{equation}
\end{corollary}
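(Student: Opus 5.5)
The identity $\sigma_{1,1}=1/\lambda_\ast$ is Corollary~\ref{cor:sigma11_from_lambda_star}, and $\lambda_\ast=1+\vartheta_N$ holds by the definition \eqref{eq:delta_def_stepD}. So the whole statement reduces to an elementary estimate for the function $t\mapsto 1/(1+t)$ on $(0,1]$. The assumption $\lambda_\ast\in[1,2]$ and Lemma~\ref{lem:lambda_star_bounds}, which gives $\lambda_\ast>1$, together yield $\vartheta_N\in(0,1]$.

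Write
\[
\frac{1}{1+\vartheta_N}=1-\vartheta_N+\frac{\vartheta_N^2}{1+\vartheta_N}.
\]
This follows from the algebraic identity $1=(1+t)(1-t)+t^2$, divided by $1+t$. We then define the remainder $\mathcal R_\sigma:=\vartheta_N^2/(1+\vartheta_N)$.

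Since $\vartheta_N>0$, we have $1+\vartheta_N>1$. It follows that $0<\mathcal R_\sigma<\vartheta_N^2$, which gives $|\mathcal R_\sigma|\le\vartheta_N^2$. In fact the remainder is positive, so $\sigma_{1,1}\ge 1-\vartheta_N$; this sign information may be useful later for one-sided bounds.

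There is no genuine obstacle. The only points to check are that the hypotheses of Corollary~\ref{cor:sigma11_from_lambda_star} are in force, namely $\kappa_N<1$ with $N\ge 20$, so that $\sigma_{1,1}=1/\lambda_\ast$ is legitimate, and that $\vartheta_N$ is nonnegative, so that the denominator $1+\vartheta_N$ is at least $1$.
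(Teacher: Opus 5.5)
Your proof is correct and is essentially the paper's argument. Both use Corollary~\ref{cor:sigma11_from_lambda_star} for $\sigma_{1,1}=1/\lambda_\ast$, apply the identity $\frac{1}{1+\vartheta_N}=1-\vartheta_N+\frac{\vartheta_N^2}{1+\vartheta_N}$, and bound the remainder using $1+\vartheta_N\ge 1$; your extra observation that $\mathcal R_\sigma>0$ is a correct, harmless refinement.
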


\begin{proof}
    By Corollary~\ref{cor:sigma11_from_lambda_star} we have $\sigma_{1,1}=1/\lambda_\ast=1/(1+\vartheta_N)$.
The identity
\[
\frac1{1+\vartheta_N}=1-\vartheta_N+\frac{\vartheta_N^2}{1+\vartheta_N}
\]
yields \eqref{eq:sigma11_delta_series}, and since $1+\vartheta_N\ge 1$ we have $|\mathcal R_\sigma|\le \vartheta_N^2$.
\end{proof}

\subsection{Multi-sum formulas for the Schur moments}

\begin{lemma}[Explicit formulas for $M_0$, $M_1$, and $M_2$]\label{lem:Mj_multisum_48}
For $j=0,1,2$ one has the identities
\begin{align}
M_0 &= \sum_{m\in \mathbb Z_*} \frac{|v_m|^2}{d_m}, \label{eq:M0_explicit_48} \\
M_1 &= \sum_{m,\ell\in \mathbb Z_*} \frac{\overline{v_m}\,v_\ell\,v_{m-\ell}}{d_m d_\ell}, \label{eq:M1_explicit_48} \\
M_2 &= \sum_{m_0,m_1,m_2\in \mathbb Z_*} \frac{\overline{v_{m_0}}\,v_{m_2}\,v_{m_0-m_1}\,v_{m_1-m_2}}{d_{m_0} d_{m_1} d_{m_2}}. \label{eq:M2_explicit_48}
\end{align}

For any integer $j\ge 0$, $M_j$ admits a similar multi‑sum representation involving $j+1$ summation indices; the general formula can be written down analogously. The sums are absolutely convergent as a consequence of the Hilbert–Schmidt property of $K$ and the fact that $b\in\ell^2(\mathbb Z_*)$; in particular each $M_j$ is finite and satisfies
\begin{equation}\label{eq:Mj_basic_bound_48}
0\le M_j \le \|K\|^j\,\|b\|_{\ell^2(\mathbb Z_*)}^2 = \kappa_N^{\,j}\beta_N.
\end{equation}
\end{lemma}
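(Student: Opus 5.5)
We prove the three identities by expanding the quadratic forms $M_j=\langle K^j b,b\rangle_{\ell^2(\mathbb Z_*)}$ entrywise, using the matrix description of $K$ and $b$ from Lemma~\ref{lem:A1_block_matrix}. The entrywise formulas are known only on finitely supported vectors (Lemma~\ref{lem:Ar_selfadjoint_positive}). We therefore first work with the truncations $b^{(L)}:=\Pi_{\{0<|m|\le L\}}b$ and pass to the limit $L\to\infty$ at the end. Both sides of each identity are then justified: the left side by continuity of $y\mapsto\langle K^jy,y\rangle$ on $\ell^2$, and the right side by absolute convergence.

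\emph{Finite step.} For $j=0$ we have $M_0=\|b\|^2=\sum_{m\ne0}|v_m|^2/d_m$, since $b_m=v_m/\sqrt{d_m}$. This sum is finite because $d_m\ge N-1$ and $v\in\ell^2$. For $j\ge1$ we write
\[
\langle K^jb,b\rangle=\sum_{m_0,\dots,m_j}K_{m_j,m_{j-1}}\cdots K_{m_1,m_0}\,b_{m_0}\overline{b_{m_j}},
\]
where $K_{m,m'}=v_{m-m'}/\sqrt{d_md_{m'}}$. In each term, every interior index $m_i$ ($0<i<j$) appears in two factors of $K$ and so contributes $d_{m_i}^{-1}$. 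Each endpoint index appears once in $K$ and once in $b$, so it also contributes $d^{-1}$. The product of $v$'s is $v_{m_0}\,\overline{v_{m_j}}\prod_i v_{m_{i+1}-m_i}$.

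We then clean this up using that $v$ is real and even (Lemma~\ref{lem:wn_reflection_real}). This lets us freely conjugate $v_m$ and reverse differences $v_{a-b}=v_{b-a}$, and after relabeling the indices we obtain exactly \eqref{eq:M1_explicit_48} for $j=1$ and \eqref{eq:M2_explicit_48} for $j=2$. The general $j$ gives the $(j+1)$-index analogue.

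\emph{Limit step (main point).} The key issue is absolute convergence of the multi-sums. We bound the absolute sum by $\langle |K|^j|b|,|b|\rangle$, where $|K|$ denotes the entrywise absolute-value matrix and $|b|$ the entrywise absolute-value vector. The entries of $|K|$ are $|v_{m-m'}|/\sqrt{d_md_{m'}}$. The proof of Lemma~\ref{lem:K_HS_bound_r1} used only $|v|^2$, so the same argument shows $|K|$ is Hilbert--Schmidt with $\||K|\|_{\mathrm{HS}}\le\sqrt{S_2(N)}\|v\|_{\ell^2}$. Also $|b|\in\ell^2$ with the same norm as $b$. Hence the absolute series is bounded by $\||K|\|^j\beta_N<\infty$.

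With absolute convergence established, dominated convergence lets us pass from $b^{(L)}$ to $b$ in the sums. On the other side, $\langle K^jb^{(L)},b^{(L)}\rangle\to M_j$ because $K$ is bounded. Finally, the bound \eqref{eq:Mj_basic_bound_48} is exactly \eqref{eq:Mj_bounds} of Corollary~\ref{cor:schur_term_series}, and nonnegativity follows from $K\ge0$.
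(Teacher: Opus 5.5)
Your proposal is correct. Its core computation is the paper's: expand $\langle K^jb,b\rangle$ in coordinates with $K_{m,m'}=v_{m-m'}/\sqrt{d_md_{m'}}$ and $b_m=v_m/\sqrt{d_m}$, and collect two factors $d^{-1/2}$ per index. The two routes differ in how convergence is justified.

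The paper argues that each row of the Hilbert--Schmidt operator $K$ lies in $\ell^2(\mathbb Z_*)$. By Cauchy--Schwarz, $(Kb)_m$ and then $(K^2b)_m=\sum_\ell K_{m,\ell}(Kb)_\ell$ converge absolutely, and the paper then expands directly with the full vector $b$. This controls the nested one-index sums, but the joint absolute convergence of the multi-sums claimed in the statement is asserted rather than derived.

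Your device of the entrywise modulus $|K|$, which has the same Hilbert--Schmidt norm as $K$, combined with Tonelli, proves exactly that joint absolute convergence, with the explicit bound $\bigl\|\,|K|\,\bigr\|^{j}\beta_N$. This is what is actually needed later, when the multi-sums for $M_1$ and $M_2$ are split by coincidence patterns and expanded term by term. In the present setting $v_m>0$ by the Gamma formula, so $|K|=K$, $|b|=b$, and the absolute series is just $M_j$ itself.

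Your truncation $b^{(L)}$, however, buys nothing. For $j\ge 2$ the interior indices remain unrestricted, so you still need the entrywise action $(Ky)_m=\sum_{m'}K_{m,m'}y_{m'}$ for arbitrary $y\in\ell^2$. That holds because every row of a bounded operator is in $\ell^2$, and once you have it together with absolute convergence you can expand with $b$ directly, as the paper does.

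Likewise, the appeal to reality and evenness of $v$ is harmless but inessential. With the convention $K_{m,m'}=\langle K\delta_{m'},\delta_m\rangle$, your expansion for $j=1,2$ becomes the stated formulas after a mere relabelling of indices.
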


\begin{proof}
We first note that $K$ is Hilbert–Schmidt, hence its matrix entries satisfy $\sum_{m,\ell}|K_{m,\ell}|^2<\infty$. This implies that for each fixed $m$, the series $\sum_{\ell}|K_{m,\ell}|^2$ is finite, so the row $(K_{m,\ell})_{\ell}$ belongs to $\ell^2(\mathbb Z_*)$. Since $b\in\ell^2(\mathbb Z_*)$, the Cauchy–Schwarz inequality guarantees that for each $m$ the sum $(Kb)_m = \sum_{\ell} K_{m,\ell} b_\ell$ converges absolutely. Repeating the argument, $(K^2b)_m = \sum_{\ell} K_{m,\ell}(Kb)_\ell$ also converges absolutely. Therefore all expansions below are justified.

Now, $M_j = \langle K^j b, b\rangle_{\ell^2(\mathbb Z_*)} = \sum_{m_0\in \mathbb Z_*} (K^j b)_{m_0}\,\overline{b_{m_0}}$.

For $j=0$, $(K^0 b)_{m_0}=b_{m_0}=v_{m_0}/\sqrt{d_{m_0}}$, hence
\[
M_0 = \sum_{m_0} \frac{v_{m_0}}{\sqrt{d_{m_0}}}\,\frac{\overline{v_{m_0}}}{\sqrt{d_{m_0}}}
= \sum_{m_0}\frac{|v_{m_0}|^2}{d_{m_0}},
\]
which is \eqref{eq:M0_explicit_48}.

For $j=1$, $(Kb)_{m_0} = \sum_{m_1} K_{m_0,m_1} b_{m_1}$. Using $K_{m_0,m_1}=v_{m_0-m_1}/\sqrt{d_{m_0}d_{m_1}}$ and $b_{m_1}=v_{m_1}/\sqrt{d_{m_1}}$, we obtain
\[
(Kb)_{m_0}\,\overline{b_{m_0}} = \sum_{m_1} \frac{v_{m_0-m_1}}{\sqrt{d_{m_0}d_{m_1}}}\frac{v_{m_1}}{\sqrt{d_{m_1}}}\,\frac{\overline{v_{m_0}}}{\sqrt{d_{m_0}}}
= \sum_{m_1} \frac{\overline{v_{m_0}} v_{m_1} v_{m_0-m_1}}{d_{m_0} d_{m_1}}.
\]
Summing over $m_0$ gives \eqref{eq:M1_explicit_48}.

For $j=2$, we write $(K^2b)_{m_0} = \sum_{m_1,m_2} K_{m_0,m_1}K_{m_1,m_2} b_{m_2}$. Substituting the expressions for the matrix entries and $b$,
\[
(K^2b)_{m_0}\,\overline{b_{m_0}} = \sum_{m_1,m_2} \frac{v_{m_0-m_1}}{\sqrt{d_{m_0}d_{m_1}}}
\frac{v_{m_1-m_2}}{\sqrt{d_{m_1}d_{m_2}}}\frac{v_{m_2}}{\sqrt{d_{m_2}}}\,\frac{\overline{v_{m_0}}}{\sqrt{d_{m_0}}}.
\]
Simplifying the product of square roots gives $\frac{1}{d_{m_0}d_{m_1}d_{m_2}}$, and we obtain
\[
(K^2b)_{m_0}\,\overline{b_{m_0}} = \sum_{m_1,m_2} \frac{\overline{v_{m_0}} v_{m_2} v_{m_0-m_1} v_{m_1-m_2}}{d_{m_0} d_{m_1} d_{m_2}}.
\]
Summation over $m_0$ yields \eqref{eq:M2_explicit_48}.

The non‑negativity $M_j\ge 0$ follows from $K\ge 0$ (Lemma~\ref{lem:A1_block_matrix}) and the bound \eqref{eq:Mj_basic_bound_48} is a consequence of the Cauchy–Schwarz inequality:
\[
M_j = \langle K^j b,b\rangle_{\ell^2(\mathbb Z_*)} \le \|K^j b\|_{\ell^2(\mathbb Z_*)}\,\|b\|_{\ell^2(\mathbb Z_*)} \le \|K\|^j\,\|b\|_{\ell^2(\mathbb Z_*)}^2.
\]
\end{proof}

% ------------------------------------------------------------

\begin{remark}[Specialization to the even case]\label{rem:real_even_simplify_48}
Since $w_N$ is real-valued and even, $v_m\in\mathbb R$ and
\eqref{eq:M0_explicit_48}--\eqref{eq:M2_explicit_48} simplify by dropping complex conjugates.
% We keep the general form since later perturbations may produce complex Toeplitz coefficients.
\end{remark}
%\begin{corollary}[Diagonal/off-diagonal splitting of $M_1$]\label{cor:M1_split_48}
%One has
%\begin{equation}\label{eq:M1_split_48}
%M_1
%=
%\sum_{m\in \mathbb Z_*}\frac{|v_m|^2}{d_m^2}
%\;+\;
%\sum_{\substack{m,\ell\in \mathbb Z_*\\ \ell\neq m}}
%\frac{\overline{v_m}\,v_\ell\,v_{m-\ell}}{d_m\,d_\ell}.
%\end{equation}
%\end{corollary}

\begin{proof}
Start from \eqref{eq:M1_explicit_48} and split the sum into $\ell=m$ and $\ell\neq m$.
On the diagonal $\ell=m$ we have $v_{m-\ell}=v_0=1$, so the diagonal contribution is
$\sum_{m\in \mathbb Z_*}\overline{v_m}v_m/d_m^2=\sum_{m\in \mathbb Z_*}|v_m|^2/d_m^2$.
\end{proof}

%============================================================

\subsection{Expansion of $M_0=\|b\|_{\ell^2}^2$}\label{sec:moment_euler_M0}

Throughout this subsection we assume $N\ge 20$, hence $0<\alpha\le \alpha_0:=\frac1{20}$.

\begin{lemma}[Symmetrized denominator expansion]\label{lem:den_symm_expand}
Let $0<\alpha\le \alpha_0$ and $m\ge 1$. Then
\begin{equation}\label{eq:den_symm_id}
\frac{1}{mN-1}+\frac{1}{mN+1}
=\alpha\Big(\frac{1}{m-\alpha}+\frac{1}{m+\alpha}\Big)
=\frac{2\alpha m}{m^2-\alpha^2}.
\end{equation}
Moreover,
\begin{equation}\label{eq:den_symm_expand}
\alpha\Big(\frac{1}{m-\alpha}+\frac{1}{m+\alpha}\Big)
=\frac{2\alpha}{m}+\frac{2\alpha^3}{m^3}+R^{\rm den}_m(\alpha),
\end{equation}
where the remainder satisfies the explicit bound
\begin{equation}\label{eq:den_symm_remainder}
0\le R^{\rm den}_m(\alpha)
=\frac{2\alpha^5}{m^5}\,\frac{1}{1-(\alpha/m)^2}
\le \frac{2}{1-\alpha_0^2}\,\frac{\alpha^5}{m^5}.
\end{equation}
\end{lemma}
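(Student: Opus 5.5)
The identity \eqref{eq:den_symm_id} follows from $N=1/\alpha$ by direct algebra. Writing $mN\pm1=(m\pm\alpha)/\alpha$ gives
\[
\frac{1}{mN\mp1}=\frac{\alpha}{m\mp\alpha},
\]
and summing the two fractions gives
\[
\alpha\,\frac{(m+\alpha)+(m-\alpha)}{m^2-\alpha^2}=\frac{2\alpha m}{m^2-\alpha^2}.
\]
This step uses only $m\ge 1>\alpha$, so neither denominator vanishes.

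For the expansion \eqref{eq:den_symm_expand}, set $t:=\alpha/m$. Then $0<t\le\alpha_0<1$ because $m\ge1$. We rewrite
\[
\frac{2\alpha m}{m^2-\alpha^2}=\frac{2\alpha}{m}\cdot\frac{1}{1-t^2}
\]
and use the exact finite geometric identity
\[
\frac{1}{1-t^2}=1+t^2+\frac{t^4}{1-t^2}.
\]
Multiplying by $2\alpha/m$ produces the main terms $2\alpha/m+2\alpha^3/m^3$ together with the remainder
\[
R^{\rm den}_m(\alpha)=\frac{2\alpha}{m}\cdot\frac{t^4}{1-t^2}=\frac{2\alpha^5}{m^5}\cdot\frac{1}{1-(\alpha/m)^2}.
\]
This is exactly the closed form stated in \eqref{eq:den_symm_remainder}. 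We prefer this exact identity to a truncated Taylor series because it gives the remainder in closed form with no estimate needed.

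The bound in \eqref{eq:den_symm_remainder} then follows from monotonicity. The remainder is nonnegative since $0<t<1$. Also $(\alpha/m)^2\le\alpha^2\le\alpha_0^2$, so $1-(\alpha/m)^2\ge 1-\alpha_0^2$, which gives
\[
R^{\rm den}_m(\alpha)\le \frac{2}{1-\alpha_0^2}\,\frac{\alpha^5}{m^5}.
\]
No step is a real obstacle. The only points requiring care are the strict inequality $\alpha<m$, which keeps every expression finite, and tracking the sign of the remainder so that the one-sided bound is justified.
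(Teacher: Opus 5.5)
Your proof is correct and follows essentially the same route as the paper: rewrite $\frac{1}{mN\mp 1}=\frac{\alpha}{m\mp\alpha}$, factor out $\frac{2\alpha}{m}\cdot\frac{1}{1-(\alpha/m)^2}$, isolate the first two terms, and bound the remainder using $(\alpha/m)^2\le\alpha_0^2$. The only cosmetic difference is that you use the finite identity $\frac{1}{1-t^2}=1+t^2+\frac{t^4}{1-t^2}$, whereas the paper expands the full geometric series and sums the tail $\sum_{k\ge 2}t^{2k}$ to get the same closed-form remainder.
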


\begin{proof}
Identity \eqref{eq:den_symm_id} is immediate from $N=1/\alpha$:
\[
\frac{1}{mN\mp 1}=\frac{\alpha}{m\mp\alpha}.
\]
For \eqref{eq:den_symm_expand}, write
\[
\alpha\Big(\frac{1}{m-\alpha}+\frac{1}{m+\alpha}\Big)
=\frac{2\alpha}{m}\,\frac{1}{1-(\alpha/m)^2}
=\frac{2\alpha}{m}\Big(1+\frac{\alpha^2}{m^2}\Big)
+\frac{2\alpha}{m}\sum_{k\ge 2}\Big(\frac{\alpha^2}{m^2}\Big)^k,
\]
which yields \eqref{eq:den_symm_expand} with
\[
R^{\rm den}_m(\alpha)=\frac{2\alpha}{m}\sum_{k\ge 2}\Big(\frac{\alpha^2}{m^2}\Big)^k
=\frac{2\alpha^5}{m^5}\,\frac{1}{1-(\alpha/m)^2}\ge 0.
\]
Since $\alpha/m\le \alpha_0$, the upper bound in \eqref{eq:den_symm_remainder} follows.
\end{proof}

\begin{lemma}[Evaluations of some Euler sums]
The following identities hold:
\begin{align}
\sum_{m=1}^\infty \frac{H_{m-1}}{m^3} &= \zeta(3,1) = \frac14\,\zeta(4), \label{eq:euler_weight4}\\
\sum_{m=1}^\infty \frac{H_{m-1}}{m^4} &= \zeta(4,1) = 2\zeta(5)-\zeta(2)\zeta(3), \label{eq:euler_weight5a}\\
\sum_{m=1}^\infty \frac{H_{m-1}^2}{m^3} &= \zeta(2)\zeta(3)-\frac32\,\zeta(5), \label{eq:euler_weight5b}\\
\sum_{\substack{m,\ell\in \mathbb Z\setminus\{0\}\\ m\neq \ell}} \frac{1}{|m|^2\,|\ell|^2\,|m-\ell|} &= 8\,\zeta(2)\zeta(3)-12\,\zeta(5). \label{eq:weight5_double_sum}
\end{align}
\label{lem:euler_sums_weight5}
\end{lemma}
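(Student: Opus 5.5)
Each identity can be reduced to classical double and triple zeta values, which are then evaluated through Euler's reduction formula and the double shuffle relations. No analytic input from the earlier sections is needed; the lemma is purely arithmetic.

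For \eqref{eq:euler_weight4} and \eqref{eq:euler_weight5a}, I would first write $H_{m-1}=\sum_{1\le n<m}1/n$. This gives $\sum_m H_{m-1}/m^p=\sum_{m>n\ge1}m^{-p}n^{-1}=\zeta(p,1)$ with the paper's convention for $\zeta(s_1,\dots,s_r)$. I would then invoke Euler's classical formula
\[
\zeta(p,1)=\frac p2\,\zeta(p+1)-\frac12\sum_{k=1}^{p-2}\zeta(k+1)\zeta(p-k),\qquad p\ge2.
\]
Euler's formula itself comes from the partial fraction identity $\frac{1}{m^p n}$ summed over $m=n+c$, i.e. from $\sum_{n,c\ge1}\frac{1}{n(n+c)^p}$, followed by iterated partial fractions. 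For $p=3$ it gives $\tfrac32\zeta(4)-\tfrac12\zeta(2)^2$, and $\zeta(2)^2=\tfrac52\zeta(4)$ then yields $\tfrac14\zeta(4)$. For $p=4$ it gives $2\zeta(5)-\zeta(2)\zeta(3)$ directly.

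For \eqref{eq:euler_weight5b}, I would expand
\[
H_{m-1}^2=2\sum_{m>a>b\ge1}\frac1{ab}+H^{(2)}_{m-1}.
\]
This gives $\sum_m H_{m-1}^2/m^3=2\zeta(3,1,1)+\zeta(3,2)$. By duality of MZVs, $\zeta(3,1,1)=\zeta(4,1)=2\zeta(5)-\zeta(2)\zeta(3)$, which is the previous item. The value of $\zeta(3,2)$ follows from two relations:
\begin{itemize}
\item the stuffle relation $\zeta(2)\zeta(3)=\zeta(2,3)+\zeta(3,2)+\zeta(5)$;
\item one shuffle relation, or equivalently the known value $\zeta(2,3)=\tfrac92\zeta(5)-2\zeta(2)\zeta(3)$.
\end{itemize}
Together these give $\zeta(3,2)=3\zeta(2)\zeta(3)-\tfrac{11}2\zeta(5)$. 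Summing, $4\zeta(5)-2\zeta(2)\zeta(3)+3\zeta(2)\zeta(3)-\tfrac{11}2\zeta(5)=\zeta(2)\zeta(3)-\tfrac32\zeta(5)$.

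For \eqref{eq:weight5_double_sum}, I would split according to the signs of $m$ and $\ell$.
\begin{itemize}
\item \emph{Opposite signs.} Two symmetric sign choices each give the Tornheim sum $T:=\sum_{a,b\ge1}a^{-2}b^{-2}(a+b)^{-1}$.
\item \emph{Same sign, $m\ne\ell$.} There are two sign choices and two orderings. Writing the larger index as $b+c$, each piece becomes $S:=\sum_{b,c\ge1}b^{-2}c^{-1}(b+c)^{-2}$.
\end{itemize}
The total is therefore $2T+4S$. Both sums are reduced by partial fractions in the variable pairs, using $\frac1{ab}=\frac1{a+b}\bigl(\frac1a+\frac1b\bigr)$ repeatedly, to combinations of $\zeta(5)$, $\zeta(4,1)$, $\zeta(3,2)$, $\zeta(2,3)$ and products $\zeta(2)\zeta(3)$. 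Substituting the values above, I expect $T=2\zeta(2)\zeta(3)-3\zeta(5)$ and $S=\zeta(2)\zeta(3)-\tfrac32\zeta(5)$, giving $8\zeta(2)\zeta(3)-12\zeta(5)$.

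This last reduction is the main obstacle. The partial fraction bookkeeping for $S$ involves the difference $a-b$, and it is easy to lose a boundary term or double count the diagonal. I would cross-check the final constants numerically before fixing them.
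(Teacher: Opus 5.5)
Your proposal is correct, but it takes a genuinely different route from the paper.

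\textbf{What the paper does.} Its proof is essentially a citation. It takes the linear and quadratic Euler-sum evaluations from Flajolet--Salvy. Those are stated for $H_m$, which is why the paper needs the shifts $H_{m-1}=H_m-1/m$ and $H_{m-1}^2=H_m^2-2H_m/m+1/m^2$. For \eqref{eq:weight5_double_sum} it gives no computation at all, only the remark that it is a standard weight-5 reduction.

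\textbf{What you do differently.} You read $H_{m-1}$ as a strict nested sum, so the three Euler sums are literally $\zeta(3,1)$, $\zeta(4,1)$ and $2\zeta(3,1,1)+\zeta(3,2)$, with no shift needed. You close them with three standard tools:
\begin{itemize}
\item Euler's formula for $\zeta(p,1)$;
\item the duality $\zeta(3,1,1)=\zeta(4,1)$;
\item the weight-5 stuffle relation together with one shuffle relation, namely $\zeta(2)\zeta(3)=\zeta(2,3)+3\zeta(3,2)+6\zeta(4,1)$.
\end{itemize}
Combined with the stuffle relation, that shuffle relation gives $\zeta(3,2)=\frac12\zeta(5)-3\zeta(4,1)$. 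All the values you quote check out.

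\textbf{The step you flagged.} Your sign split of the double sum into $2T+4S$ is right, and the partial-fraction step is short. From $\frac{1}{ab}=\frac{1}{a+b}\bigl(\frac1a+\frac1b\bigr)$ one gets $T=2\zeta(3,2)+4\zeta(4,1)$. From $\frac{1}{b^2c}=\frac{1}{b^2(b+c)}+\frac{1}{b(b+c)^2}+\frac{1}{c(b+c)^2}$ one gets $S=\zeta(3,2)+2\zeta(4,1)$. All terms are positive, so the rearrangements are justified and no boundary or diagonal terms are lost.

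Hence $T=2S$, and $S$ coincides with the sum in \eqref{eq:euler_weight5b}. So the double sum equals $8\sum_{m\ge1}H_{m-1}^2/m^3$, which explains the constant $8\zeta(2)\zeta(3)-12\zeta(5)$ and makes the numerical cross-check unnecessary.

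\textbf{Trade-off.} The paper's route buys brevity by outsourcing to published tables. Yours buys a self-contained, checkable derivation, and in particular an actual proof of \eqref{eq:weight5_double_sum}, which the paper only asserts.
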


\begin{proof}
The identities \eqref{eq:euler_weight4}--\eqref{eq:euler_weight5b} involving harmonic numbers follow directly from the reduction theorems established by Flajolet and Salvy \cite{flajolet1998euler}.
Using the shift relation $H_{m-1} = H_m - 1/m$, \eqref{eq:euler_weight4} and \eqref{eq:euler_weight5a} are consequences of the evaluations for linear Euler sums $S_{1,q}$ given in Theorem 2.2 and Table 1 (cases $p=1, q=3,4$) of \cite{flajolet1998euler}.
Similarly, the quadratic sum \eqref{eq:euler_weight5b} follows from the reduction formula for $S_{1^2, q}$ provided in Theorem 4.1 of \cite{flajolet1998euler}, combined with the expansion $H_{m-1}^2 = H_m^2 - 2H_m/m + 1/m^2$.
Finally, the double sum \eqref{eq:weight5_double_sum} is a standard weight-5 multiple zeta value reduction, consistent with the basis decomposition discussed in Section 6 of \cite{flajolet1998euler}.
\end{proof}

\begin{lemma}[A logarithmic sum bound]\label{lem:logsum_bound}
Let $p>1$ and $q\in\mathbb N$. Define
\begin{equation}\label{eq:M_pq_def}
M(p,q):=\max\left\{2,\ \left\lceil \exp\!\Bigl(\frac{q}{p}-1\Bigr)\right\rceil\right\}.
\end{equation}
Then the function $x\mapsto (1+\log x)^q x^{-p}$ is decreasing on $[M(p,q),\infty)$, and hence
\begin{equation}\label{eq:logsum_bound}
\sum_{m=1}^{\infty}\frac{(1+\log m)^q}{m^{p}}
\le
\sum_{m=1}^{M(p,q)}\frac{(1+\log m)^q}{m^{p}}
+\int_{M(p,q)}^{\infty}\frac{(1+\log x)^q}{x^{p}}\,dx.
\end{equation}
Moreover the tail integral admits the explicit closed form
\begin{equation}\label{eq:logsum_integral_eval}
\int_{M}^{\infty}\frac{(1+\log x)^q}{x^{p}}\,dx
=
\frac{q!}{(p-1)^{q+1}}\,
M^{-(p-1)}
\sum_{k=0}^{q}\frac{\bigl((p-1)(1+\log M)\bigr)^k}{k!},
\qquad (M\ge 1).
\end{equation}
\end{lemma}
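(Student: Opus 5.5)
The plan is to study the function $g(x):=(1+\log x)^q x^{-p}$ on $[1,\infty)$ directly. The statement has three parts: monotonicity of $g$ beyond $M(p,q)$, the resulting sum--integral comparison, and the closed form of the tail integral. I would treat them in that order.

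\emph{Monotonicity.} For $x\ge1$ we have $1+\log x\ge1>0$, so $g$ is smooth and positive there. A direct differentiation gives
\[
g'(x)=x^{-p-1}(1+\log x)^{q-1}\bigl(q-p(1+\log x)\bigr).
\]
Hence $g'(x)\le 0$ exactly when $1+\log x\ge q/p$, that is, when $x\ge \exp(q/p-1)$. Because $M(p,q)\ge\lceil\exp(q/p-1)\rceil\ge \exp(q/p-1)$, the function $g$ is nonincreasing on $[M(p,q),\infty)$. The extra condition $M\ge2$ is not needed for monotonicity; it only makes the split convenient.

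\emph{Sum bound.} Write $M=M(p,q)$. The terms with $m\le M$ are kept as they are. For $m\ge M+1$, monotonicity gives $g(m)\le \int_{m-1}^{m} g(x)\,dx$. Summing over $m\ge M+1$ yields
\[
\sum_{m>M}g(m)\le\int_M^\infty g(x)\,dx,
\]
and the integral is finite because $p>1$. This proves \eqref{eq:logsum_bound}.

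\emph{Closed form.} I would substitute $x=e^{t}$, so $dx=e^t\,dt$, and then $s=1+t$. With $a:=p-1>0$ and $L:=1+\log M\ge1$, this gives
\[
\int_M^\infty g(x)\,dx=\int_{\log M}^\infty (1+t)^q e^{-(p-1)t}\,dt=e^{a}\int_{L}^\infty s^q e^{-as}\,ds.
\]
The incomplete gamma identity
\[
\int_L^\infty s^q e^{-as}\,ds=\frac{q!}{a^{q+1}}\,e^{-aL}\sum_{k=0}^q\frac{(aL)^k}{k!}
\]
follows by induction on $q$, integrating by parts at each step. Since $e^{a}e^{-aL}=e^{-a\log M}=M^{-(p-1)}$, the tail integral equals
\[
\frac{q!}{(p-1)^{q+1}}\,M^{-(p-1)}\sum_{k=0}^{q}\frac{\bigl((p-1)(1+\log M)\bigr)^k}{k!},
\]
which is \eqref{eq:logsum_integral_eval}. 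This identity holds for any $M\ge1$.

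None of these steps is hard. The only points needing care are the sign analysis of $g'$, including the factor $(1+\log x)^{q-1}$, which is positive for $x\ge1$, and the bookkeeping of the exponential factors in the substitution.
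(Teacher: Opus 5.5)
Your proposal is correct and follows essentially the same route as the paper. Both arguments do a sign analysis of the derivative (the paper uses the logarithmic derivative $\frac{1}{x}\bigl(\frac{q}{1+\log x}-p\bigr)$), then the standard sum--integral comparison for a nonincreasing function on $[M(p,q),\infty)$, and finally the substitution $t=1+\log x$ (which you perform in two steps) followed by repeated integration by parts, ending with $e^{p-1}e^{-(p-1)(1+\log M)}=M^{-(p-1)}$.
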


\begin{proof}
Set $f(x):=(1+\log x)^q x^{-p}$ for $x\ge1$. Then
\[
\frac{d}{dx}\log f(x)=\frac{1}{x}\Bigl(\frac{q}{1+\log x}-p\Bigr),
\]
so $f$ is decreasing once $1+\log x\ge q/p$, i.e.\ $x\ge \exp(q/p-1)$, hence on
$[M(p,q),\infty)$.

For decreasing nonnegative $f$ and any integer $M\ge1$,
\[
\sum_{m=M}^{\infty} f(m)\le f(M)+\int_{M}^{\infty} f(x)\,dx,
\]
hence \eqref{eq:logsum_bound} follows after adding $\sum_{m=1}^{M-1}f(m)$ to both sides.

For \eqref{eq:logsum_integral_eval}, substitute $t:=1+\log x$ so that
$x=e^{t-1}$ and $dx=e^{t-1}dt$. Then, with $\beta:=p-1>0$,
\[
\int_{M}^{\infty}\frac{(1+\log x)^q}{x^{p}}\,dx
=
e^{\beta}\int_{1+\log M}^{\infty} t^q e^{-\beta t}\,dt.
\]
Since $q\in\mathbb N$, repeated integration by parts gives the identity
\[
\int_{a}^{\infty} t^q e^{-\beta t}\,dt
=
\frac{q!}{\beta^{q+1}}e^{-\beta a}\sum_{k=0}^{q}\frac{(\beta a)^k}{k!}
\qquad(a\ge0),
\]
which, with $a=1+\log M$ and $e^{\beta}e^{-\beta(1+\log M)}=M^{-\beta}$, yields \eqref{eq:logsum_integral_eval}.
\end{proof}

\begin{lemma}[A traceable bound for the $\alpha^5$--polynomial tail]\label{lem:Cpoly_traceable}
For $m\ge1$, let $a_{m,2},a_{m,3}$ be as in Proposition~\ref{prop:vm_taylor_5}.
Then
\[
\frac{|a_{m,2}a_{m,3}|}{m}\le 15\,\frac{(1+\log m)^3}{m^3}.
\]
Consequently, defining
\begin{equation}\label{eq:Cpoly_def}
E_{0,poly}
:=60\left[
\sum_{m=1}^{M(3,3)}\frac{(1+\log m)^3}{m^3}
+\frac{3!}{2^{4}}\,
M(3,3)^{-2}\sum_{k=0}^{3}\frac{\bigl(2(1+\log M(3,3))\bigr)^k}{k!}
\right],
\end{equation}
(where $M(\cdot,\cdot)$ is as in Lemma~\ref{lem:logsum_bound}),
one has for all $N\ge 20$ (i.e.\ $\alpha=1/N\le \alpha_0$)
\[
4\alpha^6\sum_{m=1}^\infty\frac{|a_{m,2}a_{m,3}|}{m}
\le \frac{E_{0,poly}}{N^6}.
\]
\end{lemma}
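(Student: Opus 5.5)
The plan has two layers: a pointwise coefficient bound, then a summation bound for the series, with Lemma~\ref{lem:logsum_bound} supplying the sum. The definitions of $a_{m,2},a_{m,3}$ appear later, in Proposition~\ref{prop:vm_taylor_5}. I expect them to be the Taylor coefficients of $v_m$ in powers of $\alpha$, given by harmonic-number polynomials. Concretely, $v_m=\alpha\,\frac{(\cdot)}{m}\bigl(1+\alpha a_{m,1}+\alpha^2 a_{m,2}+\alpha^3a_{m,3}+\dots\bigr)$, coming from the Gamma-ratio form of the Fourier coefficient of $|1-e^{iN\theta}|^{-2/N}$. Typically $a_{m,2}$ is quadratic and $a_{m,3}$ cubic in $H_{m-1}$ and $H^{(2)}_{m-1}, H^{(3)}_{m-1}$, divided by suitable powers of $m$ or normalized by factorials.

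\textbf{Pointwise bound.} First I will read off these explicit expressions. Then I bound each harmonic quantity by elementary inequalities: $H_{m-1}\le 1+\log m$ for $m\ge1$, and $H^{(2)}_{m-1}\le \zeta(2)$, $H^{(3)}_{m-1}\le\zeta(3)$. Each constant is absorbed into a power of $(1+\log m)\ge1$. The aim is to show $|a_{m,2}|\lesssim (1+\log m)^2/m$ and $|a_{m,3}|\lesssim(1+\log m)^3/m$, or whatever split of the $m$-decay the formulas give. Multiplying the two bounds and dividing by $m$ then yields $15(1+\log m)^3/m^3$. Since the target has only a cubic logarithm, I expect one factor to carry at most one log power with an $m^{-1}$ decay and the other a quadratic log, or a similar split. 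I will track the numerical constants, for example $|a_{m,2}|\le 3(1+\log m)/m$ and $|a_{m,3}|\le 5(1+\log m)^2/m$, to land exactly on $15$. Small $m$ (say $m\le 3$) may need a direct check if the crude inequalities are not tight enough.

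\textbf{Summation.} With the pointwise bound in hand, I sum over $m\ge1$: $\sum_m |a_{m,2}a_{m,3}|/m\le 15\sum_m(1+\log m)^3m^{-3}$. I then apply Lemma~\ref{lem:logsum_bound} with $p=q=3$, using \eqref{eq:logsum_bound} for the head sum and the closed form \eqref{eq:logsum_integral_eval} for the tail integral, where $(p-1)^{q+1}=2^4$. Multiplying by $4\alpha^6$ gives the factor $60$ and exactly the bracket in \eqref{eq:Cpoly_def}, so the sum is at most $E_{0,poly}\alpha^6=E_{0,poly}/N^6$. The hypothesis $N\ge20$ only enters if the coefficients $a_{m,j}$ themselves depend on $\alpha\le\alpha_0$; in that case their bounds must be taken uniformly over $\alpha\le\alpha_0$.

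The main obstacle is the pointwise step: extracting clean, explicit bounds on $a_{m,2}$ and $a_{m,3}$ with constants whose product is at most $15$, while keeping the logarithmic power at $3$. If the naive harmonic bounds overshoot, I would sharpen them using $H_{m-1}\le \log m+\gamma\cdot[m\ge2]$, and use the sign structure, since some terms in $a_{m,3}$ may cancel, before resorting to a finite numerical check of small $m$.
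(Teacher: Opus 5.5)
Your plan is correct and is essentially the paper's proof. The actual coefficients are $a_{m,2}=\frac{1}{m^2}+\frac{2H_{m-1}}{m}$ and $a_{m,3}=\frac{1}{m^3}+\frac{2H_{m-1}}{m^2}+\frac{2H_{m-1}^2}{m}$, which are linear and quadratic in $H_{m-1}$, not quadratic and cubic as you first guessed. So $H_{m-1}\le 1+\log m$ directly gives your $3(1+\log m)/m$ and $5(1+\log m)^2/m$ for every $m\ge 1$, with no small-$m$ check needed. The summation via Lemma~\ref{lem:logsum_bound} with $p=q=3$ (so $M(3,3)=2$) and the factor $4\cdot 15=60$ are also exactly as in the paper, and $N\ge 20$ plays no role since the $a_{m,j}$ do not depend on $\alpha$.
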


\begin{proof}
Using $H_{m-1}\le 1+\log m$ and the explicit formulas in Proposition~\ref{prop:vm_taylor_5},
\[
|a_{m,2}|\le \frac{1}{m^2}+\frac{2(1+\log m)}{m}\le \frac{3(1+\log m)}{m},
\]
and
\[
|a_{m,3}|\le \frac{1}{m^3}+\frac{2(1+\log m)}{m^2}+\frac{2(1+\log m)^2}{m}
\le \frac{5(1+\log m)^2}{m}.
\]
Multiplying and dividing by $m$ gives the pointwise bound, and Lemma~\ref{lem:logsum_bound}
with $(p,q)=(3,3)$ yields \eqref{eq:Cpoly_def}.
\end{proof}

\begin{proposition}[Explicit expansion of $M_0$ up to $N^{-5}$]\label{prop:M0_expand_stepA}
Assume $N\ge 20$ and set $\alpha:=1/N$, $\alpha_0:=1/20$. Then
\begin{equation}\label{eq:M0_expansion_stepA}
M_0(N)
=\frac{2\zeta(3)}{N^{3}}+\frac{6\zeta(4)}{N^{4}}+\frac{16\zeta(5)}{N^{5}}+R^{(A)}_{N},
\end{equation}
with the explicit remainder bound
\begin{equation}\label{eq:M0_remainder_stepA}
|R^{(A)}_{N}|\le \frac{E_{0}}{N^{6}}.
\end{equation}
One may take
\begin{equation}\label{eq:CA6_def}
E_{0}
:=E_{0,den}+E_{0,poly}+E_{0,cr}+E_{0,sq},
\end{equation}
where
\begin{align}
E_{0,den}
&:=\frac{2}{1-\alpha_0^2}\,V_\infty^2\,\alpha_0\,\zeta(7-4\alpha_0),\label{eq:CA6_den}\\
E_{0,cr}
&:=\frac{4}{1-\alpha_0^2}\,V_\infty V_{4}\,
\Biggl[\sum_{m=1}^{M(p_\ast,4)}\frac{(1+\log m)^4}{m^{p_\ast}}\\
&+\frac{4!}{(p_\ast-1)^5}\,M(p_\ast,4)^{-(p_\ast-1)}
\sum_{k=0}^{4}\frac{\bigl((p_\ast-1)(1+\log M(p_\ast,4))\bigr)^k}{k!}\Biggr],\label{eq:CA6_cross}\\
E_{0,sq}
&:=\frac{2}{1-\alpha_0^2}\,V_{4}^2\,\alpha_0^{3}\,
\Biggl[\sum_{m=1}^{M(p_\ast,8)}\frac{(1+\log m)^8}{m^{p_\ast}}\\
&+\frac{8!}{(p_\ast-1)^9}\,M(p_\ast,8)^{-(p_\ast-1)}
\sum_{k=0}^{8}\frac{\bigl((p_\ast-1)(1+\log M(p_\ast,8))\bigr)^k}{k!}\Biggr].\label{eq:CA6_sq}
\\
E_{0,poly}
&:=60\left[
\sum_{m=1}^{M(3,3)}\frac{(1+\log m)^3}{m^3}
+\frac{3!}{2^{4}}\,
M(3,3)^{-2}\sum_{k=0}^{3}\frac{\bigl(2(1+\log M(3,3))\bigr)^k}{k!}
\right],
\end{align}
with
\[
p_\ast:=3-4\alpha_0=\frac{14}{5},
\]
and $M(\cdot,\cdot)$ as in Lemma~\ref{lem:logsum_bound}.
Here $V_\infty$ and $V_{4}$ are the uniform constants controlling $|v_m|$ and the cubic
truncation remainder $|v_m-v_{m,\le3}|$ (introduced earlier in the $v_m$--analysis).
\end{proposition}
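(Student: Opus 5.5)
The plan is to reduce $M_0$ to a one-sided sum, substitute the two available expansions (the symmetrized denominator expansion and the Taylor expansion of $v_m$ in $\alpha$), read off the coefficients of $\alpha^3,\alpha^4,\alpha^5$ as Euler sums, and put everything else into the four remainder pieces named in \eqref{eq:CA6_def}.

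\emph{Step 1 (folding and denominator expansion).} Starting from \eqref{eq:M0_explicit_48}, I use $v_{-m}=v_m\in\mathbb R$ (Corollary~\ref{cor:wn_cosine_series}) and $d_{\pm m}=mN\pm1$ for $m\ge1$ to pair the terms $\pm m$:
\[
M_0=\sum_{m\ge1}v_m^2\Bigl(\frac1{mN+1}+\frac1{mN-1}\Bigr)=\sum_{m\ge1}v_m^2\Bigl(\frac{2\alpha}{m}+\frac{2\alpha^3}{m^3}+R^{\rm den}_m(\alpha)\Bigr),
\]
by Lemma~\ref{lem:den_symm_expand}. For the term containing $R^{\rm den}_m(\alpha)$ I will not expand $v_m$. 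Instead I use the uniform bound \eqref{eq:vm_uniform_bound}, which I expect to have the form $|v_m|\le V_\infty\,\alpha\, m^{-(1-2\alpha)}$. Combined with \eqref{eq:den_symm_remainder}, this bounds the contribution by $\frac{2}{1-\alpha_0^2}V_\infty^2\alpha^7\zeta(7-4\alpha)$. Since $\alpha^7=\alpha\cdot\alpha^6\le\alpha_0\alpha^6$ and $\zeta(7-4\alpha)\le\zeta(7-4\alpha_0)$, this is at most $E_{0,den}/N^6$.

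\emph{Step 2 (inserting the $v_m$ expansion).} In the two main terms I write
\[
v_m=v_{m,\le3}+\rho_m,\qquad v_{m,\le3}=\frac{\alpha}{m}+\alpha^2a_{m,2}+\alpha^3a_{m,3},\qquad |\rho_m|\le V_4\,\alpha^4\frac{(1+\log m)^4}{m^{1-2\alpha}},
\]
as provided by Proposition~\ref{prop:vm_taylor_5}, with $a_{m,2},a_{m,3}$ explicit in $1/m$ and $H_{m-1}$. Then
\[
v_m^2=v_{m,\le3}^2+2v_{m,\le3}\rho_m+\rho_m^2 .
\]
\begin{itemize}
\item The cross term $2v_{m,\le3}\rho_m$, multiplied by $2\alpha/m+2\alpha^3/m^3\le\frac{2}{1-\alpha_0^2}\cdot\frac{\alpha}{m}$, is controlled by $V_\infty V_4\,\alpha^6\sum_m(1+\log m)^4m^{-p_\ast}$. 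I bound that sum with Lemma~\ref{lem:logsum_bound} and obtain $E_{0,cr}$.
\item The square term $\rho_m^2$ is handled the same way with $q=8$ and gives $E_{0,sq}$; the spare $\alpha^3$ is absorbed into $\alpha_0^3$.
\end{itemize}

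\emph{Step 3 (the exact coefficients).} I multiply out $v_{m,\le3}^2\,(2\alpha/m+2\alpha^3/m^3)$ and group by powers of $\alpha$:
\begin{itemize}
\item $\alpha^3$: the coefficient is $2\sum_m m^{-3}=2\zeta(3)$.
\item $\alpha^4$: the coefficient is $4\sum_m a_{m,2}/m^2$. Using Lemma~\ref{lem:euler_sums_weight5} (in particular $\zeta(3,1)=\zeta(4)/4$), this should equal $6\zeta(4)$.
\item $\alpha^5$: the coefficient is $2\sum_m\bigl(a_{m,2}^2+2a_{m,3}/m\bigr)/m+2\zeta(5)$, the last piece coming from the $2\alpha^3/m^3$ factor against $\alpha^2/m^2$. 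Expanding with the explicit $a_{m,2},a_{m,3}$ produces $\sum H_{m-1}/m^4$, $\sum H_{m-1}^2/m^3$ and pure zeta sums. By \eqref{eq:euler_weight5a} and \eqref{eq:euler_weight5b} the $\zeta(2)\zeta(3)$ contributions should cancel and leave $16\zeta(5)$.
\item $\alpha^{\ge6}$: the leftover pieces include $4\alpha^6\sum_m a_{m,2}a_{m,3}/m$, which Lemma~\ref{lem:Cpoly_traceable} bounds by $E_{0,poly}/N^6$. There are also $\alpha^7$-order pieces such as $a_{m,3}^2$ terms and cubic-denominator products. Because $\alpha\le\alpha_0$, I plan to absorb these into the same polynomial constant, enlarging it slightly if necessary.
\end{itemize}

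\emph{Main obstacle.} The crux is the exact $\alpha^5$ coefficient, where the cancellation of the $\zeta(2)\zeta(3)$ terms must come out exactly; this depends on getting the precise form of $a_{m,3}$ right. The second delicate point is the bookkeeping that keeps every non-polynomial remainder at order $N^{-6}$ with the traceable constants listed above. The Euler sum evaluations themselves are taken from Lemma~\ref{lem:euler_sums_weight5}.
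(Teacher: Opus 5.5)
Your proposal is correct and follows the paper's proof essentially step for step. You fold $\pm m$, apply Lemma~\ref{lem:den_symm_expand}, and bound the denominator remainder by $E_{0,den}$ via $V_\infty$. You then insert $v_{m,\le3}$ to read off $2\zeta(3)$, $6\zeta(4)$, $16\zeta(5)$ from Lemma~\ref{lem:euler_sums_weight5} (your $\alpha^5$ cancellation does check out), and control the truncation error through $E_{0,cr}$, $E_{0,sq}$ and the polynomial leftover through Lemma~\ref{lem:Cpoly_traceable}.

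Two bookkeeping remarks:
\begin{itemize}
\item Bound the truncation error as $|v_m^2-v_{m,\le3}^2|=|\rho_m|\,|2v_m-\rho_m|\le 2|v_m||\rho_m|+\rho_m^2$, as the paper does, because $V_\infty$ controls $|v_m|$, not $|v_{m,\le3}|$.
\item Among the leftovers, the cubic-denominator product $4\alpha^6\sum_m a_{m,2}/m^4$ is of order $\alpha^6$, not $\alpha^7$. Together with the genuine $\alpha^{\ge7}$ pieces (which the paper also leaves implicit), it still fits comfortably inside the slack of the crude bound $|a_{m,2}a_{m,3}|/m\le 15(1+\log m)^3/m^3$. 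So no enlargement of the stated $E_{0,poly}$ is needed.
\end{itemize}
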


\begin{proof}
Using $v_{-m}=v_m$ and $d_{\pm m}=|1\pm mN|$, \eqref{eq:M0_explicit_48} becomes
\begin{equation}\label{eq:M0_positive_m}
M_0(N)=\sum_{m=1}^{\infty}v_m^2\Bigl(\frac{1}{mN-1}+\frac{1}{mN+1}\Bigr).
\end{equation}
Apply Lemma~\ref{lem:den_symm_expand}:
\begin{equation}\label{eq:M0_split_den}
M_0(N)
=
\sum_{m\ge1}v_m^2\Bigl(\frac{2\alpha}{m}+\frac{2\alpha^3}{m^3}\Bigr)
+\sum_{m\ge1}v_m^2\,R^{\mathrm{den}}_{m}(\alpha).
\end{equation}

\emph{Step 1: the denominator remainder.}
By \eqref{eq:den_symm_remainder}, the uniform bound $|v_m|\le V_\infty\alpha\,m^{2\alpha_0-1}$,
and $\alpha^7\le \alpha_0\alpha^6$,
\[
0\le
\sum_{m\ge1}v_m^2\,R^{\mathrm{den}}_{m}(\alpha)
\le
\frac{2}{1-\alpha_0^2}\,V_\infty^2\,\alpha^7\sum_{m\ge1}\frac{1}{m^{7-4\alpha_0}}
\le
\frac{2}{1-\alpha_0^2}\,V_\infty^2\,\alpha_0\,\zeta(7-4\alpha_0)\,\alpha^6,
\]
which is \eqref{eq:CA6_den}. 
%(If one prefers only integer zeta values, note that
%$m^{4\alpha_0}=m^{1/5}\le m$ for $m\ge1$, hence $\sum_{m\ge1}m^{4\alpha_0-7}\le \zeta(6)$.)
\emph{Step 2: the main term and the coefficients.}

Insert the cubic Taylor polynomial $v_{m,\le3}$ (obtained by truncating \eqref{eq:vm_taylor_5} at $\alpha^3$)
into the first sum of \eqref{eq:M0_split_den}. Squaring $v_{m,\le3}$ and keeping terms up to $\alpha^4$ gives
\[
v_{m,\le3}^2
=
\frac{\alpha^2}{m^2}
+\alpha^3\Bigl(\frac{2}{m^3}+\frac{4H_{m-1}}{m^2}\Bigr)
+\alpha^4\Bigl(\frac{3}{m^4}+\frac{8H_{m-1}}{m^3}+\frac{8H_{m-1}^2}{m^2}\Bigr)
+\mathcal O\!\Bigl(\alpha^5\,\frac{(1+\log m)^4}{m^{2-4\alpha_0}}\Bigr).
\]
Multiplying by $\frac{2\alpha}{m}+\frac{2\alpha^3}{m^3}$, summing termwise (absolute convergence), and using
\[
\sum_{m\ge1}\frac{1}{m^k}=\zeta(k),\qquad
\sum_{m\ge1}\frac{H_{m-1}}{m^3}=\frac14\zeta(4)\ \text{(Lemma~\ref{lem:euler_sums_weight5})},
\]
together with Lemma~\ref{lem:euler_sums_weight5}, one obtains the coefficients in
\eqref{eq:M0_expansion_stepA}:
\[
\sum_{m\ge1}v_{m,\le3}^2\Bigl(\frac{2\alpha}{m}+\frac{2\alpha^3}{m^3}\Bigr)
=
2\zeta(3)\alpha^3+6\zeta(4)\alpha^4+16\zeta(5)\alpha^5+\mathcal O(\alpha^6).
\]

\emph{Step 3: the truncation error in $v_m$.}
Write $v_m=v_{m,\le3}+r_m$ with $|r_m|\le V_{4}\alpha^4\,m^{2\alpha_0-1}(1+\log m)^4$.
Then
\[
|v_m^2-v_{m,\le3}^2|\le 2|v_m||r_m|+|r_m|^2.
\]
Moreover, for $m\ge1$ and $\alpha\le\alpha_0$,
\[
\frac{2\alpha}{m}+\frac{2\alpha^3}{m^3}
\le \alpha\Bigl(\frac{1}{m-\alpha}+\frac{1}{m+\alpha}\Bigr)
\le \frac{2}{1-\alpha_0^2}\,\frac{\alpha}{m}.
\]
Hence the contribution of the truncation error to $M_0$ is bounded by
\[
\frac{2}{1-\alpha_0^2}\,\alpha\sum_{m\ge1}\frac{|v_m^2-v_{m,\le3}^2|}{m}
\le
\frac{4}{1-\alpha_0^2}\,V_\infty V_{4}\,\alpha^6
\sum_{m\ge1}\frac{(1+\log m)^4}{m^{3-4\alpha_0}}
+
\frac{2}{1-\alpha_0^2}\,V_{4}^2\,\alpha^9
\sum_{m\ge1}\frac{(1+\log m)^8}{m^{3-4\alpha_0}}.
\]
Since $\alpha^9\le \alpha_0^3\alpha^6$, Lemma~\ref{lem:logsum_bound} with $p=p_\ast=3-4\alpha_0$ yields
\eqref{eq:CA6_cross}--\eqref{eq:CA6_sq}. Combining Steps~1--3 gives \eqref{eq:M0_remainder_stepA}.
\end{proof}

\subsection{Expansion of $M_1=\langle Kb,b\rangle_{\ell^2(\mathbb Z_*)}$}\label{sec:moment_euler_M1}
We split $M_1$ into the diagonal and off-diagonal parts:
\begin{equation}\label{eq:M1_split_stepB}
M_1(N)=M_1^{\mathrm{diag}}(N)+M_1^{\mathrm{off}}(N),
\end{equation}
where
\begin{equation}\label{eq:M1_diag_off_def}
M_1^{\mathrm{diag}}(N):=\sum_{m\in \mathbb Z_*}\frac{v_{|m|}^2\,v_0}{d_m^2}
=\sum_{m\in \mathbb Z_*}\frac{v_{|m|}^2}{d_m^2},
\qquad
M_1^{\mathrm{off}}(N):=\sum_{\substack{m,\ell\in \mathbb Z_*\\ m\neq \ell}}
\frac{v_{|m|}\,v_{|\ell|}\,v_{|m-\ell|}}{d_m\,d_\ell}.
\end{equation}
\begin{remark}[Reality and evenness of $v_k$]\label{rem:v_real_even_stepB}
Since $w_N$ is real-valued and even on $\mathbb S^1$, its Fourier coefficients satisfy
$\widehat w_N(-k)=\overline{\widehat w_N(k)}=\widehat w_N(k)$.
In particular, the normalized coefficients $v_k$ are real and even:
\[
v_{-k}=v_k\in\mathbb R,
\qquad k\in\mathbb Z,
\]
and we may write $v_{m-\ell}=v_{|m-\ell|}$.
Moreover $v_0=1$ by the normalization (cf.\ \eqref{eq:vm_ratio_integral} with $m=0$).
\end{remark}

\begin{lemma}[A closed-form bound for logarithmic sums]\label{lem:Spq_bound}
Let $p>1$ and $q\in\mathbb N_0$. Define
\begin{equation}\label{eq:Spq_def}
S(p,q):=
\sum_{m=1}^{M(p,q)}\frac{(1+\log m)^q}{m^{p}}
+\sum_{j=0}^{q}\binom{q}{j}\frac{j!}{(p-1)^{j+1}},
\end{equation}
where $M(p,q)$ is as in Lemma~\ref{lem:logsum_bound}. Then
\begin{equation}\label{eq:Spq_bound}
\sum_{m=1}^{\infty}\frac{(1+\log m)^q}{m^{p}}\le S(p,q).
\end{equation}
\end{lemma}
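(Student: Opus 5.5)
Write $M:=M(p,q)$. The plan is to start from Lemma~\ref{lem:logsum_bound}, which gives
\[
\sum_{m\ge1}\frac{(1+\log m)^q}{m^p}\le \sum_{m=1}^{M}\frac{(1+\log m)^q}{m^p}+\int_M^\infty\frac{(1+\log x)^q}{x^p}\,dx .
\]
After that, it only remains to bound the tail integral by the explicit sum $\sum_{j=0}^q\binom{q}{j}\frac{j!}{(p-1)^{j+1}}$, which has no $M$-dependence. The monotonicity needed on $[M,\infty)$ is already contained in Lemma~\ref{lem:logsum_bound}, so no new decay argument is required.

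For the integral, I would first enlarge the domain: $M\ge 2\ge1$ and the integrand is nonnegative on $[1,\infty)$, so
\[
\int_M^\infty\frac{(1+\log x)^q}{x^p}\,dx\le \int_1^\infty\frac{(1+\log x)^q}{x^p}\,dx .
\]
Next substitute $x=e^{s}$, $s\in[0,\infty)$. This turns the right-hand side into $\int_0^\infty (1+s)^q e^{-(p-1)s}\,ds$. Expanding $(1+s)^q=\sum_{j=0}^q\binom{q}{j}s^j$ and using the Gamma integral $\int_0^\infty s^j e^{-\beta s}\,ds=j!/\beta^{j+1}$ with $\beta=p-1>0$ gives exactly $\sum_{j=0}^q\binom{q}{j}\frac{j!}{(p-1)^{j+1}}$. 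Adding back the finite head sum yields \eqref{eq:Spq_bound}.

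Alternatively, one could insert $M\ge1$ into the closed form \eqref{eq:logsum_integral_eval}. That route is less clean, because $M^{-(p-1)}$ decreases while the factors $(1+\log M)^k$ increase, so the comparison is not termwise. The substitution argument avoids this issue entirely, and I would use it.

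I do not expect a genuine obstacle here. The only point to check is that extending the integral from $M$ down to $1$ is legitimate, which needs only nonnegativity of the integrand and not monotonicity. The case $q=0$ is covered by the same computation, since then the bound reduces to $1/(p-1)$.
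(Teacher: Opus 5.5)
Your proof is correct and follows the paper's one-line argument: combine Lemma~\ref{lem:logsum_bound} with the fact that the tail integral over $[M,\infty)$ is at most the integral over $[1,\infty)$, whose value ($=$ \eqref{eq:logsum_integral_eval} evaluated at $M=1$) is exactly $\sum_{j=0}^{q}\binom{q}{j}\frac{j!}{(p-1)^{j+1}}$; your substitution $x=e^{s}$ simply recomputes that closed form directly. The ``termwise'' difficulty you raise about the alternative route does not actually arise, because one uses monotonicity of the integral in its lower limit rather than comparing individual terms of the closed form.
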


\begin{proof}
This follows directly from Lemma~\ref{lem:logsum_bound} and \eqref{eq:logsum_integral_eval}.
\end{proof}

\begin{lemma}[Symmetrized squared denominator expansion]\label{lem:den2_symm_expand}
Let $0<\alpha\le \alpha_0<1$ and $m\ge 1$. Then
\begin{equation}\label{eq:den2_symm_exact}
\frac{1}{(mN-1)^2}+\frac{1}{(mN+1)^2}
=\alpha^2\Bigl(\frac{1}{(m-\alpha)^2}+\frac{1}{(m+\alpha)^2}\Bigr).
\end{equation}
Moreover,
\begin{equation}\label{eq:den2_symm_trunc}
\alpha^2\Bigl(\frac{1}{(m-\alpha)^2}+\frac{1}{(m+\alpha)^2}\Bigr)
=\frac{2\alpha^2}{m^2}+\frac{6\alpha^4}{m^4}+R^{\mathrm{den},2}_{m}(\alpha),
\end{equation}
where the remainder satisfies the explicit bound
\begin{equation}\label{eq:den2_symm_remainder}
0\le R^{\mathrm{den},2}_{m}(\alpha)
\le
\frac{10}{(1-\alpha_0^2)^2}\,\frac{\alpha^6}{m^6}.
\end{equation}
In particular,
\begin{equation}\label{eq:den2_symm_upper}
\alpha^2\Bigl(\frac{1}{(m-\alpha)^2}+\frac{1}{(m+\alpha)^2}\Bigr)
\le
\frac{2(1+\alpha_0^2)}{(1-\alpha_0^2)^2}\,\frac{\alpha^2}{m^2}.
\end{equation}
\end{lemma}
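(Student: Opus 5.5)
The identity \eqref{eq:den2_symm_exact} is immediate: since $N=1/\alpha$, we have $\frac{1}{mN\mp1}=\frac{\alpha}{m\mp\alpha}$, and squaring gives $\frac{1}{(mN\mp1)^2}=\frac{\alpha^2}{(m\mp\alpha)^2}$. For the expansion we set $t:=\alpha/m\in(0,\alpha_0]$ and write
\[
\alpha^2\Bigl(\frac{1}{(m-\alpha)^2}+\frac{1}{(m+\alpha)^2}\Bigr)
=\frac{\alpha^2}{m^2}\,g(t),\qquad g(t):=\frac{1}{(1-t)^2}+\frac{1}{(1+t)^2}=\frac{2(1+t^2)}{(1-t^2)^2}.
\]

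Summing the binomial series $(1\mp t)^{-2}=\sum_{k\ge0}(k+1)(\pm t)^k$ cancels the odd powers, so
\[
g(t)=2\sum_{j\ge0}(2j+1)t^{2j}=2+6t^2+10t^4+14t^6+\cdots .
\]
This identifies the main terms $\frac{2\alpha^2}{m^2}+\frac{6\alpha^4}{m^4}$ in \eqref{eq:den2_symm_trunc}. The remainder is
\[
R^{\mathrm{den},2}_m(\alpha)=\frac{\alpha^2}{m^2}\sum_{j\ge2}2(2j+1)t^{2j},
\]
which is a sum of nonnegative terms and hence $\ge 0$.

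For the upper bound, the cleanest route is to compute the tail in closed form:
\[
\sum_{j\ge2}2(2j+1)t^{2j}=g(t)-2-6t^2.
\]
With $s=t^2$ this equals $\frac{2(1+s)}{(1-s)^2}-2-6s=\frac{s^2(10-6s)}{(1-s)^2}$, which one checks by multiplying out: $2+2s-(2+6s)(1-2s+s^2)=10s^2-6s^3$. Hence
\[
0\le R^{\mathrm{den},2}_m(\alpha)=\frac{\alpha^2}{m^2}\,\frac{t^4(10-6t^2)}{(1-t^2)^2}\le \frac{10}{(1-\alpha_0^2)^2}\,\frac{\alpha^6}{m^6},
\]
using $10-6t^2\le10$, $t^4\cdot\alpha^2/m^2=\alpha^6/m^6$, and $1-t^2\ge1-\alpha_0^2$. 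This is \eqref{eq:den2_symm_remainder}.

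Finally, \eqref{eq:den2_symm_upper} follows directly from $g(t)=\frac{2(1+t^2)}{(1-t^2)^2}\le\frac{2(1+\alpha_0^2)}{(1-\alpha_0^2)^2}$, since $g$ is increasing on $[0,\alpha_0]$: its numerator increases and its denominator decreases in $t$. The only step requiring care is the algebraic simplification of the tail; there is no real obstacle.
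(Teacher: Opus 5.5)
Your proof is correct and is essentially the paper's argument: both write the sum as $\frac{\alpha^2}{m^2}\cdot\frac{2(1+x)}{(1-x)^2}$ with $x=(\alpha/m)^2$, expand it as $2\sum_{j\ge0}(2j+1)x^j$, and bound the tail using the same closed form $\frac{x^2(10-6x)}{(1-x)^2}$. The paper reaches that closed form as $2x^2\sum_{j\ge0}(2j+5)x^j=\frac{2x^2(5-3x)}{(1-x)^2}$ rather than by subtracting $2+6x$ from $g$, and it obtains the final upper bound from the same monotonicity of $\frac{1+x}{(1-x)^2}$ on $[0,\alpha_0^2]$.
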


\begin{proof}
The identity \eqref{eq:den2_symm_exact} follows from $N=1/\alpha$:
\[
\frac1{(mN\pm 1)^2}=\frac{\alpha^2}{(m\pm\alpha)^2}.
\]
Write $x:=(\alpha/m)^2\le \alpha_0^2$. Then
\[
\alpha^2\Bigl(\frac{1}{(m-\alpha)^2}+\frac{1}{(m+\alpha)^2}\Bigr)
=\frac{\alpha^2}{m^2}\Bigl(\frac{1}{(1-\alpha/m)^2}+\frac{1}{(1+\alpha/m)^2}\Bigr)
=\frac{2\alpha^2}{m^2}\sum_{j\ge0}(2j+1)x^{j}.
\]
Separating $j=0,1$ yields \eqref{eq:den2_symm_trunc} with
\[
R^{\mathrm{den},2}_{m}(\alpha)=\frac{2\alpha^2}{m^2}\sum_{j\ge2}(2j+1)x^j\ge0.
\]
Using $\sum_{j\ge0}(2j+5)x^j=\frac{5-3x}{(1-x)^2}\le \frac{5}{(1-x)^2}$, we obtain
\[
R^{\mathrm{den},2}_{m}(\alpha)
=\frac{2\alpha^2}{m^2}\,x^2\sum_{j\ge0}(2j+5)x^j
\le \frac{10\alpha^2}{m^2}\,\frac{x^2}{(1-x)^2}
\le \frac{10}{(1-\alpha_0^2)^2}\,\frac{\alpha^6}{m^6}.
\]
Finally, \eqref{eq:den2_symm_upper} follows from the exact identity
\[
\frac{1}{(m-\alpha)^2}+\frac{1}{(m+\alpha)^2}
=\frac{2}{m^2}\,\frac{1+x}{(1-x)^2},
\qquad x=(\alpha/m)^2\le \alpha_0^2,
\]
and the monotonicity of $(1+x)/(1-x)^2$ on $[0,\alpha_0^2]$.
\end{proof}

\begin{proposition}[The diagonal part $M_1^{\mathrm{diag}}$]\label{prop:M1_diag_expand_stepB}
Assume $N\ge 20$ and set $\alpha:=1/N$, $\alpha_0:=1/20$. Then
\begin{equation}\label{eq:M1_diag_expansion}
M_1^{\mathrm{diag}}(N)
=
\frac{2\zeta(4)}{N^4}
+\frac{20\zeta(5)-8\zeta(2)\zeta(3)}{N^5}
+R^{(B,\mathrm{diag})}_N,
\qquad
\bigl|R^{(B,\mathrm{diag})}_N\bigr|\le \frac{E_{1,d}}{N^6},
\end{equation}
where one may take
\begin{equation}\label{eq:CBdiag6_def}
E_{1,d}
:=
E_{1,d,den2}
+E_{1,d,z6}
+E_{1,d,v},
\end{equation}
with
\begin{align}
E_{1,d,den2}
&:=\frac{10}{(1-\alpha_0^2)^2}\,V_\infty^2\,\alpha_0^2\,\zeta(8-4\alpha_0),\label{eq:CBdiag6_den2}\\
E_{1,d,z6}
&:=6\,V_\infty^2\,\zeta(6-4\alpha_0),\label{eq:CBdiag6_six}\\
E_{1,d,v}
&:=\frac{4(1+\alpha_0^2)}{(1-\alpha_0^2)^2}\,V_\infty V_{4}\,\alpha_0\,S(4-4\alpha_0,4)
+\frac{2(1+\alpha_0^2)}{(1-\alpha_0^2)^2}\,V_{4}^2\,\alpha_0^{4}\,S(4-4\alpha_0,8),\label{eq:CBdiag6_v}
\end{align}
where $S(\cdot,\cdot)$ is defined in Lemma~\ref{lem:Spq_bound}.
\end{proposition}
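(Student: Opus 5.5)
The plan is to repeat the three-step argument of Proposition~\ref{prop:M0_expand_stepA}, with the squared denominators of Lemma~\ref{lem:den2_symm_expand} in place of those of Lemma~\ref{lem:den_symm_expand}. By evenness $v_{-m}=v_m$ (Remark~\ref{rem:v_real_even_stepB}) and $d_{\pm m}=|1\pm mN|$, I would first rewrite
\[
M_1^{\mathrm{diag}}(N)=\sum_{m\ge1}v_m^2\Bigl(\frac1{(mN-1)^2}+\frac1{(mN+1)^2}\Bigr),
\]
and then insert \eqref{eq:den2_symm_trunc}. This splits $M_1^{\mathrm{diag}}$ into three pieces: a main piece $\sum_m v_m^2\,2\alpha^2/m^2$, a piece $\sum_m v_m^2\,6\alpha^4/m^4$, and a remainder piece $\sum_m v_m^2 R^{\mathrm{den},2}_m(\alpha)$.

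\emph{The two error pieces.} I would bound both using only the uniform bound $|v_m|\le V_\infty\alpha m^{2\alpha_0-1}$.
\begin{itemize}
\item The $6\alpha^4/m^4$ piece is at most $6V_\infty^2\alpha^6\sum_m m^{4\alpha_0-6}=6V_\infty^2\zeta(6-4\alpha_0)\alpha^6$. This gives $E_{1,d,z6}$.
\item For the remainder piece, \eqref{eq:den2_symm_remainder} gives a bound $\frac{10}{(1-\alpha_0^2)^2}V_\infty^2\alpha^8\zeta(8-4\alpha_0)$. Since $\alpha^8\le\alpha_0^2\alpha^6$, this gives $E_{1,d,den2}$.
\end{itemize}

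\emph{The main piece.} I would write $v_m=v_{m,\le3}+r_m$, with $|r_m|\le V_4\alpha^4m^{2\alpha_0-1}(1+\log m)^4$ as in Step~3 of Proposition~\ref{prop:M0_expand_stepA}. The truncation error is bounded via $|v_m^2-v_{m,\le3}^2|\le 2|v_m||r_m|+|r_m|^2$ together with the upper bound \eqref{eq:den2_symm_upper}. This produces $\alpha^7$ and $\alpha^{10}$ terms. Reducing them to $\alpha^6$ via $\alpha^7\le\alpha_0\alpha^6$ and $\alpha^{10}\le\alpha_0^4\alpha^6$, and bounding the resulting logarithmic sums by Lemma~\ref{lem:Spq_bound} with $p=4-4\alpha_0$ and $q=4,8$, gives exactly $E_{1,d,v}$.

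For the polynomial part I would use the Taylor coefficients of Proposition~\ref{prop:vm_taylor_5}, namely $v_m=\alpha/m+\alpha^2\bigl(1/m^2+2H_{m-1}/m\bigr)+\alpha^3a_{m,3}+\cdots$. Then
\[
v_{m,\le3}^2=\frac{\alpha^2}{m^2}+\alpha^3\Bigl(\frac2{m^3}+\frac{4H_{m-1}}{m^2}\Bigr)+O(\alpha^4).
\]
Multiplying by $2\alpha^2/m^2$ and summing gives $2\zeta(4)\alpha^4+\alpha^5\bigl(4\zeta(5)+8\zeta(4,1)\bigr)$. By \eqref{eq:euler_weight5a}, $\zeta(4,1)=2\zeta(5)-\zeta(2)\zeta(3)$, so the coefficient of $\alpha^5$ is $20\zeta(5)-8\zeta(2)\zeta(3)$. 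These are exactly the claimed coefficients.

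\emph{Main obstacle.} The delicate step is the bookkeeping of the terms of order $\alpha^6$ and higher that come from $v_{m,\le3}^2$ itself, such as $\alpha^4(a_{m,2}^2+2a_{m,3}/m)$ times $2\alpha^2/m^2$. These must be shown to fit inside the stated constants. I would first try bounding $|a_{m,2}|,|a_{m,3}|$ by $(1+\log m)^k/m$ as in Lemma~\ref{lem:Cpoly_traceable}. Because of the extra factor $m^{-2}$ here, these contributions decay like $(1+\log m)^4/m^4$ and are at most comparable to the $\zeta(6-4\alpha_0)$ term. I would therefore absorb them into $E_{1,d,z6}$, or, if that absorption fails numerically, into the $V_\infty V_4$ part of $E_{1,d,v}$ by truncating at $v_{m,\le2}$ instead.
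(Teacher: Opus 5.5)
Your outline follows the paper's proof step for step:
\begin{itemize}
\item the same rewriting as a sum over $m\ge1$;
\item the same split via Lemma~\ref{lem:den2_symm_expand};
\item the same Euler-sum evaluation of the $\alpha^4$ and $\alpha^5$ coefficients;
\item the same three estimates producing $E_{1,d,den2}$, $E_{1,d,z6}$ and $E_{1,d,v}$.
\end{itemize}
The obstacle you flag at the end is genuine, and the paper does not resolve it either. It only writes ``terms contributing only to $O(\alpha^6)$ after summation'' and gives them no slot in $E_{1,d}$. The uncovered piece is
\[
\sum_{m\ge1}\frac{2\alpha^2}{m^2}\Bigl(v_{m,\le3}^2-\frac{\alpha^2}{m^2}-\alpha^3\Bigl(\frac{2}{m^3}+\frac{4H_{m-1}}{m^2}\Bigr)\Bigr)
=\sum_{m\ge1}\frac{2\alpha^6(a_{m,2}^2+2a_{m,1}a_{m,3})+4\alpha^7a_{m,2}a_{m,3}+2\alpha^8a_{m,3}^2}{m^2}.
\]
Its leading coefficient is
\[
2\sum_{m}\bigl(3m^{-6}+8H_{m-1}m^{-5}+8H_{m-1}^2m^{-4}\bigr)=\tfrac{128}{3}\zeta(6)-24\zeta(3)^2\approx 8.73,
\]
using $\zeta(5,1)=\tfrac34\zeta(6)-\tfrac12\zeta(3)^2$ and $\sum_m H_{m-1}^2m^{-4}=\tfrac{37}{24}\zeta(6)-\zeta(3)^2$. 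The single term $m=1$ already contributes $6$. So the decay in $m$ is beside the point: this quantity is dominated by small $m$.

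Neither of your absorption routes can work, and no bookkeeping can. The inequality with this $E_{1,d}$ is false for large $N$. All terms have summable majorants, so $N^6R^{(B,\mathrm{diag})}_N\to c_6$, the full $\alpha^6$ coefficient of $M_1^{\mathrm{diag}}$. Adding the $6\zeta(6)$ coming from the $6\alpha^4/m^4$ piece gives $c_6=\tfrac{146}{3}\zeta(6)-24\zeta(3)^2\approx14.83$. With the constants of Proposition~\ref{prop:vm_constants_app_new}, one finds $E_{1,d,den2}\approx0.03$, $E_{1,d,z6}\approx7.61$ and $E_{1,d,v}\approx6.38$ (using $S(3.8,4)\approx3.53$ and $S(3.8,8)\approx79.6$). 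Hence $E_{1,d}\approx14.02<c_6$.

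Here is why each route fails:
\begin{itemize}
\item $E_{1,d,z6}$ exceeds the true size $6\zeta(6)\approx6.10$ of its own piece by only about $1.5$.
\item $E_{1,d,v}$ carries the factor $\alpha_0$ from $\alpha^7\le\alpha_0\alpha^6$, so it only has room for genuinely $O(\alpha^7)$ quantities.
\item Truncating at $v_{m,\le2}$ instead would produce an unsuppressed $\alpha^6$ cross term with a new constant, not $E_{1,d,v}$.
\end{itemize}

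The correct repair is to add a separate polynomial-tail constant, in the spirit of $E_{0,poly}$. With $|a_{m,2}|\le3(1+\log m)/m$ and $|a_{m,3}|\le5(1+\log m)^2/m$ from the proof of Lemma~\ref{lem:Cpoly_traceable}, the display above is at most $\bigl(38\,S(4,2)+60\alpha_0S(4,3)+50\alpha_0^2S(4,4)\bigr)\alpha^6$. This enlarged $E_{1,d}$ must then be propagated into $E_1$ and $C_6$.
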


\begin{proof}
By \eqref{eq:M1_diag_off_def} and symmetry $m=\pm k$,
\[
M_1^{\mathrm{diag}}(N)
=\sum_{k\ge1}v_k^2\Bigl(\frac{1}{(kN-1)^2}+\frac{1}{(kN+1)^2}\Bigr).
\]
Apply Lemma~\ref{lem:den2_symm_expand}:
\[
M_1^{\mathrm{diag}}(N)
=
\sum_{k\ge1}v_k^2\Bigl(\frac{2\alpha^2}{k^2}+\frac{6\alpha^4}{k^4}\Bigr)
+\sum_{k\ge1}v_k^2\,R^{\mathrm{den},2}_{k}(\alpha).
\]

\emph{(i) Main coefficients.}
We retain only those terms in the Proposition \ref{prop:vm_taylor_5} expansion of $v_k$ which contribute to $\alpha^4$ and $\alpha^5$:
\begin{equation}\label{eq:vk_sq_expand_upto3}
v_k^2
=\frac{\alpha^2}{k^2}
+\alpha^3\Bigl(\frac{2}{k^3}+\frac{4H_{k-1}}{k^2}\Bigr)
+\text{(terms contributing only to }O(\alpha^6)\text{ after summation)}.
\end{equation}
Multiplying \eqref{eq:vk_sq_expand_upto3} by $2\alpha^2/k^2$ and summing termwise gives
\[
2\alpha^4\sum_{k\ge1}\frac1{k^4}
+2\alpha^5\sum_{k\ge1}\Bigl(\frac{2}{k^5}+\frac{4H_{k-1}}{k^4}\Bigr)
=
2\zeta(4)\alpha^4+\Bigl(4\zeta(5)+8\!\sum_{k\ge1}\frac{H_{k-1}}{k^4}\Bigr)\alpha^5.
\]
By Lemma~\ref{lem:euler_sums_weight5}, $\sum_{k\ge1}H_{k-1}/k^4=2\zeta(5)-\zeta(2)\zeta(3)$,
hence the $\alpha^5$ coefficient equals $20\zeta(5)-8\zeta(2)\zeta(3)$.

\emph{(ii) Remainder bounds.}
First, by \eqref{eq:den2_symm_remainder} and $|v_k|\le V_\infty\alpha\,k^{2\alpha_0-1}$,
\[
\sum_{k\ge1}v_k^2\,R^{\mathrm{den},2}_{k}(\alpha)
\le
\frac{10}{(1-\alpha_0^2)^2}\,V_\infty^2\,\alpha^8\sum_{k\ge1}\frac{1}{k^{8-4\alpha_0}}
\le E_{1,d,den2}\alpha^6,
\]
since $\alpha^8\le \alpha_0^2\alpha^6$.

Next, the term with $\frac{6\alpha^4}{k^4}$ is bounded by
\[
6\alpha^4\sum_{k\ge1}\frac{v_k^2}{k^4}
\le
6V_\infty^2\alpha^6\sum_{k\ge1}\frac{1}{k^{6-4\alpha_0}}
= E_{1,d,z6}\alpha^6.
\]

Finally, to control the truncation error in $v_k^2$ against the exact squared denominator, use \eqref{eq:den2_symm_upper}:
\[
\frac{1}{(kN-1)^2}+\frac{1}{(kN+1)^2}
=\alpha^2\Bigl(\frac{1}{(k-\alpha)^2}+\frac{1}{(k+\alpha)^2}\Bigr)
\le
\frac{2(1+\alpha_0^2)}{(1-\alpha_0^2)^2}\,\frac{\alpha^2}{k^2}.
\]
Hence
\[
\sum_{k\ge1}\Bigl(\frac{1}{(kN-1)^2}+\frac{1}{(kN+1)^2}\Bigr)\,
\bigl|v_k^2-v_{k,\le3}^2\bigr|
\le
\frac{2(1+\alpha_0^2)}{(1-\alpha_0^2)^2}\alpha^2
\sum_{k\ge1}\frac{\bigl|v_k^2-v_{k,\le3}^2\bigr|}{k^2}.
\]
Using the Chapter \ref{sec:moment_euler_M0} remainder bound
\[
|v_k-v_{k,\le3}|
\le V_{4}\alpha^4\,k^{2\alpha_0-1}(1+\log k)^4,
\qquad
|v_k|\le V_\infty\alpha\,k^{2\alpha_0-1},
\]
we obtain
\[
|v_k^2-v_{k,\le3}^2|
\le
2V_\infty V_{4}\alpha^5\,k^{4\alpha_0-2}(1+\log k)^4
+
V_{4}^2\alpha^8\,k^{4\alpha_0-2}(1+\log k)^8.
\]
Therefore
\begin{align*}
\sum_{k\ge1}\Bigl(\frac{1}{(kN-1)^2}+\frac{1}{(kN+1)^2}\Bigr)\,
\bigl|v_k^2-v_{k,\le3}^2\bigr|
&\le
\frac{4(1+\alpha_0^2)}{(1-\alpha_0^2)^2}V_\infty V_{4}\alpha^7
\sum_{k\ge1}\frac{(1+\log k)^4}{k^{4-4\alpha_0}}\\
&\quad+
\frac{2(1+\alpha_0^2)}{(1-\alpha_0^2)^2}V_{4}^2\alpha^{10}
\sum_{k\ge1}\frac{(1+\log k)^8}{k^{4-4\alpha_0}}.
\end{align*}
Using $\alpha^7\le \alpha_0\alpha^6$, $\alpha^{10}\le \alpha_0^4\alpha^6$, and Lemma~\ref{lem:Spq_bound},
we obtain \eqref{eq:CBdiag6_v}. This completes the proof.
\end{proof}

\begin{lemma}[Kernel-sum bounds closed by $S(p,q)$]\label{lem:kernel_bounds_stepB}
Fix $\alpha_0:=1/20$ and set
\[
\beta:=1-2\alpha_0=\frac{9}{10},
\qquad
\gamma:=2-2\alpha_0=\frac{19}{10},
\qquad
p_\ast:=\beta+\gamma=3-4\alpha_0=\frac{14}{5}.
\]
Let
\[
A:=2^{\beta+\gamma+1}\zeta(\gamma),
\qquad
B:=4\zeta(p_\ast),
\qquad
D:=2^{\beta+\gamma+2},
\qquad
E:=\frac{3^{2\alpha_0}}{2\alpha_0}.
\]
For $q\in\{0,1,2\}$ define
\begin{align}
\mathcal S_q
&:=2A\,S(p_\ast,q)+2B\,S(\gamma,q)+2D\,S(2\gamma,q)+2DE\,S(2\gamma-2\alpha_0,q),\label{eq:Sq_def}\\
\mathcal T_q
&:=2A\,S(p_\ast+1,q)+2B\,S(\gamma+1,q)+2D\,S(2\gamma+1,q)+2DE\,S(2\gamma+1-2\alpha_0,q),\label{eq:Tq_def}
\end{align}
where $S(\cdot,\cdot)$ is given by Lemma~\ref{lem:Spq_bound}.
Then
\begin{align}
\sum_{\substack{m,\ell\in \mathbb Z_*\\ m\neq \ell}}
\frac{(1+\log|m|)^q}{|m|^\gamma\,|\ell|^\gamma\,|m-\ell|^\beta}
&\le \mathcal S_q,\label{eq:kernel_Sq_bound}\\
\sum_{\substack{m,\ell\in \mathbb Z_*\\ m\neq \ell}}
\frac{(1+\log|m|)^q}{|m|^{\gamma+1}\,|\ell|^\gamma\,|m-\ell|^\beta}
&\le \mathcal T_q.\label{eq:kernel_Tq_bound}
\end{align}
\end{lemma}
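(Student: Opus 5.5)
The plan is to bound both double sums by splitting the index set $\{m,\ell\in\mathbb Z_*,\ m\neq\ell\}$ according to the relative sizes of $|m|$, $|\ell|$ and $|m-\ell|$. Each region will be reduced to a single sum of the form $\sum_{m\ge1}(1+\log m)^q m^{-p}$, and each such sum will be closed by $S(p,q)$ from Lemma~\ref{lem:Spq_bound}. The four terms in $\mathcal S_q$ should correspond to four regions. The factor $2$ in front of each term will come from the symmetry $(m,\ell)\mapsto(-m,-\ell)$, which reduces to $m\ge1$ with $\ell$ of either sign. The powers of $2$ in $A$ and $D$ should come from comparing $|m-\ell|$ with $\max(|m|,|\ell|)$ or with $|m|/2$.

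\emph{Region I: $|\ell|\ge 2|m|$ or $\ell$ of opposite sign to $m$.} Here $|m-\ell|\ge \max(|m|,|\ell|)/2$, so $|m-\ell|^{-\beta}\le 2^\beta|m|^{-\beta}$. The $\ell$-sum then gives at most $2\zeta(\gamma)$, leaving $\sum_m(1+\log m)^q m^{-(\beta+\gamma)}$. This should produce the term $A\,S(p_\ast,q)$, with the slack in $2^{\beta+\gamma+1}$ covering the constants.

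\emph{Region II: $|\ell|\le |m|/2$.} Here $|m-\ell|\ge |m|/2$, so $|m-\ell|^{-\beta}\le 2^\beta|m|^{-\beta}$, and the $\ell$-sum is again bounded by a zeta value. Alternatively, one sums over $m$ first with $\ell$ fixed, which leaves $(1+\log|m|)^q|m|^{-\gamma}$ and accounts for the term $B\,S(\gamma,q)$ with $B=4\zeta(p_\ast)$. I would arrange the split so that in one of these two regions the logarithm stays attached to a variable summed against exponent $\gamma$.

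\emph{Region III: $|m|/2<|\ell|<2|m|$, same sign.} This near-diagonal region is the main obstacle. Here $|\ell|^{-\gamma}\le 2^\gamma|m|^{-\gamma}$, and one must sum $|m-\ell|^{-\beta}$ over $k=m-\ell$ with $1\le|k|\le|m|$, where $\beta<1$. This sum is divergent-type:
\[
\sum_{1\le|k|\le |m|}|k|^{-\beta}\le 2\Bigl(1+\tfrac{|m|^{1-\beta}-1}{1-\beta}\Bigr)\le \frac{2\cdot 3^{2\alpha_0}}{2\alpha_0}\,|m|^{2\alpha_0},
\]
using $1-\beta=2\alpha_0$. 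This is where $E=3^{2\alpha_0}/(2\alpha_0)$ enters, the $3$ coming from the range $|k|<|m|+\cdots$ or $\le 3|m|/2$. The resulting single sum is $\sum_m(1+\log m)^q m^{-(2\gamma-2\alpha_0)}$, which gives the term $2DE\,S(2\gamma-2\alpha_0,q)$. The boundary term $k$ small (the "$1$" above) gives $D\,S(2\gamma,q)$. The care needed is to check that the constant $2^{\beta+\gamma+2}$ covers both $2^\gamma$ and the counting of the two signs of $k$.

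Finally, \eqref{eq:kernel_Tq_bound} follows by the identical region decomposition with the extra factor $|m|^{-1}$. That factor shifts every exponent of the final $m$-sum by $1$, which gives $\mathcal T_q$ term by term. In Region II, where we sum over $\ell$ first, the shift is consistent because $|m|^{-1}$ stays attached to $m$.
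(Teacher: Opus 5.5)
Your proposal is correct and is essentially the paper's argument: fix $m\ge1$ (the factor $2$ comes from $(m,\ell)\mapsto(-m,-\ell)$), split the $\ell$-sum into a far part and a part with $|\ell|\le m/2$, both controlled by zeta values, and a near-diagonal part where $|\ell|^{-\gamma}\le 2^\gamma m^{-\gamma}$ and $\sum|m-\ell|^{-\beta}$ is at most a constant times $1+E\,m^{2\alpha_0}$, then close each outer sum with $S(p,q)$. The differences are only bookkeeping. The paper takes the tail $|\ell|>2m$ and bounds it uniformly by $B=4\zeta(p_\ast)$, and its near region $m/2<|\ell|\le 2m$ includes opposite signs, which gives $|m-\ell|\le 3m$ and explains the $3^{2\alpha_0}$ in $E$. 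Your grouping instead makes the far region decay like $m^{-\beta}$ and confines the near region to $|m-\ell|\le m$, so the $B\,S(\gamma,q)$ and $D\,S(2\gamma,q)$ terms are harmless nonnegative surplus and your speculative ``sum over $m$ first'' remark can be dropped.
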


\begin{proof}
Fix $m\ge 1$ and set
\[
S_m:=\sum_{\substack{\ell\in \mathbb Z_*\\ \ell\neq m}}\frac{1}{|\ell|^\gamma\,|m-\ell|^\beta}.
\]
Split $S_m=S_m^{\mathrm{near}}+S_m^{\mathrm{tail}}$ with
$S_m^{\mathrm{tail}}$ summing over $|\ell|>2m$ and $S_m^{\mathrm{near}}$ summing over $|\ell|\le 2m$.

\smallskip\noindent
\emph{Tail part.}
If $|\ell|>2m$ then $|m-\ell|\ge |\ell|-m\ge |\ell|/2$, hence
\[
S_m^{\mathrm{tail}}
\le 2^\beta\sum_{|\ell|>2m}\frac{1}{|\ell|^{\beta+\gamma}}
\le 4\zeta(p_\ast)=B.
\]

\smallskip\noindent
\emph{Near part.}
Split $S_m^{\mathrm{near}}=S_m^{(1)}+S_m^{(2)}$ where
$S_m^{(1)}$ sums over $0<|\ell|\le m/2$ and $S_m^{(2)}$ over $m/2<|\ell|\le 2m$, $\ell\neq m$.
For $0<|\ell|\le m/2$ we have $|m-\ell|\ge m/2$, hence
\[
S_m^{(1)}
\le 2^\beta m^{-\beta}\sum_{0<|\ell|\le m/2}\frac{1}{|\ell|^\gamma}
\le 2^\beta m^{-\beta}\cdot 2\zeta(\gamma)
\le \frac{A}{m^\beta},
\]
since $A=2^{\beta+\gamma+1}\zeta(\gamma)\ge 2^{\beta+1}\zeta(\gamma)$.

For $m/2<|\ell|\le 2m$ we have $|\ell|^{-\gamma}\le 2^\gamma m^{-\gamma}$.
Let $j:=m-\ell\neq 0$. The constraint $|\ell|\le 2m$ implies $|j|=|m-\ell|\le 3m$, hence
\[
S_m^{(2)}
\le 2^\gamma m^{-\gamma}\sum_{1\le |j|\le 3m}\frac{1}{|j|^\beta}
=2^\gamma m^{-\gamma}\cdot 2H_{3m}^{(\beta)}
\le \frac{D}{m^\gamma}H_{3m}^{(\beta)},
\]
where $H_{N}^{(\beta)}:=\sum_{j=1}^{N}j^{-\beta}$ and $D=2^{\beta+\gamma+2}\ge 2^{\gamma+1}$.

Combining the three parts yields
\[
S_m\le \frac{A}{m^\beta}+B+\frac{D}{m^\gamma}H_{3m}^{(\beta)}.
\]

\smallskip\noindent
\emph{Closing the outer sum.}
Since $\beta=1-2\alpha_0<1$, integral comparison gives
\[
H_{3m}^{(\beta)}
\le 1+\int_{1}^{3m}x^{-\beta}\,dx
=1+\frac{(3m)^{1-\beta}-1}{1-\beta}
\le 1+\frac{(3m)^{2\alpha_0}}{2\alpha_0}
=1+E\,m^{2\alpha_0}.
\]
Multiply the bound on $S_m$ by $2(1+\log m)^q/m^\gamma$ and sum over $m\ge 1$.
Use
\[
\sum_{m\in \mathbb Z_*}f(|m|)=2\sum_{m\ge1}f(m).
\]
Each resulting one-dimensional sum is controlled by Lemma~\ref{lem:Spq_bound}, giving
\eqref{eq:kernel_Sq_bound} and \eqref{eq:Sq_def}. The proof of \eqref{eq:kernel_Tq_bound}
is identical, with an additional factor $m^{-1}$ in the outer sum.
\end{proof}

\begin{proposition}[The off-diagonal part $M_1^{\mathrm{off}}$]\label{prop:M1_off_expand_stepB}
Assume $N\ge 20$ and set $\alpha:=1/N$, $\alpha_0:=1/20$. Then
\begin{equation}\label{eq:M1_off_expansion}
M_1^{\mathrm{off}}(N)
=
\frac{8\zeta(2)\zeta(3)-12\zeta(5)}{N^5}
+R^{(B,\mathrm{off})}_N,
\qquad
\bigl|R^{(B,\mathrm{off})}_N\bigr|\le \frac{E_{1,o}}{N^6},
\end{equation}
where one may take
\begin{equation}\label{eq:CBoff6_def}
E_{1,o}
:=
E_{1,o,v}
+E_{1,o,d}
+E_{1,o,2+},
\end{equation}
with
\begin{align}
E_{1,o,v}
&:=\frac{3}{(1-\alpha_0)^2}\,V_1\,\mathcal S_1,\label{eq:CBoff6_v}\\
E_{1,o,d}
&:=\frac{2}{(1-\alpha_0)^2}\,\mathcal T_0,\label{eq:CBoff6_d}\\
E_{1,o,2+}
&:=\frac{1}{(1-\alpha_0)^2}\Bigl(3V_1^2\,\mathcal S_2+2V_1\,\mathcal T_1\Bigr),\label{eq:CBoff6_ge2}
\end{align}
where $\mathcal S_q,\mathcal T_q$ are given by Lemma~\ref{lem:kernel_bounds_stepB}.
\end{proposition}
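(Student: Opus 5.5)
The plan is to reduce $M_1^{\mathrm{off}}$ to the weight-5 double sum \eqref{eq:weight5_double_sum}. We replace each of the three coefficients $v_{|m|},v_{|\ell|},v_{|m-\ell|}$ by its leading term $\alpha/|\cdot|$ from Proposition~\ref{prop:vm_taylor_5}, and each denominator $d_m=|1+mN|$ by $N|m|$. At leading order the summand becomes
\[
\frac{\alpha^3}{|m|\,|\ell|\,|m-\ell|}\cdot\frac{\alpha^2}{|m|\,|\ell|}
=\frac{\alpha^5}{|m|^2|\ell|^2|m-\ell|},
\]
and summing over $m\neq\ell$ in $\mathbb Z_*$ gives exactly $(8\zeta(2)\zeta(3)-12\zeta(5))\alpha^5$ by \eqref{eq:weight5_double_sum}. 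Every other contribution must then be shown to be $O(\alpha^6)$ with an explicit constant. The three constants $E_{1,o,v}$, $E_{1,o,d}$, $E_{1,o,2+}$ in the statement correspond to the natural telescoping decomposition of the error.

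First I would write $v_k=\alpha/k+\rho_k$ for $k\ge1$, where $|\rho_k|\le V_1\alpha^2 k^{2\alpha_0-1}(1+\log k)$. This is the first-order truncation bound from the $v_m$-analysis, with constant $V_1$. The bound $|v_k|\le V_\infty\alpha k^{2\alpha_0-1}$ will also be used. Next I would handle the denominators, writing
\[
\frac1{d_md_\ell}=\frac{\alpha^2}{|m||\ell|}\,(1+\epsilon_{m,\ell}).
\]
Since $d_m\ge (N-1)|m|$ (as in Lemma~\ref{lem:S2_bound_r1}), we have $\frac{1}{d_m}\le\frac{\alpha}{(1-\alpha_0)|m|}$. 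Also $|1/d_m-\alpha/|m||\le \frac{\alpha^2}{(1-\alpha_0)m^2}$. This accounts for the prefactor $(1-\alpha_0)^{-2}$, and the extra factor $1/|m|$ (or $1/|\ell|$, symmetrically) produces the kernel with $|m|^{\gamma+1}$, i.e. $\mathcal T_0$.

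The product $v_mv_\ell v_{m-\ell}$ is then expanded as (main)$+$(exactly one $\rho$)$+$(two or three $\rho$'s). The terms with exactly one $\rho$ give three terms. In each, one factor carries $\alpha^2 k^{2\alpha_0-1}(1+\log k)$ and the others carry $\alpha k^{2\alpha_0-1}$; the total is $\alpha^6$ times a double sum. Up to the symmetries $m\leftrightarrow\ell$ and relabelings, that double sum is dominated by the kernel $(1+\log|m|)|m|^{-\gamma}|\ell|^{-\gamma}|m-\ell|^{-\beta}$, because $|m|^{-1}|m|^{2\alpha_0-1}=|m|^{-\gamma}$ and $|m-\ell|^{2\alpha_0-1}=|m-\ell|^{-\beta}$. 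Here one must move the logarithm onto the $m$ variable. When the $\rho$ sits on $m-\ell$, I would use $1+\log|m-\ell|\le (1+\log|m|)+(1+\log|\ell|)$ and symmetry. This explains the factor $3$ and gives $E_{1,o,v}$ via \eqref{eq:kernel_Sq_bound} with $q=1$. The terms with two or more $\rho$'s carry an extra $\alpha$ and squared logarithms, which are bounded by $\mathcal S_2$. Cross terms between the denominator error and a $\rho$ are bounded by $\mathcal T_1$, yielding $E_{1,o,2+}$ after using $\alpha^7\le\alpha_0\alpha^6$.

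The main obstacle is the bookkeeping of the logarithm and exponent placements so that every error term genuinely fits one of the kernels of Lemma~\ref{lem:kernel_bounds_stepB}. The difficulty is the factor $|m-\ell|^{-\beta}$ with $\beta<1$: it is only barely summable against $|\ell|^{-\gamma}$. I would first check the near-diagonal region $\ell\approx m$, where the $\rho_{m-\ell}$ term is largest, and verify that $(1+\log|m-\ell|)\le 2(1+\log 3|m|)$ on $|\ell|\le2m$ is absorbed by the $q=1$ kernel bound. If it is not, I would enlarge the constant via the subadditivity bound above. Absolute convergence of all these sums, guaranteed by the finiteness of $\mathcal S_q,\mathcal T_q$, justifies the termwise rearrangements.
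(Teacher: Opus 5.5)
Your proposal follows essentially the same route as the paper: replacing $v_k$ by $\alpha/k$ and $d_m^{-1}$ by $\alpha/|m|$ produces the weight-5 sum \eqref{eq:weight5_double_sum}, the single-remainder terms are closed by $\mathcal S_1$ (for $\rho$) and $\mathcal T_0$ (for the denominator errors), and the higher-order terms are closed by $\mathcal S_2,\mathcal T_1$ from Lemma~\ref{lem:kernel_bounds_stepB}. One bookkeeping correction: your subadditivity bound for the logarithm on $|m-\ell|$ gives $2\mathcal S_1$ for that term, so your count yields the factor $4$, not $3$, in $E_{1,o,v}$; this does not affect the total $E_{1,o}$, because the multi-remainder terms carry $\alpha^7\le\alpha_0\alpha^6$ while the stated $E_{1,o,2+}$ contains no factor $\alpha_0$, which leaves ample room to absorb the extra $V_1\mathcal S_1/(1-\alpha_0)^2$.
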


\begin{proof}
By \eqref{eq:M1_diag_off_def} and symmetry,
\[
M_1^{\mathrm{off}}(N)
=\sum_{\substack{m,\ell\in \mathbb Z_*\\ m\neq \ell}}
\frac{v_{|m|}\,v_{|\ell|}\,v_{|m-\ell|}}{d_m\,d_\ell}.
\]
Insert the linear decompositions from Lemma~\ref{lem:linear_controls_stepB}:
\[
v_m=\frac{\alpha}{m}+r^{(v)}_m,\qquad \frac1{d_m}=\frac{\alpha}{|m|}+r^{(d)}_m,
\]
and expand the product. The unique term with no remainders equals
\[
\alpha^5\sum_{\substack{m,\ell\in \mathbb Z_*\\ m\neq \ell}}
\frac{1}{|m|^2\,|\ell|^2\,|m-\ell|}
=
\bigl(8\zeta(2)\zeta(3)-12\zeta(5)\bigr)\alpha^5
\]
by Lemma~\ref{lem:euler_sums_weight5}.

All remaining terms contain at least one remainder factor. Those with exactly one remainder
are bounded by the linear bounds \eqref{eq:dm_linear_stepB}--\eqref{eq:vm_linear_stepB} and closed using
Lemma~\ref{lem:kernel_bounds_stepB}, yielding \eqref{eq:CBoff6_v} and \eqref{eq:CBoff6_d}.
Terms with at least two remainders are bounded similarly, using the quadratic bounds
coming from products of \eqref{eq:dm_linear_stepB}--\eqref{eq:vm_linear_stepB}, giving \eqref{eq:CBoff6_ge2}.
\end{proof}

\begin{proposition}[Combination for $M_1$]\label{prop:M1_expand_stepB}
Assume $N\ge 20$. Then
\begin{equation}\label{eq:M1_expand_stepB}
M_1(N)
=
\frac{2\zeta(4)}{N^4}
+\frac{8\zeta(5)}{N^5}
+R^{(B)}_N,
\qquad
|R^{(B)}_N|\le \frac{E_{1}}{N^6},
\end{equation}
where one may take
\begin{equation}\label{eq:CB6_def}
E_{1}:=E_{1,d}+E_{1,o}.
\end{equation}
\end{proposition}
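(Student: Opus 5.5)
The plan is to add the two expansions just established, using the splitting \eqref{eq:M1_split_stepB}, $M_1=M_1^{\mathrm{diag}}+M_1^{\mathrm{off}}$, which holds exactly. The split is legitimate because the series in \eqref{eq:M1_explicit_48} converges absolutely (Lemma~\ref{lem:Mj_multisum_48}). Separating the diagonal $\ell=m$, where $v_0=1$, from the off-diagonal terms therefore requires no rearrangement argument. Since $v_k$ is real and even (Remark~\ref{rem:v_real_even_stepB}), the conjugates in \eqref{eq:M1_explicit_48} drop out, and the two pieces are exactly those in \eqref{eq:M1_diag_off_def}.

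Next I would invoke Proposition~\ref{prop:M1_diag_expand_stepB} and Proposition~\ref{prop:M1_off_expand_stepB}, both valid for $N\ge 20$. Adding the leading terms gives the $N^{-4}$ coefficient $2\zeta(4)$, which comes only from the diagonal part. The $N^{-5}$ coefficient is
\[
\bigl(20\zeta(5)-8\zeta(2)\zeta(3)\bigr)+\bigl(8\zeta(2)\zeta(3)-12\zeta(5)\bigr)=8\zeta(5).
\]
The $\zeta(2)\zeta(3)$ contributions cancel exactly, which is the point of this combination. The cancellation comes from the Euler-sum evaluations \eqref{eq:euler_weight5a} and \eqref{eq:weight5_double_sum} in Lemma~\ref{lem:euler_sums_weight5}.

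For the remainder, I set $R^{(B)}_N:=R^{(B,\mathrm{diag})}_N+R^{(B,\mathrm{off})}_N$. The triangle inequality then gives $|R^{(B)}_N|\le (E_{1,d}+E_{1,o})N^{-6}$, and this is \eqref{eq:CB6_def}.

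There is no real obstacle at this stage, because all the analytic work sits in the two preceding propositions. The only step that needs care is checking that both remainders are measured in the same normalization. Both are stated as absolute bounds in $N^{-6}$, uniform for $N\ge 20$, so their constants add directly. It also needs checking that the $\alpha^4$ off-diagonal contribution vanishes, and it does: the off-diagonal product already contains five factors of order $\alpha$.
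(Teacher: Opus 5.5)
Your proposal is correct and is essentially the paper's own proof. You split $M_1=M_1^{\mathrm{diag}}+M_1^{\mathrm{off}}$, add the expansions of Propositions~\ref{prop:M1_diag_expand_stepB} and~\ref{prop:M1_off_expand_stepB} so that the $\zeta(2)\zeta(3)$ terms cancel and the $N^{-5}$ coefficient becomes $8\zeta(5)$, and bound the remainder by the triangle inequality to obtain $E_1=E_{1,d}+E_{1,o}$.
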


\begin{proof}
Combine \eqref{eq:M1_split_stepB}, \eqref{eq:M1_diag_expansion}, and \eqref{eq:M1_off_expansion}. The $N^{-5}$
coefficient equals
\[
\bigl(20\zeta(5)-8\zeta(2)\zeta(3)\bigr)+\bigl(8\zeta(2)\zeta(3)-12\zeta(5)\bigr)=8\zeta(5),
\]
and the remainder bound follows from the triangle inequality.
\end{proof}

\subsection{Expansion of $M_2=\langle K^2 b,b\rangle_{\ell^2(\mathbb Z_*)}$}\label{sec:moment_euler_M2}

\begin{lemma}[Triple-sum formula and decomposition of $M_2$]\label{lem:M2_decomp_stepC}
For $N\ge 20$ define
\begin{equation}\label{eq:M2_def_stepC}
M_2(N):=\langle K^2 b,b\rangle_{\ell^2(\mathbb Z_*)} .
\end{equation}
Then
\begin{equation}\label{eq:M2_trisum_stepC}
M_2(N)=\sum_{m,\ell,p\in \mathbb Z_*}
\frac{\overline{v_m}\,v_p\,v_{m-\ell}\,v_{\ell-p}}
{d_m\,d_\ell\,d_p}.
\end{equation}
Moreover, $M_2$ decomposes as
\begin{equation}\label{eq:M2_decomp_stepC}
M_2
=
M_2^{\mathrm{diag}}+M_2^{(1)}+M_2^{(2)}+M_2^{(3)},
\end{equation}
where
\begin{align}
M_2^{\mathrm{diag}}
&:=
\sum_{m\in \mathbb Z_*}\frac{|v_m|^2}{d_m^3},\label{eq:M2_diag_def_stepC}\\[2mm]
M_2^{(1)}
&:=
2\sum_{\substack{m,p\in \mathbb Z_*\\ m\neq p}}
\frac{\overline{v_m}\,v_p\,v_{m-p}}{d_m^2\,d_p},
\label{eq:M2_oneeq_def_stepC}\\[2mm]
M_2^{(2)}
&:=
\sum_{\substack{m,\ell\in \mathbb Z_*\\ m\neq \ell}}
\frac{|v_m|^2\,|v_{m-\ell}|^2}{d_m^2\,d_\ell},
\label{eq:M2_mpeq_def_stepC}\\[2mm]
M_2^{(3)}
&:=
\sum_{\substack{m,\ell,p\in \mathbb Z_*\\ m,\ell,p\ \text{pairwise distinct}}}
\frac{\overline{v_m}\,v_p\,v_{m-\ell}\,v_{\ell-p}}
{d_m\,d_\ell\,d_p}.
\label{eq:M2_distinct_def_stepC}
\end{align}
\end{lemma}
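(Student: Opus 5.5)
The plan is to take the triple-sum formula from Lemma~\ref{lem:Mj_multisum_48} and then split the index set $\mathbb Z_*^3$ according to which of the indices $m,\ell,p$ coincide. The one point that needs care is justifying that split, which requires absolute convergence of the triple sum; the coincidence patterns themselves are bookkeeping.

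\textbf{Step 1: the triple-sum formula.} Formula \eqref{eq:M2_trisum_stepC} is exactly \eqref{eq:M2_explicit_48}, relabelling $(m_0,m_1,m_2)=(m,\ell,p)$. For the decomposition I need absolute convergence. Let $|K|$ be the matrix with entries $|v_{m-m'}|/\sqrt{d_md_{m'}}$ and $|b|$ the vector with entries $|v_m|/\sqrt{d_m}$. The proof of Lemma~\ref{lem:K_HS_bound_r1} only uses $|v_j|^2$, so $|K|$ is Hilbert--Schmidt with the same bound $\sqrt{S_2(N)}\,\|v\|_{\ell^2}$. Also $|b|\in\ell^2(\mathbb Z_*)$ with the same norm as $b$. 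Hence
\[
\sum_{m,\ell,p\in\mathbb Z_*}\frac{|v_m|\,|v_p|\,|v_{m-\ell}|\,|v_{\ell-p}|}{d_m\,d_\ell\,d_p}=\langle |K|^2|b|,|b|\rangle_{\ell^2(\mathbb Z_*)}<\infty .
\]
This lets me sum over any partition of the index set and rearrange freely.

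\textbf{Step 2: partition by coincidences.} I split $\mathbb Z_*^3$ into five disjoint pieces and use $v_0=1$ together with $v_{-k}=v_k\in\mathbb R$ (Remark~\ref{rem:v_real_even_stepB}):
\begin{itemize}
\item[(a)] $m=\ell=p$. Here $v_{m-\ell}=v_{\ell-p}=1$, so the term is $|v_m|^2/d_m^3$. This gives $M_2^{\mathrm{diag}}$.
\item[(b)] $m=\ell\neq p$. Here $v_{m-\ell}=1$, and the terms are $\overline{v_m}v_pv_{m-p}/(d_m^2d_p)$.
\item[(c)] $\ell=p\neq m$. The terms are $\overline{v_m}v_pv_{m-p}/(d_md_p^2)$. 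Swapping the names $m\leftrightarrow p$ and using realness and evenness ($\overline{v_p}v_m v_{p-m}=\overline{v_m}v_pv_{m-p}$) turns this into exactly the sum in (b). So (b)+(c) equals $M_2^{(1)}$, which explains the factor $2$.
\item[(d)] $m=p\neq\ell$. The terms are $|v_m|^2v_{m-\ell}v_{\ell-m}/(d_m^2d_\ell)$, and $v_{m-\ell}v_{\ell-m}=|v_{m-\ell}|^2$ by evenness and realness. This gives $M_2^{(2)}$.
\item[(e)] $m,\ell,p$ pairwise distinct. This is $M_2^{(3)}$ by definition.
\end{itemize}
Each sub-sum converges absolutely by Step~1, so adding the pieces gives \eqref{eq:M2_decomp_stepC}.

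I expect Step~1 to be the only real point, namely checking that the Hilbert--Schmidt bound transfers to the entrywise absolute-value matrix. Step~2 is mechanical once the symmetry relabelling in case (c) is noted.
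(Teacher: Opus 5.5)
Your proposal is correct and follows the paper's proof. You read the triple sum off the matrix entries of $K$ and $b$, split $\mathbb Z_*^3$ by coincidence patterns, and fold the $\ell=p\neq m$ case into the $m=\ell\neq p$ case by relabelling and realness/evenness of $v$, which gives the factor $2$ in $M_2^{(1)}$. Your Step~1, which bounds the sum of absolute values by $\langle |K|^2|b|,|b|\rangle_{\ell^2(\mathbb Z_*)}$ with $|K|$ Hilbert--Schmidt, is a worthwhile addition: the paper rearranges the triple sum without explicitly justifying absolute convergence of the full sum.
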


\begin{proof}
By the Fourier model we have
\[
K_{m\ell}=\frac{v_{m-\ell}}{\sqrt{d_m d_\ell}},
\qquad
b_p=\frac{v_p}{\sqrt{d_p}}
\qquad (m,\ell,p\in \mathbb Z_*).
\]
Hence
\[
(K^2 b)_m=\sum_{\ell,p\in \mathbb Z_*} K_{m\ell}K_{\ell p}b_p
=
\sum_{\ell,p\in \mathbb Z_*}\frac{v_{m-\ell}\,v_{\ell-p}\,v_p}
{\sqrt{d_m}\,d_\ell\,d_p},
\]
and therefore
\[
M_2=\sum_{m\in \mathbb Z_*}(K^2b)_m\,\overline{b_m}
=
\sum_{m,\ell,p\in \mathbb Z_*}
\frac{\overline{v_m}\,v_p\,v_{m-\ell}\,v_{\ell-p}}
{d_m\,d_\ell\,d_p},
\]
which is \eqref{eq:M2_trisum_stepC}.

To decompose the sum, distinguish the coincidence patterns among $(m,\ell,p)$.
If $m=\ell=p$, then $v_{m-\ell}v_{\ell-p}=v_0^2=1$ and we obtain \eqref{eq:M2_diag_def_stepC}.
If exactly one equality holds:
\begin{itemize}
\item $m=\ell\neq p$ gives $\sum_{m\neq p}\overline{v_m}v_p v_{m-p}/(d_m^2 d_p)$;
\item $\ell=p\neq m$ gives $\sum_{\ell\neq m}\overline{v_m}v_\ell v_{m-\ell}/(d_m d_\ell^2)$, and after swapping
$(m,\ell)\mapsto(\ell,m)$ this equals the previous contribution (using $v_{m-\ell}=v_{\ell-m}\in\mathbb R$);
\item $m=p\neq \ell$ gives
$\sum_{m\neq \ell}|v_m|^2\,|v_{m-\ell}|^2/(d_m^2 d_\ell)$ since
$v_{\ell-m}=\overline{v_{m-\ell}}$.
\end{itemize}
This yields \eqref{eq:M2_oneeq_def_stepC}--\eqref{eq:M2_mpeq_def_stepC}. The remaining terms are those with
$m,\ell,p$ pairwise distinct, which gives \eqref{eq:M2_distinct_def_stepC}.
\end{proof}

\begin{proposition}[The diagonal part $M_2^{\mathrm{diag}}$]\label{prop:M2_diag_expand_stepC}
Assume $N\ge 20$ (so $\alpha\le \alpha_0$). Then
\begin{equation}\label{eq:M2_diag_expand_stepC}
M_2^{\mathrm{diag}}(N)=\frac{2\zeta(5)}{N^5}+R_2^{(C,\mathrm{diag})}(N),
\qquad
\bigl|R_2^{(C,\mathrm{diag})}(N)\bigr|
\le
\frac{E_{2,d}}{N^6},
\end{equation}
where one may take
\begin{equation}\label{eq:C6Cdiag_def_stepC}
E_{2,d}
:=
2D_{3}\zeta(6)
+
\frac{4V_1}{(1-\alpha_0)^3}\,S(5-2\alpha_0,1)
+
\frac{2\alpha_0 V_1^2}{(1-\alpha_0)^3}\,S(5-4\alpha_0,2).
\end{equation}
Here $V_1$ is the remainder constant in the linear control of $v_m$ (introduced earlier),
and $S(p,q)$ is defined by \eqref{eq:Spq_def}.
\end{proposition}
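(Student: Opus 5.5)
The plan follows the pattern of Proposition~\ref{prop:M1_diag_expand_stepB}. First use the evenness $v_{-m}=v_m$ and $d_{\pm k}=|1\pm kN|$ to rewrite
\[
M_2^{\mathrm{diag}}(N)=\sum_{k\ge1}v_k^2\Bigl(\frac{1}{(kN-1)^3}+\frac{1}{(kN+1)^3}\Bigr)
=\alpha^3\sum_{k\ge1}v_k^2\Bigl(\frac{1}{(k-\alpha)^3}+\frac{1}{(k+\alpha)^3}\Bigr).
\]
Expanding the bracket in $x=\alpha/k\le\alpha_0$, the odd powers cancel. The bracket equals $2k^{-3}(1+6x^2+\dots)$ and is bounded above by $2k^{-3}(1-\alpha_0)^{-3}$. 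The leading term of $v_k$ is $\alpha/k$, which contributes
\[
2\alpha^3\sum_{k\ge1}\frac{\alpha^2}{k^5}=2\zeta(5)\alpha^5 ,
\]
and this is the claimed coefficient. Everything else must be shown to be $O(\alpha^6)$ with the stated constant.

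The remainder splits into three contributions, which I will bound separately.

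\textbf{(a) Curvature correction of the bracket.} Replace $2k^{-3}$ by the exact bracket. The correction is bounded by $12\alpha^2k^{-5}(1-\alpha_0)^{-5}$ or similar. Multiplied by $\alpha^3 v_k^2\le \alpha^5 D\,k^{-2}$, this gives $\alpha^{8}$-order terms. Alternatively, keep it at the crude level: write the bracket error as $\le D_3\alpha^2 k^{-5}$ and use $v_k^2\le \alpha^2/k^2\cdot$const, absorbed into $2D_3\zeta(6)\alpha^6$. I expect $D_3$ to be the constant of the cubic denominator expansion introduced alongside Lemma~\ref{lem:den2_symm_expand}. I would set it up so that $|(1+\alpha/k)^{-3}+(1-\alpha/k)^{-3}-2|\le D_3\alpha/k$. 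Combined with the leading part $v_k\approx\alpha/k$, this yields $2D_3\zeta(6)\alpha^6$.

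\textbf{(b) Linear correction of $v_k$.} Use the linear control from Lemma~\ref{lem:linear_controls_stepB}: $v_k=\alpha/k+r^{(v)}_k$ with $|r^{(v)}_k|\le V_1\alpha^2 k^{2\alpha_0-1}(1+\log k)$. Then
\[
v_k^2-\frac{\alpha^2}{k^2}=\frac{2\alpha}{k}\,r^{(v)}_k+\bigl(r^{(v)}_k\bigr)^2 .
\]
The cross term, weighted by $\alpha^3\cdot 2k^{-3}(1-\alpha_0)^{-3}$, is at most
\[
\frac{4V_1}{(1-\alpha_0)^3}\,\alpha^6\sum_k\frac{1+\log k}{k^{5-2\alpha_0}}\le \frac{4V_1}{(1-\alpha_0)^3}\,S(5-2\alpha_0,1)\,\alpha^6
\]
by Lemma~\ref{lem:Spq_bound}.

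\textbf{(c) Square of the remainder.} The square term gives
\[
\frac{2V_1^2}{(1-\alpha_0)^3}\,\alpha^7\sum_k\frac{(1+\log k)^2}{k^{5-4\alpha_0}} ,
\]
and using $\alpha^7\le\alpha_0\alpha^6$ this reproduces the third term of $E_{2,d}$.

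The only delicate step is the bookkeeping in (a). I must make sure the bracket's deviation from $2k^{-3}$, multiplied by the full $v_k^2$ rather than only its leading part, still closes with $\zeta(6)$. To handle this, I would bound $v_k^2\le 2\alpha^2/k^2+2(r_k^{(v)})^2$, or more simply absorb the cross terms into the (b)–(c) estimates, which already use the upper bound $(1-\alpha_0)^{-3}$ for the whole bracket. Everything else is termwise summation of absolutely convergent series, justified by the weights $k^{-5+4\alpha_0}$.
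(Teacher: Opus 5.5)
Your proposal is correct and is essentially the paper's proof. The paper uses the same exact splitting $v_k^2 d_m^{-3} - (\alpha/k)^2\,(\text{leading part of } d_m^{-3}) = (\alpha/k)^2\,(\text{denominator error}) + \bigl(2(\alpha/k)\, r^{(v)}_k + (r^{(v)}_k)^2\bigr) d_m^{-3}$, bounds the last two pieces with the crude estimate $d_m^{-3}\le \alpha^3/((1-\alpha_0)^3|m|^3)$ and Lemma~\ref{lem:Spq_bound}, and so arrives at the same three terms of $E_{2,d}$.

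The one difference is that the paper does not pair $\pm k$. It linearizes each $d_m^{-3}$ separately by the mean value theorem (Lemma~\ref{lem:den3_linear_stepC}: $|d_m^{-3}-\alpha^3/|m|^3|\le D_3\alpha^4/|m|^4$), and that is where the factor $2D_3\zeta(6)$ comes from. Your aside in (a) that $v_k^2\le \mathrm{const}\cdot\alpha^2/k^2$ is false uniformly in $k$, since $v_k\asymp \alpha k^{2\alpha-1}$, but your final bookkeeping never uses it.
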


\begin{proof}
From \eqref{eq:M2_diag_def_stepC},
\[
M_2^{\mathrm{diag}}=\sum_{m\in \mathbb Z_*}\frac{|v_m|^2}{d_m^3}.
\]
We expand
\[
|v_m|^2=\Bigl(\frac{\alpha}{|m|}\Bigr)^2+2\frac{\alpha}{|m|}r^{(v)}_{|m|}+|r^{(v)}_{|m|}|^2,
\qquad
\frac1{d_m^3}=\frac{\alpha^3}{|m|^3}+r^{(d,3)}_m,
\]
and treat $\bigl(\frac{\alpha}{|m|}\bigr)^2 r^{(d,3)}_m$ as the ``denominator error'';
the remaining $r^{(d,3)}_m$--cross terms are absorbed into the linear/quadratic $v$--errors
since those are estimated using the crude bound \eqref{eq:dm3_upper_stepC} for $d_m^{-3}$.

Write for $m\ge 1$
\[
v_m=\frac{\alpha}{m}+r^{(v)}_m,
\qquad
|r^{(v)}_m|\le V_1\alpha^2 m^{2\alpha_0-1}(1+\log m),
\]
and use evenness to treat $m\in \mathbb Z_*$ by doubling the $m\ge 1$ sum.

\smallskip\noindent
\emph{Main term.} Using Lemma~\ref{lem:den3_linear_stepC},
\[
\sum_{m\in \mathbb Z_*}\frac{\alpha^2}{|m|^2}\cdot \frac{\alpha^3}{|m|^3}
=
2\alpha^5\sum_{m\ge 1}\frac1{m^5}
=
2\zeta(5)\alpha^5.
\]

\smallskip\noindent
\emph{Denominator error.} By \eqref{eq:den3_linear_stepC},
\[
\sum_{m\in \mathbb Z_*}\frac{\alpha^2}{|m|^2}\,\bigl|r^{(d,3)}_m\bigr|
\le
D_{3}\alpha^6\sum_{m\in \mathbb Z_*}\frac1{|m|^6}
=
2D_{3}\zeta(6)\alpha^6.
\]

\smallskip\noindent
\emph{Linear $v$-error.} By \eqref{eq:dm3_upper_stepC},
\[
\sum_{m\in \mathbb Z_*}\frac{2\alpha}{|m|}\,|r^{(v)}_{|m|}|\cdot \frac1{d_m^3}
\le
\frac{4V_1}{(1-\alpha_0)^3}\alpha^6
\sum_{m\ge 1}\frac{1+\log m}{m^{5-2\alpha_0}}
=
\frac{4V_1}{(1-\alpha_0)^3}\alpha^6\,S(5-2\alpha_0,1).
\]

\smallskip\noindent
\emph{Quadratic $v$-error.} Similarly,
\[
\sum_{m\in \mathbb Z_*}\frac{|r^{(v)}_{|m|}|^2}{d_m^3}
\le
\frac{2V_1^2}{(1-\alpha_0)^3}\alpha^7
\sum_{m\ge 1}\frac{(1+\log m)^2}{m^{5-4\alpha_0}}.
\]
Since $\alpha\le \alpha_0$, we have $\alpha^7\le \alpha_0\alpha^6$, hence
\[
\sum_{m\in \mathbb Z_*}\frac{|r^{(v)}_{|m|}|^2}{d_m^3}
\le
\frac{2\alpha_0 V_1^2}{(1-\alpha_0)^3}\alpha^6\,S(5-4\alpha_0,2).
\]
Collecting these bounds yields \eqref{eq:M2_diag_expand_stepC}--\eqref{eq:C6Cdiag_def_stepC}.
\end{proof}

\begin{lemma}[A harmonic-weighted kernel bound]\label{lem:double_sum_mpeq_stepC}
Let $\beta_2:=2-4\alpha_0\in(1,2)$.
Then for every integer $m\ge 1$,
\begin{equation}\label{eq:double_sum_mpeq_stepC}
\sum_{\substack{\ell\in \mathbb Z_*\\ \ell\neq m}}\frac{1}{|\ell|\,|m-\ell|^{\beta_2}}
\le
\mathcal U_0\,\frac{1+\log m}{m},
\end{equation}
where one may take
\begin{equation}\label{eq:U0_def_stepC}
\mathcal U_0:=
4\zeta(\beta_2)+2^{\beta_2+2}+\frac{2^{\beta_2+2}}{\beta_2}.
\end{equation}
\end{lemma}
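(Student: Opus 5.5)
Fix $m\ge1$ and split the sum over $\ell\in\mathbb Z_*\setminus\{m\}$ into three regions, in the same spirit as the near/tail splitting in the proof of Lemma~\ref{lem:kernel_bounds_stepB}: the far region $|\ell|>2m$, the region $0<|\ell|\le m/2$ (which includes all negative $\ell$ with $|\ell|\le m/2$), and the intermediate region $m/2<|\ell|\le 2m$, $\ell\ne m$. The goal in each region is a bound of the form (constant)$\cdot(1+\log m)/m$. The three constants should reproduce the three summands of $\mathcal U_0$.

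\emph{Far region.} Here $|m-\ell|\ge|\ell|-m\ge|\ell|/2$, so the summand is at most $2^{\beta_2}|\ell|^{-1-\beta_2}$. Summing over $|\ell|>2m$ gives at most $2^{\beta_2}\cdot 2\sum_{k>2m}k^{-1-\beta_2}\le 2^{\beta_2+1}(2m)^{-\beta_2}/\beta_2$. Since $\beta_2>1$ and $m\ge1$, this is $\le 2^{\beta_2+2}/(\beta_2 m)$, well within the last summand of $\mathcal U_0$.

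\emph{Small region.} For $0<|\ell|\le m/2$ we have $|m-\ell|\ge m/2$. Hence the contribution is at most $2^{\beta_2}m^{-\beta_2}\cdot 2H_{\lfloor m/2\rfloor}\le 2^{\beta_2+1}m^{-\beta_2}(1+\log m)$. Using $m^{-\beta_2}\le m^{-1}$, this is at most $2^{\beta_2+2}(1+\log m)/m$, which is the middle summand. This region is the source of the logarithm.

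\emph{Intermediate region.} Here $|\ell|^{-1}\le 2/m$, and $j:=m-\ell$ ranges over nonzero integers. Thus the contribution is at most $(2/m)\sum_{j\ne0}|j|^{-\beta_2}=4\zeta(\beta_2)/m$, the first summand. Adding the three regions and using $1\le 1+\log m$ yields \eqref{eq:double_sum_mpeq_stepC} with $\mathcal U_0$ as in \eqref{eq:U0_def_stepC}.

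The only mildly delicate point is bookkeeping. Negative $\ell$ must be counted in the right region: for $\ell<0$ one has $|m-\ell|=m+|\ell|$, which only helps. The harmonic-sum bound $H_{\lfloor m/2\rfloor}\le 1+\log m$ must also be applied with the factor $2$ for $\pm\ell$. No genuine obstacle is expected, since every estimate is elementary and the constants have slack.
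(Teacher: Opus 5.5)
Your proposal is correct and follows essentially the same route as the paper: the same three-region split ($0<|\ell|\le m/2$, $m/2<|\ell|<2m$, $|\ell|\ge 2m$). In both, the logarithm comes from $2H_{\lfloor m/2\rfloor}$ in the small region, the intermediate region gives $4\zeta(\beta_2)/m$ via $j=m-\ell$, and the tail is bounded using $|m-\ell|\ge|\ell|/2$, with $m^{-\beta_2}\le m^{-1}$ and $1\le 1+\log m$ absorbing everything into $\mathcal U_0$.
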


\begin{proof}
Fix $m\ge 1$ and split the sum into three regions.

\smallskip\noindent
(I) If $m/2<|\ell|<2m$, then $|\ell|^{-1}\le 2/m$ and with $j=m-\ell\in \mathbb Z_*$,
\[
\sum_{m/2<|\ell|<2m}\frac1{|\ell|\,|m-\ell|^{\beta_2}}
\le
\frac{2}{m}\sum_{j\in \mathbb Z_*}\frac1{|j|^{\beta_2}}
=
\frac{4\zeta(\beta_2)}{m}.
\]

\smallskip\noindent
(II) If $1\le |\ell|\le m/2$, then $|m-\ell|\ge m/2$, hence
\[
\sum_{1\le |\ell|\le m/2}\frac1{|\ell|\,|m-\ell|^{\beta_2}}
\le
\Bigl(\frac{2}{m}\Bigr)^{\beta_2}\sum_{1\le |\ell|\le m/2}\frac1{|\ell|}.
\]
Moreover,
\[
\sum_{1\le |\ell|\le m/2}\frac1{|\ell|}
=
2H_{\lfloor m/2\rfloor}
\le 2(1+\log m),
\]
so
\[
\sum_{1\le |\ell|\le m/2}\frac1{|\ell|\,|m-\ell|^{\beta_2}}
\le
2^{\beta_2+2}\,\frac{1+\log m}{m^{\beta_2}}
\le
2^{\beta_2+2}\,\frac{1+\log m}{m},
\]
where we used $\beta_2>1$ and $m\ge1$.

\smallskip\noindent
(III) If $|\ell|\ge 2m$, then $|m-\ell|\ge |\ell|/2$, so
\[
\sum_{|\ell|\ge 2m}\frac1{|\ell|\,|m-\ell|^{\beta_2}}
\le
2^{\beta_2}\sum_{|\ell|\ge 2m}\frac1{|\ell|^{1+\beta_2}}
\le
\frac{2^{\beta_2+2}}{\beta_2}\,\frac1{m^{\beta_2}}
\le
\frac{2^{\beta_2+2}}{\beta_2}\,\frac1{m}.
\]

Combining (I)--(III) yields \eqref{eq:double_sum_mpeq_stepC}.
\end{proof}

\begin{lemma}[An $\ell^2$ bound for $\mathcal T_0$]\label{lem:T0_bound_stepC}
Fix $\alpha_0\in(0,\tfrac12)$ and set
\[
\beta:=1-2\alpha_0\in(0,1),\qquad \gamma:=2-2\alpha_0>1.
\]
Define
\begin{equation}\label{eq:T0_def_stepC}
\mathcal T_0:=\sum_{\substack{m,p\in \mathbb Z_*\\ m\neq p}}
\frac{1}{|m|^{\gamma+1}|p|^\gamma|m-p|^\beta}.
\end{equation}
Then
\begin{equation}\label{eq:T0_sharp_def_stepC}
\mathcal T_0\le \mathcal T_0^{\sharp},
\qquad
\mathcal T_0^{\sharp}:=4\,\zeta(\gamma+1)\,\sqrt{\zeta(2\gamma)\,\zeta(2\beta)}.
\end{equation}
\end{lemma}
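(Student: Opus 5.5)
The bound follows from a single Cauchy--Schwarz step in the inner variable, followed by summing the outer weight. Fix $m\in\mathbb Z_*$ and isolate the inner sum
\[
I_m:=\sum_{\substack{p\in\mathbb Z_*\\ p\neq m}}\frac{1}{|p|^{\gamma}\,|m-p|^{\beta}},
\]
so that $\mathcal T_0=\sum_{m\in\mathbb Z_*}|m|^{-(\gamma+1)}I_m$.

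\emph{Step 1: uniform bound on $I_m$.} Apply Cauchy--Schwarz in $p$ with the factors $|p|^{-\gamma}$ and $|m-p|^{-\beta}$:
\[
I_m\le\Bigl(\sum_{p\in\mathbb Z_*}\frac{1}{|p|^{2\gamma}}\Bigr)^{1/2}
\Bigl(\sum_{\substack{p\in\mathbb Z\\ p\neq m}}\frac{1}{|m-p|^{2\beta}}\Bigr)^{1/2}.
\]
The first factor equals $\sqrt{2\zeta(2\gamma)}$. In the second factor we substitute $j=m-p$, which ranges over $\mathbb Z_*$; we may drop the constraint $p\neq 0$ since this only enlarges the sum. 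The second factor then equals $\sqrt{2\zeta(2\beta)}$. Hence, uniformly in $m$,
\[
I_m\le 2\sqrt{\zeta(2\gamma)\,\zeta(2\beta)}.
\]

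\emph{Step 2: outer sum.} Summing over $m\in\mathbb Z_*$ and using evenness gives $\sum_{m\in\mathbb Z_*}|m|^{-(\gamma+1)}=2\zeta(\gamma+1)$. Multiplying by the bound from Step 1 yields $\mathcal T_0\le 4\zeta(\gamma+1)\sqrt{\zeta(2\gamma)\zeta(2\beta)}=\mathcal T_0^\sharp$, which is \eqref{eq:T0_sharp_def_stepC}.

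\emph{Finiteness of the constants.} The only point needing care is that every zeta value in $\mathcal T_0^\sharp$ is finite. Since $\gamma>1$, both $\zeta(\gamma+1)$ and $\zeta(2\gamma)$ converge. However, $\zeta(2\beta)<\infty$ requires $2\beta=2-4\alpha_0>1$, i.e.\ $\alpha_0<\tfrac14$. This holds for the value $\alpha_0=1/20$ used throughout; there $2\beta=9/5$. For $\alpha_0\in[\tfrac14,\tfrac12)$ we have $\zeta(2\beta)=+\infty$, so the inequality is trivially true but uninformative. Only the regime $\alpha_0<\tfrac14$ matters for the constants in Proposition~\ref{prop:M1_off_expand_stepB}, and for it the argument is complete.
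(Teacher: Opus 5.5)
Your proof is correct and is essentially the paper's argument: both rest on the same trilinear H\"older--Young estimate $\sum_{m\neq p}u_m x_p w_{m-p}\le \|u\|_{\ell^1}\|x\|_{\ell^2}\|w\|_{\ell^2}$ with $u_m=|m|^{-(\gamma+1)}$, $x_p=|p|^{-\gamma}$, $w_k=|k|^{-\beta}$. The paper groups it as $\langle x,u*w\rangle$ and applies Young's $\ell^1*\ell^2\subset\ell^2$, while you bound the inner sum uniformly in $m$ by Cauchy--Schwarz and then pay $\|u\|_{\ell^1}$; both give the identical constant. Your caveat that $\zeta(2\beta)<\infty$ requires $\alpha_0<\tfrac14$ is accurate, and the paper's proof tacitly assumes it too. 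One small correction: $\mathcal T_0^\sharp$ is consumed in Proposition~\ref{prop:M2_off_expand_stepC}, not in Proposition~\ref{prop:M1_off_expand_stepB}, whose $\mathcal T_0$ is the differently defined constant from Lemma~\ref{lem:kernel_bounds_stepB}.
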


\begin{proof}
Let
\[
u_m:=|m|^{-(\gamma+1)},\qquad x_p:=|p|^{-\gamma},\qquad w_k:=|k|^{-\beta}\mathbf 1_{k\neq 0},
\qquad (m,p,k\in \mathbb Z_*),
\]
so that $(u*w)_p=\sum_{m\in \mathbb Z_*}u_m w_{p-m}=\sum_{m\neq p}|m|^{-(\gamma+1)}|p-m|^{-\beta}$ and hence
\[
\mathcal T_0=\sum_{p\in \mathbb Z_*} x_p\,(u*w)_p = \langle x, u*w\rangle_{\ell^2(\mathbb Z_*)}.
\]
By Cauchy--Schwarz and Young,
\[
\mathcal T_0 \le \|x\|_{\ell^2}\,\|u*w\|_{\ell^2} \le \|x\|_{\ell^2}\,\|u\|_{\ell^1}\,\|w\|_{\ell^2}.
\]
Moreover,
\[
\|x\|_{\ell^2}^2=\sum_{p\in \mathbb Z_*}\frac1{|p|^{2\gamma}}=2\zeta(2\gamma),
\qquad
\|u\|_{\ell^1}=\sum_{m\in \mathbb Z_*}\frac1{|m|^{\gamma+1}}=2\zeta(\gamma+1),
\qquad
\|w\|_{\ell^2}^2=\sum_{k\in \mathbb Z_*}\frac1{|k|^{2\beta}}=2\zeta(2\beta),
\]
and \eqref{eq:T0_sharp_def_stepC} follows.
\end{proof}

\begin{lemma}[An $\ell^2$ bound for the fully off-diagonal kernel]\label{lem:convolution_bound_stepC}
Fix $\alpha_0\in(0,\tfrac12)$ and set
\[
\beta:=1-2\alpha_0\in(0,1),\qquad \gamma:=2-2\alpha_0>1.
\]
For $\ell\in \mathbb Z_*$ define
\begin{equation}\label{eq:Fell_def_stepC}
F_\ell:=\sum_{\substack{m\in \mathbb Z_*\\ m\neq \ell}}\frac{1}{|m|^\gamma\,|m-\ell|^\beta}.
\end{equation}
Then
\begin{equation}\label{eq:V0_def_stepC}
\sum_{\ell\in \mathbb Z_*}\frac{F_\ell^2}{|\ell|}
\le
\sum_{\ell\in \mathbb Z_*}F_\ell^2
\le
\mathcal V_0,
\qquad
\mathcal V_0:=8\,\zeta(\gamma)^2\,\zeta(2\beta).
\end{equation}
\end{lemma}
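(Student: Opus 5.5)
The plan mirrors the proof of Lemma~\ref{lem:T0_bound_stepC}: write the quantity as a convolution and apply Young's inequality. For $\ell\in\mathbb Z_*$ set
\[
u_m:=|m|^{-\gamma}\ (m\in\mathbb Z_*),\qquad u_0:=0,\qquad w_k:=|k|^{-\beta}\mathbf 1_{k\neq 0}\quad(k\in\mathbb Z),
\]
so that $F_\ell=\sum_{m\in\mathbb Z}u_m\,w_{\ell-m}=(u*w)_\ell$. The constraints $m\neq 0$ and $m\neq\ell$ are encoded exactly by the vanishing of $u_0$ and $w_0$. All terms are nonnegative, so no convergence issues arise beyond finiteness of the bound.

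The first inequality in \eqref{eq:V0_def_stepC} is immediate, since $|\ell|\ge1$ for $\ell\in\mathbb Z_*$ and $F_\ell\ge0$. For the second, I would extend the sum to all $\ell\in\mathbb Z$, which only increases it:
\[
\sum_{\ell\in\mathbb Z_*}F_\ell^2\le \|u*w\|_{\ell^2(\mathbb Z)}^2.
\]
Young's inequality ($\ell^1*\ell^2\subset\ell^2$) then gives $\|u*w\|_{\ell^2}\le \|u\|_{\ell^1}\|w\|_{\ell^2}$. The two norms are explicit:
\[
\|u\|_{\ell^1}=2\zeta(\gamma)<\infty \text{ since } \gamma>1,
\qquad
\|w\|_{\ell^2}^2=2\zeta(2\beta).
\]
Combining these yields $\sum_\ell F_\ell^2\le 4\zeta(\gamma)^2\cdot 2\zeta(2\beta)=8\zeta(\gamma)^2\zeta(2\beta)=\mathcal V_0$.

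The only genuine obstacle is finiteness of $\zeta(2\beta)$, which requires $2\beta>1$, i.e. $\alpha_0<\tfrac14$. This holds for the value $\alpha_0=1/20$ actually used, where $2\beta=9/5$. For the full stated range $\alpha_0\in(0,\tfrac12)$, with $2\beta\le1$, the constant $\mathcal V_0$ is $+\infty$ and the inequality holds trivially. I would add a one-line remark noting this, or restrict attention to $\alpha_0<\tfrac14$ as needed downstream.
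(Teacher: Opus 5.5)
Your proof is correct and is essentially the paper's argument: you write $F=u*w$ with the vanishing entries $u_0=w_0=0$ encoding the constraints $m\neq 0$, $m\neq\ell$, apply Young's inequality $\|u*w\|_{\ell^2}\le\|u\|_{\ell^1}\|w\|_{\ell^2}$, and evaluate $\|u\|_{\ell^1}=2\zeta(\gamma)$ and $\|w\|_{\ell^2}^2=2\zeta(2\beta)$.

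Your caveat about the range of $\alpha_0$ is well taken, and the paper's proof passes over it silently. The bound is only meaningful for $\alpha_0<\tfrac14$, which covers the value $\alpha_0=1/20$ actually used. For $\alpha_0\in(\tfrac14,\tfrac12)$, taking just one term of the sum gives $F_\ell\ge(2|\ell|)^{-\beta}$, so $\sum_\ell F_\ell^2=+\infty$. Meanwhile the Riemann zeta value $\zeta(2\beta)$ with $2\beta\in(0,1)$ is finite and negative. So the right fix is to restrict the lemma to $\alpha_0<\tfrac14$, rather than to read the inequality as trivially true with $\mathcal V_0=+\infty$.
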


\begin{proof}
Since $|\ell|\ge 1$ for $\ell\in \mathbb Z_*$, we have $\sum_{\ell\in \mathbb Z_*}F_\ell^2/|\ell|\le \sum_{\ell\in \mathbb Z_*}F_\ell^2$.
Set
\[
a_m:=|m|^{-\gamma},\qquad b_k:=|k|^{-\beta}\mathbf 1_{k\neq 0}\qquad (m,k\in \mathbb Z_*),
\]
so that $F=a*b$ on $\mathbb Z_*$ (indeed $b_0=0$ enforces $m\neq \ell$). By Young's inequality $\|a*b\|_{\ell^2}\le \|a\|_{\ell^1}\|b\|_{\ell^2}$,
\[
\sum_{\ell\in \mathbb Z_*}F_\ell^2=\|F\|_{\ell^2}^2\le \|a\|_{\ell^1}^2\|b\|_{\ell^2}^2.
\]
Finally,
\[
\|a\|_{\ell^1}=\sum_{m\in \mathbb Z_*}\frac1{|m|^\gamma}=2\zeta(\gamma),
\qquad
\|b\|_{\ell^2}^2=\sum_{k\in \mathbb Z_*}\frac1{|k|^{2\beta}}=2\zeta(2\beta),
\]
hence $\|a\|_{\ell^1}^2\|b\|_{\ell^2}^2=8\,\zeta(\gamma)^2\,\zeta(2\beta)$, which proves \eqref{eq:V0_def_stepC}.
\end{proof}

\begin{proposition}[The off-diagonal part $M_2^{\mathrm{off}}$]\label{prop:M2_off_expand_stepC}
Assume $N\ge 20$ (so $\alpha\le \alpha_0$) and set
\[
M_2^{\mathrm{off}}:=M_2^{(1)}+M_2^{(2)}+M_2^{(3)}.
\]
Then
\begin{equation}\label{eq:M2_off_bound_stepC}
|M_2^{\mathrm{off}}(N)|\le \frac{E_{2,o}}{N^6},
\end{equation}
where one may take
\begin{equation}\label{eq:C6Coff_def_stepC}
E_{2,o}
:=
\frac{2V_\infty^3}{(1-\alpha_0)^3}\,\mathcal T_0^{\sharp}
+
\frac{2\alpha_0 V_\infty^4}{(1-\alpha_0)^3}\,\mathcal U_0\,S(5-4\alpha_0,1)
+
\frac{\alpha_0 V_\infty^4}{(1-\alpha_0)^3}\,\mathcal V_0,
\end{equation}
with $\mathcal U_0$ from \eqref{eq:U0_def_stepC}, $\mathcal V_0$ from \eqref{eq:V0_def_stepC}, and
$\mathcal T_0^{\sharp}$ from \eqref{eq:T0_sharp_def_stepC}.
\end{proposition}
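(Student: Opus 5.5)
The proof bounds the three off-diagonal pieces $M_2^{(1)}, M_2^{(2)}, M_2^{(3)}$ of Lemma~\ref{lem:M2_decomp_stepC} separately, each by $N^{-6}$ times one of the three terms in \eqref{eq:C6Coff_def_stepC}. For this we need only crude pointwise bounds, not Taylor expansions. These are the uniform bound $|v_k|\le V_\infty\alpha\,|k|^{2\alpha_0-1}$ for $k\neq0$, together with $v_0=1$, and the denominator bound $d_m\ge (1-\alpha_0)N|m|$, i.e. $d_m^{-1}\le \alpha(1-\alpha_0)^{-1}|m|^{-1}$. Since every term of the $M_2^{\mathrm{off}}$ sums carries at least three nonzero-index $v$-factors or enough denominators, the order $\alpha^6$ should come out directly. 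Note that $|k|^{2\alpha_0-1}=|k|^{-\beta}$ and $|k|^{-1}|k|^{2\alpha_0-1}=|k|^{-\gamma}$ with $\beta,\gamma$ as in Lemma~\ref{lem:T0_bound_stepC}.

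\emph{Term $M_2^{(1)}$.} With $m\neq p$ both nonzero, the summand is bounded by
\[
V_\infty^3\alpha^3|m|^{-\beta}|p|^{-\beta}|m-p|^{-\beta}\cdot\alpha^3(1-\alpha_0)^{-3}|m|^{-2}|p|^{-1}.
\]
This equals $V_\infty^3(1-\alpha_0)^{-3}\alpha^6\,|m|^{-(\gamma+1)}|p|^{-\gamma}|m-p|^{-\beta}$. The factor $2$ in front of $M_2^{(1)}$ together with Lemma~\ref{lem:T0_bound_stepC} then gives the first term of $E_{2,o}$.

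\emph{Term $M_2^{(2)}$.} Here there are four $v$-factors, $|v_m|^2|v_{m-\ell}|^2\le V_\infty^4\alpha^4|m|^{-2\beta}|m-\ell|^{-2\beta}$, and three denominators contributing $\alpha^3$. This yields $\alpha^7\le\alpha_0\alpha^6$ times
\[
\sum_m|m|^{-2\beta-2}\sum_{\ell\neq m}|\ell|^{-1}|m-\ell|^{-\beta_2}\qquad(\beta_2=2\beta=2-4\alpha_0).
\]
Lemma~\ref{lem:double_sum_mpeq_stepC} bounds the inner sum by $\mathcal U_0(1+\log m)/m$. The outer sum is then $2\sum_{m\ge1}(1+\log m)m^{-(5-4\alpha_0)}\le 2S(5-4\alpha_0,1)$, which gives the second term.

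\emph{Term $M_2^{(3)}$.} Pairwise distinctness makes all four $v$-indices $m,p,m-\ell,\ell-p$ nonzero, giving $V_\infty^4\alpha^4$ and $\alpha^7\le\alpha_0\alpha^6$ overall. Summing first over $m$ and $p$ with $\ell$ fixed, the sum factorizes as $\sum_\ell |\ell|^{-1}F_\ell^2$, where
\[
F_\ell=\sum_{m\ne\ell}|m|^{-\gamma}|m-\ell|^{-\beta}
\]
(using the $m\leftrightarrow p$ symmetry and evenness of the kernel). Lemma~\ref{lem:convolution_bound_stepC} bounds this by $\mathcal V_0$, which is the third term. Adding the three bounds gives \eqref{eq:M2_off_bound_stepC}.

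The main obstacle is bookkeeping rather than analysis. One must check that each piece really has enough $v$-factors with nonzero index (at least three in $M_2^{(1)}$, four in the other two), so that no $v_0=1$ factor sneaks in and lowers the order. One must also verify the exponent matching, e.g. $2\beta+2+1=5-4\alpha_0$, so that the quoted kernel lemmas apply exactly. For $M_2^{(3)}$ the factorization into $F_\ell^2$ needs care, since relaxing the distinctness constraint to just $m,p\neq\ell$ only enlarges a positive sum and is therefore harmless.
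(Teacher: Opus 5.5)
Your proposal is correct and follows the paper's proof essentially line by line. It uses the same crude bounds $|v_k|\le V_\infty\alpha|k|^{-\beta}$ and $d_m^{-1}\le \alpha(1-\alpha_0)^{-1}|m|^{-1}$, the same reduction of $M_2^{(1)}$ to $\mathcal T_0\le\mathcal T_0^{\sharp}$, the same reduction of $M_2^{(2)}$ to Lemma~\ref{lem:double_sum_mpeq_stepC} with outer exponent $5-4\alpha_0$, and the same reduction of $M_2^{(3)}$ (after dropping only $m\neq p$) to $\sum_\ell F_\ell^2/|\ell|\le\mathcal V_0$, with $\alpha^7\le\alpha_0\alpha^6$ in the last two pieces. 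The factor $2$ in the $M_2^{(2)}$ step implicitly uses $\ell\mapsto-\ell$ to cover $m\le -1$, since Lemma~\ref{lem:double_sum_mpeq_stepC} is stated only for $m\ge 1$; the paper leaves this implicit as well.
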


\begin{proof}
Set $\beta:=1-2\alpha_0\in(0,1)$ and $\gamma:=2-2\alpha_0>1$, so that $\beta_2:=2-4\alpha_0\in(1,2)$.

By Lemma~\ref{lem:Cv_bounds} we have the pointwise bound
\[
|v_m|\le V_\infty\,\alpha\,|m|^{-\beta},
\qquad (m\in \mathbb Z_*),
\]
and by Lemma~\ref{lem:dm_elementary_stepC} we have
\[
\frac1{d_m}\le \frac{\alpha}{(1-\alpha_0)|m|},
\qquad
\frac1{d_m^2}\le \frac{\alpha^2}{(1-\alpha_0)^2|m|^2}.
\]

\smallskip\noindent
\emph{1) Bound for $M_2^{(1)}$.}
From \eqref{eq:M2_oneeq_def_stepC},
\[
|M_2^{(1)}|
\le
2\sum_{\substack{m,p\in \mathbb Z_*\\ m\neq p}}
\frac{|v_m|\,|v_p|\,|v_{m-p}|}{d_m^2\,d_p}
\le
\frac{2V_\infty^3}{(1-\alpha_0)^3}\alpha^6
\sum_{\substack{m,p\in \mathbb Z_*\\ m\neq p}}
\frac{1}{|m|^{\gamma+1}|p|^\gamma|m-p|^\beta}
=
\frac{2V_\infty^3}{(1-\alpha_0)^3}\alpha^6\,\mathcal T_0.
\]
Applying Lemma~\ref{lem:T0_bound_stepC} gives $|M_2^{(1)}|\le \frac{2V_\infty^3}{(1-\alpha_0)^3}\alpha^6\,\mathcal T_0^{\sharp}$.

\smallskip\noindent
\emph{2) Bound for $M_2^{(2)}$.}
From \eqref{eq:M2_mpeq_def_stepC} and $\beta_2=2\beta$,
\[
|M_2^{(2)}|
\le
\frac{V_\infty^4}{(1-\alpha_0)^3}\alpha^7
\sum_{m\in \mathbb Z_*}\frac1{|m|^{\beta_2+2}}
\sum_{\substack{\ell\in \mathbb Z_*\\ \ell\neq m}}
\frac1{|\ell|\,|m-\ell|^{\beta_2}}.
\]
By Lemma~\ref{lem:double_sum_mpeq_stepC} and $\alpha^7\le \alpha_0\alpha^6$,
\[
|M_2^{(2)}|
\le
\frac{2\alpha_0 V_\infty^4}{(1-\alpha_0)^3}\alpha^6\,\mathcal U_0
\sum_{m\ge 1}\frac{1+\log m}{m^{\beta_2+3}}
=
\frac{2\alpha_0 V_\infty^4}{(1-\alpha_0)^3}\alpha^6\,\mathcal U_0\,S(5-4\alpha_0,1),
\]
since $\beta_2+3=5-4\alpha_0$.

\smallskip\noindent
\emph{3) Bound for $M_2^{(3)}$.}
Starting from \eqref{eq:M2_distinct_def_stepC}, take absolute values and drop only the constraint $m\neq p$
(keeping $m\neq \ell$ and $p\neq \ell$ so that $m-\ell\neq 0$ and $\ell-p\neq 0$):
\[
|M_2^{(3)}|
\le
\sum_{\ell\in \mathbb Z_*}\ \sum_{\substack{m\in \mathbb Z_*\\ m\neq \ell}}\ \sum_{\substack{p\in \mathbb Z_*\\ p\neq \ell}}
\frac{|v_m|\,|v_p|\,|v_{m-\ell}|\,|v_{\ell-p}|}{d_m\,d_\ell\,d_p}.
\]
Using $\frac{|v_m|}{d_m}\le \frac{V_\infty}{1-\alpha_0}\alpha^2 |m|^{-\gamma}$ and
$|v_{m-\ell}|\le V_\infty\alpha |m-\ell|^{-\beta}$ yields
\[
|M_2^{(3)}|
\le
\frac{V_\infty^4}{(1-\alpha_0)^3}\alpha^7
\sum_{\ell\in \mathbb Z_*}\frac1{|\ell|}
\Biggl(\sum_{\substack{m\in \mathbb Z_*\\ m\neq \ell}}\frac1{|m|^\gamma|m-\ell|^\beta}\Biggr)^2
=
\frac{V_\infty^4}{(1-\alpha_0)^3}\alpha^7
\sum_{\ell\in \mathbb Z_*}\frac{F_\ell^2}{|\ell|}.
\]
By Lemma~\ref{lem:convolution_bound_stepC} and $\alpha^7\le \alpha_0\alpha^6$,
\[
|M_2^{(3)}|
\le
\frac{\alpha_0 V_\infty^4}{(1-\alpha_0)^3}\alpha^6\,\mathcal V_0.
\]

Summing the three bounds gives \eqref{eq:M2_off_bound_stepC}--\eqref{eq:C6Coff_def_stepC}.
\end{proof}

\begin{proposition}[Expansion of $M_2$]\label{prop:M2_expand_stepC}
Assume $N\ge 20$. Then
\begin{equation}\label{eq:M2_expand_stepC}
M_2(N)=\frac{2\zeta(5)}{N^5}+R_2^{(C)}(N),
\qquad
|R_2^{(C)}(N)|\le \frac{E_{2}}{N^6},
\end{equation}
where one may take
\begin{equation}\label{eq:C6C_def_stepC}
E_{2}:=E_{2,d}+E_{2,o}.
\end{equation}
\end{proposition}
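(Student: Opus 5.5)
The plan is to assemble Proposition~\ref{prop:M2_expand_stepC} directly from the two pieces already established, using the decomposition of Lemma~\ref{lem:M2_decomp_stepC}. That lemma gives
\[
M_2=M_2^{\mathrm{diag}}+M_2^{\mathrm{off}},\qquad M_2^{\mathrm{off}}:=M_2^{(1)}+M_2^{(2)}+M_2^{(3)}.
\]
Because the $v_m$ are real and even (Remark~\ref{rem:v_real_even_stepB}), each of these four sums is real and absolutely convergent. This convergence comes from the Hilbert--Schmidt property in Lemma~\ref{lem:K_HS_bound_r1}, together with the pointwise bounds $|v_m|\le V_\infty\alpha|m|^{-\beta}$ and $d_m^{-1}\le \alpha/((1-\alpha_0)|m|)$. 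Absolute convergence is what justifies regrouping the triple sum \eqref{eq:M2_trisum_stepC} according to coincidence patterns.

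The key steps are as follows. First, invoke Proposition~\ref{prop:M2_diag_expand_stepC}, which gives
\[
M_2^{\mathrm{diag}}=\frac{2\zeta(5)}{N^5}+R_2^{(C,\mathrm{diag})},\qquad |R_2^{(C,\mathrm{diag})}|\le \frac{E_{2,d}}{N^6}.
\]
Second, invoke Proposition~\ref{prop:M2_off_expand_stepC}. That bound $|M_2^{\mathrm{off}}|\le E_{2,o}N^{-6}$ follows from three facts: $M_2^{(1)}$ carries $\alpha^6$ (one squared denominator and three $v$'s), while $M_2^{(2)}$ and $M_2^{(3)}$ carry $\alpha^7\le\alpha_0\alpha^6$. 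Third, set
\[
R_2^{(C)}:=R_2^{(C,\mathrm{diag})}+M_2^{\mathrm{off}}.
\]
The triangle inequality then gives $|R_2^{(C)}|\le (E_{2,d}+E_{2,o})N^{-6}=E_2N^{-6}$, which is \eqref{eq:M2_expand_stepC}--\eqref{eq:C6C_def_stepC}. There is no cancellation at order $N^{-5}$ from the off-diagonal part, unlike in $M_1$. The reason is that the cheapest off-diagonal term already has total homogeneity $\alpha^{3}\cdot\alpha^{3}$.

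The only point needing care is the bookkeeping that all constants are finite for $\alpha_0=1/20$. Specifically, the exponents must keep every zeta argument and every $S(p,q)$ in its convergent range: $\gamma+1>1$, $2\beta=9/5>1$, $5-4\alpha_0>1$, and so on. One also has to confirm that the auxiliary bounds for $d_m^{-3}$ (the constant $D_3$ and the crude upper bound) are uniform in $N\ge20$, so that $E_2$ is a genuine absolute constant. I expect this verification to be the main (though routine) obstacle. I would check it by listing each exponent against the hypotheses of Lemma~\ref{lem:Spq_bound} and Lemmas~\ref{lem:T0_bound_stepC}--\ref{lem:convolution_bound_stepC}.
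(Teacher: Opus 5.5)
Your proposal is correct and is essentially the paper's proof: the paper combines the decomposition \eqref{eq:M2_decomp_stepC} of Lemma~\ref{lem:M2_decomp_stepC} with Propositions~\ref{prop:M2_diag_expand_stepC} and~\ref{prop:M2_off_expand_stepC} and applies the triangle inequality, giving $E_2=E_{2,d}+E_{2,o}$. Your added remark that absolute convergence justifies regrouping the triple sum by coincidence patterns is a detail the paper leaves implicit; that convergence follows from the Hilbert--Schmidt bound on $K$ together with $b\in\ell^2(\mathbb Z_*)$.
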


\begin{proof}
Combine \eqref{eq:M2_decomp_stepC} with
Propositions~\ref{prop:M2_diag_expand_stepC} and \ref{prop:M2_off_expand_stepC}.
\end{proof}

\subsection{Schur scalar equation and the five-term expansion}
\begin{lemma}[Uniform bounds for $\beta_N$ and $\kappa_N$]\label{lem:beta_kappa_bounds_stepD}
Assume $N\ge 20$, then
\begin{equation}\label{eq:beta_kappa_bounds_stepD}
\beta_N=M_0(N)\le \frac{B_0}{N^3},
\qquad
\kappa_N=\|K\|\le \frac{K_0}{N},
\end{equation}
where
\begin{equation}\label{eq:B0K0_def_stepD}
B_0:=2\zeta(3)+6\zeta(4)\alpha_0+16\zeta(5)\alpha_0^2+E_{0}\alpha_0^3,
\end{equation}
and
\begin{equation}\label{eq:K0_def_stepD}
V_2:=\sqrt{2}\,V_\infty\,\zeta(2-4\alpha_0)^{1/2},
\qquad
K_0:=\frac{\sqrt{2\zeta(2)}}{1-\alpha_0}\,\sqrt{1+V_2^2\alpha_0^2}.
\end{equation}
Consequently, for all $N\ge 20$ one has $\beta_N\le \tfrac12$ and $\kappa_N\le \tfrac12$, and hence
\begin{equation}\label{eq:lambda_window_stepD}
\lambda_\ast\in[1,2].
\end{equation}
\end{lemma}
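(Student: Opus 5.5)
The lemma splits into three claims: the bound on $\beta_N$, the bound on $\kappa_N$, and the numerical consequences. I would take them in that order.

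\emph{The bound on $\beta_N$.} By definition $\beta_N=\|b\|_{\ell^2(\mathbb Z_*)}^2=M_0(N)$. Proposition~\ref{prop:M0_expand_stepA} gives
\[
M_0(N)\le \frac{2\zeta(3)}{N^3}+\frac{6\zeta(4)}{N^4}+\frac{16\zeta(5)}{N^5}+\frac{E_0}{N^6}.
\]
Factoring out $N^{-3}$ and using $N^{-1}\le\alpha_0$ bounds the bracket by $B_0$, which is exactly \eqref{eq:B0K0_def_stepD}. No further work is needed here.

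\emph{The bound on $\kappa_N$.} I would use Lemma~\ref{lem:K_HS_bound_r1}, which gives $\kappa_N\le\|K\|_{\mathrm{HS}}\le\sqrt{S_2(N)}\,\|v\|_{\ell^2}$. Lemma~\ref{lem:S2_bound_r1} supplies
\[
\sqrt{S_2(N)}\le \frac{\sqrt{2\zeta(2)}}{N-1}=\frac{\sqrt{2\zeta(2)}}{N(1-\alpha)}\le \frac{\sqrt{2\zeta(2)}}{(1-\alpha_0)N}.
\]
For $\|v\|_{\ell^2}$, note $v_0=1$ and use the uniform pointwise bound $|v_m|\le V_\infty\alpha|m|^{2\alpha_0-1}$ from Lemma~\ref{lem:Cv_bounds}. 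This gives
\[
\|v\|_{\ell^2}^2=1+2\sum_{m\ge1}v_m^2\le 1+2V_\infty^2\alpha^2\zeta(2-4\alpha_0)=1+V_2^2\alpha^2\le 1+V_2^2\alpha_0^2,
\]
which yields $K_0$ as defined in \eqref{eq:K0_def_stepD}. One caveat: Lemma~\ref{lem:K_HS_bound_r1} assumes $w_N\in L^2$, which holds for $N\ge5$, so this is available in the regime $N\ge 20$.

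\emph{The numerical consequences.} Substituting the constants gives
\[
\beta_N\le \frac{B_0}{20^3},\qquad \kappa_N\le \frac{K_0}{20}.
\]
This is the step I expect to be the main obstacle, since it depends on the explicit sizes of $E_0$ and $V_\infty$ from the appendix. With $V_\infty$ of moderate size, $K_0\approx\sqrt{2\zeta(2)}/0.95\approx1.9$, so $\kappa_N\lesssim0.1$. For $\beta_N$ we need $B_0\le 4000$, which requires $E_0\alpha_0^3\lesssim 4000$, i.e.\ $E_0\lesssim 3\cdot10^7$. This is comfortably satisfied if $E_0$ is of the order of the constants in Appendix~\ref{app:constants}. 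I would close the argument by citing the numerical values recorded there.

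\emph{The window for $\lambda_\ast$.} Once $\kappa_N\le\frac12<1$, Proposition~\ref{prop:unique_schur_root} gives $\lambda_\ast>1$. Then \eqref{eq:lambda_upper_linear} in Lemma~\ref{lem:lambda_star_bounds} gives
\[
\lambda_\ast-1\le \frac{\beta_N}{1-\kappa_N}\le \frac{1/2}{1/2}=1,
\]
so $\lambda_\ast\in[1,2]$, as claimed.
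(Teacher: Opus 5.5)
Your proposal is correct and follows essentially the same route as the paper. You bound $\beta_N=M_0$ via Proposition~\ref{prop:M0_expand_stepA} with $N^{-1}\le\alpha_0$, and $\kappa_N$ via Lemma~\ref{lem:K_HS_bound_r1}, $S_2(N)\le 2\zeta(2)/(N-1)^2$ and the pointwise bound on $v_m$; your $\|v\|_{\ell^2}^2=1+2\sum_{m\ge1}v_m^2$ is the paper's $1+\|v\|^2$ with the zero mode written separately. The appendix values $B_0\le 2.834$ and $K_0\le 1.921$ (Proposition~\ref{prop:window_constants_app_new}) then close the numerics, and your use of $\lambda_\ast-1\le\beta_N/(1-\kappa_N)\le 1$ spells out the step the paper leaves implicit for $\lambda_\ast\in[1,2]$.
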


\begin{proof}
By Proposition~\ref{prop:M0_expand_stepA} ,
\[
M_0(N)=\frac{2\zeta(3)}{N^{3}}+\frac{6\zeta(4)}{N^{4}}+\frac{16\zeta(5)}{N^{5}}+R^{(A)}_{N},
\qquad
|R^{(A)}_{N}|\le \frac{E_{0}}{N^{6}}.
\]
Since $N\ge 20$ implies $N^{-1}\le \alpha_0$, we have
$N^{-4}\le \alpha_0 N^{-3}$, $N^{-5}\le \alpha_0^2 N^{-3}$, and $N^{-6}\le \alpha_0^3 N^{-3}$, hence
$\beta_N=M_0(N)\le B_0 N^{-3}$ with $B_0$ as in \eqref{eq:B0K0_def_stepD}.

Next, by Lemma~\ref{lem:K_HS_bound_r1} we have $\kappa_N=\|K\|\le \|K\|_{\mathrm{HS}}\le \sqrt{S_2(N)}\sqrt{1+\|v\|_{\ell^2}^2}$.
By Lemma~\ref{lem:S2_bound_r1}, $S_2(N)\le 2\zeta(2)/(N-1)^2$.
By \eqref{eq:vm_uniform_bound} (Lemma~\ref{lem:Cv_bounds}), for $m\ge 1$ and $\alpha\le \alpha_0$,
\[
|v_m|^2\le V_\infty^2\,\alpha^2\,m^{4\alpha_0-2},
\]
so
\[
\|v\|_{\ell^2}^2=\sum_{m\in \mathbb Z_*}|v_m|^2
\le 2V_\infty^2\,\alpha^2\sum_{m\ge 1}m^{4\alpha_0-2}
= V_2^2\,\alpha^2
\le V_2^2\,\alpha_0^2.
\]
Therefore
\[
\kappa_N\le \frac{\sqrt{2\zeta(2)}}{N-1}\,\sqrt{1+V_2^2\alpha_0^2}
=\frac{\sqrt{2\zeta(2)}}{1-\alpha}\,\alpha\,\sqrt{1+V_2^2\alpha_0^2}
\le \frac{\sqrt{2\zeta(2)}}{1-\alpha_0}\,\frac{1}{N}\,\sqrt{1+V_2^2\alpha_0^2}
=\frac{K_0}{N},
\]
which proves \eqref{eq:beta_kappa_bounds_stepD}.

Finally, if $N\ge \lceil(2B_0)^{1/3}\rceil$ then $B_0/N^3\le 1/2$, hence $\beta_N\le 1/2$; and if
$N\ge \lceil 2K_0\rceil$ then $K_0/N\le 1/2$, hence $\kappa_N\le 1/2$.
Thus for $N\ge 20$ we have $\beta_N\le \frac12$ and $\kappa_N\le \frac12$, and yields $\lambda_\ast\in[1,2]$.
\end{proof}

\begin{remark}[A concrete threshold for the $N^{-6}$ regime]\label{rem:N6_def_stepD}
In the remainder of Section~\ref{sec:moment} we work under $N \ge 20$, so that in particular
$\lambda_\ast\in[1,2]$, $\kappa_N\le \frac12$, and $\beta_N\le \frac12$.
\end{remark}

\begin{proposition}[Five-term expansion of $\vartheta_N=\lambda_\ast-1$]\label{prop:delta_expand_stepD}
Assume $N\ge 20$ and let $\vartheta_N$ be defined by \eqref{eq:delta_def_stepD}.
Then
\begin{equation}\label{eq:delta_5term_stepD}
\vartheta_N
=
\frac{2\zeta(3)}{N^{3}}+\frac{8\zeta(4)}{N^{4}}+\frac{26\zeta(5)}{N^{5}}+R^{(\vartheta)}_{N},
\qquad
|R^{(\vartheta)}_{N}|\le \frac{E_{\vartheta}}{N^{6}},
\end{equation}
where
\begin{equation}\label{eq:Cdelta6_def_stepD}
E_{\vartheta}
:=
E_{0}+E_{1}+E_{2}+8B_0^{2}+2B_0K_0^{3}.
\end{equation}
\end{proposition}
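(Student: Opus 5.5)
We plan to combine Proposition~\ref{prop:lambda_star_replace_one} with $J=2$ and the three moment expansions, Propositions~\ref{prop:M0_expand_stepA}, \ref{prop:M1_expand_stepB} and \ref{prop:M2_expand_stepC}. First we check the hypotheses. By Lemma~\ref{lem:beta_kappa_bounds_stepD}, for $N\ge 20$ we have $\kappa_N\le\tfrac12$, $\beta_N\le\tfrac12$ and $\lambda_\ast\in[1,2]$. So Proposition~\ref{prop:lambda_star_replace_one} applies and gives
\[
\vartheta_N=M_0+M_1+M_2+\mathcal E_2,\qquad |\mathcal E_2|\le 8\beta_N^2+2\beta_N\kappa_N^{3}.
\]

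Next we add the three expansions term by term. The $N^{-3}$ coefficient is $2\zeta(3)$, coming from $M_0$ alone. The $N^{-4}$ coefficient is $6\zeta(4)+2\zeta(4)=8\zeta(4)$, from $M_0$ and $M_1$. The $N^{-5}$ coefficient is $16\zeta(5)+8\zeta(5)+2\zeta(5)=26\zeta(5)$, from $M_0$, $M_1$ and $M_2$. This matches \eqref{eq:delta_5term_stepD}. The corresponding remainders are bounded by $(E_0+E_1+E_2)N^{-6}$ using the triangle inequality.

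It then remains to bound $\mathcal E_2$ at order $N^{-6}$. Lemma~\ref{lem:beta_kappa_bounds_stepD} gives $\beta_N\le B_0N^{-3}$ and $\kappa_N\le K_0N^{-1}$. Hence
\[
8\beta_N^2\le \frac{8B_0^2}{N^6},\qquad 2\beta_N\kappa_N^3\le \frac{2B_0K_0^3}{N^6},
\]
and the two pieces contribute exactly the last two terms of $E_\vartheta$ in \eqref{eq:Cdelta6_def_stepD}.

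The one delicate point is the choice $J=2$. If we took $J=1$, the tail term $2\beta_N\kappa_N^2$ would only be of order $N^{-5}$, and $M_2$, which contributes at order $N^{-5}$, would be missing from the coefficient. With $J=2$ the tail is $O(N^{-6})$, and the estimate $\vartheta_N\le 2\beta_N$ used inside Proposition~\ref{prop:lambda_star_replace_one} keeps the cost of replacing the prefactor $\lambda_\ast^{-(j+1)}$ by $1$ at order $\beta_N^2=O(N^{-6})$. Once the cited expansions are granted, the argument is just bookkeeping.
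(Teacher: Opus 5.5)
Your proposal is correct and follows the paper's proof essentially line by line. The paper also uses the window $\lambda_\ast\in[1,2]$, $\kappa_N\le\tfrac12$ from Lemma~\ref{lem:beta_kappa_bounds_stepD} and applies Proposition~\ref{prop:lambda_star_replace_one} with $J=2$ to get $|\vartheta_N-(M_0+M_1+M_2)|\le 8\beta_N^2+2\beta_N\kappa_N^3\le (8B_0^2+2B_0K_0^3)N^{-6}$, then adds the expansions of $M_0$, $M_1$, $M_2$ using the triangle inequality; your coefficient bookkeeping and your remark on why $J=2$ (not $J=1$) is needed are both accurate.
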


\begin{proof}
Since $N\ge 20$, Lemma~\ref{lem:beta_kappa_bounds_stepD} gives $\lambda_\ast\in[1,2]$ and $\kappa_N\le \frac12$.
Applying Proposition~\ref{prop:lambda_star_replace_one} with $J=2$ yields
\begin{equation}\label{eq:delta_moment_trunc_stepD}
\bigl|\vartheta_N-(M_0+M_1+M_2)\bigr|
\le 8\beta_N^2+2\beta_N\kappa_N^{3}.
\end{equation}
Using \eqref{eq:beta_kappa_bounds_stepD} we bound the truncation error by
\[
8\beta_N^2+2\beta_N\kappa_N^{3}
\le \Bigl(8B_0^2+2B_0K_0^3\Bigr)\frac{1}{N^{6}}.
\]

Next, combine the expansions from Chapter \ref{sec:moment_euler_M0}--\ref{sec:moment_euler_M2}:
\begin{align*}
M_0(N)
&=\frac{2\zeta(3)}{N^{3}}+\frac{6\zeta(4)}{N^{4}}+\frac{16\zeta(5)}{N^{5}}+R^{(A)}_N,
\qquad |R^{(A)}_N|\le \frac{E_{0}}{N^{6}},\\
M_1(N)
&=\frac{2\zeta(4)}{N^{4}}+\frac{8\zeta(5)}{N^{5}}+R^{(B)}_N,
\qquad |R^{(B)}_N|\le \frac{E_{1}}{N^{6}},\\
M_2(N)
&=\frac{2\zeta(5)}{N^{5}}+R^{(C)}_N,
\qquad |R^{(C)}_N|\le \frac{E_{2}}{N^{6}}.
\end{align*}
(Here the $M_2$ expansion is supplied by Section~\ref{sec:moment_euler_M2}; cf.\ Proposition~\ref{prop:M2_expand_stepC}.)
Summing the main terms gives the coefficients
$8\zeta(4)$ at order $N^{-4}$ and $26\zeta(5)$ at order $N^{-5}$.
The remainder is controlled by the triangle inequality together with \eqref{eq:delta_moment_trunc_stepD},
yielding \eqref{eq:delta_5term_stepD} with $C^{(\vartheta)}_6$ as in \eqref{eq:Cdelta6_def_stepD}.
\end{proof}

\begin{corollary}[Five-term expansion of $\sigma_{1,1}=1/\lambda_\ast$]\label{cor:sigma11_5term_stepD}
Assume $N\ge 20$. Then
\begin{equation}\label{eq:sigma11_5term_stepD}
\sigma_{1,1}(\Omega_N)
=
\frac{1}{\lambda_\ast}
=
1-\frac{2\zeta(3)}{N^{3}}-\frac{8\zeta(4)}{N^{4}}-\frac{26\zeta(5)}{N^{5}}
+R^{(D)}_{N},
\qquad
|R^{(D)}_{N}|\le \frac{E_{\sigma}}{N^{6}},
\end{equation}
where one may take
\begin{equation}\label{eq:CD6_def_stepD}
E_{\sigma}:=E_{\vartheta}+4B_0^2
=
E_{0}+E_{1}+E_{2}+12B_0^{2}+2B_0K_0^{3}.
\end{equation}
\end{corollary}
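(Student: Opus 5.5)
The plan is to combine Corollary~\ref{cor:sigma11_delta_expansion} with Proposition~\ref{prop:delta_expand_stepD}. By Corollary~\ref{cor:sigma11_from_lambda_star}, $\sigma_{1,1}=1/\lambda_\ast=1/(1+\vartheta_N)$. Since $N\ge 20$, Lemma~\ref{lem:beta_kappa_bounds_stepD} gives $\lambda_\ast\in[1,2]$, so Corollary~\ref{cor:sigma11_delta_expansion} applies and yields
\[
\sigma_{1,1}=1-\vartheta_N+\mathcal R_\sigma,\qquad |\mathcal R_\sigma|\le\vartheta_N^2 .
\]
Substituting the five-term expansion \eqref{eq:delta_5term_stepD} for $\vartheta_N$ produces the main terms $1-2\zeta(3)N^{-3}-8\zeta(4)N^{-4}-26\zeta(5)N^{-5}$. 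The remaining error is $R^{(D)}_N=-R^{(\vartheta)}_N+\mathcal R_\sigma$, with $|R^{(\vartheta)}_N|\le E_\vartheta N^{-6}$.

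It remains to bound $\vartheta_N^2$ by $4B_0^2N^{-6}$. I will use \eqref{eq:lambda_upper_linear} from Lemma~\ref{lem:lambda_star_bounds}, which gives $\vartheta_N\le\beta_N/(1-\kappa_N)$. Since $\kappa_N\le\tfrac12$, this becomes $\vartheta_N\le 2\beta_N$. Lemma~\ref{lem:beta_kappa_bounds_stepD} then gives $\beta_N\le B_0N^{-3}$, so
\[
\vartheta_N^2\le 4\beta_N^2\le \frac{4B_0^2}{N^6}.
\]
Adding this to $E_\vartheta N^{-6}$ by the triangle inequality gives $|R^{(D)}_N|\le (E_\vartheta+4B_0^2)N^{-6}$. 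Inserting the definition \eqref{eq:Cdelta6_def_stepD} of $E_\vartheta$ then gives exactly the stated constant $E_\sigma=E_0+E_1+E_2+12B_0^2+2B_0K_0^3$.

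There is no real obstacle. All the heavy lifting is in the earlier moment expansions, and this step only checks that the hypotheses $\lambda_\ast\in[1,2]$ and $\kappa_N\le\frac12$ hold for $N\ge 20$. Both come from Lemma~\ref{lem:beta_kappa_bounds_stepD}.
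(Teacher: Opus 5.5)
Your proposal is correct and follows the paper's proof essentially verbatim. The paper writes $\sigma_{1,1}=1-\vartheta_N+\vartheta_N^2/(1+\vartheta_N)$, which is the identity behind Corollary~\ref{cor:sigma11_delta_expansion}; it bounds $\vartheta_N\le\beta_N/(1-\kappa_N)\le 2\beta_N\le 2B_0N^{-3}$ via Lemmas~\ref{lem:lambda_star_bounds} and~\ref{lem:beta_kappa_bounds_stepD}, and adds the resulting $4B_0^2N^{-6}$ to the remainder from Proposition~\ref{prop:delta_expand_stepD}, giving $E_\vartheta+4B_0^2=E_0+E_1+E_2+12B_0^2+2B_0K_0^3$ exactly as in your bookkeeping.
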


\begin{proof}
Since $\lambda_\ast=1+\vartheta_N$, the identity
\[
\frac1{1+\vartheta_N}=1-\vartheta_N+\frac{\vartheta_N^2}{1+\vartheta_N}
\]
gives
\[
\sigma_{1,1}=1-\vartheta_N+\frac{\vartheta_N^2}{1+\vartheta_N}.
\]
By Lemma~\ref{lem:lambda_star_bounds} we have $\vartheta_N\le \beta_N/(1-\kappa_N)$, hence for $N\ge 20$,
Lemma~\ref{lem:beta_kappa_bounds_stepD} implies $\kappa_N\le \tfrac12$ and therefore $\vartheta_N\le 2\beta_N$.
Using \eqref{eq:beta_kappa_bounds_stepD} we obtain
\[
0\le \frac{\vartheta_N^2}{1+\vartheta_N}\le \vartheta_N^2\le 4\beta_N^2\le \frac{4B_0^2}{N^6}.
\]
Combining this with Proposition~\ref{prop:delta_expand_stepD} yields \eqref{eq:sigma11_5term_stepD} and
\eqref{eq:CD6_def_stepD}.
\end{proof}
\section{The proofs of the main theorems}\label{sec:monotonicity}

This section contains the proofs of the three main theorems. We begin with the proof of Theorem~\ref{thm:expansion}. On this basis, we derive Theorem~\ref{thm:monotone_N0_20} by estimating the difference
\[
\sigma_1(\Omega_{N+1})-\sigma_1(\Omega_N)
\]
for \(N\ge 20\). We then combine this large-\(N\) monotonicity with the certified finite verification given in Appendix~\ref{app:finite_check_smallN} to prove Theorem~\ref{thm:global_monotone}.

% ============================================================
\subsection{Proof of Theorem~\ref{thm:expansion}}
% ============================================================

\begin{proof}[Proof of Theorem~\ref{thm:expansion}]
Fix \(N\ge 20\). By Corollary~\ref{cor:sigma11_5term_stepD}, the first positive Steklov eigenvalue in the critical block \(r=1\) admits the expansion
\[
\sigma_{1,1}(\Omega_N)
=
1-\frac{2\zeta(3)}{N^{3}}
-\frac{8\zeta(4)}{N^{4}}
-\frac{26\zeta(5)}{N^{5}}
+R_N^{(D)},
\qquad
|R_N^{(D)}|\le \frac{E_\sigma}{N^6}.
\]
With
\[
c_3:=2\zeta(3),\qquad c_4:=8\zeta(4),\qquad c_5:=26\zeta(5),
\]
this becomes
\begin{equation}\label{eq:expansion_sigma11_main}
\sigma_{1,1}(\Omega_N)
=
1-\frac{c_3}{N^3}-\frac{c_4}{N^4}-\frac{c_5}{N^5}+R_N^{(D)},
\qquad
|R_N^{(D)}|\le \frac{E_\sigma}{N^6}.
\end{equation}

Next, Corollary~\ref{cor:first_cluster_critical} shows that for every \(N\ge 20\) the first nonzero Steklov cluster is produced exactly by the two critical blocks \(r=1\) and \(r=N-1\), and
\[
\sigma_1(\Omega_N)=\sigma_2(\Omega_N)=\sigma_{1,1}(\Omega_N)=\sigma_{1,N-1}(\Omega_N).
\]
Hence, setting
\[
R_N:=R_N^{(D)},
\]
we obtain
\[
\sigma_1(\Omega_N)=\sigma_2(\Omega_N)
=
1-\frac{c_3}{N^3}-\frac{c_4}{N^4}-\frac{c_5}{N^5}+R_N,
\qquad
|R_N|\le \frac{E_\sigma}{N^6}.
\]

It remains to identify the eigenspace. By Corollary~\ref{cor:sigma11_from_lambda_star}, the eigenspace of \(\sigma_{1,1}(\Omega_N)\) inside \(\mathcal H_1\) is one-dimensional; denote it by \(E_1\subset \mathcal H_1\). By Lemma~\ref{lem:critical_conjugacy}, complex conjugation maps \(\mathcal H_1\) onto \(\mathcal H_{N-1}\) and preserves the eigenvalue, so
\[
E_{N-1}:=\overline{E_1}\subset \mathcal H_{N-1}
\]
is the corresponding one-dimensional eigenspace in the conjugate critical block. Since the global block decomposition is direct, \(\mathcal H_1\cap \mathcal H_{N-1}=\{0\}\). Corollary~\ref{cor:first_cluster_critical} excludes all remaining blocks from the first cluster. Therefore the full eigenspace of \(\sigma_1(\Omega_N)\) is exactly
\[
E_1\oplus E_{N-1},
\]
the direct sum of two one-dimensional eigenspaces supported in two orthogonal subspaces of \(W^{\frac12,2}(\partial\Omega_N)\). This proves Theorem~\ref{thm:expansion}.
\end{proof}

% ============================================================
\subsection{Proof of Theorem~\ref{thm:monotone_N0_20}}
% ============================================================

\begin{proof}[Proof of Theorem~\ref{thm:monotone_N0_20}]
Fix \(N\ge 20\). By Theorem~\ref{thm:expansion},
\[
\sigma_1(\Omega_N)=\sigma_2(\Omega_N)
=
1-\frac{c_3}{N^3}-\frac{c_4}{N^4}-\frac{c_5}{N^5}+R_N,
\qquad
|R_N|\le \frac{E_\sigma}{N^6},
\]
and likewise
\[
\sigma_1(\Omega_{N+1})=\sigma_2(\Omega_{N+1})
=
1-\frac{c_3}{(N+1)^3}-\frac{c_4}{(N+1)^4}-\frac{c_5}{(N+1)^5}+R_{N+1}.
\]
Subtracting the two identities gives
\begin{align}
\sigma_1(\Omega_{N+1})-\sigma_1(\Omega_N)
&=
c_3\!\left(\frac1{N^3}-\frac1{(N+1)^3}\right)
+c_4\!\left(\frac1{N^4}-\frac1{(N+1)^4}\right) \notag\\
&\qquad
+c_5\!\left(\frac1{N^5}-\frac1{(N+1)^5}\right)
+\bigl(R_{N+1}-R_N\bigr).
\label{eq:direct_sigma_difference_main}
\end{align}

For \(k\ge 1\),
\[
\frac1{N^k}-\frac1{(N+1)^k}
=
\frac{(N+1)^k-N^k}{N^k(N+1)^k}
\ge \frac{k}{N(N+1)^k}.
\]
Since \(N\ge 20\), we also have
\[
\frac1{(N+1)^k}\ge \Bigl(\frac{20}{21}\Bigr)^k\frac1{N^k},
\]
and therefore
\[
\frac1{N^k}-\frac1{(N+1)^k}
\ge
k\Bigl(\frac{20}{21}\Bigr)^k\frac1{N^{k+1}}.
\]
Applying this with \(k=3,4,5\) in \eqref{eq:direct_sigma_difference_main} yields
\begin{align}
\sigma_1(\Omega_{N+1})-\sigma_1(\Omega_N)
&\ge
3c_3\Bigl(\frac{20}{21}\Bigr)^3\frac1{N^4}
+
4c_4\Bigl(\frac{20}{21}\Bigr)^4\frac1{N^5}
+
5c_5\Bigl(\frac{20}{21}\Bigr)^5\frac1{N^6}
\notag\\
&\qquad
-\frac{E_\sigma}{N^6}-\frac{E_\sigma}{(N+1)^6}.
\label{eq:direct_sigma_difference_lower}
\end{align}
Multiplying by \(N^6\), and using \(N\ge 20\), we obtain
\begin{align}
N^6\bigl(\sigma_1(\Omega_{N+1})-\sigma_1(\Omega_N)\bigr)
&\ge
1200\,c_3\Bigl(\frac{20}{21}\Bigr)^3
+
80\,c_4\Bigl(\frac{20}{21}\Bigr)^4
+
5\,c_5\Bigl(\frac{20}{21}\Bigr)^5
\notag\\
&\qquad
-
E_\sigma\!\left(1+\Bigl(\frac{N}{N+1}\Bigr)^6\right).
\label{eq:direct_sigma_difference_scaled}
\end{align}
Since \(\bigl(\frac{N}{N+1}\bigr)^6<1\), the last term is bounded below by \(-2E_\sigma\). Hence
\begin{equation}\label{eq:direct_sigma_difference_scaled2}
N^6\bigl(\sigma_1(\Omega_{N+1})-\sigma_1(\Omega_N)\bigr)
>
1200\,c_3\Bigl(\frac{20}{21}\Bigr)^3
+
80\,c_4\Bigl(\frac{20}{21}\Bigr)^4
+
5\,c_5\Bigl(\frac{20}{21}\Bigr)^5
-2E_\sigma.
\end{equation}

Now
\[
c_3=2\zeta(3),\qquad c_4=8\zeta(4),\qquad c_5=26\zeta(5),
\]
so the explicit constant on the right-hand side equals
\[
1200\cdot 2\zeta(3)\Bigl(\frac{20}{21}\Bigr)^3
+
80\cdot 8\zeta(4)\Bigl(\frac{20}{21}\Bigr)^4
+
5\cdot 26\zeta(5)\Bigl(\frac{20}{21}\Bigr)^5
\approx 3167.61.
\]
On the other hand, by Remark~\ref{rmk:constant}, the constant \(E_\sigma\) may be chosen slightly larger than \(1187\); in particular, we may take
\[
E_\sigma<1188.
\]
Therefore
\[
3167.61-2E_\sigma>3167.61-2376>0.
\]
It follows from \eqref{eq:direct_sigma_difference_scaled2} that
\[
\sigma_1(\Omega_{N+1})-\sigma_1(\Omega_N)>0
\qquad\text{for all }N\ge 20.
\]
Together with Theorem~\ref{thm:expansion}, which also gives
\[
\sigma_1(\Omega_N)=\sigma_2(\Omega_N),
\qquad
\sigma_1(\Omega_{N+1})=\sigma_2(\Omega_{N+1}),
\]
this proves
\[
\sigma_1(\Omega_{N+1})=\sigma_2(\Omega_{N+1})
>
\sigma_1(\Omega_N)=\sigma_2(\Omega_N)
\qquad (N\ge 20),
\]
namely \eqref{eq:monotone_intro}.
\end{proof}

% ============================================================
\subsection{Proof of Theorem~\ref{thm:global_monotone}}
% ============================================================

\begin{proof}[Proof of Theorem~\ref{thm:global_monotone}]
By Theorem~\ref{thm:monotone_N0_20}, one already has
\[
\sigma_1(\Omega_{N+1})>\sigma_1(\Omega_N)
\qquad\text{for all }N\ge 20.
\]
Thus it remains only to treat the finite range \(3\le N\le 20\).

Appendix~\ref{app:finite_check_smallN} provides, for each \(N\in\{3,\dots,20\}\), a certified interval enclosure
\[
\sigma_1(\Omega_N)\in[\underline{\sigma}_N,\overline{\sigma}_N].
\]
We briefly recall the construction.

For each \(N\ge 3\), the pulled-back Steklov problem splits into \(N\) rotation blocks,
indexed by \(r=0,1,\dots,N-1\). Write \(\sigma_1^{(r)}(N)\) for the first positive Steklov
 eigenvalue in block \(r\). In the \(r=0\) block, we impose the orthogonality-to-constants
condition. Then
\[
\sigma_1(\Omega_N)=\min_{0\le r\le N-1}\sigma_1^{(r)}(N).
\]
Equivalently,
\(
\sigma_1(\Omega_N)=\frac1{\Lambda_N},
\qquad
\Lambda_N:=\max_{0\le r\le N-1}\lambda_{\max}^{(r)}(N).
\)

For \(r\neq 0\), Proposition~\ref{prop:block_standard_evp} reduces the block problem to the standard symmetric eigenvalue problem
\[
A_r y=\lambda y
\quad\text{on }\ell^2(\mathbb Z),
\]
with kernel given by \eqref{eq:app_Ar_entries}. For \(r=0\), the orthogonality constraint and rank-one reduction lead to the compact self-adjoint nonnegative operator \(\widetilde K_0\) on \(\ell^2(Q)\); see \eqref{eq:app_r0_rankone}, Remark~\ref{rem:r0_x0_zero_branch}, and Lemma~\ref{lem:r0_Ktilde_nonneg}.

Fix \(M=320\), let \(\Pi_M\) be the finite-section projection, and decompose each block operator \(A\) as
\[
A=
\begin{pmatrix}
B & C\\
C^* & E
\end{pmatrix},
\qquad
B:=\Pi_MA\Pi_M.
\]
For \(r\neq 0\), positivity of \(v_m\) (immediate from the Gamma representation in Lemma~\ref{lem:vm_gamma_formula}) implies that the finite matrix \(B\) is entrywise positive, so its top eigenvalue is enclosed by the Collatz--Wielandt inequalities in interval arithmetic. For \(r=0\), one instead uses a Rayleigh lower bound and an \(\ell^\infty\)-operator upper bound. In both cases, the tail blocks \(C\) and \(E\) are controlled by explicit Hilbert--Schmidt bounds, and the \(2\times2\) comparison inequality \eqref{eq:app_2by2_bound} yields a certified upper bound for the top eigenvalue of the full operator. The lower bound follows from the compression inequality \eqref{eq:app_lower_from_compression}. Thus one obtains, for each \(r\), a certified interval
\[
\lambda_{\max}^{(r)}(N)\in[\underline{\Lambda}_N^{(r)},\overline{\Lambda}_N^{(r)}],
\]
hence an interval enclosure for \(\Lambda_N\), and therefore for \(\sigma_1(\Omega_N)\).

Finally, Proposition~\ref{prop:app_verified_gaps_3_19} proves the strict interval separation
\[
\underline{\sigma}_{N+1}-\overline{\sigma}_N>0
\qquad (3\le N\le 20).
\]
Since
\[
\sigma_1(\Omega_{N+1})\ge \underline{\sigma}_{N+1},
\qquad
\sigma_1(\Omega_N)\le \overline{\sigma}_N,
\]
it follows that
\[
\sigma_1(\Omega_{N+1})>\sigma_1(\Omega_N)
\qquad (3\le N\le 20).
\]

Combining this finite verification with the already proved large-\(N\) monotonicity, we conclude that
\[
\sigma_1(\Omega_{N+1})>\sigma_1(\Omega_N)
\qquad\text{for every }N\ge 3.
\]
This proves Theorem~\ref{thm:global_monotone}.
\end{proof}

\newpage
\bibliographystyle{abbrv}  % 或其他样式，如 unsrt, alpha, abbrv, ieeetr 等
\bibliography{ref}         % 注意：这里写的是 ref，不是 ref.bib

\printbackmatterinfo

\newpage
\appendix
% ============================================================
% Appendix: constants and effective thresholds (re-organized)
% ============================================================
\appendix
\section{Uniform estimates for $v_m$}

This appendix collects the coefficient estimates for $v_m$ used in Chapter~\ref{sec:moment} and in the constant closure carried out later in Appendix~\ref{app:constants}. The aim is to isolate a self-contained chain of bounds, beginning with the exact Gamma-function formula and ending with the uniform estimates required in the summation arguments.

Set
\begin{equation}\label{eq:psi_alpha_def}
\psi_\alpha(\phi):=\bigl|1-e^{i\phi}\bigr|^{-2\alpha}
=\bigl(2|\sin(\phi/2)|\bigr)^{-2\alpha},
\qquad \phi\in[0,2\pi).
\end{equation}
Since $\alpha<\frac12$, one has $\psi_\alpha\in L^1(\mathbb S^1)$.
Define the normalization constant
\begin{equation}\label{eq:C_alpha_def}
C(\alpha):=\left(\frac1{2\pi}\int_0^{2\pi}\psi_\alpha(\phi)\,d\phi\right)^{-1}\in(0,\infty).
\end{equation}

We begin with the exact integral and Gamma-function representations, which provide the analytic starting point for all later expansions.
\begin{lemma}[Normalized kernel representation]\label{lem:wn_as_kernel}
For a.e.\ $\theta\in[0,2\pi)$,
\begin{equation}\label{eq:wn_kernel_formula}
w_N(\theta)=C(\alpha)\,\psi_\alpha(N\theta)=C(\alpha)\,|1-e^{iN\theta}|^{-2\alpha}.
\end{equation}
Consequently, for every $m\in\Z$,
\begin{equation}\label{eq:vm_ratio_integral}
v_m
=\widehat w_N(mN)
=C(\alpha)\,\frac1{2\pi}\int_0^{2\pi}\psi_\alpha(\phi)\,e^{-im\phi}\,d\phi .
\end{equation}
\end{lemma}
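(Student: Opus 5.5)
The plan is to start from the boundary formula already obtained in the $L^p$-integrability discussion of Section~\ref{sec:conformal}. There, taking radial limits in the Schwarz--Christoffel representation gave, for a.e.\ $\theta$,
\[
w_N(\theta)=C_N\prod_{j=0}^{N-1}|e^{i\theta}-\zeta_j|^{-2/N}.
\]
The key algebraic step is the factorization $\prod_{j=0}^{N-1}(z-\zeta_j)=z^N-1$, valid because the $\zeta_j$ are exactly the $N$-th roots of unity. Taking absolute values at $z=e^{i\theta}$ gives $\prod_j|e^{i\theta}-\zeta_j|=|e^{iN\theta}-1|=|1-e^{iN\theta}|$. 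With $\alpha=1/N$ this yields
\[
w_N(\theta)=C_N\,|1-e^{iN\theta}|^{-2\alpha}=C_N\,\psi_\alpha(N\theta)
\]
for every $\theta$ off the finite vertex set $\{\theta_j\}$. Alternatively, one can get this directly from $f_N'(z)=C_N(1-z^N)^{-2/N}$: the boundary values are continuous away from the roots of unity, so the radial limit equals $C_N|1-e^{iN\theta}|^{-2/N}$ there.

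Next I identify the constant $C_N$ with $C(\alpha)$ using the perimeter normalization \eqref{eq:wn_integral_2pi}. This gives $2\pi=C_N\int_0^{2\pi}\psi_\alpha(N\theta)\,d\theta$. The substitution $\phi=N\theta$, together with the $2\pi$-periodicity of $\psi_\alpha$, shows $\int_0^{2\pi}\psi_\alpha(N\theta)\,d\theta=\int_0^{2\pi}\psi_\alpha(\phi)\,d\phi$: the interval $[0,2\pi N)$ covers $N$ full periods, and the factor $N$ cancels against $d\theta=d\phi/N$. Hence $C_N=C(\alpha)$. Here $\psi_\alpha\in L^1$ because $2\alpha<1$, so $C(\alpha)$ is finite and positive. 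This proves \eqref{eq:wn_kernel_formula}.

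For \eqref{eq:vm_ratio_integral}, I would compute
\[
\widehat w_N(mN)=\frac{C(\alpha)}{2\pi}\int_0^{2\pi}\psi_\alpha(N\theta)e^{-imN\theta}\,d\theta
\]
and apply the same substitution $\phi=N\theta$. The integrand $\psi_\alpha(\phi)e^{-im\phi}$ is $2\pi$-periodic in $\phi$, so the integral over $[0,2\pi N)$ equals $N$ times the integral over one period, and this cancels the Jacobian factor $1/N$.

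I do not expect a genuine obstacle. The only point needing care is justifying the radial limit formula almost everywhere, which already appears in the paper's $L^p$ discussion. The rest is the cyclotomic identity and a periodic change of variables.
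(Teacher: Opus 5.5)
Your proposal is correct and follows essentially the same route as the paper: read off $|f_N'(e^{i\theta})|=C_N|1-e^{iN\theta}|^{-2/N}$ from the Schwarz--Christoffel formula, fix $C_N=C(\alpha)$ through the perimeter normalization $\int_0^{2\pi}w_N\,d\theta=2\pi$, and obtain the formula for $v_m$ by the substitution $\phi=N\theta$ together with $2\pi$-periodicity. Your version is slightly more careful than the paper's. The paper only writes the boundary derivative as $\asymp|1-e^{iN\theta}|^{-2\alpha}$ and leaves implicit both the exact proportionality and the periodic change of variables in the normalization step.
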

\begin{proof}
A Schwarz--Christoffel representation for the conformal map onto a regular $N$-gon implies that,
up to a positive multiplicative constant, the boundary derivative satisfies
\[
|f_N'(e^{i\theta})|\asymp |1-e^{iN\theta}|^{-2/N}=|1-e^{iN\theta}|^{-2\alpha}
\qquad\text{for a.e. }\theta.
\]
The proportionality factor is fixed by the perimeter normalization
$\int_0^{2\pi}w_N(\theta)\,d\theta=2\pi$ (Lemma~\ref{lem:ds_pullback_weight}), which yields exactly the
constant $C(\alpha)$ in \eqref{eq:C_alpha_def} and gives \eqref{eq:wn_kernel_formula}.

For \eqref{eq:vm_ratio_integral}, use the Fourier convention and substitute
$\phi=N\theta$:
\begin{align*}
    v_m=\widehat w_N(mN)
=&\frac1{2\pi}\int_0^{2\pi} w_N(\theta)\,e^{-imN\theta}\,d\theta\\
=&\frac{C(\alpha)}{2\pi}\int_0^{2\pi}\psi_\alpha(N\theta)\,e^{-imN\theta}\,d\theta\\
=&\frac{C(\alpha)}{2\pi}\cdot\frac1N\int_0^{2\pi N}\psi_\alpha(\phi)\,e^{-im\phi}\,d\phi.
\end{align*}

Since both $\psi_\alpha$ and $e^{-im\phi}$ are $2\pi$--periodic, the integral over $[0,2N\pi]$ equals
$N$ times the integral over $[0,2\pi]$, hence
\[
v_m=\frac{C(\alpha)}{2\pi}\int_0^{2\pi}\psi_\alpha(\phi)\,e^{-im\phi}\,d\phi,
\]
which is \eqref{eq:vm_ratio_integral}.
\end{proof}
\begin{lemma}[Normalization integral]\label{lem:C_alpha_explicit}
For $\alpha\in(0,\frac12)$,
\begin{equation}\label{eq:I0_gamma}
\frac1{2\pi}\int_0^{2\pi}\psi_\alpha(\phi)\,d\phi
=\frac{\Gamma(1-2\alpha)}{\Gamma(1-\alpha)^2}
=2^{-2\alpha}\,\frac{\Gamma(\tfrac12-\alpha)}{\sqrt{\pi}\,\Gamma(1-\alpha)}.
\end{equation}
In particular,
\begin{equation}\label{eq:C_alpha_gamma}
C(\alpha)=\frac{\Gamma(1-\alpha)^2}{\Gamma(1-2\alpha)}
=2^{2\alpha}\,\sqrt{\pi}\,\frac{\Gamma(1-\alpha)}{\Gamma(\tfrac12-\alpha)}.
\end{equation}
\end{lemma}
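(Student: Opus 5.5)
We need to prove $\frac1{2\pi}\int_0^{2\pi}|1-e^{i\phi}|^{-2\alpha}\,d\phi=\Gamma(1-2\alpha)/\Gamma(1-\alpha)^2$, then rewrite it via the duplication formula.

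\textbf{Step 1: binomial expansion.} For $0<r<1$ write $|1-re^{i\phi}|^{-2\alpha}=(1-re^{i\phi})^{-\alpha}(1-re^{-i\phi})^{-\alpha}$. Expand each factor by the binomial series,
\[
(1-z)^{-\alpha}=\sum_{k\ge0}\frac{(\alpha)_k}{k!}z^k,
\]
which converges absolutely and uniformly on $|z|\le r$. Multiplying the two series and integrating in $\phi$, orthogonality of $e^{ik\phi}$ leaves only the diagonal terms:
\[
\frac1{2\pi}\int_0^{2\pi}|1-re^{i\phi}|^{-2\alpha}\,d\phi=\sum_{k\ge0}\Bigl(\frac{(\alpha)_k}{k!}\Bigr)^2r^{2k}.
\]

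\textbf{Step 2: let $r\to1^-$.} On the right, all terms are nonnegative, so monotone convergence gives the limit ${}_2F_1(\alpha,\alpha;1;1)$. This series converges exactly when $1-2\alpha>0$, since $(\alpha)_k/k!\sim k^{\alpha-1}/\Gamma(\alpha)$.

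On the left we must justify passing $r\to1^-$ under the integral. For $|\phi|\le\pi$ and $0\le r\le1$, the elementary bound $|1-re^{i\phi}|\ge\sin(|\phi|/2)\ge|\phi|/\pi$ (with $|\phi|\le\pi$) gives the dominating function $C|\phi|^{-2\alpha}$. This is integrable because $2\alpha<1$, so dominated convergence applies.

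\textbf{Step 3: evaluate the hypergeometric value.} Gauss's summation theorem gives
\[
{}_2F_1(\alpha,\alpha;1;1)=\frac{\Gamma(1)\Gamma(1-2\alpha)}{\Gamma(1-\alpha)^2},
\]
which is the first equality in \eqref{eq:I0_gamma}.

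\textbf{Step 4: second form.} Apply the Legendre duplication formula with $z=\tfrac12-\alpha$:
\[
\Gamma(1-2\alpha)=\frac{2^{-2\alpha}}{\sqrt\pi}\,\Gamma\!\left(\tfrac12-\alpha\right)\Gamma(1-\alpha).
\]
Dividing by $\Gamma(1-\alpha)^2$ yields the second expression in \eqref{eq:I0_gamma}. Taking reciprocals of both forms gives \eqref{eq:C_alpha_gamma}.

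\textbf{Main obstacle.} The only delicate point is the boundary limit in Step 2. It is resolved by monotone convergence on the series side and dominated convergence on the integral side, both using $\alpha<\tfrac12$.
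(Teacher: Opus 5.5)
Your proof is correct, but it takes a different route from the paper's. The paper works directly with the trigonometric form $\psi_\alpha(\phi)=(2|\sin(\phi/2)|)^{-2\alpha}$. The substitution $t=\phi/2$ and symmetry reduce the integral to $2^{2-2\alpha}\int_0^{\pi/2}(\sin t)^{-2\alpha}\,dt$, and the Beta integral $\int_0^{\pi/2}(\sin t)^{-2\alpha}\,dt=\tfrac12 B(\tfrac12-\alpha,\tfrac12)$ then produces the $\Gamma(\tfrac12-\alpha)$ form first. The duplication formula converts it to $\Gamma(1-2\alpha)/\Gamma(1-\alpha)^2$.

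You go in the opposite order. Applying Parseval to the binomial expansion of the regularized kernel, followed by Gauss's summation, gives $\Gamma(1-2\alpha)/\Gamma(1-\alpha)^2$ directly, and duplication then supplies the other form.

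The paper's route is a closed-form evaluation with no limiting procedure and no hypergeometric input. Yours needs the passage $r\to1^-$, which you handle cleanly: monotone convergence on the nonnegative series, and domination via the valid bound $|1-re^{i\phi}|^2=(1-r)^2+4r\sin^2(\phi/2)\ge\sin^2(\phi/2)$ for all $r\in[0,1]$.

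What your approach buys is uniformity. It is exactly the $m=0$ case of the computation the paper uses later for the exact $\Gamma$ formula for $v_m$ (Lemma~\ref{lem:vm_gamma_formula}), so a single computation for general $m$ would cover both lemmas. Your explicit dominating function also makes that later limit argument more transparent than the paper's bare assertion of a common majorant.
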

\begin{proof}
Using \eqref{eq:psi_alpha_def} and the substitution $t=\phi/2$,
\[
\int_0^{2\pi}\psi_\alpha(\phi)\,d\phi
=2^{1-2\alpha}\int_0^\pi (\sin t)^{-2\alpha}\,dt
=2^{2-2\alpha}\int_0^{\pi/2}(\sin t)^{-2\alpha}\,dt.
\]
The Beta integral $\int_0^{\pi/2}\sin^{p-1}t\cos^{q-1}t\,dt=\frac12 B(\frac p2,\frac q2)$ with
$p=1-2\alpha$, $q=1$ gives
\[
\int_0^{\pi/2}(\sin t)^{-2\alpha}\,dt
=\frac12\,B\!\left(\frac12-\alpha,\frac12\right)
=\frac12\,\frac{\Gamma(\tfrac12-\alpha)\Gamma(\tfrac12)}{\Gamma(1-\alpha)}
=\frac12\,\frac{\Gamma(\tfrac12-\alpha)\sqrt{\pi}}{\Gamma(1-\alpha)}.
\]
Substituting yields
\[
\int_0^{2\pi}\psi_\alpha(\phi)\,d\phi
=2^{1-2\alpha}\sqrt{\pi}\,\frac{\Gamma(\tfrac12-\alpha)}{\Gamma(1-\alpha)},
\]
hence the second equality in \eqref{eq:I0_gamma}. The first equality in \eqref{eq:I0_gamma} follows from the
duplication formula
\[
\Gamma\!\left(\tfrac12-\alpha\right)\Gamma(1-\alpha)
=2^{2\alpha}\sqrt{\pi}\,\Gamma(1-2\alpha),
\]
which rearranges to
$2^{-2\alpha}\Gamma(\tfrac12-\alpha)/(\sqrt{\pi}\Gamma(1-\alpha))=\Gamma(1-2\alpha)/\Gamma(1-\alpha)^2$.
Finally \eqref{eq:C_alpha_gamma} is the reciprocal of \eqref{eq:I0_gamma}.
\end{proof}
\begin{lemma}[Exact $\Gamma$ formula for $v_m$]\label{lem:vm_gamma_formula}
Let $\alpha\in(0,\frac12)$ and $m\in\mathbb N$. Then
\begin{equation}\label{eq:vm_exact_gamma}
v_m=\frac{\Gamma(1-\alpha)}{\Gamma(\alpha)}\,
\frac{\Gamma(m+\alpha)}{\Gamma(m+1-\alpha)}.
\end{equation}
Moreover $v_0=1$ and $v_{-m}=v_m$.
\end{lemma}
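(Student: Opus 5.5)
The plan is to compute the Fourier coefficients of $\psi_\alpha(\phi)=|1-e^{i\phi}|^{-2\alpha}$ directly from a binomial series, and then multiply by $C(\alpha)$ from Lemma~\ref{lem:C_alpha_explicit}. By \eqref{eq:vm_ratio_integral} it suffices to find $\frac1{2\pi}\int_0^{2\pi}\psi_\alpha(\phi)e^{-im\phi}\,d\phi$.

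\emph{Step 1: factor $\psi_\alpha$.} For $0<\rho<1$ write
\[
\psi_{\alpha,\rho}(\phi):=|1-\rho e^{i\phi}|^{-2\alpha}=(1-\rho e^{i\phi})^{-\alpha}(1-\rho e^{-i\phi})^{-\alpha},
\]
using principal branches, which is legitimate since $\mathrm{Re}(1-\rho e^{\pm i\phi})>0$. Expand each factor as
\[
(1-z)^{-\alpha}=\sum_{k\ge0}\frac{(\alpha)_k}{k!}z^k .
\]
The Cauchy product gives, for $m\ge0$,
\[
\widehat{\psi_{\alpha,\rho}}(m)=\rho^m\sum_{k\ge0}\frac{(\alpha)_{k+m}(\alpha)_k}{(k+m)!\,k!}\rho^{2k}.
\]

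\emph{Step 2: pass to $\rho\to1^-$.} The coefficients $(\alpha)_k/k!\sim k^{\alpha-1}/\Gamma(\alpha)$, so the $k$-th summand behaves like $k^{2\alpha-2}$. Since $2\alpha-2<-1$ for $\alpha<\frac12$, the series converges absolutely at $\rho=1$, and Abel's theorem gives the limit of the right-hand side.

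For the left-hand side, $\psi_{\alpha,\rho}\to\psi_\alpha$ pointwise away from $\phi=0$, with the domination $\psi_{\alpha,\rho}\lesssim\psi_\alpha$. This follows from $|1-\rho e^{i\phi}|\ge\sin(|\phi|/2)\cdot c$, or more simply from $|1-\rho e^{i\phi}|\ge|\sin\phi|$ near $0$ together with $|1-\rho e^{i\phi}|^2=(1-\rho)^2+4\rho\sin^2(\phi/2)\ge \sin^2(\phi/2)$ for $\rho\ge\frac12$. Dominated convergence then yields
\[
\widehat{\psi_\alpha}(m)=\sum_{k\ge0}\frac{(\alpha)_{k+m}(\alpha)_k}{(k+m)!\,k!}.
\]
This limit exchange is the only analytic point, and I expect it to be the main (mild) obstacle.

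\emph{Step 3: Gauss summation.} Rewrite the sum as a hypergeometric series:
\[
\frac{(\alpha)_m}{m!}\,{}_2F_1(\alpha+m,\alpha;m+1;1).
\]
Gauss's theorem applies since $c-a-b=1-2\alpha>0$, and gives
\[
\frac{(\alpha)_m}{m!}\cdot\frac{\Gamma(m+1)\Gamma(1-2\alpha)}{\Gamma(1-\alpha)\Gamma(m+1-\alpha)}
=\frac{\Gamma(m+\alpha)\,\Gamma(1-2\alpha)}{\Gamma(\alpha)\,\Gamma(1-\alpha)\,\Gamma(m+1-\alpha)}.
\]

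\emph{Step 4: normalize.} Multiply by $C(\alpha)=\Gamma(1-\alpha)^2/\Gamma(1-2\alpha)$ from \eqref{eq:C_alpha_gamma}. This gives exactly \eqref{eq:vm_exact_gamma}.

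The case $m=0$ reproduces \eqref{eq:I0_gamma}, so $v_0=1$ (this is also Corollary~\ref{cor:wn_cosine_series}). Evenness $v_{-m}=v_m$ follows from Lemma~\ref{lem:wn_reflection_real}, or directly from $\psi_\alpha(-\phi)=\psi_\alpha(\phi)$.
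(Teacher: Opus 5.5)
Your proposal is correct and follows the paper's proof essentially step for step: you regularize with $\rho<1$, expand both factors binomially, identify the Fourier coefficient as $\frac{(\alpha)_m}{m!}\rho^m\,{}_2F_1(\alpha+m,\alpha;m+1;\rho^2)$, let $\rho\to1^-$, apply Gauss's summation (valid since $1-2\alpha>0$), and multiply by $C(\alpha)$. Your explicit majorant from $|1-\rho e^{i\phi}|^2=(1-\rho)^2+4\rho\sin^2(\phi/2)$ is more concrete than the paper's appeal to an unspecified $L^1$ bound; one small correction is that the aside $|1-\rho e^{i\phi}|\ge|\sin\phi|$ should read $\ge\rho|\sin\phi|$, but the argument does not use it.
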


\begin{proof}
Define for $r\in(0,1)$ the regularized kernel
\[
\psi_{\alpha,r}(\phi):=(1-re^{i\phi})^{-\alpha}(1-re^{-i\phi})^{-\alpha},
\]
where the principal branch is used (analytic for $|re^{i\phi}|<1$).
Then $\psi_{\alpha,r}(\phi)\to \psi_\alpha(\phi)$ pointwise as $r\rightarrow1^-$, and for $r$ sufficiently close to $1$ the family $\{\psi_{\alpha,r}\}$ is dominated by a common $L^1(0,2\pi)$ majorant (using $\alpha<\tfrac12$); hence dominated convergence gives
\begin{equation}\label{eq:Im_limit_r}
\frac1{2\pi}\int_0^{2\pi}\psi_\alpha(\phi)e^{-im\phi}\,d\phi
=\lim_{r\rightarrow1^-}\frac1{2\pi}\int_0^{2\pi}\psi_{\alpha,r}(\phi)e^{-im\phi}\,d\phi.
\end{equation}
For $|re^{i\phi}|<1$ we may expand by the binomial series
\[
(1-re^{\pm i\phi})^{-\alpha}=\sum_{k\ge0}\frac{(\alpha)_k}{k!}\,r^k\,e^{\pm ik\phi},
\qquad (\alpha)_k:=\alpha(\alpha+1)\cdots(\alpha+k-1),
\]
and for each fixed $r<1$ the resulting product series is absolutely summable.
Hence we may interchange sums (equivalently, apply Fubini to the absolutely convergent series) and obtain
\[
\psi_{\alpha,r}(\phi)=\sum_{j,k\ge0}\frac{(\alpha)_j(\alpha)_k}{j!\,k!}\,r^{j+k}\,e^{i(j-k)\phi}.
\]
Multiplying by $e^{-im\phi}$ and integrating over $[0,2\pi]$ picks out the Fourier mode $j-k=m>0$, giving
\[
\frac1{2\pi}\int_0^{2\pi}\psi_{\alpha,r}(\phi)e^{-im\phi}\,d\phi
=\sum_{k\ge0}\frac{(\alpha)_{k+m}(\alpha)_k}{(k+m)!\,k!}\,r^{2k+m}
=\frac{(\alpha)_m}{m!}\,r^m\,
{}_2F_1(\alpha+m,\alpha;m+1;r^2).
\]
Since $c-a-b=(m+1)-(\alpha+m)-\alpha=1-2\alpha>0$, the function ${}_2F_1(a,b;c;z)$ extends continuously to $z=1$,
and Gauss' summation formula yields
\[
{}_2F_1(\alpha+m,\alpha;m+1;1)
=\frac{\Gamma(m+1)\Gamma(1-2\alpha)}{\Gamma(1-\alpha)\Gamma(m+1-\alpha)}.
\]
Therefore, letting $r\rightarrow1^-$ in \eqref{eq:Im_limit_r} and using $(\alpha)_m=\Gamma(m+\alpha)/\Gamma(\alpha)$,
\[
\frac1{2\pi}\int_0^{2\pi}\psi_{\alpha}(\phi)e^{-im\phi}\,d\phi
=\frac{\Gamma(m+\alpha)}{\Gamma(\alpha)}\,
\frac{\Gamma(1-2\alpha)}{\Gamma(1-\alpha)\Gamma(m+1-\alpha)}.
\]
Combining this with \eqref{eq:vm_ratio_integral}, we obtain
\[
v_m=\frac{\Gamma(1-\alpha)}{\Gamma(\alpha)}\,\frac{\Gamma(m+\alpha)}{\Gamma(m+1-\alpha)},
\]
which is \eqref{eq:vm_exact_gamma}. The above assertion follows from~\eqref{eq:C_alpha_gamma} and the evenness/real-valuedness of $w_N$ (Lemma~\ref{lem:wn_reflection_real}).
\end{proof}
With the Gamma-function representation now explicit, we rewrite $v_m$ in a form adapted to Taylor expansion at $\alpha=0$.

\begin{lemma}[Analytic normalization]\label{lem:um_definition_vm}
For each $m\ge 1$ define
\begin{equation}\label{eq:um_def_vm}
u_m(\alpha):=\frac{m}{\alpha}\,v_m.
\end{equation}
Then $u_m$ extends to a real-analytic function on $[0,\alpha_0]$ with $u_m(0)=1$, and
\begin{equation}\label{eq:um_gamma_ratio}
u_m(\alpha)=\frac{m}{m-\alpha}\,
\frac{\Gamma(1-\alpha)}{\Gamma(1+\alpha)}\,
\frac{\Gamma(m+\alpha)}{\Gamma(m-\alpha)}.
\end{equation}
Consequently,
\begin{equation}\label{eq:log_um}
\log u_m(\alpha)
=
-\log\Bigl(1-\frac{\alpha}{m}\Bigr)
+\Bigl[\log\Gamma(1-\alpha)-\log\Gamma(1+\alpha)\Bigr]
+\Bigl[\log\Gamma(m+\alpha)-\log\Gamma(m-\alpha)\Bigr].
\end{equation}
\end{lemma}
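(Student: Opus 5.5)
The plan is to derive everything from the exact Gamma formula of Lemma~\ref{lem:vm_gamma_formula}, starting with \eqref{eq:um_gamma_ratio}. Substituting $v_m=\frac{\Gamma(1-\alpha)}{\Gamma(\alpha)}\frac{\Gamma(m+\alpha)}{\Gamma(m+1-\alpha)}$ into $u_m=\frac m\alpha v_m$, I would apply two functional equations. The first is $\alpha\Gamma(\alpha)=\Gamma(1+\alpha)$, which turns $\frac{1}{\alpha\Gamma(\alpha)}$ into $\frac{1}{\Gamma(1+\alpha)}$. The second is $\Gamma(m+1-\alpha)=(m-\alpha)\Gamma(m-\alpha)$, which produces the factor $\frac{m}{m-\alpha}$. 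These two substitutions give exactly \eqref{eq:um_gamma_ratio}, valid for $\alpha\in(0,\tfrac12)$.

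Next comes analyticity. The right-hand side of \eqref{eq:um_gamma_ratio} is a product of functions real-analytic in $\alpha$ on a neighbourhood of $[0,\alpha_0]$, with $\alpha_0=1/20$ and $m\ge1$:
\begin{itemize}
\item $\frac{m}{m-\alpha}$ is analytic since $m-\alpha\ge 1-\alpha_0>0$;
\item $\Gamma(1\pm\alpha)$ is analytic and positive near $[0,\alpha_0]$ because its arguments stay in $(1/2,3/2)$, away from the poles;
\item $\Gamma(m\pm\alpha)$ is analytic and positive since $m\pm\alpha\ge 1-\alpha_0>0$.
\end{itemize}
The formula therefore defines the real-analytic extension of $u_m$ to $[0,\alpha_0]$, including $\alpha=0$ where the original definition $\frac m\alpha v_m$ was singular. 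Setting $\alpha=0$ gives $u_m(0)=1\cdot\frac{\Gamma(1)}{\Gamma(1)}\cdot\frac{\Gamma(m)}{\Gamma(m)}=1$.

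Finally, since every factor is strictly positive on $[0,\alpha_0]$, $\log u_m$ is well defined and real-analytic there. Taking logarithms of the three factors gives \eqref{eq:log_um}, with $\log\frac{m}{m-\alpha}=-\log(1-\alpha/m)$.

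The only delicate point is justifying the extension to $\alpha=0$: $v_m$ itself is defined through $w_N$ only for $\alpha=1/N$. I would note that the Gamma expression of Lemma~\ref{lem:vm_gamma_formula} is analytic in $\alpha$, so $u_m$ is interpreted as that analytic function restricted to the points $\alpha=1/N$. No real obstacle is expected beyond checking positivity of the arguments.
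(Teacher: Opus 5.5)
Your proposal is correct and follows essentially the same route as the paper. You substitute the Gamma formula of Lemma~\ref{lem:vm_gamma_formula}, apply $\alpha\Gamma(\alpha)=\Gamma(1+\alpha)$ and $\Gamma(m+1-\alpha)=(m-\alpha)\Gamma(m-\alpha)$, read off analyticity from the Gamma arguments staying away from the poles, and take logarithms. The only cosmetic differences are that you get $u_m(0)=1$ by evaluating the closed form directly (the paper uses the limit $\Gamma(\alpha)\sim 1/\alpha$), and you state explicitly the positivity that justifies the logarithm, which the paper leaves implicit.
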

\begin{proof}
Using $\Gamma(\alpha)=\Gamma(1+\alpha)/\alpha$ and $\Gamma(m+1-\alpha)=(m-\alpha)\Gamma(m-\alpha)$ in
\eqref{lem:vm_gamma_formula} yields \eqref{eq:um_gamma_ratio}. This representation is manifestly analytic for
$\alpha\in[0,\alpha_0]$ (note $m-\alpha\ge 1-\alpha_0>0$), and $u_m(0)=\lim_{\alpha\to 0^+}\frac{m}{\alpha}v_m=1$
follows from the classical limit $\Gamma(\alpha)\sim 1/\alpha$.
Taking logarithms gives \eqref{eq:log_um}.
\end{proof}

\begin{lemma}[Expansion of $\log u_m$ up to order $\alpha^4$]\label{lem:log_um_expand_4}
For every $m\ge 1$ and $0\le \alpha\le \alpha_0$,
\begin{equation}\label{eq:log_um_expand}
\log u_m(\alpha)
=
\Bigl(2H_{m-1}+\frac1m\Bigr)\alpha
+\frac{1}{2m^2}\alpha^2
+\Bigl(\frac{2}{3}H_{m-1}^{(3)}+\frac{1}{3m^3}\Bigr)\alpha^3
+\frac{1}{4m^4}\alpha^4
+ \mathcal R_{m,5}(\alpha),
\end{equation}
where the remainder satisfies the explicit bound
\begin{equation}\label{eq:log_um_remainder_bound}
|\mathcal R_{m,5}(\alpha)|
\le
L_5\,\alpha^5,
\qquad
L_5:=\frac{4}{19}+\frac{2}{5}\zeta(5)+\frac{4\,\zeta(7)\,\alpha_0^2}{7(1-\alpha_0^2)}.
\end{equation}
\end{lemma}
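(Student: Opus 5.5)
We expand the three pieces of \eqref{eq:log_um} separately in powers of $\alpha$, using the Taylor series of $\log(1-x)$ and of $\log\Gamma$.

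\emph{The first term.} The series
\[
-\log(1-\alpha/m)=\sum_{k\ge1}\frac{\alpha^k}{k\,m^k}
\]
contributes $\frac{\alpha}{m}+\frac{\alpha^2}{2m^2}+\frac{\alpha^3}{3m^3}+\frac{\alpha^4}{4m^4}$. Its tail from $k\ge5$ is at most $\frac{\alpha^5}{5}\cdot\frac{1}{1-\alpha_0}$, since $m\ge1$.

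\emph{The middle bracket.} From $\log\Gamma(1+z)=-\gamma z+\sum_{k\ge2}\frac{(-1)^k\zeta(k)}{k}z^k$, valid for $|z|<1$, the even powers cancel in $\log\Gamma(1-\alpha)-\log\Gamma(1+\alpha)$. This leaves
\[
2\gamma\alpha+\sum_{j\ge1}\frac{2\zeta(2j+1)}{2j+1}\alpha^{2j+1}.
\]
The remainder from $j\ge2$ is bounded by $\frac{2}{5}\zeta(5)\alpha^5$ plus a geometric tail. Writing $\zeta(7)$ as the largest of the odd zeta values $\zeta(2j+1)$, $j\ge3$, gives
\[
\sum_{j\ge3}\frac{2\zeta(2j+1)}{2j+1}\alpha^{2j+1}\le\frac{2\zeta(7)}{7}\alpha^5\,\frac{\alpha_0^2}{1-\alpha_0^2}.
\]
These two contributions match the second and third summands of $L_5$, up to the factor $2$ versus $4$ in the last one, which leaves room.

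\emph{The last bracket.} The function $\log\Gamma(m+\alpha)-\log\Gamma(m-\alpha)$ is odd in $\alpha$, so
\[
\log\Gamma(m+\alpha)-\log\Gamma(m-\alpha)=2\psi(m)\alpha+\frac{\psi''(m)}{3}\alpha^3+\frac{2}{5!}\psi^{(4)}(m)\alpha^5+\cdots.
\]
Here $\psi(m)=-\gamma+H_{m-1}$ and $\psi''(m)=-2\zeta(3,m)=-2\bigl(\zeta(3)-H^{(3)}_{m-1}\bigr)$.

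Adding the three pieces, the $\gamma$ terms cancel and the linear coefficient is $2H_{m-1}+\frac1m$. In the cubic coefficient, $\frac{2}{3}\zeta(3)$ from the middle bracket cancels $-\frac23\zeta(3)$ from $\psi''(m)/3$, leaving $\frac23H^{(3)}_{m-1}+\frac{1}{3m^3}$. There is no $\alpha^2$ or $\alpha^4$ contribution from the $\Gamma$ terms. This reproduces \eqref{eq:log_um_expand} exactly.

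\emph{The remainders, which are the main obstacle.} The bound must be uniform in $m$ with explicit constants. For the last bracket I would use the Taylor remainder in integral form,
\[
\Bigl|\mathcal R\Bigr|\le\frac{2\alpha^5}{5!}\sup_{|t|\le\alpha_0}\bigl|\psi^{(4)}(m+t)\bigr|,
\]
valid because the function is odd and smooth for $m\ge1$ and $|t|\le\alpha_0$. Since $\psi^{(4)}(x)=-24\sum_{n\ge0}(x+n)^{-5}$, the supremum is attained at $x=1-\alpha_0$. The crude bound $\zeta(5,1-\alpha_0)\le(1-\alpha_0)^{-5}+\zeta(5)$ should close the estimate. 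Alternatively one can expand $\log\Gamma(m\pm\alpha)$ in the Hurwitz zeta series $\sum_k\frac{(-1)^k\zeta(k,m)}{k}(\pm\alpha)^k$, pair the odd terms, and bound $\zeta(k,m)\le\zeta(k)$. The tail is then again $\le\frac25\zeta(5)\alpha^5$ plus a geometric tail, and this seems to be how $L_5$ was assembled.

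The one genuinely delicate point is bookkeeping: the coefficients $\zeta(5)$ from the middle bracket and $\zeta(5,m)$ from the last bracket enter with opposite signs. Combining them as $\zeta(5)-\zeta(5,m)=H^{(5)}_{m-1}\in[0,\zeta(5)]$ gives a single term bounded by $\frac25\zeta(5)\alpha^5$. The same pairing works for the $\alpha^7$ and higher terms, yielding the $\frac{4\zeta(7)\alpha_0^2}{7(1-\alpha_0^2)}$ tail. The constant $\frac4{19}$ should absorb $\frac{1}{5(1-\alpha_0)}=\frac{4}{19}$ from the first term. I would carry out this pairing carefully rather than bounding each bracket separately.
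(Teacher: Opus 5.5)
Your committed plan (expanding both $\log\Gamma$ differences in their Hurwitz zeta series, pairing the odd coefficients as $\zeta(2j+1)-\zeta(2j+1,m)=H^{(2j+1)}_{m-1}\in[0,\zeta(2j+1)]$, and taking the first term's tail as $\frac{4}{19}\alpha^5$) is essentially the paper's proof; the paper pairs only at order $\alpha^5$ and bounds the two $\alpha^{\ge 7}$ tails separately, which is why it has $4\zeta(7)$ where your full pairing gives $2\zeta(7)$. Discard the Taylor-remainder alternative for the last bracket, though: it yields $\frac{2}{5}\zeta(5,1-\alpha_0)\alpha^5$ (Hurwitz zeta), and with your crude bound this is $\frac{2}{5}\bigl((1-\alpha_0)^{-5}+\zeta(5)\bigr)\approx 0.93$, which alone exceeds $L_5\approx 0.63$, so that route does not close with the stated constant and the pairing is genuinely needed.
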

\begin{proof}
We expand each term in \eqref{eq:log_um}.

\emph{Step 1: the rational term.}
For $0\le \alpha\le \alpha_0$,
\[
-\log\Bigl(1-\frac{\alpha}{m}\Bigr)
=
\sum_{j=1}^4 \frac{1}{j\,m^j}\alpha^j
+ \sum_{j\ge 5}\frac{1}{j\,m^j}\alpha^j,
\]
and since $m\ge 1$ and $\alpha\le \alpha_0$,
\[
0\le \sum_{j\ge 5}\frac{1}{j\,m^j}\alpha^j
\le \frac{\alpha^5}{5m^5}\sum_{k\ge 0}\alpha^k
\le \frac{\alpha^5}{5(1-\alpha_0)}
=\frac{4}{19}\alpha^5.
\]

\emph{Step 2: the term at $1\pm \alpha$.}
The classical Taylor expansion of $\log\Gamma$ around $1$ gives
\[
\log\Gamma(1-\alpha)-\log\Gamma(1+\alpha)
=
2\gamma\,\alpha+\frac{2}{3}\zeta(3)\alpha^3+\frac{2}{5}\zeta(5)\alpha^5+\mathcal O(\alpha^7).
\]
(Only odd powers appear by symmetry in $\alpha\mapsto-\alpha$.)

\emph{Step 3: the term at $m\pm \alpha$.}
Similarly, Taylor expansion around $m$ yields
\[
\log\Gamma(m+\alpha)-\log\Gamma(m-\alpha)
=
2\psi(m)\alpha+\frac{2}{3!}\psi^{(2)}(m)\alpha^3+\frac{2}{5!}\psi^{(4)}(m)\alpha^5+\mathcal O(\alpha^7),
\]
again with only odd powers.
Using the integer identities
\[
\psi(m)=H_{m-1}-\gamma,
\qquad
\psi^{(2)}(m)=-2\bigl(\zeta(3)-H_{m-1}^{(3)}\bigr),
\]
we see that the $\zeta(3)$ contributions cancel between Steps~2 and~3, and the cubic term becomes
$\frac{2}{3}H_{m-1}^{(3)}\alpha^3$. Up to order $\alpha^4$ there are no further $\Gamma$--contributions.

\emph{Step 4: remainder control.}
From Steps~2--3 the first omitted $\Gamma$--term is of order $\alpha^5$.
By Lemma~\ref{lem:harmonic_bounds}, $0\le H_{m-1}^{(5)}\le \zeta(5)$, hence the coefficient of $\alpha^5$
in the combined $\Gamma$--difference is bounded in magnitude by $\frac{2}{5}\zeta(5)$.
The tail $\sum_{k\ge 3} \frac{2}{2k+1}\zeta(2k+1)\alpha^{2k+1}$ is bounded by
\[
2\sum_{k\ge 3}\frac{\zeta(7)}{2k+1}\alpha^{2k+1}
\le \frac{2\zeta(7)}{7}\frac{\alpha^7}{1-\alpha^2}
\le \frac{2\zeta(7)\alpha_0^2}{7(1-\alpha_0^2)}\,\alpha^5,
\]
and the same estimate applies to the corresponding harmonic-sum tail since
$H_{m-1}^{(2k+1)}\le \zeta(2k+1)\le \zeta(7)$ for $k\ge 3$.
Combining with Step~1 yields \eqref{eq:log_um_remainder_bound} and completes \eqref{eq:log_um_expand}.
\end{proof}

The preceding expansion of $\log u_m$ is the basic analytic input for the coefficient expansion of $v_m$ itself.

\begin{proposition}[Taylor polynomial for $v_m$ up to $\alpha^5$]\label{prop:vm_taylor_5}
For every $m\ge 1$ and $0\le \alpha\le \alpha_0$,
\begin{equation}\label{eq:vm_taylor_5}
v_m
=
\sum_{j=1}^5 a_{m,j}\,\alpha^j
+\mathcal E_{m,6}(\alpha),
\end{equation}
where
\begin{equation}\label{eq:a_mj_def_vm}
\begin{aligned}
a_{m,1}&=\frac{1}{m},\\
a_{m,2}&=\frac{1+2mH_{m-1}}{m^2},\\
a_{m,3}&=\frac{1+2mH_{m-1}+2m^2H_{m-1}^2}{m^3},\\
a_{m,4}&=\frac{3+6mH_{m-1}+6m^2H_{m-1}^2+4m^3H_{m-1}^3+2m^3H_{m-1}^{(3)}}{3m^4},\\
a_{m,5}&=\frac{3+6mH_{m-1}+6m^2H_{m-1}^2+4m^3H_{m-1}^3+2m^3H_{m-1}^{(3)}
+2m^4H_{m-1}^4+4m^4H_{m-1}H_{m-1}^{(3)}}{3m^5}.
\end{aligned}
\end{equation}
Moreover, with
\begin{equation}\label{eq:Pm4_def}
P_{m,4}(\alpha):=
\Bigl(2H_{m-1}+\frac1m\Bigr)\alpha
+\frac{1}{2m^2}\alpha^2
+\Bigl(\frac{2}{3}H_{m-1}^{(3)}+\frac{1}{3m^3}\Bigr)\alpha^3
+\frac{1}{4m^4}\alpha^4,
\end{equation}
one has
\begin{equation}\label{eq:vm_taylor_remainder_bound}
|\mathcal E_{m,6}(\alpha)|
\le
\frac{\alpha}{m}\,
\exp\!\Bigl(\bigl|P_{m,4}(\alpha)\bigr|+L_5\alpha^5\Bigr)\,
\Bigl(\frac{\bigl|P_{m,4}(\alpha)\bigr|^5}{5!}+L_5\alpha^5\Bigr),
\end{equation}
where $L_5$ is defined in \eqref{eq:log_um_remainder_bound}.
\end{proposition}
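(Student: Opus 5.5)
The plan is to exponentiate the expansion of Lemma~\ref{lem:log_um_expand_4}. By Lemma~\ref{lem:um_definition_vm}, $v_m=\frac{\alpha}{m}u_m(\alpha)=\frac{\alpha}{m}\exp(\log u_m(\alpha))$, and Lemma~\ref{lem:log_um_expand_4} gives $\log u_m(\alpha)=P_{m,4}(\alpha)+\mathcal R_{m,5}(\alpha)$ with $|\mathcal R_{m,5}|\le L_5\alpha^5$. We write
\[
u_m(\alpha)=\sum_{k=0}^{4}\frac{P_{m,4}(\alpha)^k}{k!}+\Bigl(e^{P_{m,4}+\mathcal R_{m,5}}-\sum_{k=0}^{4}\frac{P_{m,4}^k}{k!}\Bigr).
\]
We then take $\sum_{j=1}^5 a_{m,j}\alpha^j$ to be $\frac{\alpha}{m}$ times the part of degree $\le 4$ in $\alpha$ of the truncated exponential. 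The error $\mathcal E_{m,6}$ is $\frac{\alpha}{m}$ times the sum of the discarded degree-$\ge5$ monomials of $\sum_{k\le4}P_{m,4}^k/k!$ and the bracketed exponential tail.

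\emph{Coefficients.} First I will identify $a_{m,j}$ by direct multiplication. Set $p_1=2H_{m-1}+\frac1m$, $p_2=\frac1{2m^2}$, $p_3=\frac23H^{(3)}_{m-1}+\frac1{3m^3}$, $p_4=\frac1{4m^4}$. The coefficient of $\alpha^j$ in $e^{P}$ is the complete Bell-type combination $c_0=1$, $c_1=p_1$, $c_2=p_2+p_1^2/2$, $c_3=p_3+p_1p_2+p_1^3/6$, and so on. A cleaner route is to note that $-\log(1-\alpha/m)$ exponentiates to $(1-\alpha/m)^{-1}=\sum(\alpha/m)^k$. Then $e^{P}=(1-\alpha/m)^{-1}\exp(2H_{m-1}\alpha+\tfrac23H^{(3)}_{m-1}\alpha^3)$ up to order $4$, and multiplying the geometric series against $1+2H\alpha+2H^2\alpha^2+(\tfrac43H^3+\tfrac23H^{(3)})\alpha^3+(\tfrac23H^4+\tfrac43HH^{(3)})\alpha^4$ reproduces the listed $a_{m,j}$ after putting everything over $m^j$ (resp.\ $3m^j$). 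For example, $a_{m,3}=\frac1m\bigl(\frac1{m^2}+\frac{2H}{m}+2H^2\bigr)$ matches. This step is routine bookkeeping.

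\emph{Remainder.} The main point is the bound \eqref{eq:vm_taylor_remainder_bound}. With $x:=P_{m,4}$ and $y:=\mathcal R_{m,5}$, I split
\[
e^{x+y}-\sum_{k\le4}\frac{x^k}{k!}=\Bigl(e^x-\sum_{k\le4}\frac{x^k}{k!}\Bigr)+e^x(e^y-1).
\]
The Lagrange form gives $|e^x-\sum_{k\le4}x^k/k!|\le e^{|x|}|x|^5/5!$, and we have $|e^x(e^y-1)|\le e^{|x|}|y|e^{|y|}\le e^{|x|+L_5\alpha^5}L_5\alpha^5$. Together these yield the stated shape $\exp(|P|+L_5\alpha^5)(|P|^5/5!+L_5\alpha^5)$.

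\emph{Main obstacle.} The delicate point is that the stated $\mathcal E_{m,6}$ must also absorb the degree-$\ge5$ monomials produced inside $\sum_{k\le4}P^k/k!$, since $P_{m,4}^k$ contains powers up to $\alpha^{16}$. The clean way out is to define the remainder relative to the full truncated exponential instead, comparing against the exact $(1-\alpha/m)^{-1}$ factor. Alternatively, one observes that those high-degree cross terms are dominated termwise by the corresponding terms of $e^{|x|}|x|^5/5!$ after expanding in powers of $\alpha$ with nonnegative coefficients (all $p_j\ge0$). This gives $\sum_{k\le4}x^k/k!-\mathrm{trunc}_{\le4}\le e^{|x|}-\sum_{k\le 4}|x|^k/k!$-type majorants. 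I would try the majorant argument first, using positivity of the $p_j$, and if the constant does not close exactly I would slightly enlarge the factor inside the exponential, which is harmless for the later uses in Proposition~\ref{prop:M0_expand_stepA}.
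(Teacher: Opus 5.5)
Your coefficient identification is correct. The observation that $e^{P_{m,4}}\equiv(1-\alpha/m)^{-1}\exp(2H_{m-1}\alpha+\tfrac23H^{(3)}_{m-1}\alpha^3)$ modulo $\alpha^5$ is a cleaner check than the paper's ``direct algebra''. Your splitting $e^{x+y}-\sum_{k\le4}x^k/k!=(e^x-\sum_{k\le4}x^k/k!)+e^x(e^y-1)$ is also right.

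The gap is exactly the step you flag, and neither remedy works. The discarded degree-$\ge5$ monomials of $\sum_{k\le4}P_{m,4}^k/k!$ are \emph{not} dominated by $e^{|P_{m,4}|}|P_{m,4}|^5/5!$. At order $\alpha^5$ they contribute $p_1^3p_2/6+p_1p_2^2/2+p_1^2p_3/2+p_2p_3+p_1p_4$, whereas $e^{|P_{m,4}|}|P_{m,4}|^5/5!$ contributes only $p_1^5/120$; at $m=1$ this is roughly $0.79$ against $0.008$. Enlarging the argument of the exponential cannot help either. That factor tends to $1$ as $\alpha\to0$, and the mismatch sits in the leading $\alpha^5$ coefficient.

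In fact \eqref{eq:vm_taylor_remainder_bound} is false, so no argument can close this step.
\begin{itemize}
\item Take $m=1$. Then $H_0=H^{(3)}_0=0$, all $a_{1,j}=1$, and Lemma~\ref{lem:vm_gamma_formula} gives $v_1=\alpha/(1-\alpha)$, hence $\mathcal E_{1,6}(\alpha)=\alpha^6/(1-\alpha)$.
\item With $P_{1,4}=\alpha+\alpha^2/2+\alpha^3/3+\alpha^4/4$, the right-hand side is $(1/120+L_5)\alpha^6(1+O(\alpha))\approx0.635\,\alpha^6$.
\item At $\alpha=1/20$ one finds $\mathcal E_{1,6}\approx1.64\times10^{-8}$ against a bound of about $1.05\times10^{-8}$. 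The case $m=2$ fails similarly.
\end{itemize}

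The paper's proof contains the same defect, hidden in its ``elementary bound'' $|\exp(P)-(\exp P)_{\le\alpha^4}|\le e^{|P|}|P|^5/5!$. That bound confuses truncation in powers of $\alpha$ with the Taylor polynomial in $P$; at $m=1$ its left side is $\tfrac45\alpha^5+O(\alpha^6)$.

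A true version needs one of two changes. Let $u_{m,\le4}$ denote the degree-$4$ Taylor polynomial of $u_m$.
\begin{itemize}
\item Keep the degree-$\ge5$ part of $\sum_{k\le4}P_{m,4}^k/k!$ as an extra explicit term. It is $O(\alpha^5(1+\log m)^3)$, because each such monomial contains some $p_j$ with $j\ge2$ and hence at most $p_1^3$.
\item Alternatively, use that all Taylor coefficients of $u_m=\frac{m}{m-\alpha}\prod_{j<m}\frac{j+\alpha}{j-\alpha}$ are nonnegative. This yields $0\le u_m(\alpha)-u_{m,\le4}(\alpha)\le(\alpha/\rho)^5u_m(\rho)$ for $0\le\alpha\le\rho<1$.
\end{itemize}
In either case the constant $L_6$ of Lemma~\ref{lem:Em6_uniform}, and everything built on it, must be rechecked.
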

\begin{proof}
By Lemma~\ref{lem:log_um_expand_4},
\[
u_m(\alpha)=\exp\bigl(P_{m,4}(\alpha)+\mathcal R_{m,5}(\alpha)\bigr)
=\exp(P_{m,4}(\alpha))\exp(\mathcal R_{m,5}(\alpha)),
\qquad
|\mathcal R_{m,5}(\alpha)|\le L_5\alpha^5.
\]
Expanding $\exp(P_{m,4}(\alpha))$ as a power series in $\alpha$ and retaining terms up to $\alpha^4$
yields an explicit polynomial $u_{m,\le 4}(\alpha)$ whose coefficients depend only on
the coefficients of $P_{m,4}$, hence only on $H_{m-1}$, $H_{m-1}^{(3)}$, and powers of $m^{-1}$.
Multiplying by $\alpha/m$ gives \eqref{eq:vm_taylor_5} with the stated coefficients $a_{m,j}$
(which can be checked by direct algebra from $e^{P_{m,4}}=1+P_{m,4}+P_{m,4}^2/2+P_{m,4}^3/6+P_{m,4}^4/24+\mathcal O(\alpha^5)$).

For the remainder, write
\[
u_m(\alpha)-u_{m,\le 4}(\alpha)
=
\Bigl[\exp(P_{m,4}(\alpha))-\bigl(\exp(P_{m,4})\bigr)_{\le \alpha^4}\Bigr]\exp(\mathcal R_{m,5}(\alpha))
+\exp(P_{m,4}(\alpha))\bigl(\exp(\mathcal R_{m,5}(\alpha))-1\bigr).
\]
Using the elementary bounds
\[
\Bigl|\exp(P)-(\exp(P))_{\le \alpha^4}\Bigr|
\le \exp(|P|)\,\frac{|P|^5}{5!},
\qquad
|\exp(R)-1|\le |R|\exp(|R|),
\]
with $P=P_{m,4}(\alpha)$ and $R=\mathcal R_{m,5}(\alpha)$, and $|R|\le L_5\alpha^5$, we obtain
\[
|u_m(\alpha)-u_{m,\le4}(\alpha)|
\le
\exp\!\Bigl(|P_{m,4}(\alpha)|+L_5\alpha^5\Bigr)
\Bigl(\frac{|P_{m,4}(\alpha)|^5}{5!}+L_5\alpha^5\Bigr).
\]
Finally $\mathcal E_{m,6}(\alpha)=\frac{\alpha}{m}\bigl(u_m(\alpha)-u_{m,\le4}(\alpha)\bigr)$, which gives
\eqref{eq:vm_taylor_remainder_bound}.
\end{proof}

We now pass from the coefficient expansion to the uniform bounds needed in the later summation estimates.

\begin{remark}[A tighter coarse form]\label{rem:vm_remainder_coarse}
Assume $m\ge 1$ and $0\le \alpha\le \alpha_0$.
Starting from the sharp remainder bound \eqref{eq:vm_taylor_remainder_bound} (with the prefactor $\alpha/m$)
and using Lemma~\ref{lem:Pm4_log_control}, we obtain
\[
\exp\!\Bigl(\bigl|P_{m,4}(\alpha)\bigr|+L_5\alpha^5\Bigr)
\le e^{C_P\alpha_0+L_5\alpha_0^5}\,m^{2\alpha_0},
\qquad
\bigl|P_{m,4}(\alpha)\bigr|^5\le \alpha^5(2\log m+C_P)^5.
\]
Hence the $\alpha^5$ factor in the bracket of \eqref{eq:vm_taylor_remainder_bound} may be pulled out,
yielding the coarse inequality
\begin{equation}\label{eq:vm_remainder_coarse_tight}
|\mathcal E_{m,6}(\alpha)|
\le
\frac{\alpha^6}{m}\,e^{C_P\alpha_0+L_5\alpha_0^5}\,m^{2\alpha_0}\,
\Bigl(\frac{(2\log m+C_P)^5}{5!}+L_5\Bigr).
\end{equation}
This form is tailored for later summations: in all subsequent uses the remainder is multiplied by at least one
additional decay factor in $m$ coming from denominators such as $d_m^{-1}$ (see \eqref{eq:dm_def_r1}),
which ensures absolute summability of the resulting series.
\end{remark}

\begin{lemma}[Uniform remainder bound]\label{lem:Em6_uniform}
Define
\begin{equation}\label{eq:CL6_def}
L_6
:=
\exp\!\bigl(C_P\alpha_0+L_5\alpha_0^5\bigr)
\left(\frac{(2+C_P)^5}{5!}+L_5\right),
\end{equation}
where $L_5$ is as in \eqref{eq:log_um_remainder_bound} and $C_P$ as in \eqref{eq:CP_def}.
Then, for every $m\ge 1$ and every $0\le \alpha\le \alpha_0$,
\begin{equation}\label{eq:Em6_bound_CL6}
|\mathcal E_{m,6}(\alpha)|
\le
L_6\,\alpha^6\,m^{2\alpha_0-1}\,(1+\log m)^5 .
\end{equation}
\end{lemma}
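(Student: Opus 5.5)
The bound \eqref{eq:Em6_bound_CL6} will be deduced from the coarse inequality \eqref{eq:vm_remainder_coarse_tight} of Remark~\ref{rem:vm_remainder_coarse}. That inequality already provides the prefactor $\alpha^6 m^{2\alpha_0-1}$ together with the exponential constant $e^{C_P\alpha_0+L_5\alpha_0^5}$. What remains is to compare the bracket
\[
\frac{(2\log m+C_P)^5}{5!}+L_5
\]
with $(1+\log m)^5\bigl(\frac{(2+C_P)^5}{5!}+L_5\bigr)$, uniformly in $m\ge1$.

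First I check the inputs. Lemma~\ref{lem:Pm4_log_control}, which I assume states $|P_{m,4}(\alpha)|\le \alpha(2\log m+C_P)$, gives both $|P_{m,4}|^5\le\alpha^5(2\log m+C_P)^5$ and $\exp(|P_{m,4}|)\le e^{C_P\alpha}m^{2\alpha}\le e^{C_P\alpha_0}m^{2\alpha_0}$. The same lemma, together with $L_5\alpha^5\le L_5\alpha_0^5$, also gives the exponential factor in \eqref{eq:vm_taylor_remainder_bound}. Inserting these into \eqref{eq:vm_taylor_remainder_bound} and factoring $\alpha^5$ out of the bracket yields \eqref{eq:vm_remainder_coarse_tight}. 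For the $L_5$-term this uses $L_5\alpha^5$ directly, and for the $P$-term it uses the power bound.

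The key elementary step is the inequality
\[
2\log m+C_P\le (2+C_P)(1+\log m)\qquad(m\ge1),
\]
which holds because $C_P\ge0$ and $\log m\ge0$: expanding the right side gives $2+C_P+2\log m+C_P\log m$, which dominates the left side. Raising both sides to the fifth power gives
\[
\frac{(2\log m+C_P)^5}{5!}\le \frac{(2+C_P)^5}{5!}(1+\log m)^5.
\]
Since $(1+\log m)^5\ge1$, we also have $L_5\le L_5(1+\log m)^5$. Adding these two bounds shows that the bracket is at most $\bigl(\frac{(2+C_P)^5}{5!}+L_5\bigr)(1+\log m)^5$. Multiplying by the exponential constant then gives exactly $L_6$ as in \eqref{eq:CL6_def}, and \eqref{eq:Em6_bound_CL6} follows.

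The main obstacle is bookkeeping rather than analysis. One must confirm that $C_P\ge0$ and that the exponent coming from $\exp(|P_{m,4}|)$ is $m^{2\alpha_0}$ and not something larger. The $\alpha H_{m-1}$ term in $P_{m,4}$ contributes $e^{2\alpha H_{m-1}}\le e^{2\alpha(1+\log m)}$, so the additive constant $2\alpha$ has to be absorbed into $C_P\alpha$. I would verify this against the definition \eqref{eq:CP_def} and Lemma~\ref{lem:harmonic_bounds}, namely $H_{m-1}\le \log m+1$, before closing the argument.
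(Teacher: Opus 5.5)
Your argument is the paper's proof. You insert Lemma~\ref{lem:Pm4_log_control} and $e^{L_5\alpha^5}\le e^{L_5\alpha_0^5}$ into \eqref{eq:vm_taylor_remainder_bound}, then use $2\log m+C_P\le(2+C_P)(1+\log m)$ and $L_5\le L_5(1+\log m)^5$ to arrive at $L_6$, so relative to the paper's earlier results your deduction is complete.

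Two remarks on the inputs you take for granted. First, the stated $C_P$ rests on $H_{m-1}\le\log m+\gamma$; the bound $H_{m-1}\le 1+\log m$ in your last paragraph would give $3$ in place of $2\gamma+1$. Second, the estimate \eqref{eq:vm_taylor_remainder_bound}, which both you and the paper start from, is not valid as stated. Its proof treats the degree-$4$ Taylor truncation of $e^{P_{m,4}}$ as if it were $\sum_{k\le 4}P_{m,4}^k/k!$. For example, at $m=1$ one has $v_1=\alpha/(1-\alpha)$, hence $\mathcal E_{1,6}=\alpha^6/(1-\alpha)\ge\alpha^6$, whereas the right-hand side of \eqref{eq:vm_taylor_remainder_bound} is at most about $0.67\,\alpha^6$. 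The coarser bound \eqref{eq:Em6_bound_CL6} does still hold at $m=1$, thanks to the slack in $L_6$.
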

\begin{proof}
Start from the remainder estimate \eqref{eq:vm_taylor_remainder_bound} in
Proposition~\ref{prop:vm_taylor_5}. By Lemma~\ref{lem:Pm4_log_control},
\[
|P_{m,4}(\alpha)|\le 2\alpha\log m + C_P\alpha,
\qquad
\exp\!\bigl(|P_{m,4}(\alpha)|\bigr)\le e^{C_P\alpha_0}\,m^{2\alpha_0},
\]
and
\[
|P_{m,4}(\alpha)|^5\le \alpha^5\,(2\log m + C_P)^5
\le \alpha^5\,(2+C_P)^5(1+\log m)^5.
\]
Also $e^{L_5\alpha^5}\le e^{L_5\alpha_0^5}$.
Plugging these bounds into \eqref{eq:vm_taylor_remainder_bound} yields
\[
|\mathcal E_{m,6}(\alpha)|
\le
\frac{\alpha}{m}\,e^{C_P\alpha_0+L_5\alpha_0^5}\,
m^{2\alpha_0}\,
\alpha^5\left(\frac{(2+C_P)^5}{5!}+L_5\right)(1+\log m)^5,
\]
which is exactly \eqref{eq:Em6_bound_CL6} with \eqref{eq:CL6_def}.
\end{proof}

\begin{lemma}[Logarithmic control of $P_{m,4}(\alpha)$]\label{lem:Pm4_log_control}
Let $m\ge 1$ and $0\le \alpha\le \alpha_0$. Then
\begin{equation}\label{eq:Pm4_log_bound}
\bigl|P_{m,4}(\alpha)\bigr|
\le 2\alpha\log m + C_P\,\alpha,
\end{equation}
where
\begin{equation}\label{eq:CP_def}
C_P:=2\gamma+1+\frac{\alpha_0}{2}
+\Bigl(\frac{2}{3}\zeta(3)+\frac13\Bigr)\alpha_0^{2}
+\frac{\alpha_0^{3}}{4}.
\end{equation}
Consequently,
\begin{equation}\label{eq:exp_Pm4_bound}
\exp\!\bigl(\bigl|P_{m,4}(\alpha)\bigr|\bigr)\le e^{C_P\alpha_0}\,m^{2\alpha_0}.
\end{equation}
\end{lemma}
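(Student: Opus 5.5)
The bound will come from bounding each of the four pieces of $P_{m,4}(\alpha)$ in \eqref{eq:Pm4_def} by the triangle inequality. All coefficients of $P_{m,4}$ are nonnegative, so $|P_{m,4}(\alpha)|=P_{m,4}(\alpha)$ for $\alpha\ge 0$. It therefore suffices to bound each monomial by its worst case over $m\ge 1$, then absorb the higher powers of $\alpha$ into a single factor $\alpha$ using $\alpha\le\alpha_0$.

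\emph{Linear term.} The only $m$-growth sits in the linear coefficient $2H_{m-1}+\frac1m$. I would use the classical inequality $H_m\le \log m+\gamma+\frac{1}{2m}$, or more crudely $H_{m-1}\le \log m+\gamma$ for $m\ge1$; the latter follows since $H_{m-1}-\log m$ increases to $\gamma$ after the correction $\frac{1}{m}$, equivalently $H_m-\frac1m-\log m\le \gamma$ for $m\ge 1$. With $\frac1m\le 1$ this gives
\[
\Bigl(2H_{m-1}+\tfrac1m\Bigr)\alpha\le (2\log m+2\gamma+1)\alpha .
\]
Checking $H_{m-1}-\log m\le\gamma$ cleanly is the only genuinely delicate point. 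It follows from $H_{m-1}-\log m = \gamma - \sum_{k\ge m}\bigl(\log(1+\tfrac1k)-\tfrac1k\bigr)-\ldots$; more simply, $H_{m-1}-\log m\le H_m-\log m - \frac1m$, and $H_m-\log m-\frac{1}{2m}$ decreases to $\gamma$ from above. So the bound needs care, and I would instead write it as $H_{m-1}-\log(m)$, which is increasing with limit $\gamma$, hence $\le\gamma$. I would verify the monotonicity via $\frac1m\le\log\frac{m+1}{m}$, which is false, so it has to be done the other way: the difference of consecutive terms is $\frac1m-\log(1+\frac1m)\ge 0$, giving increasing, hence $\le\gamma$. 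This is the main step to get right.

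\emph{Higher terms.} The remaining coefficients are bounded uniformly:
\[
\frac{1}{2m^2}\le \frac12,\qquad \frac23H^{(3)}_{m-1}+\frac{1}{3m^3}\le \frac23\zeta(3)+\frac13,\qquad \frac{1}{4m^4}\le\frac14 ,
\]
using $H^{(3)}_{m-1}\le\zeta(3)$. The corresponding contributions are at most $\frac12\alpha^2$, $(\frac23\zeta(3)+\frac13)\alpha^3$ and $\frac14\alpha^4$. Writing $\alpha^{k}\le\alpha_0^{k-1}\alpha$ reproduces exactly the constant $C_P$ in \eqref{eq:CP_def}, which yields \eqref{eq:Pm4_log_bound}.

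\emph{Exponential bound.} Finally, exponentiate and use $\alpha\le\alpha_0$. Since $\log m\ge 0$,
\[
e^{2\alpha\log m}=m^{2\alpha}\le m^{2\alpha_0},\qquad e^{C_P\alpha}\le e^{C_P\alpha_0},
\]
which gives \eqref{eq:exp_Pm4_bound}.
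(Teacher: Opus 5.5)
Your proposal is correct and follows the paper's proof essentially line by line: bound each coefficient of $P_{m,4}$ uniformly in $m$ using $H_{m-1}\le\log m+\gamma$ and $H^{(3)}_{m-1}\le\zeta(3)$, absorb $\alpha^k\le\alpha_0^{k-1}\alpha$ to obtain exactly $C_P$, and exponentiate using $m\ge 1$. The paper simply quotes $H_{m-1}\le\log m+\gamma$ as classical, and your final justification of it (the sequence $H_{m-1}-\log m$ has increments $\frac1m-\log(1+\frac1m)\ge 0$ and limit $\gamma$) is correct; delete the false starts before it, e.g.\ $H_m-\log m-\frac{1}{2m}$ actually increases to $\gamma$ from below, not from above.
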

\begin{proof}
From \eqref{eq:Pm4_def} and $m\ge 1$ we have
\[
|P_{m,4}(\alpha)|
\le \Bigl(2H_{m-1}+\frac1m\Bigr)\alpha+\frac{\alpha^2}{2m^2}
+\Bigl(\frac23H_{m-1}^{(3)}+\frac{1}{3m^3}\Bigr)\alpha^3+\frac{\alpha^4}{4m^4}.
\]
Use the classical bound $H_{m-1}\le \log m+\gamma$ for $m\ge 1$ and
$H_{m-1}^{(3)}\le \zeta(3)$, together with $\frac1m\le 1$, $\frac{1}{m^k}\le 1$:
\[
|P_{m,4}(\alpha)|
\le \bigl(2\log m+2\gamma+1\bigr)\alpha +\frac{\alpha^2}{2}
+\Bigl(\frac{2}{3}\zeta(3)+\frac13\Bigr)\alpha^3+\frac{\alpha^4}{4}.
\]
Since $0\le \alpha\le \alpha_0$, we bound $\alpha^2\le \alpha_0\alpha$, $\alpha^3\le \alpha_0^2\alpha$,
$\alpha^4\le \alpha_0^3\alpha$, which yields \eqref{eq:Pm4_log_bound} with $C_P$ as in \eqref{eq:CP_def}.
Then \eqref{eq:exp_Pm4_bound} is immediate, and \eqref{eq:Pm4_log_bound} follows from
$|P_{m,4}(\alpha)|\le \alpha(2\log m+C_P)$.
\end{proof}

\begin{lemma}[Uniform pointwise bound for $v_m$]\label{lem:vm_pointwise_bound}
Fix $\alpha_0:=1/20$. Let $C_P$ be as in Lemma~\ref{lem:Pm4_log_control} and
$L_5$ be as in Lemma~\ref{lem:log_um_expand_4}. Define
\begin{equation}\label{eq:Cv_infty_def}
V_\infty:=\exp\!\bigl(C_P\alpha_0+L_{5}\alpha_0^5\bigr).
\end{equation}
Then for all $\alpha\in(0,\alpha_0]$ and all $m\ge 1$,
\begin{equation}\label{eq:vm_uniform_pointwise}
|v_m|\le V_\infty\,\alpha\,m^{2\alpha_0-1}.
\end{equation}
\end{lemma}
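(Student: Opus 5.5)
We will prove the bound through the analytic normalization of Lemma~\ref{lem:um_definition_vm}. For $m\ge1$ we have $v_m=\frac{\alpha}{m}u_m(\alpha)$ with $u_m>0$, and $v_m>0$ directly from the Gamma formula \eqref{eq:vm_exact_gamma}. So it suffices to show
\[
u_m(\alpha)\le V_\infty\,m^{2\alpha_0}.
\]
Then $|v_m|=v_m\le V_\infty\,\alpha\, m^{2\alpha_0-1}$, which is \eqref{eq:vm_uniform_pointwise}.

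To bound $u_m$, write $\log u_m(\alpha)=P_{m,4}(\alpha)+\mathcal R_{m,5}(\alpha)$ using Lemma~\ref{lem:log_um_expand_4}. Here $|\mathcal R_{m,5}(\alpha)|\le L_5\alpha^5\le L_5\alpha_0^5$. Then Lemma~\ref{lem:Pm4_log_control} gives
\[
P_{m,4}(\alpha)\le |P_{m,4}(\alpha)|\le 2\alpha\log m+C_P\alpha\le 2\alpha_0\log m+C_P\alpha_0 .
\]
This uses $\alpha\le\alpha_0$ and $\log m\ge0$. Exponentiating yields
\[
u_m(\alpha)\le \exp\bigl(C_P\alpha_0+L_5\alpha_0^5\bigr)\,m^{2\alpha_0}=V_\infty m^{2\alpha_0},
\]
which is exactly the constant defined in \eqref{eq:Cv_infty_def}.

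The main obstacle is making sure the expansion of Lemma~\ref{lem:log_um_expand_4} and its remainder bound are uniform in $m\ge1$. The remainder $L_5$ was stated uniformly, using $H^{(k)}_{m-1}\le\zeta(k)$, so this step is already done. I will also check that the log-Gamma differences in \eqref{eq:log_um} have convergent Taylor series on $[0,\alpha_0]$, which they do since $\alpha_0<1$.

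As a sanity check independent of the series, one could instead use the monotonicity of the Gamma ratio. Gautschi/Wendel's inequality $\Gamma(m+\alpha)/\Gamma(m-\alpha)\le m^{2\alpha}$ bounds the last factor in \eqref{eq:um_gamma_ratio}. The remaining factors $\frac{m}{m-\alpha}\frac{\Gamma(1-\alpha)}{\Gamma(1+\alpha)}$ are bounded by an absolute constant, and $m^{2\alpha}\le m^{2\alpha_0}$. This gives the same form with a possibly different constant. I will present the first route, since it produces the stated $V_\infty$.
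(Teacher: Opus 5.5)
Your proposal is correct and is essentially the paper's own argument. You write $v_m=\frac{\alpha}{m}\exp\bigl(P_{m,4}(\alpha)+\mathcal R_{m,5}(\alpha)\bigr)$, bound $P_{m,4}$ via Lemma~\ref{lem:Pm4_log_control} and $\mathcal R_{m,5}$ by $L_5\alpha_0^5$ via Lemma~\ref{lem:log_um_expand_4}, then exponentiate using $\alpha\le\alpha_0$ and $\log m\ge 0$ to get exactly $V_\infty$; your Wendel-inequality cross-check is valid but not needed.
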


\begin{proof}
From Proposition~\ref{prop:vm_taylor_5} we have $v_m=\frac{\alpha}{m}\exp(P_{m,4}(\alpha)+R_{m,5}(\alpha))$.
Lemma~\ref{lem:Pm4_log_control} gives $|P_{m,4}(\alpha)|\le \alpha(2\log m + C_P)$, hence
$\exp(|P_{m,4}(\alpha)|)\le e^{C_P\alpha_0} m^{2\alpha_0}$ for $\alpha\le\alpha_0$.
Lemma~\ref{lem:log_um_expand_4} gives $|R_{m,5}(\alpha)|\le L_{5} \alpha^5\le L_{5}\alpha_0^5$.
Combining yields $|v_m|\le \frac{\alpha}{m}e^{C_P\alpha_0+L_{5} \alpha_0^5} m^{2\alpha_0}$,
which is \eqref{eq:vm_uniform_pointwise}.
\end{proof}

\begin{lemma}[Uniform bounds for $v_m$ and the cubic truncation]\label{lem:Cv_bounds}
Fix $\alpha_0:=1/20$. For $m\ge 1$ and $0\le \alpha\le \alpha_0$, define the cubic truncation
\[
v_{m,\le3}:=a_{m,1}\alpha+a_{m,2}\alpha^2+a_{m,3}\alpha^3,
\]
where $a_{m,1},a_{m,2},a_{m,3}$ are as in Lemma~\ref{prop:vm_taylor_5}.
Set the explicit constant in \ref{eq:Cv_infty_def}
\begin{equation*}
V_{\infty}:=e^{C_P\alpha_0+L_5\alpha_0^5},
\end{equation*}
(where $C_P$ is from Lemma~\ref{lem:Pm4_log_control} and $L_5$ from Lemma~\ref{lem:log_um_expand_4}),
and
\begin{equation}\label{eq:Cv4_def}
V_4:=C_4+\alpha_0 C_5+11\,L_6\alpha_0^2,
\end{equation}
where $L_6$ is defined in Lemma~\ref{lem:Em6_uniform}, and
\[
C_4:=\frac{19}{3}+\frac{2}{3}\zeta(3),\qquad
C_5:=7+2\zeta(3).
\]
Then the following bounds hold for all $m\ge 1$ and $0\le \alpha\le \alpha_0$:
\begin{align}
|v_m|&\le V_{\infty}\,\alpha\, m^{2\alpha_0-1}, \label{eq:vm_uniform_bound}\\
|v_m-v_{m,\le3}|&\le V_4\,\alpha^4\, m^{2\alpha_0-1}\,(1+\log m)^4. \label{eq:vm_cubic_tail_bound}
\end{align}
\end{lemma}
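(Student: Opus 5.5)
The plan is to derive both bounds from the representation $v_m=\frac{\alpha}{m}\,u_m(\alpha)$ with $u_m=\exp\bigl(P_{m,4}(\alpha)+\mathcal R_{m,5}(\alpha)\bigr)$, which is given by Lemma~\ref{lem:um_definition_vm} and Lemma~\ref{lem:log_um_expand_4}. The first bound \eqref{eq:vm_uniform_bound} is exactly Lemma~\ref{lem:vm_pointwise_bound}, so we simply quote it. For \eqref{eq:vm_cubic_tail_bound} we start from the Taylor decomposition of Proposition~\ref{prop:vm_taylor_5}:
\[
v_m-v_{m,\le3}=a_{m,4}\alpha^4+a_{m,5}\alpha^5+\mathcal E_{m,6}(\alpha),
\]
and we bound the three pieces separately, each in the form $\alpha^4 m^{2\alpha_0-1}(1+\log m)^4$ times a constant. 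Adding the three constants should reproduce $V_4=C_4+\alpha_0C_5+11L_6\alpha_0^2$.

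\emph{The polynomial coefficients.} We use $H_{m-1}\le 1+\log m$ and $H^{(3)}_{m-1}\le\zeta(3)$, together with the trivial $m^{-k}\le 1$. From the explicit formula \eqref{eq:a_mj_def_vm}, each summand of $3m^4a_{m,4}$ is at most a multiple of $m^3(1+\log m)^k$ with $k\le3$. This gives
\[
|a_{m,4}|\le \frac{(1+\log m)^3}{m}\Bigl(1+2+2+\tfrac43+\tfrac23\zeta(3)\Bigr)=\frac{C_4(1+\log m)^3}{m}.
\]
Since $1+\log m\ge1$ and $m^{-1}\le m^{2\alpha_0-1}$, this is controlled by $C_4\alpha^4m^{2\alpha_0-1}(1+\log m)^4$. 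The same counting for $a_{m,5}$ gives $|a_{m,5}|\le C_5(1+\log m)^4/m$ with $C_5=7+2\zeta(3)$; here the terms $2m^4H^4/3$ and $4m^4HH^{(3)}/3$ contribute at order $(1+\log m)^4$. Then $\alpha^5\le\alpha_0\alpha^4$ produces the term $\alpha_0C_5$.

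\emph{The sixth-order remainder.} By Lemma~\ref{lem:Em6_uniform}, $|\mathcal E_{m,6}|\le L_6\alpha^6m^{2\alpha_0-1}(1+\log m)^5$. The difficulty is the extra logarithm: $(1+\log m)^5$ is not bounded by a constant times $(1+\log m)^4$ uniformly in $m$. This is the main obstacle, and it is presumably where the factor $11$ comes from.

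Our first attempt is to trade a power of $\alpha$ for the logarithm, using $\alpha\log m\le$ const whenever $\log m\lesssim 1/\alpha$. For larger $m$ that argument fails, so there we would instead return to the exact size of $v_m$. When $\alpha\log m$ is large, both $|v_m|$ and $|v_{m,\le3}|$ are bounded by $\alpha m^{2\alpha_0-1}\cdot$(const) times powers of $\alpha\log m$. Since $\alpha\log m\ge$ const in that regime, the crude estimate $|v_m|+|v_{m,\le3}|$ is already dominated by $\alpha^4m^{2\alpha_0-1}(1+\log m)^4$ times a constant.

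If this split does not close with the constant $11$, we would rather revisit the proof of Lemma~\ref{lem:Em6_uniform}. There we would keep the factor $m^{2\alpha_0}$ coming from $e^{|P|}$ and use $\log m\le m^{\epsilon}/(e\epsilon)$ to absorb one logarithm into a slight loss of power. The honest observation, however, is that the stated bound must rely on some such absorption. We expect the bookkeeping of the constant $11L_6\alpha_0^2$ to be the delicate point, while the coefficient estimates are routine.
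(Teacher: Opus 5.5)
Your treatment of $a_{m,4}$ and $a_{m,5}$ matches the paper's; your $(1+\log m)^3$ for $a_{m,4}$ is a harmless sharpening. Quoting Lemma~\ref{lem:vm_pointwise_bound} for the first bound is also fine. The real difference is the remainder $\mathcal E_{m,6}$, and there your route is the sound one.

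The paper removes the extra logarithm in one line, via $(1+\log m)^5\le 11(1+\log m)^4m^{1/10}$ with $m^{1/10}=m^{2\alpha_0}$. But it applies this to $|\mathcal E_{m,6}|\le (L_6/m)\alpha^6(1+\log m)^5$, which drops the factor $m^{2\alpha_0}$ that Lemma~\ref{lem:Em6_uniform} actually carries. That factor is genuine. At $\alpha=\alpha_0$ one has $v_m\sim\frac{\Gamma(1-\alpha_0)}{\Gamma(\alpha_0)}m^{2\alpha_0-1}$, while the degree-five Taylor polynomial is $O(m^{-1}(1+\log m)^4)$. So $\mathcal E_{m,6}$ is of exact order $m^{2\alpha_0-1}$ for huge $m$, and the bound the paper uses is false there. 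With the factor kept, the paper's absorption lands on $m^{4\alpha_0-1}$. Your fallback (absorbing a logarithm into $m^{\epsilon}$) fails for the same reason: at $\alpha=\alpha_0$ there is no slack in the exponent, so discard it.

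Your case split, by contrast, closes with exactly the stated constant once you fix the threshold as $\alpha\log m\le\frac12=10\alpha_0$.
\begin{itemize}
\item \emph{Small region, $\alpha\log m\le\frac12$.} Here $\alpha^2(1+\log m)=\alpha(\alpha+\alpha\log m)\le\alpha_0(\alpha_0+10\alpha_0)=11\alpha_0^2$. Lemma~\ref{lem:Em6_uniform} then gives $|\mathcal E_{m,6}|\le 11L_6\alpha_0^2\,\alpha^4m^{2\alpha_0-1}(1+\log m)^4$. This is precisely where the $11$ comes from, and adding your coefficient bounds gives $V_4$ exactly.
\item \emph{Large region, $\alpha\log m>\frac12$.} Then $\log m>10$, so $\alpha^3(1+\log m)^4\ge(\alpha\log m)^3(1+\log m)>\frac{11}{8}$. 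Hence the target exceeds $\frac{11}{8}V_4\,\frac{\alpha}{m}m^{2\alpha_0}\approx 10.9\,\frac{\alpha}{m}m^{2\alpha_0}$.
\item \emph{Size of the terms in the large region.} First, $|v_m|\le V_\infty\frac{\alpha}{m}m^{2\alpha_0}$. Next, set $y:=\alpha\bigl(2H_{m-1}+\frac1m\bigr)\le 2\alpha\log m+3\alpha$. Then $0\le v_{m,\le3}=\frac{\alpha}{m}\bigl(1+y+\frac{y^2}{2}+\frac{\alpha^2}{2m^2}\bigr)\le\frac{\alpha}{m}\bigl(e^{y}+\alpha_0^2\bigr)\le 1.2\,\frac{\alpha}{m}m^{2\alpha_0}$, using $e^{2\alpha\log m}=m^{2\alpha}\le m^{2\alpha_0}$. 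The total $|v_m|+|v_{m,\le3}|$ is about $2.3\,\frac{\alpha}{m}m^{2\alpha_0}$, well inside the budget.
\end{itemize}

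So the paper's one-line absorption buys brevity but, as written, is not justified by its own Lemma~\ref{lem:Em6_uniform}. Your case split gives a correct proof with the stated $V_4$, provided you state the threshold explicitly instead of leaving it as ``const''.
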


\begin{proof}
The bound \eqref{eq:vm_uniform_bound} follows from the remainder form
\eqref{eq:vm_taylor_remainder_bound} together with Lemma~\ref{lem:Pm4_log_control} as summarized in
Remark~\ref{rem:vm_remainder_coarse}.

For \eqref{eq:vm_cubic_tail_bound}, write (using Lemma~\ref{prop:vm_taylor_5})
\[
v_m-v_{m,\le3}=a_{m,4}\alpha^4+a_{m,5}\alpha^5+\mathcal E_{m,6}(\alpha).
\]
Using $H_{m-1}\le \gamma+\log m$ and $H_{m-1}^{(3)}\le \zeta(3)$ in the explicit formulas for $a_{m,4},a_{m,5}$,
one obtains $|a_{m,4}|\le C_4\,m^{-1}(1+\log m)^4$ and $|a_{m,5}|\le C_5\,m^{-1}(1+\log m)^4$.
Next, from Lemma~\ref{prop:vm_taylor_5} we have
$|\mathcal E_{m,6}(\alpha)|\le (L_6/m)\alpha^6(1+\log m)^5$.
Since $\log m \le 10 m^{1/10}$ for $m\ge 1$, we have
$(1+\log m)^5\le 11(1+\log m)^4 m^{1/10}$, and hence
\[
|\mathcal E_{m,6}(\alpha)|
\le \frac{11 L_6\alpha_0^2}{m^{9/10}}\alpha^4(1+\log m)^4
=11 L_6\alpha_0^2\,\alpha^4\,m^{2\alpha_0-1}(1+\log m)^4.
\]
Combining the three pieces and using $\alpha^5\le \alpha_0\alpha^4$ gives \eqref{eq:vm_cubic_tail_bound}.
\end{proof}

\begin{lemma}[Linear controls for $d_m^{-1}$ and $v_m$]\label{lem:linear_controls_stepB}
Assume $0<\alpha\le \alpha_0=1/20$.

\smallskip\noindent
(i) For $m\in \mathbb Z_*$,
\begin{equation}\label{eq:dm_linear_stepB}
\frac{1}{d_m}=\frac{\alpha}{|m|}+r^{(d)}_m(\alpha),
\qquad
\bigl|r^{(d)}_m(\alpha)\bigr|\le \frac{\alpha^2}{1-\alpha_0}\,\frac{1}{m^2},
\qquad
\frac1{d_m}\le \frac{\alpha}{(1-\alpha_0)|m|}.
\end{equation}

\smallskip\noindent
(ii) For $m\ge 1$,
\begin{equation}\label{eq:vm_linear_stepB}
v_m=\frac{\alpha}{m}+r^{(v)}_m(\alpha),
\qquad
\bigl|r^{(v)}_m(\alpha)\bigr|
\le
V_1\,\alpha^2\,m^{2\alpha_0-1}\,(1+\log m),
\end{equation}
where one may take the explicit constant
\begin{equation}\label{eq:Cv1_stepB}
V_1:=
\exp\!\bigl(C_P\alpha_0+L_5\alpha_0^5\bigr)\,
\Bigl(2+C_P+L_5\alpha_0^4\Bigr),
\end{equation}
with $C_P$ from Lemma~\ref{lem:Pm4_log_control} and $L_5$ from Lemma~\ref{lem:log_um_expand_4}.
\end{lemma}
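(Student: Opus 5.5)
Part (i) is elementary and needs no earlier input beyond the definition $d_m=|1+mN|$. For $m\ge1$, $d_m=mN+1$ and $1/d_m-\alpha/m = \frac{1}{mN+1}-\frac{1}{mN} = -\frac{1}{mN(mN+1)}$, whose absolute value is at most $\alpha^2/m^2$. For $m=-k\le -1$, $d_m=kN-1$ and
\[
\frac{1}{kN-1}-\frac{1}{kN}=\frac{1}{kN(kN-1)}\le \frac{\alpha^2}{k^2(1-\alpha/k)}\le\frac{\alpha^2}{(1-\alpha_0)k^2}.
\]
The crude bound $1/d_m\le \alpha/((1-\alpha_0)|m|)$ then follows from $d_m\ge |m|N-1\ge |m|N(1-\alpha_0)$, which is the same computation as in Lemma~\ref{lem:S2_bound_r1}.

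For (ii), start from the exact representation used in Lemma~\ref{lem:vm_pointwise_bound}:
\[
v_m=\frac{\alpha}{m}\exp\bigl(P_{m,4}(\alpha)+\mathcal R_{m,5}(\alpha)\bigr),
\]
which combines Lemma~\ref{lem:um_definition_vm} with Lemma~\ref{lem:log_um_expand_4}. Write $Q:=P_{m,4}(\alpha)+\mathcal R_{m,5}(\alpha)$, so that
\[
r^{(v)}_m=\frac{\alpha}{m}\bigl(e^{Q}-1\bigr), \qquad |e^{Q}-1|\le |Q|\,e^{|Q|}.
\]
Lemma~\ref{lem:Pm4_log_control} gives $|P_{m,4}(\alpha)|\le \alpha(2\log m+C_P)$, and Lemma~\ref{lem:log_um_expand_4} gives $|\mathcal R_{m,5}|\le L_5\alpha^5\le L_5\alpha_0^4\,\alpha$. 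Hence
\[
|Q|\le \alpha\bigl(2\log m+C_P+L_5\alpha_0^4\bigr)\le \alpha\,(2+C_P+L_5\alpha_0^4)(1+\log m),
\]
where the last step uses $2\log m + c\le (2+c)(1+\log m)$ for $c\ge0$ and $m\ge1$. For the exponential factor, $e^{|Q|}\le e^{C_P\alpha_0+L_5\alpha_0^5}\,m^{2\alpha_0}$, exactly as in \eqref{eq:exp_Pm4_bound} together with $\alpha\le\alpha_0$.

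Multiplying these two bounds gives
\[
|r^{(v)}_m|\le \frac{\alpha}{m}\cdot \alpha(2+C_P+L_5\alpha_0^4)(1+\log m)\cdot e^{C_P\alpha_0+L_5\alpha_0^5}m^{2\alpha_0}=V_1\alpha^2 m^{2\alpha_0-1}(1+\log m),
\]
which is \eqref{eq:vm_linear_stepB} with $V_1$ as in \eqref{eq:Cv1_stepB}. None of these steps is a real obstacle. The only point needing care is the bookkeeping that absorbs the $\alpha^5$ remainder into a linear-in-$\alpha$ factor, using $\alpha^4\le\alpha_0^4$, so that the constant comes out exactly as $V_1$.
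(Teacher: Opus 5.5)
Your proof is correct and follows essentially the same route as the paper. Part (i) is direct algebra on $d_{\pm k}=kN\pm 1$, and part (ii) writes $v_m=\frac{\alpha}{m}e^{P_{m,4}+\mathcal R_{m,5}}$, uses $|e^{Q}-1|\le |Q|e^{|Q|}$, and invokes Lemmas~\ref{lem:Pm4_log_control} and~\ref{lem:log_um_expand_4}. Your bookkeeping, which keeps $|P_{m,4}|\le \alpha(2\log m+C_P)$ for both the exponential factor and the linear factor, is slightly cleaner than the paper's intermediate bound $|\log u_m|\le C_P\alpha(1+\log m)+L_5\alpha^5$, and it produces exactly the stated constant $V_1$.
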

\begin{proof}
(i) If $m=k\ge 1$, then $d_m=1+kN=(k+\alpha)/\alpha$ and
\[
\frac{1}{d_m}=\frac{\alpha}{k+\alpha}
=\frac{\alpha}{k}-\frac{\alpha^2}{k(k+\alpha)},
\]
so $|r_m^{(d)}|\le \alpha^2/k^2$. If $m=-k\le -1$, then $d_m=|1-kN|=kN-1=(k-\alpha)/\alpha$ and
\[
\frac{1}{d_m}=\frac{\alpha}{k-\alpha}
=\frac{\alpha}{k}+\frac{\alpha^2}{k(k-\alpha)},
\]
hence $|r_m^{(d)}|\le \alpha^2/((1-\alpha_0)k^2)$. The last inequality in \eqref{eq:dm_linear_stepB}
follows from $k-\alpha\ge (1-\alpha_0)k$.

(ii) Write $v_m=(\alpha/m)\,u_m(\alpha)$ with $u_m(\alpha)=\exp(\log u_m(\alpha))$.
By Lemma~\ref{lem:log_um_expand_4} and Lemma~\ref{lem:Pm4_log_control},
\[
|\log u_m(\alpha)|
\le C_P\,\alpha\,(1+\log m)+L_5\alpha^5
\le C_P\,\alpha_0\,(1+\log m)+L_5\alpha_0^5,
\]
and therefore
\[
|u_m(\alpha)-1|\le e^{|\log u_m(\alpha)|}\,|\log u_m(\alpha)|
\le \exp(L_5\alpha_0^5)\exp(C_P\alpha_0)\bigl(2+C_P+L_5\alpha_0^4\bigr)\,\alpha\,(1+\log m)\,m^{2\alpha_0}.
\]
Multiplying by $\alpha/m$ yields \eqref{eq:vm_linear_stepB}--\eqref{eq:Cv1_stepB}.
\end{proof}

\begin{lemma}[Cubic denominator linearization]\label{lem:den3_linear_stepC}
Assume $0<\alpha\le \alpha_0=1/20$ and $m\in \mathbb Z_*$. Then
\begin{equation}\label{eq:den3_linear_stepC}
\frac{1}{d_m^3}=\frac{\alpha^3}{|m|^3}+r^{(d,3)}_m(\alpha),
\qquad
\bigl|r^{(d,3)}_m(\alpha)\bigr|
\le
D_{3}\,\frac{\alpha^4}{|m|^4},
\end{equation}
where one may take
\begin{equation}\label{eq:Cd3_def_stepC}
D_{3}:=\frac{3}{(1-\alpha_0)^4}.
\end{equation}
In particular,
\begin{equation}\label{eq:dm3_upper_stepC}
\frac1{d_m^3}\le \frac{\alpha^3}{(1-\alpha_0)^3}\,\frac1{|m|^3}.
\end{equation}
\end{lemma}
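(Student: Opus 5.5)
The plan is to reduce everything to the explicit form of $d_m$ for $r=1$ and then apply the mean value theorem to $x\mapsto x^{-3}$. Write $m=\pm k$ with $k\ge1$. As in the proof of Lemma~\ref{lem:linear_controls_stepB}(i), $d_k=1+kN=(k+\alpha)/\alpha$ and $d_{-k}=kN-1=(k-\alpha)/\alpha$. Hence
\[
\frac1{d_m^3}=\frac{\alpha^3}{(k+s\alpha)^3},\qquad s\in\{+1,-1\},
\]
with $s=+1$ for $m=k$ and $s=-1$ for $m=-k$. The remainder is therefore exactly
\[
r^{(d,3)}_m(\alpha)=\alpha^3\bigl(f(k+s\alpha)-f(k)\bigr),\qquad f(x):=x^{-3}.
\]

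For the remainder bound, I apply the mean value theorem on the segment between $k$ and $k+s\alpha$. Every point $x$ of that segment satisfies $x\ge k-\alpha\ge k-\alpha_0 k=(1-\alpha_0)k$, using $\alpha\le\alpha_0$ and $k\ge1$. Since $|f'(x)|=3x^{-4}$, this gives
\[
|f(k+s\alpha)-f(k)|\le 3\alpha\,\bigl((1-\alpha_0)k\bigr)^{-4}.
\]
Multiplying by $\alpha^3$ yields $|r^{(d,3)}_m|\le \frac{3}{(1-\alpha_0)^4}\frac{\alpha^4}{|m|^4}$, which is \eqref{eq:den3_linear_stepC} with $D_3$ as in \eqref{eq:Cd3_def_stepC}. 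For $s=+1$ one even has the sharper bound $3\alpha^4/k^4$, but the uniform constant is enough.

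For the crude upper bound \eqref{eq:dm3_upper_stepC}, the same inequality $k+s\alpha\ge(1-\alpha_0)k$ gives directly $\frac1{d_m^3}=\frac{\alpha^3}{(k+s\alpha)^3}\le\frac{\alpha^3}{(1-\alpha_0)^3k^3}$.

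The only point needing care is the sign case $m\le-1$, where the denominator $k-\alpha$ is smaller than $k$; this is why the factor $(1-\alpha_0)^{-4}$ appears. Otherwise the argument is elementary and I expect no real obstacle.
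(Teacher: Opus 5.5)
Your proposal is correct and follows the paper's proof essentially step for step. You use the same explicit formula $1/d_{\pm k}^3=\alpha^3/(k\pm\alpha)^3$, apply the mean value theorem to $x\mapsto x^{-3}$ with the lower bound $k-\alpha\ge(1-\alpha_0)k$ to get $D_3=3/(1-\alpha_0)^4$, and obtain the crude bound by the same direct inversion.
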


\begin{proof}
Write $m=\pm k$ with $k\ge 1$. Then
\[
d_k=kN+1=N(k+\alpha),\qquad d_{-k}=kN-1=N(k-\alpha),
\]
hence
\[
\frac1{d_{\pm k}^3}=\frac{\alpha^3}{(k\pm\alpha)^3}.
\]
Let $g(x):=x^{-3}$ on $(0,\infty)$. By the mean value theorem, for either choice of sign,
\[
\Bigl|\frac1{(k\pm\alpha)^3}-\frac1{k^3}\Bigr|
=
|g(k\pm\alpha)-g(k)|
\le
\sup_{x\in[k-\alpha,k+\alpha]}|g'(x)|\,\alpha
\le
3\,(k-\alpha_0)^{-4}\,\alpha.
\]
Since $k-\alpha_0\ge (1-\alpha_0)k$, we obtain
\[
\Bigl|\frac1{d_{\pm k}^3}-\frac{\alpha^3}{k^3}\Bigr|
\le
\frac{3}{(1-\alpha_0)^4}\,\frac{\alpha^4}{k^4},
\]
which is \eqref{eq:den3_linear_stepC}--\eqref{eq:Cd3_def_stepC}. Finally,
$(k\pm\alpha)\ge (1-\alpha_0)k$ gives \eqref{eq:dm3_upper_stepC}.
\end{proof}

\begin{lemma}[Elementary bounds for $d_m$]\label{lem:dm_elementary_stepC}
Assume $0<\alpha\le \alpha_0$ and $m\in \mathbb Z_*$. Then
\begin{equation}\label{eq:dm_elementary_stepC}
\frac1{d_m}\le \frac{\alpha}{(1-\alpha_0)|m|},
\qquad
\frac1{d_m^2}\le \frac{\alpha^2}{(1-\alpha_0)^2|m|^2}.
\end{equation}
\end{lemma}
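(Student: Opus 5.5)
The plan is to reduce both inequalities to the two–sided bound $(1-\alpha_0)|m|\le |m|-\alpha\le \alpha d_m$, valid for $m\in\mathbb Z_*$ and $0<\alpha\le\alpha_0$. Recall from \eqref{eq:dm_def_r1} that $d_m=|1+mN|$. With $N=1/\alpha$, this suggests treating the two signs of $m$ separately, exactly as in the proof of Lemma~\ref{lem:den3_linear_stepC}.

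First I will write $m=\pm k$ with $k\ge1$.
\begin{itemize}
\item For $m=k$: $d_k=kN+1=N(k+\alpha)$, so $1/d_k=\alpha/(k+\alpha)\le \alpha/k$.
\item For $m=-k$: $d_{-k}=|1-kN|=kN-1=N(k-\alpha)$. This is positive since $kN\ge N\ge 20>1$, and $1/d_{-k}=\alpha/(k-\alpha)$.
\end{itemize}

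Since $0<\alpha\le\alpha_0<1$ and $k\ge1$, we have $k-\alpha\ge k-\alpha_0 k=(1-\alpha_0)k$. Hence $\alpha/(k-\alpha)\le \alpha/((1-\alpha_0)k)$. In the $m=k$ case, $\alpha/k\le \alpha/((1-\alpha_0)k)$ trivially, because $1-\alpha_0\le1$. This gives the first inequality in \eqref{eq:dm_elementary_stepC} for all $m\in\mathbb Z_*$ with $|m|=k$.

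The second inequality follows by squaring the first, since both sides are positive. There is no genuine obstacle here. The only point needing care is the sign bookkeeping for $m<0$, namely that $1-kN<0$, so the absolute value gives $kN-1$. One should also note that only $k\ge1$ is used to get $\alpha\le\alpha_0 k$.

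The estimate also coincides with the last inequality in \eqref{eq:dm_linear_stepB} of Lemma~\ref{lem:linear_controls_stepB}, so one could simply cite that. I would nonetheless present the two-line direct argument above for self-containment.
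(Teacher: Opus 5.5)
Your proof is correct and essentially matches the paper's: it writes $d_m=N|m+\alpha|$ and uses $|m+\alpha|\ge |m|-\alpha\ge(1-\alpha_0)|m|$ in one line, which is the chain you state at the outset, and your sign split $m=\pm k$ just unpacks that reverse triangle inequality. The second bound then follows by squaring (the paper says ``by inversion''), exactly as you do.
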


\begin{proof}
Since $d_m=|1+mN|=N|m+\alpha|$ and $|m+\alpha|\ge |m|-\alpha\ge (1-\alpha_0)|m|$ for $\alpha\le\alpha_0$,
we have $d_m\ge N(1-\alpha_0)|m|$. The bounds follow by inversion.
\end{proof}

\begin{lemma}[Elementary bounds for harmonic sums]\label{lem:harmonic_bounds}
For all $M\ge 1$,
\[
H_M\le 1+\log M,
\qquad
0\le H_M^{(s)}\le \zeta(s)\quad(s>1).
\]
\end{lemma}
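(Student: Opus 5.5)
For the first inequality $H_M\le 1+\log M$, the plan is the standard integral comparison. Since $x\mapsto 1/x$ is decreasing on $[1,\infty)$, each term with $k\ge 2$ satisfies $\frac1k\le\int_{k-1}^{k}\frac{dx}{x}$. Summing over $k=2,\dots,M$ and adding the $k=1$ term gives
\[
H_M=1+\sum_{k=2}^{M}\frac1k\le 1+\int_1^M\frac{dx}{x}=1+\log M .
\]
The case $M=1$ reads $H_1=1$ and holds trivially with equality. With the convention $H_0=0$ stated in the notation list, the same bound is also what is used whenever the paper writes $H_{m-1}\le 1+\log m$, since $H_{m-1}\le H_m$.

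For the second statement, fix $s>1$. Every term $k^{-s}$ is positive, so $H_M^{(s)}\ge 0$. The partial sums $H_M^{(s)}$ increase in $M$, and their supremum is the convergent series $\sum_{k\ge1}k^{-s}=\zeta(s)$. Hence $H_M^{(s)}\le\zeta(s)$ for every $M\ge1$.

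Neither step presents a real obstacle. The only point to check is that the convention $H_0=H_0^{(s)}=0$ is consistent with both inequalities at $m=1$, where $\log 1=0$; it is, since $0\le 1$ and $0\le\zeta(s)$. This matters because the paper applies these bounds to $H_{m-1}$ and $H^{(3)}_{m-1}$ for $m\ge1$ in Lemma~\ref{lem:Pm4_log_control} and Lemma~\ref{lem:Cv_bounds}.
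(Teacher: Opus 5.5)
Your proof is correct and follows the paper's route: the integral comparison $\sum_{k=2}^{M}1/k\le\int_1^M dx/x=\log M$ for the first bound, and bounding the partial sums of the positive series $\sum_k k^{-s}$ by its sum $\zeta(s)$ for the second. A small correction to your closing aside: Lemmas~\ref{lem:Pm4_log_control} and~\ref{lem:Cv_bounds} use this lemma only for $H_{m-1}^{(3)}\le\zeta(3)$; for $H_{m-1}$ they use the sharper bound $H_{m-1}\le\log m+\gamma$, which does not follow from $H_M\le 1+\log M$.
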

\begin{proof}
The bound $H_M\le 1+\int_1^M \frac{dt}{t}=1+\log M$ is standard.
For $s>1$, monotone comparison with the tail integral gives
$H_M^{(s)}\le \sum_{k=1}^\infty k^{-s}=\zeta(s)$.
\end{proof}

The estimates established above provide the coefficient-level input required for the explicit constant closure in Chapters~\ref{sec:moment}--\ref{sec:monotonicity}. We now assemble these bounds into global numerical constants and verify the threshold conditions used in the main text.

\section{Constant Closure}\label{app:constants}

This appendix completes the explicit constant closure required in Chapters~\ref{sec:moment}--\ref{sec:monotonicity}. We first collect the coefficient constants inherited from Appendix~A, then bound the remainder terms in the expansions of $M_0$, $M_1$, and $M_2$, and finally assemble the global $N^{-6}$ remainder constant $C_6$. In particular, we verify that the threshold $N=20$ satisfies all window conditions used in the Schur analysis and that the resulting constant meets the monotonicity criterion.

% ------------------------------------------------------------
\subsection{Constants in the bounds for $v_m$}\label{app:constants_vm}

We begin by collecting the explicit constants governing the coefficient bounds for $v_m$. These constants enter repeatedly in the remainder estimates for the Schur moments and will be used throughout the subsequent subsections without further redefinition.

\begin{proposition}[Explicit $v_m$--constants at $\alpha_0=1/20$]\label{prop:vm_constants_app_new}
Fix $\alpha_0:=1/20$.
Define
\begin{align}
L_5
&:=\frac{4}{19}+\frac{2}{5}\zeta(5)+\frac{4\,\zeta(7)\,\alpha_0^2}{7(1-\alpha_0^2)},
\label{eq:CL5_def_app_new}
\\
C_P
&:=2\gamma+1+\frac{\alpha_0}{2}
+\Bigl(\frac{2}{3}\zeta(3)+\frac13\Bigr)\alpha_0^{2}
+\frac{\alpha_0^{3}}{4},
\label{eq:CP_def_app_new}
\\
V_\infty
&:=\exp\!\bigl(C_P\alpha_0+L_5\alpha_0^5\bigr),
\label{eq:Cv_infty_def_app_new}
\\
L_6
&:=V_\infty\Bigl(\frac{(2+C_P)^5}{5!}+L_5\Bigr),
\label{eq:CL6_def_app_new}
\\
V_1
&:=V_\infty\Bigl(2+C_P+L_5\alpha_0^4\Bigr),
\label{eq:Cv1_def_app_new}
\\
V_4
&:=\Bigl(\frac{19}{3}+\frac{2}{3}\zeta(3)\Bigr)
+\alpha_0\bigl(7+2\zeta(3)\bigr)
+11\,L_6\alpha_0^2.
\label{eq:Cv4_def_app_new}
\end{align}
Then numerically
\begin{equation}\label{eq:vm_constants_numeric_app_new}
\begin{aligned}
    L_5\approx 0.62674153,\quad
C_P\approx 2.18229934,\quad
V_\infty\approx 1.11529078,\\
L_6\approx 12.59175302,\quad
V_1\approx 4.66448428,\quad
V_4\approx 7.95118350.
\end{aligned}
\end{equation}
\end{proposition}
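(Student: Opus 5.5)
The statement is a purely numerical claim: six explicit closed-form constants, each built from $\alpha_0=1/20$, Euler's constant $\gamma$, and the odd zeta values $\zeta(3),\zeta(5),\zeta(7)$. I will verify the quoted decimal values by evaluating the definitions in dependency order, with rigorous rounding. There is nothing structural to prove beyond the evaluation itself. The order is forced by the definitions: first $L_5$ and $C_P$, which are independent; then $V_\infty$ from both; then $L_6$ and $V_1$, which use $V_\infty$; and finally $V_4$, which uses $L_6$.

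For the inputs, I take certified enclosures of the transcendental constants to at least ten digits:
\[
\gamma\in[0.5772156649,\,0.5772156650],\qquad \zeta(3)\in[1.2020569031,\,1.2020569032],
\]
\[
\zeta(5)\in[1.0369277551,\,1.0369277552],\qquad \zeta(7)\in[1.0083492773,\,1.0083492774].
\]
Then $L_5=\tfrac{4}{19}+\tfrac25\zeta(5)+\tfrac{4\zeta(7)\alpha_0^2}{7(1-\alpha_0^2)}$. Its three pieces are $0.2105263\ldots$, $0.4147711\ldots$, and about $0.0014441$, which sum to $\approx0.6267415$, as claimed.

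Next, $C_P=2\gamma+1+\alpha_0/2+(\tfrac23\zeta(3)+\tfrac13)\alpha_0^2+\alpha_0^3/4$. The pieces are $2.1544313\ldots+0.025+0.0028367\ldots+0.0000313$, giving $\approx2.1822993$.

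The dependent constants follow. $V_\infty=\exp(C_P/20+L_5/20^5)$ has exponent $\approx0.1091150+2\times10^{-7}$, so $V_\infty\approx1.1152908$. For $V_1$, the bracket is $V_\infty(4.1822993+L_5/20^4)$, which gives $\approx4.6644843$. For $L_6$, I compute $(4.1822993)^5/120$ and add $L_5$, then multiply by $V_\infty$. This gives $\approx 12.5917$.

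The one point to flag is that \eqref{eq:CL6_def_app_new} writes the prefactor as $V_\infty=e^{C_P\alpha_0+L_5\alpha_0^5}$, which matches \eqref{eq:CL6_def}, so the two definitions are consistent. Finally, $V_4=(\tfrac{19}{3}+\tfrac23\zeta(3))+\tfrac{1}{20}(7+2\zeta(3))+\tfrac{11}{400}L_6$. The pieces are $7.1347046+0.4702057+0.3462732$, so $V_4\approx7.9511835$.

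The main obstacle is only bookkeeping. The step most prone to error is $L_6$: there the fifth power amplifies relative errors by a factor of about five, so $C_P$ must be carried to about nine digits to certify eight in $L_6$. I would do the whole chain in interval arithmetic, for example with outward-rounded enclosures using mpmath's \texttt{iv} context. At each step I would check that the resulting interval rounds to the stated eight-digit value, so that every constant is certified rather than merely approximated.
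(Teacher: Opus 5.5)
Your proposal is correct and follows the paper's route: the proposition is just a direct evaluation of the defining formulas, with no further argument, and your intermediate values all match the stated decimals. One bookkeeping caution, since you aim to certify rather than approximate: $L_6=12.5917530223\ldots$ lies only about $3\times10^{-9}$ below a rounding boundary while $\partial L_6/\partial C_P\approx 15$, so the ten-digit enclosure of $\gamma$ you list (and carrying $C_P$ to ``about nine digits'') is slightly too coarse to pin the eighth decimal of $L_6$; run the \texttt{iv} chain at a higher working precision, which your mpmath plan already allows.
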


\begin{remark}[Two auxiliary constants]\label{rem:aux_constants_vm_app_new}
We also record
\begin{equation}\label{eq:Cd3_def_app_new}
C_{d,3}:=\frac{3}{(1-\alpha_0)^4}\le 3.683213,
\end{equation}
and the $\ell^2$--type constant
\begin{equation}\label{eq:Cv2_def_app_new}
V_2:=\sqrt2\,V_\infty\,\zeta(2-4\alpha_0)^{1/2}\le 2.164.
\end{equation}
\end{remark}

% ------------------------------------------------------------
\subsection{Remainder constant for the expansion of $M_0$}\label{app:constants_M0}

We next close the remainder term in the expansion of $M_0=\|b\|_{\ell^2(\mathbb Z_*)}^2$. The purpose of this subsection is to convert the decomposition established in Chapter~\ref{sec:moment} into a single explicit constant $E_0$.

\begin{proposition}[Explicit remainder bound for $M_0$]\label{prop:C6_M0_app_new}
Assume $N\ge 20$ and $\alpha=1/N$.
Let $M_0$ be as in Chapter~\ref{sec:moment}. Then
\[
M_0
=
2\zeta(3)\alpha^3+6\zeta(4)\alpha^4+16\zeta(5)\alpha^5+R_n^{(A)},
\qquad
|R_N^{(A)}|\le \frac{E_{0}}{N^6},
\]
with an explicit constant $E_{0}$ satisfying
\begin{equation}\label{eq:C6A_numeric_app_new}
E_{0}\approx 507.61355685,
\qquad\text{hence one may record}\qquad
E_{0}\le 508.
\end{equation}
\end{proposition}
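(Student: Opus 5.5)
Proposition~\ref{prop:M0_expand_stepA} already gives the decomposition $E_0=E_{0,den}+E_{0,poly}+E_{0,cr}+E_{0,sq}$, so the plan is to evaluate these four pieces numerically at $\alpha_0=1/20$, using the constants of Proposition~\ref{prop:vm_constants_app_new}, and add them. The expansion identity itself is exactly Proposition~\ref{prop:M0_expand_stepA}. No further analysis is needed beyond checking that each bracketed expression is a finite, rigorously computable quantity.

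\textbf{Step 1 (denominator piece).} Evaluate
\[
E_{0,den}=\frac{2}{1-\alpha_0^2}V_\infty^2\alpha_0\zeta(7-4\alpha_0)=\frac{2}{1-\alpha_0^2}V_\infty^2\alpha_0\zeta(6.8),
\]
using $V_\infty\approx1.11529$. This is of size about $0.13$ and is negligible.

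\textbf{Step 2 (polynomial piece).} Compute $M(3,3)=\max\{2,\lceil e^{0}\rceil\}=2$. Then evaluate the finite sum over $m=1,2$ of $(1+\log m)^3/m^3$ and the closed-form tail from \eqref{eq:logsum_integral_eval} with $p=3$, $q=3$, and multiply by $60$.

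\textbf{Step 3 (cross and square pieces).} These dominate. With $p_\ast=14/5$:
\begin{itemize}
\item For $q=4$, we have $M(p_\ast,4)=\lceil e^{4/2.8-1}\rceil=\lceil e^{0.4286}\rceil=2$. The finite part is short, and the tail is $\frac{4!}{1.8^5}2^{-1.8}\sum_{k\le4}\frac{(1.8(1+\log 2))^k}{k!}$. Multiply by $\frac{4}{1-\alpha_0^2}V_\infty V_4$, where $V_4\approx7.951$.
\item For $q=8$, we have $M(p_\ast,8)=\lceil e^{8/2.8-1}\rceil=\lceil e^{1.857}\rceil=7$. Sum $m=1,\dots,7$ explicitly and add the tail with $8!/1.8^9$. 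The resulting bracket is large, but the prefactor $\alpha_0^3=1/8000$ tames it.
\end{itemize}

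I expect $E_{0,cr}$ to account for the bulk of the value $\approx507.6$. Its tail term carries the factor $4!/(p_\ast-1)^5\approx1.26$ times a sizable exponential polynomial, which gives a bracket of order tens; multiplied by $4V_\infty V_4\approx35.5$, this lands in the hundreds.

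\textbf{Main obstacle.} The main obstacle is rigor of the arithmetic rather than ideas. Each quantity involves transcendental constants ($\zeta(6.8)$, $\log m$, the exponentials inside $V_\infty$ and $L_6$), so I would carry out all evaluations in interval arithmetic, rounding outward. I would also check that every constant from Proposition~\ref{prop:vm_constants_app_new} that feeds in (in particular $V_4$ through $L_6$) is itself enclosed from above.

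Summing the four upper enclosures should give $E_0\approx507.61$, hence the recorded bound $E_0\le508$. The one modeling point to double-check is that the $q=8$ tail formula is applied with $M\ge M(p_\ast,8)$, so that the monotonicity hypothesis of Lemma~\ref{lem:logsum_bound} holds.
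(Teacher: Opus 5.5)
Your proposal follows the paper's proof: both take the decomposition $E_0=E_{0,den}+E_{0,poly}+E_{0,cr}+E_{0,sq}$ from Proposition~\ref{prop:M0_expand_stepA} and evaluate each piece at $\alpha_0=1/20$ with the constants of Proposition~\ref{prop:vm_constants_app_new}; your outward-rounded interval arithmetic and the check $M\ge M(p_\ast,8)=7$ make this more careful, but do not change the method. One correction to your size estimates: the four pieces come out at roughly $0.13$, $189.7$, $298.1$ and $19.7$, and the bracket in $E_{0,cr}$ is about $8.4$ rather than of order tens, so $E_{0,poly}$ (not $E_{0,sq}$) is the second-largest contribution, and the total is $\approx 507.61$ as stated.
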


\begin{proof}
The constant $E_{0}$ is obtained by the same decomposition used in the main text:
\[
E_{0}
=
E_{0,den}
+
E_{0,poly}
+
E_{0,cr}
+
E_{0,sq}.
\]
Each term is an explicit algebraic expression in $\alpha_0$ and the constants
$V_\infty$, $V_4$ from Proposition~\ref{prop:vm_constants_app_new}, together with
a finite list of zeta values and auxiliary sums $S(p,q)$ bounded by
Lemma~\ref{lem:Spq_bound}.
Evaluating these expressions at $\alpha_0=1/20$ yields \eqref{eq:C6A_numeric_app_new}.
\end{proof}

This completes the closure of the $M_0$ contribution to the global remainder.

% ------------------------------------------------------------
\subsection{Remainder constant for the expansion of $M_1$}\label{app:constants_M1}

We now treat the remainder term in the expansion of $M_1$, keeping the diagonal and off-diagonal contributions separate until the final numerical aggregation.

\begin{proposition}[Explicit remainder bound for $M_1$]\label{prop:C6_M1_app_new}
Assume $N\ge 20$ and $\alpha=1/N$.
Let $M_1$ be as in Chapter~\ref{sec:moment}. Then
\[
M_1
=
4\zeta(4)\alpha^4+8\zeta(5)\alpha^5+R_N^{(B)},
\qquad
|R_N^{(B)}|\le \frac{E_{1}}{N^6},
\]
with an explicit constant $E_{1}$ satisfying
\begin{equation}\label{eq:C6B_numeric_app_new}
E_{1}\le 475.
\end{equation}
\end{proposition}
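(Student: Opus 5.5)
The plan is to follow the same template as Proposition~\ref{prop:C6_M0_app_new}. We write
\[
E_1=E_{1,d}+E_{1,o},
\]
as in \eqref{eq:CB6_def}, and evaluate each piece numerically at $\alpha_0=1/20$, using the constants of Proposition~\ref{prop:vm_constants_app_new}. The expansion itself (with leading coefficient $2\zeta(4)\alpha^4$, as established in Proposition~\ref{prop:M1_expand_stepB}) is already proved in the main text. So only the numerical closure of the constant remains.

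\emph{Diagonal part.} Here
\[
E_{1,d}=E_{1,d,den2}+E_{1,d,z6}+E_{1,d,v}
\]
from \eqref{eq:CBdiag6_den2}--\eqref{eq:CBdiag6_v}. The first two terms are explicit zeta values times $V_\infty^2$. Since $V_\infty^2\approx 1.244$, $\zeta(5.8)\approx 1.02$ and $\zeta(7.8)$ is close to $1$, they give roughly $7.7$ and a negligible amount, respectively. For $E_{1,d,v}$ we need $S(3.8,4)$ and $S(3.8,8)$ from Lemma~\ref{lem:Spq_bound}. The cutoffs are $M(3.8,4)=2$ and $M(3.8,8)=\lceil e^{8/3.8-1}\rceil=4$, so each $S$ is a short finite sum plus the closed-form tail $\sum_j\binom qj j!/(p-1)^{j+1}$. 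With $V_\infty V_4\approx 8.87$ and the damping factors $\alpha_0$ and $\alpha_0^4$, this stays moderate. The dominant contribution should be the $q=8$ tail, where $8!/2.8^9$ is small but the $\binom 8j$ sum is not.

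\emph{Off-diagonal part.} Here
\[
E_{1,o}=\frac{1}{(1-\alpha_0)^2}\bigl(3V_1\mathcal S_1+2\mathcal T_0+3V_1^2\mathcal S_2+2V_1\mathcal T_1\bigr),
\]
with $\mathcal S_q,\mathcal T_q$ from \eqref{eq:Sq_def}--\eqref{eq:Tq_def}. These involve the constants
\[
A=2^{3.8}\zeta(1.9)\approx 13.9\cdot 1.75,\qquad B=4\zeta(2.8),\qquad D=2^{4.8},\qquad E=3^{0.1}\cdot 10,
\]
together with $S(\cdot,q)$ at exponents $2.8,\,1.9,\,3.8,\,3.7$ (and these shifted by $1$). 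I will evaluate each $S(p,q)$ by its finite sum plus tail, then assemble the totals.

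\emph{Main obstacle.} The risk lies here. The term $S(1.9,q)$ has $p-1=0.9$, so its tail factor $j!/(0.9)^{j+1}$, multiplied by $2B\approx 10.4$ and by $3V_1^2\approx 65$, may already push $E_{1,o}$ into the hundreds. If the crude bounds exceed $475$, I would first replace $\mathcal T_0$ by the sharper Young-inequality bound $\mathcal T_0^\sharp$ of Lemma~\ref{lem:T0_bound_stepC}. I would then tighten $S(p,q)$ by raising the cutoff $M$, since a larger finite sum shrinks the tail, before changing any structural estimate. The final step is to sum the parts with outward rounding, in interval arithmetic, and confirm that $E_1\le 475$.
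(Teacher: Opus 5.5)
\textbf{There is a genuine gap.} Your plan is the paper's own argument: its proof consists only of ``collecting'' $E_{1,d}+E_{1,o}$. But the decisive step, evaluating the displayed formulas and reading off $E_1\le 475$, fails by more than two orders of magnitude.

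\emph{Diagonal part.} This part is harmless. $E_{1,d,den2}\approx 0.03$ and $E_{1,d,z6}=6V_\infty^2\zeta(5.8)\approx 7.6$; you have these two swapped. $E_{1,d,v}\approx 6.4$, dominated by the $q=4$ term; the factor $\alpha_0^4$ shrinks the $q=8$ term to about $0.06$. So $E_{1,d}\approx 14$.

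\emph{Off-diagonal part.} This part is not harmless. In Lemma~\ref{lem:kernel_bounds_stepB} one has $2A\approx 49$, $2D\approx 56$ and $2DE=2^{5.8}\cdot 3^{0.1}\cdot 10\approx 622$. Hence $\mathcal S_1\approx 1.25\times 10^3$, of which $2DE\,S(3.7,1)\approx 1.0\times 10^3$; the term $2B\,S(1.9,1)$ you worried about is only $\approx 38$. Likewise $\mathcal S_2\approx 1.55\times 10^3$, and \eqref{eq:Tq_def} gives $\mathcal T_0\approx 9.7\times 10^2$ and $\mathcal T_1\approx 1.05\times 10^3$. Consequently $E_{1,o,v}\approx 1.9\times 10^4$, $E_{1,o,d}\approx 2.2\times 10^3$ and $E_{1,o,2+}\approx 1.2\times 10^5$, so \eqref{eq:CBoff6_def} yields $E_1\approx 1.4\times 10^5$.

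\emph{Why your fallbacks do not help.} Using $\mathcal T_0^\sharp\approx 7.0$ from Lemma~\ref{lem:T0_bound_stepC} repairs only $E_{1,o,d}$, bringing it down to $\approx 16$. Raising the cutoff in $S(p,q)$ cannot repair $\mathcal S_1,\mathcal S_2,\mathcal T_1$. Their size comes from the prefactors $A,D,DE$ of the near/far splitting, not from the integral tails. Even with the exact value $\sum_{m\ge 1}(1+\log m)m^{-3.7}\approx 1.20$, one still has $\mathcal S_1>745$ and $E_{1,o,v}>1.1\times 10^4$.

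\emph{What is missing.} The structural estimate itself must change, and two ingredients are needed.
(i) Bound the log-weighted kernels by Cauchy--Schwarz and Young as in Lemma~\ref{lem:T0_bound_stepC}. For example, $\mathcal S_1\le \|x\|_{\ell^2}\cdot 2\zeta(\gamma)\cdot\sqrt{2\zeta(2\beta)}\approx 11$ with $x_m=(1+\log|m|)|m|^{-\gamma}$. The slot $v_{|m-\ell|}$ has its logarithm on $|m-\ell|$ rather than on $|m|$, so it needs its own bound of this type; the kernel lemma as stated does not cover it.
(ii) Keep the extra factor $\alpha\le\alpha_0$ carried by every term with at least two remainder factors. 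Those terms are $O(\alpha^7)$, but \eqref{eq:CBoff6_ge2} discards this factor. Without it, $3V_1^2\mathcal S_2/(1-\alpha_0)^2$ still exceeds $10^3$ even after (i).

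With (i) and (ii) my rough estimate puts the total at a few hundred, so $475$ looks attainable. That, however, is a new estimate, not a numerical closure of the existing formulas. As written, neither your plan nor the paper's one-line proof establishes $E_1\le 475$.

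\emph{The coefficient in the statement.} As printed, with $4\zeta(4)\alpha^4$, the statement cannot hold uniformly for $N\ge 20$, since the surplus $2\zeta(4)\alpha^4$ is not $O(N^{-6})$. You silently switched to the correct coefficient $2\zeta(4)$ from Proposition~\ref{prop:M1_expand_stepB}, which is also what $c_4=8\zeta(4)$ requires. You should say explicitly that you are proving the corrected statement.
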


\begin{proof}
Write $M_1=M_1^{\mathrm{diag}}+M_1^{\mathrm{off}}$ as in the main text. The diagonal remainder
is controlled by the Taylor expansion of the denominators and the coefficient bounds
from Proposition~\ref{prop:vm_constants_app_new}. The off-diagonal remainder is controlled by:
(i) the linear remainder bound for $v_m-\alpha/m$ with constant $V_1$,
and (ii) the log-sums $S(p,q)$ bounded by Lemma~\ref{lem:Spq_bound}.
Collecting the diagonal and off-diagonal contributions yields \eqref{eq:C6B_numeric_app_new}.
\end{proof}

Thus the $M_1$ contribution is reduced to the explicit constant $E_1$.

% ------------------------------------------------------------
\subsection{Remainder constant for the expansion of $M_2$}\label{app:constants_M2}

The last moment contribution is $M_2$. Here the structure is simpler, and the goal is to record a uniform constant $E_2$ valid throughout the range $N\ge 20$.

\begin{proposition}[Explicit remainder bound for $M_2$]\label{prop:C6_M2_app_new}
Assume $N\ge 20$ and $\alpha=1/N$. Then
\[
M_2
=
\frac{2\zeta(5)}{N^5}+R_N^{(C)},
\qquad
|R_N^{(C)}|\le \frac{E_{2}}{N^6},
\]
with an explicit constant $E_{2}$ satisfying
\begin{equation}\label{eq:C6C_numeric_app_new}
E_{2}\le 67,
\qquad
N_C=20.
\end{equation}
\end{proposition}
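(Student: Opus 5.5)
The statement is Proposition~\ref{prop:C6_M2_app_new}: the expansion $M_2=2\zeta(5)N^{-5}+R_N^{(C)}$ with $|R_N^{(C)}|\le E_2N^{-6}$ and $E_2\le 67$. Since the analytic decomposition is already in place, the plan is numerical closure. Proposition~\ref{prop:M2_expand_stepC} gives the expansion with $E_2=E_{2,d}+E_{2,o}$. So it suffices to evaluate the explicit formulas \eqref{eq:C6Cdiag_def_stepC} and \eqref{eq:C6Coff_def_stepC} at $\alpha_0=1/20$, using the constants of Proposition~\ref{prop:vm_constants_app_new}, and check that the total is at most $67$. The condition $N_C=20$ simply records that every ingredient was proved under $N\ge20$, i.e.\ $\alpha\le\alpha_0$.

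\emph{Diagonal part.} I will evaluate $E_{2,d}=2D_3\zeta(6)+\frac{4V_1}{(1-\alpha_0)^3}S(5-2\alpha_0,1)+\frac{2\alpha_0V_1^2}{(1-\alpha_0)^3}S(5-4\alpha_0,2)$. The inputs are $D_3=C_{d,3}\le 3.6833$ and $V_1\approx4.6645$. For the terms $S(p,q)$ I use Lemma~\ref{lem:Spq_bound}. Here $M(p,q)=2$ in both cases, since $\exp(q/p-1)<1$. Hence each $S(p,q)$ is a two-term finite sum plus $\sum_{j\le q}\binom qj j!/(p-1)^{j+1}$, all elementary. Rough sizes:
\begin{itemize}
\item $2D_3\zeta(6)\approx7.5$;
\item $S(4.9,1)\approx 1+0.057+0.26+0.07\approx1.39$, so the linear term is about $4\cdot4.66\cdot1.17\cdot1.39\approx30$;
\item the quadratic term is small, roughly $2\cdot0.05\cdot21.8\cdot1.17\cdot S(4.8,2)\approx 2.5\cdot1.5\approx4$.
\end{itemize}
This gives $E_{2,d}\approx 42$.

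\emph{Off-diagonal part.} I will compute, with $\beta=0.9$, $\gamma=1.9$, $\beta_2=1.8$:
\begin{itemize}
\item $\mathcal T_0^\sharp=4\zeta(2.9)\sqrt{\zeta(3.8)\zeta(1.8)}\approx4\cdot1.19\cdot\sqrt{1.10\cdot1.88}\approx6.9$;
\item $\mathcal U_0=4\zeta(1.8)+2^{3.8}+2^{3.8}/1.8\approx7.5+13.9+7.7\approx29$;
\item $\mathcal V_0=8\zeta(1.9)^2\zeta(1.8)\approx8\cdot3.05\cdot1.88\approx46$.
\end{itemize}
With $V_\infty\approx1.1153$ and $(1-\alpha_0)^{-3}\approx1.166$, the three pieces are approximately $2\cdot1.387\cdot1.166\cdot6.9\approx22$, then $2\cdot0.05\cdot1.55\cdot1.166\cdot29\cdot S(4.8,1)\approx5.2\cdot1.35\approx7$, and finally $0.05\cdot1.55\cdot1.166\cdot46\approx4$. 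This gives $E_{2,o}\approx33$.

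The main obstacle is that my rough total, $42+33\approx75$, may exceed the claimed $67$. If it does, I will first redo every quantity with careful interval arithmetic. In particular I will evaluate $S(p,q)$ exactly via the closed form \eqref{eq:logsum_integral_eval} rather than its crude consequence in Lemma~\ref{lem:Spq_bound}, and compute $\zeta(1.8)$ and $\zeta(1.9)$ precisely. If a gap remains, I will tighten the $M_2$ estimates. Two specific sharpenings are available:
\begin{itemize}
\item In $E_{2,d}$, replace the crude product $V_1\cdot(1+\log m)m^{2\alpha_0}$ by the exact first-order coefficient $a_{m,2}=(1+2mH_{m-1})/m^2$. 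The linear error then becomes $\alpha^6$ times an explicit convergent Euler sum, plus an $\alpha^7$ tail.
\item In $M_2^{(1)}$, use the exact value $v_m\le\alpha/m\cdot u_m$ for small $m$ rather than the uniform $V_\infty$ bound.
\end{itemize}
In the final writeup, each sharpened estimate will be recorded as a displayed bound, and the sum will be reported as a certified upper bound below $67$.
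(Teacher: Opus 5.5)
Your route is the same as the paper's: its proof of Proposition~\ref{prop:C6_M2_app_new} evaluates $E_{2,d}+E_{2,o}$ from \eqref{eq:C6Cdiag_def_stepC} and \eqref{eq:C6Coff_def_stepC} and reports $33.70+32.60=66.30$. Your worry is justified, and it catches an inaccuracy in the paper's one-line justification. If $S(p,q)$ is taken literally from Lemma~\ref{lem:Spq_bound} (tail integral from $1$), then $S(4.9,1)\approx1.379$, $S(4.8,2)\approx1.541$ and $S(4.8,1)\approx1.393$. This gives $E_{2,d}\approx7.49+30.01+3.91\approx41.41$ and $E_{2,o}\approx22.76+7.34+4.16\approx34.26$, i.e.\ about $75.7$. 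The paper's figures are what one gets by evaluating the one-dimensional sums $\sum_{m\ge1}(1+\log m)^q m^{-p}$ accurately (about $1.0710$, $1.1394$, $1.0769$), which gives $E_{2,d}\approx33.69$ and $E_{2,o}\approx32.59$. The proofs of Propositions~\ref{prop:M2_diag_expand_stepC} and~\ref{prop:M2_off_expand_stepC} only use these sums, so any certified upper bound for them may replace $S(p,q)$.

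Your first fallback already suffices. Lemma~\ref{lem:logsum_bound} with $M=M(p,q)=2$ and the closed-form tail \eqref{eq:logsum_integral_eval} gives the bounds $\approx1.0902$, $1.1765$, $1.0977$. Hence $E_{2,d}\approx34.20$, $E_{2,o}\approx32.70$ and $E_2\approx66.90\le 67$. The margin is only about $0.1$, so it is safer to sum the first ten terms explicitly before applying the tail bound. That gives $E_2\approx66.28$, reproducing the paper's value, and the proposed sharpenings of the $M_2$ estimates are not needed.

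One numerical slip: $\zeta(2.9)\approx1.2231$, not $1.19$ (it must exceed $\zeta(3)$). So $\mathcal T_0^\sharp\approx7.03$, and the first off-diagonal piece is about $22.76$ rather than $22$.
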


\begin{proof}
The bound follows from the diagonal/off-diagonal decomposition of $M_2$ and the convolution-type
estimates in the main text. Numerically, one may take
\[
E_{2}=E_{2,d}+E_{2,o}
\le 33.70+32.60=66.30\le 67,
\]
using $V_\infty$, $V_1$ from Proposition~\ref{prop:vm_constants_app_new},
$D_{3}$ from \eqref{eq:Cd3_def_app_new}, and bounding the required $S(p,q)$ by
Lemma~\ref{lem:Spq_bound}. All estimates require only $\alpha\le\alpha_0$.
\end{proof}

Hence all three moment remainders are now available in explicit form.

% ------------------------------------------------------------
\subsection{Schur inversion and the global $N^{-6}$ remainder constant}\label{app:constants_schur}

We now return to the scalar Schur equation and assemble the final remainder constant after passing from $\lambda_\ast$ to $\sigma_{1,1}=1/\lambda_\ast$. At this stage the separate bounds for $M_0$, $M_1$, and $M_2$ are converted into the single effective constant $C_6$ appearing in Chapter~\ref{sec:monotonicity}.

\begin{proposition}[Window constants and the threshold $N$]\label{prop:window_constants_app_new}
Assume $N\ge 20$ and $\alpha=1/N$.
Define
\[
B_0:=2\zeta(3)+6\zeta(4)\alpha_0+16\zeta(5)\alpha_0^2+E_{0}\alpha_0^3,
\qquad
K_0:=\frac{\sqrt{2\zeta(2)}}{1-\alpha_0}\,\sqrt{1+V_2^2\alpha_0^2},
\]
with $V_2$ from \eqref{eq:Cv2_def_app_new}.
If $E_{0}\le 508$, then
\begin{equation}\label{eq:B0K0_numeric_app_new}
B_0\le 2.834,\qquad K_0\le 1.921,
\end{equation}
Moreover,
\begin{equation}\label{eq:N6_numeric_app_new}
\frac{B_0}{20^3}\le \frac12,\qquad \frac{K_0}{20}\le \frac12.
\end{equation}
In particular, for every $N\ge 20$ one has $B_0/N^3\le \frac12$ and $K_0/N\le \frac12$.
\end{proposition}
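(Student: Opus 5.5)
This is a direct numerical evaluation; no new analytic input is needed. I would first bound $B_0$. I substitute $\alpha_0=1/20$ and the hypothesis $E_0\le 508$ into the definition:
\[
B_0\le 2\zeta(3)+\tfrac{6}{20}\zeta(4)+\tfrac{16}{400}\zeta(5)+\tfrac{508}{8000}.
\]
With $\zeta(3)\approx1.2020569$, $\zeta(4)=\pi^4/90\approx1.0823232$ and $\zeta(5)\approx1.0369278$, the four terms are about $2.4041$, $0.3247$, $0.0415$ and $0.0635$. Their sum is about $2.8338\le 2.834$. To make this rigorous I would round each zeta value upward to a few more digits, or evaluate in interval arithmetic. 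The margin to $2.834$ is only a few times $10^{-4}$, so I keep at least five significant digits in every term.

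Next I bound $K_0$. By Remark~\ref{rem:aux_constants_vm_app_new}, $V_2\le 2.164$, so $V_2^2\alpha_0^2\le 2.164^2/400\approx0.011707$ and $\sqrt{1+V_2^2\alpha_0^2}\le 1.005837$. Also $\sqrt{2\zeta(2)}=\pi/\sqrt3\approx1.813799$ and $1/(1-\alpha_0)=20/19$. Hence
\[
K_0\le 1.813799\cdot1.052632\cdot1.005837\approx1.92041\le1.921.
\]
The one point needing care is that $V_2\le 2.164$ depends on $V_\infty\approx1.11529$ from Proposition~\ref{prop:vm_constants_app_new} and on $\zeta(1.8)$. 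If that recorded bound is in doubt, I would recheck it: $\zeta(1.8)\approx1.8822$, so $\sqrt2\cdot1.11529\cdot\sqrt{1.8822}\approx2.1639$, which agrees.

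The threshold inequalities then follow at once: $B_0/8000\le 2.834/8000\ll\tfrac12$ and $K_0/20\le 1.921/20\approx0.096\le\tfrac12$. Finally, $N\mapsto B_0/N^3$ and $N\mapsto K_0/N$ are decreasing, and $B_0$, $K_0$ do not depend on $N$ since they are built from $\alpha_0$. So the bounds at $N=20$ propagate to every $N\ge20$, which is the last assertion. The only real obstacle is keeping the floating-point rounding directed (upward) so that the stated decimal bounds are certified. The tightest case is $B_0\le 2.834$, where I would use outward-rounded intervals for $\zeta(3)$, $\zeta(4)$ and $\zeta(5)$.
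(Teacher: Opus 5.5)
Your proposal is correct and is exactly the argument the paper intends; the paper writes no separate proof, and substituting $\alpha_0=1/20$, $E_0\le 508$ and $V_2\le 2.164$ gives $B_0\le 2.83379\le 2.834$ and $K_0\le 1.92041\le 1.921$, after which the threshold claims follow because $B_0$ and $K_0$ do not depend on $N$. Your cross-check $V_2\approx\sqrt2\cdot 1.11529\cdot\zeta(1.8)^{1/2}\approx 2.1639$ is also consistent with the recorded bound.
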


\begin{proposition}[Global $N^{-6}$ remainder constant]\label{prop:C6_global_app_new}
Assume
\[
E_{0}\le 508,\qquad E_{1}\le 475,\qquad E_{2}\le 67,
\]
and let $B_0,K_0$ be as in Proposition~\ref{prop:window_constants_app_new}.
Define
\begin{equation}\label{eq:C6_global_def_app_new}
C_6
:=
E_{0}+E_{1}+E_{2}+\Bigl(12B_0^2+2B_0K_0^3\Bigr).
\end{equation}
Then
\begin{equation}\label{eq:C6_global_numeric_app_new}
C_6\le 1187.
\end{equation}
\end{proposition}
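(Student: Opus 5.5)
The plan is to assemble $C_6$ directly from its definition \eqref{eq:C6_global_def_app_new}. The inputs are the three moment constants $E_0,E_1,E_2$, which are assumed bounded by $508$, $475$, $67$. The only remaining work is the Schur-inversion correction $12B_0^2+2B_0K_0^3$, which comes from Proposition~\ref{prop:delta_expand_stepD} and Corollary~\ref{cor:sigma11_5term_stepD}. The sum of the assumed bounds is $E_0+E_1+E_2\le 508+475+67=1050$. It therefore suffices to show that
\[
12B_0^2+2B_0K_0^3\le 137 .
\]

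For this I will use the window bounds of Proposition~\ref{prop:window_constants_app_new}, namely $B_0\le 2.834$ and $K_0\le 1.921$. These hold under the standing hypothesis $E_0\le 508$. Both terms are monotone increasing in $B_0,K_0\ge 0$, so we may substitute the upper bounds directly. The first term gives $12B_0^2\le 12\cdot(2.834)^2\approx 12\cdot 8.0316\approx 96.38$. For the second, $K_0^3\le (1.921)^3\approx 7.089$, so $2B_0K_0^3\le 2\cdot 2.834\cdot 7.089\approx 40.18$. Together the correction is at most about $136.56$, hence at most $137$. Adding, $C_6\le 1050+137=1187$.

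The main obstacle is that the margin is thin: the final bound is $1186.56$ against $1187$. I would therefore verify the two cubic and quadratic evaluations carefully, preferably in interval or rounded-up arithmetic, always rounding up at each step. A weak point upstream is the bound $K_0\le 1.921$ itself; if needed, I would recompute it from $\sqrt{2\zeta(2)}/(1-\alpha_0)\approx 1.9088$ times $\sqrt{1+V_2^2\alpha_0^2}$ with $V_2\le 2.164$, which gives about $1.9199$ and confirms the bound. Nothing else in the argument is delicate, since the structural inequalities (monotonicity, and the origin of the terms $8\beta_N^2+4\beta_N^2$ and $2\beta_N\kappa_N^3$) were already established in Section~\ref{sec:moment}.
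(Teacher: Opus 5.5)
Your proposal is correct and is the paper's proof: you bound $E_0+E_1+E_2$ by $1050$ and the Schur-inversion correction $12B_0^2+2B_0K_0^3$ by $136.56$ using $B_0\le 2.834$ and $K_0\le 1.921$, which gives $1186.56\le 1187$. One small slip in your optional recheck: $\sqrt{2\zeta(2)}/(1-\alpha_0)\approx 1.9093$, not $1.9088$, so $K_0\approx 1.9204$ rather than $1.9199$, but this still satisfies $K_0\le 1.921$.
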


\begin{proof}
Using \eqref{eq:B0K0_numeric_app_new},
\[
12B_0^2+2B_0K_0^3\le 12(2.834)^2+2(2.834)(1.921)^3\le 136.56.
\]
Hence
\[
C_6\le 508+475+67+136.56=1186.56\le 1187.
\]
\end{proof}

This completes the analytic part of the proof for the range $N\ge 20$. It remains to verify the strict monotonicity in the finite range $3\le N\le 20$, which is carried out in the next appendix within the same block framework.

% ============================================================
% ============================================================
% ============================================================
\section{Finite verification for $3\le N\le 20$}\label{app:finite_check_smallN}
% ============================================================

This appendix completes the proof of global monotonicity by verifying the finite range
\[
\sigma_1(\Omega_{N+1})>\sigma_1(\Omega_N),\qquad 3\le N\le 20.
\]
Unlike the analytic argument for $N\ge 20$, this part does not use the separation lemma. Instead, it relies on the exact block decomposition valid for every $N\ge 3$, together with certified interval enclosures for the top reciprocal eigenvalue in each block.

All certified bounds are obtained by arbitrary-precision interval arithmetic with outward rounding. More precisely, we work with truncation size $M=320$ and precision $\mathrm{dps}=140$, and we assemble the final enclosures for $\sigma_1(\Omega_N)$ from blockwise bounds on the corresponding reciprocal eigenvalues.

% ------------------------------------------------------------
\subsection{Block decomposition and assembly of $\sigma_1$}\label{app:finite_check_blocks}
% ------------------------------------------------------------

We begin by recalling the blockwise characterization of the first nonzero Steklov eigenvalue and the reciprocal formulation used in the certified computation.

Fix $N\ge 3$. As in the main text (see Proposition~\ref{prop:block_standard_evp}),
the pulled-back Steklov problem is block--equivariant and decomposes into $N$ blocks indexed by $r\in\{0,1,\dots,N-1\}$. We write $\sigma_1^{(r)}(N)$ for the first \emph{positive} Steklov
eigenvalue in the $r$th block (with the standard orthogonality-to-constants constraint in the block $r=0$).
Since the blocks are orthogonal and the Rayleigh quotient splits blockwise, we have
\begin{equation}\label{eq:app_sigma1_as_block_min}
\sigma_1(\Omega_N)=\min_{0\le r\le N-1}\sigma_1^{(r)}(N).
\end{equation}

Equivalently, in the reciprocal parametrisation $\lambda=1/\sigma$, define
\[
\lambda_{\max}^{(r)}(N):=\frac{1}{\sigma_1^{(r)}(N)}\quad(0\le r\le N-1),
\qquad
\Lambda_N:=\max_{0\le r\le N-1}\lambda_{\max}^{(r)}(N).
\]
Then
\begin{equation}\label{eq:app_sigma1_via_Lambda}
\sigma_1(\Omega_N)=\frac{1}{\Lambda_N}.
\end{equation}
Our finite verification produces certified interval enclosures for each $\lambda_{\max}^{(r)}(N)$ and then
assembles $\sigma_1(\Omega_N)$ via \eqref{eq:app_sigma1_via_Lambda}.

\subsection{The standard blocks $r\neq 0$}\label{app:finite_check_rneq0}
% ------------------------------------------------------------

We now treat the nonzero residue blocks, for which the block operator has the standard self-adjoint form and admits a direct finite-section approximation.

For each $r\in\{1,\dots,N-1\}$, Proposition~\ref{prop:block_standard_evp} yields the standard self-adjoint
eigenvalue problem
\begin{equation}\label{eq:app_standard_block_evp}
A_r y=\lambda y\qquad\text{in }\ell^2(\Z),
\end{equation}
where
\begin{equation}\label{eq:app_Ar_entries}
(A_r)_{m,m'}=\frac{v_{m-m'}}{\sqrt{|r+mN|\,|r+m'N|}}
\qquad(m,m'\in\Z).
\end{equation}
By Lemma~\ref{lem:Ar_selfadjoint_positive} the operator $A_r$ is compact, self-adjoint, and nonnegative,
so $\lambda_{\max}(A_r):=\sup\operatorname{Spec}(A_r)$ is an eigenvalue. We set
\begin{equation}\label{eq:app_lambda_sector_rneq0}
\lambda_{\max}^{(r)}(N):=\lambda_{\max}(A_r)=\frac{1}{\sigma_1^{(r)}(N)},
\qquad 1\le r\le N-1.
\end{equation}
Moreover, positivity of \(v_m\), immediate from Lemma~\ref{lem:vm_gamma_formula}, shows that \(A_r\) is entrywise nonnegative, so Perron--Frobenius applies.
to finite sections.

% ------------------------------------------------------------
\subsection{The $r=0$ block: rank-one reduction and the $x_0=0$ branch}\label{app:finite_check_r0}
% ------------------------------------------------------------

The zero block requires a separate treatment because the renormalization becomes singular at the constant mode. We therefore isolate the constant direction and pass to the reduced operator on the orthogonal branch.

In the $r=0$ block the diagonal weights are $d_m:=|mN|$, hence $d_0=0$ and the renormalization
$y_m=\sqrt{d_m}\,x_m$ is singular at $m=0$. The correct treatment is to separate the constant mode and
impose orthogonality to constants.

Let $\mathbb Z_*:=\mathbb Z\setminus\{0\}$. The reciprocal coefficient system in the $r=0$ block is
\begin{equation}\label{eq:app_r0_coeff_system}
\sum_{\ell\in\Z} v_{m-\ell}\,x_\ell=\lambda\,d_m\,x_m\qquad(m\in\Z),
\qquad d_m=|mN|.
\end{equation}
The $m=0$ equation becomes the constraint
\begin{equation}\label{eq:app_r0_constraint_x}
0=\sum_{\ell\in\Z} v_{-\ell}\,x_\ell
= v_0 x_0+\sum_{\ell\in \mathbb Z_*} v_{\ell}\,x_\ell,
\end{equation}
which is equivalent to $\int_{\partial\Omega_N}u\,ds=0$ (constant mode removed).

For $m\in \mathbb Z_*$ set $y_m:=\sqrt{d_m}\,x_m$ and define $K_0:\ell^2(\mathbb Z_*)\to\ell^2(\mathbb Z_*)$ and $b_0\in\ell^2(\mathbb Z_*)$ by
\begin{equation}\label{eq:app_K0_b0_def}
(K_0)_{m\ell}:=\frac{v_{m-\ell}}{\sqrt{d_m d_\ell}}\quad(m,\ell\in \mathbb Z_*),
\qquad
(b_0)_m:=\frac{v_m}{\sqrt{d_m}}\quad(m\in \mathbb Z_*).
\end{equation}
Then the $m\in \mathbb Z_*$ equations become
\begin{equation}\label{eq:app_r0_Q_equations}
K_0 y + x_0\,b_0=\lambda y,
\end{equation}
and \eqref{eq:app_r0_constraint_x} becomes
\begin{equation}\label{eq:app_r0_constraint_y}
v_0\,x_0+\langle b_0,y\rangle_{\ell^2(\mathbb Z_*)}=0.
\end{equation}
Eliminating $x_0$ yields the homogeneous rank-one perturbation
\begin{equation}\label{eq:app_r0_rankone}
\widetilde K_0\,y=\lambda y,
\qquad
\widetilde K_0:=K_0-\frac{1}{v_0}\,b_0\otimes b_0,
\end{equation}
where $(b_0\otimes b_0)y:=\langle b_0,y\rangle_{\ell^2(\mathbb Z_*)}\,b_0$.

\begin{remark}[The branch $x_0=0$ is not omitted]\label{rem:r0_x0_zero_branch}
The constraint \eqref{eq:app_r0_constraint_y} reads $v_0x_0+\langle b_0,y\rangle_{\ell^2(\mathbb Z_*)}=0$.
If $x_0=0$, then necessarily $y\in b_0^\perp$ and \eqref{eq:app_r0_Q_equations} reduces to
\[
K_0y=\lambda y,\qquad y\in b_0^\perp.
\]
For such vectors one has $(b_0\otimes b_0)y=0$, hence $\widetilde K_0y=K_0y=\lambda y$.
Therefore every eigenpair of $K_0$ with $y\perp b_0$ contributes an eigenpair of $\widetilde K_0$,
so the subcase $x_0=0$ is already contained in the rank-one reduction \eqref{eq:app_r0_rankone}.
\end{remark}

\begin{lemma}[Nonnegativity and compactness of $\widetilde K_0$]\label{lem:r0_Ktilde_nonneg}
The operator $\widetilde K_0$ is self-adjoint, nonnegative, and compact on $\ell^2(\mathbb Z_*)$.
In particular,
\[
\lambda_{\max}^{(0)}(N):=\sup\operatorname{Spec}(\widetilde K_0)=\|\widetilde K_0\|
\]
is an eigenvalue, and $\sigma_1^{(0)}(N)=1/\lambda_{\max}^{(0)}(N)$.
\end{lemma}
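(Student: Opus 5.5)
The plan is to obtain $\widetilde K_0$ as a compression of a nonnegative quadratic form, so that nonnegativity comes for free, and then get self-adjointness and compactness from a Hilbert--Schmidt bound. First, self-adjointness. The entries $(K_0)_{m\ell}=v_{m-\ell}/\sqrt{d_md_\ell}$ are real and symmetric, because $v_{-j}=v_j\in\mathbb R$, and $b_0\otimes b_0$ is a real symmetric rank-one operator. Both pieces are bounded. For $K_0$ this follows from the Hilbert--Schmidt estimate
\[
\|K_0\|_{\mathrm{HS}}^2=\sum_{m,\ell\in\mathbb Z_*}\frac{v_{m-\ell}^2}{N^2|m||\ell|}\le \frac{\|u\|_{\ell^1}}{N^2}\sum_{m\in\mathbb Z_*}\frac{1}{m^2},
\qquad u_j:=v_j^2,
\]
proved via Young's inequality exactly as in Lemma~\ref{lem:K_HS_bound_r1}. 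Here $\|u\|_{\ell^1}=\|v\|_{\ell^2}^2<\infty$ by Parseval whenever $w_N\in L^2$, i.e.\ $N\ge5$ by \eqref{eq:wn_Lp_range}. For $N=3,4$ I would instead use the pointwise decay $v_m\lesssim m^{2\alpha-1}$, which follows from Lemma~\ref{lem:vm_gamma_formula}; it gives $\sum_{m\ne\ell}v_{m-\ell}^2/(|m||\ell|)<\infty$ because $4\alpha-2<-\tfrac23$ and $|m|^{-1}\in\ell^2$. The vector $b_0$ lies in $\ell^2$ since $v_m^2/d_m$ is summable. Hence $\widetilde K_0$ is a Hilbert--Schmidt perturbation of a rank-one operator, so it is bounded, self-adjoint and compact.

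Second, nonnegativity. This is the step that needs care, because $K_0$ alone is only the compression of the form $B_{w_N}$ to non-constant modes, while the subtracted term involves the constant direction. For $y\in\ell^2(\mathbb Z_*)$ I would set $x_m=y_m/\sqrt{d_m}$ for $m\ne0$ and choose $x_0:=-\langle b_0,y\rangle/v_0$ (recall $v_0=1$), which is exactly the value enforcing \eqref{eq:app_r0_constraint_y}. Let $\varphi=x_0+\sum_{m\ne0}x_me_{mN}$. Expanding $\frac{1}{2\pi}B_{w_N}(\varphi,\varphi)$ via Lemma~\ref{lem:Toeplitz_weight_matrix} gives
\[
\tfrac1{2\pi}B_{w_N}(\varphi,\varphi)=v_0|x_0|^2+2\,\mathrm{Re}\bigl(\overline{x_0}\langle y,b_0\rangle\bigr)+\langle K_0y,y\rangle .
\]
With the chosen $x_0$ the middle term equals $-2|\langle b_0,y\rangle|^2$ and the first equals $|\langle b_0,y\rangle|^2$. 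The right side therefore reduces to $\langle K_0y,y\rangle-|\langle b_0,y\rangle|^2=\langle\widetilde K_0y,y\rangle$, and this is $\ge0$ since $w_N>0$ a.e. I would verify the identity first for finitely supported $y$, where all sums are finite, and then extend it by density using the boundedness from the first step. The resulting $\varphi$ lies in $W^{\frac12,2}$, by the Fourier characterization, and is $w_N$-orthogonal to constants. So $\langle\widetilde K_0y,y\rangle=\frac1{2\pi}\|\varphi\|_{w_N}^2$, which is equivalently the minimal value of the form over $x_0$.

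Finally, the identification. Because $\widetilde K_0$ is compact, self-adjoint and nonnegative, $\sup\operatorname{Spec}\widetilde K_0=\|\widetilde K_0\|$. If this supremum is positive it is an eigenvalue; in the degenerate case the statement is trivial. The remaining claim is $\sigma_1^{(0)}=1/\lambda^{(0)}_{\max}$. For this I would observe that, under the map $y\mapsto\varphi$, the numerator of the Rayleigh quotient in \eqref{eq:RRQ0} is $2\pi\|y\|^2$ (by \eqref{eq:Lambda_coeff_action}, since the constant mode has $\Lambda$-energy zero) and the denominator is $2\pi\langle\widetilde K_0y,y\rangle$. This map is a bijection onto $\mathcal H_0^\circ$. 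Hence
\[
\sigma_{1,0}=\inf_{y\ne0}\frac{\|y\|^2}{\langle\widetilde K_0y,y\rangle}=\frac1{\|\widetilde K_0\|}.
\]
Remark~\ref{rem:r0_x0_zero_branch} confirms that no branch of eigenvectors is lost in this correspondence.
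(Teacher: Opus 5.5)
Your proposal is correct and is essentially the paper's argument. The paper shows that the bordered operator $\begin{pmatrix} v_0 & b_0^*\\ b_0 & K_0\end{pmatrix}$ is positive semidefinite (positivity of $B_{w_N}$ plus the $d_m^{-1/2}$ rescaling) and takes the Schur complement of the $(0,0)$ entry, which is exactly your completion of the square in $x_0$, and it gets compactness from Hilbert--Schmidt plus rank one; your Rayleigh-quotient bijection onto $\mathcal H_0^\circ$ and your separate handling of $N=3,4$ supply details the paper only cites.

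Two small corrections are needed, neither a real gap. First, the minimizing (and genuinely weighted-mean-zero) choice is $x_0=-\langle y,b_0\rangle$, not $x_0=-\langle b_0,y\rangle$: with your choice the cross term is $-2\,\mathrm{Re}\,\langle y,b_0\rangle^2$ rather than $-2|\langle y,b_0\rangle|^2$ for complex $y$. Alternatively, simply restrict to real $y$, since $\widetilde K_0$ is real symmetric. Second, at $N=3$ one has $4\alpha-2=-\tfrac23$ exactly, so the inequality you cite fails there; the double sum is still finite, e.g.\ by Young's inequality with $(v_j^2)_j\in\ell^2$ and $(|m|^{-1})_m\in\ell^{4/3}$.
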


\begin{proof}
Consider the block operator on $\C\oplus \ell^2(\mathbb Z_*)$ defined by
\[
\mathcal A_0:=
\begin{pmatrix}
v_0 & b_0^*\\
b_0 & K_0
\end{pmatrix}.
\]
By Lemma~\ref{lem:Toeplitz_weight_matrix} and positivity of the weighted form
$B_{w_N}(\phi,\phi)$, the Toeplitz matrix $(v_{m-\ell})_{m,\ell\in\Z}$
is positive definite. The congruence transform that rescales the $\mathbb Z_*$--components by $d_m^{-1/2}$
shows that $\mathcal A_0$ is positive semidefinite on $\C\oplus \ell^2(\mathbb Z_*)$.
Since $v_0>0$, the Schur complement of the $(0,0)$ block is nonnegative:
\[
K_0-\frac{1}{v_0}b_0\otimes b_0=\widetilde K_0\ \ge 0.
\]
Moreover, $K_0$ is Hilbert--Schmidt on $\ell^2(\mathbb Z_*)$ (cf.\ the same kernel summability used for the
$r\neq0$ blocks), hence compact, and $b_0\otimes b_0$ is rank-one. Therefore $\widetilde K_0$ is compact
and self-adjoint, and consequently $\sup\operatorname{Spec}(\widetilde K_0)$ is an eigenvalue.
\end{proof}

\begin{remark}[Certification caveat for $r=0$]\label{rem:r0_no_CW}
Although \(K_0\) is entrywise nonnegative (by the positivity of \(v_m\), which follows immediately from the Gamma representation in Lemma~\ref{lem:vm_gamma_formula}), the rank-one correction in
$\widetilde K_0$ typically destroys entrywise nonnegativity. Therefore Collatz--Wielandt
is \emph{not} used for finite sections of $\widetilde K_0$; we use general symmetric-matrix bounds instead.
\end{remark}

% ------------------------------------------------------------
\subsection{Certification scheme (finite section + tail bounds)}\label{app:finite_check_scheme}
% ------------------------------------------------------------

Fix a block operator $A$ (either $A=A_r$ with $1\le r\le N-1$, or $A=\widetilde K_0$).
Let $\Pi_M$ be the projection onto $\mathrm{span}\{e_m:|m|\le M\}$ (for $r=0$ we intersect with $\mathbb Z_*$),
and set $Q_M:=I-\Pi_M$. Decompose
\[
A=\begin{pmatrix}B & E\\ E^H & C\end{pmatrix},
\qquad
B:=\Pi_MA\Pi_M,\quad E:=\Pi_MAQ_M,\quad C:=Q_MAQ_M.
\]

\paragraph{Step 1: enclosing $\lambda_{\max}(B)$.}
\begin{itemize}
\item For \(r\neq 0\): by the positivity of \(v_m\), which follows immediately from the Gamma representation in Lemma~\ref{lem:vm_gamma_formula}, \(B\) is entrywise nonnegative, hence for any vector \(x>0\)
Collatz--Wielandt gives
\begin{equation}\label{eq:app_CW_rneq0}
\min_i \frac{(Bx)_i}{x_i}\le \lambda_{\max}(B)\le \max_i \frac{(Bx)_i}{x_i}.
\end{equation}
We choose $x$ from a few (non-rigorous) power iterations and then enforce positivity, e.g.
replacing $x$ by $|x|+\varepsilon\mathbf 1$, before evaluating \eqref{eq:app_CW_rneq0} in interval
arithmetic to obtain a certified enclosure
\[
\lambda_{\max}(B)\in[\underline\lambda_{N,r,M},\overline\lambda_{N,r,M}].
\]

\item For $r=0$: we do \emph{not} use Collatz--Wielandt (Remark~\ref{rem:r0_no_CW}). Instead we use general
symmetric bounds: for any nonzero $x$,
\begin{equation}\label{eq:app_RQ_lower_general}
\lambda_{\max}(B)\ge \frac{\langle Bx,x\rangle_{\ell^2}}{\langle x,x\rangle_{\ell^2}},
\end{equation}
and
\begin{equation}\label{eq:app_infty_upper_general}
\lambda_{\max}(B)\le \|B\|_{\ell^2}\le \sqrt{\|B\|_{\ell^1}\|B\|_{\ell^\infty}}=\|B\|_{\ell^\infty},
\qquad
\|B\|_{\ell^\infty}:=\max_i\sum_j |B_{ij}|.
\end{equation}
Here $\|B\|_{\ell^1}=\|B\|_{\ell^\infty}$ since $B$ is symmetric. Both bounds are evaluated in interval arithmetic,
yielding $\lambda_{\max}(B)\in[\underline\lambda_{N,0,M},\overline\lambda_{N,0,M}]$.
\end{itemize}

\paragraph{Step 2: tail bounds for $\|C\|$ and $\|E\|$.}
We certify explicit bounds
\begin{equation}\label{eq:app_tail_bounds}
\|C\|\le \|C\|_{\mathrm{HS}},\qquad \|E\|\le \|E\|_{\mathrm{HS}},
\end{equation}
using explicit kernel and monotone tail estimates for $v_m$; all quantities are computed in
interval arithmetic. (Here $r=0$ is interpreted as $A=\widetilde K_0$.)

\paragraph{Step 3: a $2\times 2$ comparison bound.}
Since $A$ is self-adjoint and nonnegative, for any unit vector $(x,y)\in \Pi_M\ell^2\oplus Q_M\ell^2$ one has
\[
\langle A(x,y),(x,y)\rangle_{\ell^2\oplus\ell^2}
\le
\lambda_{\max}(B)\|x\|_{\ell^2}^2+2\|E\|\|x\|_{\ell^2}\|y\|_{\ell^2}+\|C\|\|y\|_{\ell^2}^2,
\]
and hence
\begin{equation}\label{eq:app_2by2_bound}
\lambda_{\max}(A)\le
\lambda_{+}\!\left(\begin{array}{cc}
\lambda_{\max}(B) & \|E\|\\
\|E\| & \|C\|
\end{array}\right),
\end{equation}
where $\lambda_{+}$ denotes the larger eigenvalue of a symmetric $2\times 2$ matrix.
On the other hand,
\begin{equation}\label{eq:app_lower_from_compression}
\lambda_{\max}(A)\ge \lambda_{\max}(B),
\end{equation}
since $B=\Pi_MA\Pi_M$ is the compression of $A$ to $\mathrm{Ran}(\Pi_M)$ and the Rayleigh quotient of $A$
restricted to $\mathrm{Ran}(\Pi_M)$ coincides with that of $B$.

Combining the interval enclosure for $\lambda_{\max}(B)$ with \eqref{eq:app_tail_bounds} gives
\[
\lambda_{\max}(A)\in\Bigl[\underline\lambda_{N,r,M},\ \overline\lambda_{N,r,M}^{\,\infty}\Bigr],
\qquad
\overline\lambda_{N,r,M}^{\,\infty}:=
\lambda_{+}\!\left(\begin{array}{cc}
\overline\lambda_{N,r,M} & \|E\|_{\mathrm{HS}}\\
\|E\|_{\mathrm{HS}} & \|C\|_{\mathrm{HS}}
\end{array}\right).
\]

\paragraph{Step 4: assemble $\sigma_1(\Omega_N)$.}
For each $N$ define the blockwise bounds for the reciprocal top eigenvalue
\[
\underline\Lambda_N:=\max_{0\le r\le N-1}\underline\lambda_{N,r,M},
\qquad
\overline\Lambda_N:=\max_{0\le r\le N-1}\overline\lambda_{N,r,M}^{\,\infty}.
\]
Then by \eqref{eq:app_sigma1_via_Lambda} we obtain the certified enclosure
\begin{equation}\label{eq:app_sigma_interval}
\sigma_1(\Omega_N)\in
\Bigl[\underline\sigma_N,\overline\sigma_N\Bigr]
:=
\Bigl[\frac{1}{\overline\Lambda_N},\frac{1}{\underline\Lambda_N}\Bigr].
\end{equation}

% ------------------------------------------------------------
\subsection{Certified enclosures and verified gaps}\label{app:finite_check_data}
% ------------------------------------------------------------

\begin{proposition}[Certified enclosures for $3\le N\le 20$]\label{prop:app_sigma_enclosures_3_20}
For each $N\in\{3,\dots,20\}$ one has $\sigma_1(\Omega_N)\in[\underline\sigma_N,\overline\sigma_N]$,
where the outward-rounded endpoints are listed in Table~\ref{tab:app_sigma_smallN}.
\end{proposition}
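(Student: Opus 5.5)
This statement is a computer-assisted certification, so the plan is to run the certification scheme of Appendix~\ref{app:finite_check_scheme} for each $N\in\{3,\dots,20\}$ and for every block $r\in\{0,\dots,N-1\}$. The enclosure $[\underline\sigma_N,\overline\sigma_N]$ is then assembled through \eqref{eq:app_sigma1_via_Lambda} and \eqref{eq:app_sigma_interval}.

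First I would generate the coefficients $v_m$ for $|m|\le 2M$, with $M=320$, in interval arithmetic. I would take them from the exact Gamma formula of Lemma~\ref{lem:vm_gamma_formula}, using $v_0=1$ and evenness. From these I would build the finite sections $B=\Pi_M A\Pi_M$. For $r\neq0$, $A=A_r$ has the entries \eqref{eq:app_Ar_entries}. For $r=0$, $A=\widetilde K_0$ as in \eqref{eq:app_r0_rankone}, and its validity as the correct block operator is guaranteed by Remark~\ref{rem:r0_x0_zero_branch} and Lemma~\ref{lem:r0_Ktilde_nonneg}.

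I would enclose $\lambda_{\max}(B)$ as follows. For $r\neq0$, I would use the Collatz--Wielandt bounds \eqref{eq:app_CW_rneq0} with a positive test vector obtained from floating-point power iteration; these are justified because $B$ is entrywise positive. For $r=0$, I would use the Rayleigh lower bound \eqref{eq:app_RQ_lower_general} and the row-sum upper bound \eqref{eq:app_infty_upper_general}. In both cases every arithmetic step is done with outward rounding at $\mathrm{dps}=140$.

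The next step is the tail bounds \eqref{eq:app_tail_bounds}. For $\|E\|_{\mathrm{HS}}$ and $\|C\|_{\mathrm{HS}}$ I would use the pointwise bound $|v_m|\le V_\infty\alpha m^{2\alpha_0-1}$. That bound is only stated for $\alpha\le 1/20$, so for $N<20$ I would redo it with $\alpha_0=\alpha=1/N$, or better, use the exact monotone decay $v_m\sim \frac{\Gamma(1-\alpha)}{\Gamma(\alpha)}m^{2\alpha-1}$ coming from the Gamma ratio, bounding $\Gamma(m+\alpha)/\Gamma(m+1-\alpha)\le (m+\alpha)^{2\alpha-1}$ (Gautschi/Wendel). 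Combining this with $|r+mN|\ge N|m|-N$ for $|m|\ge 1$ reduces the HS sums over $|m|>M$ or $|m'|>M$ to explicit zeta-type tails, which are $O(M^{-1+4\alpha})$ times explicit constants and can be evaluated by integral comparison.

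Finally I would combine the pieces. The $2\times2$ bound \eqref{eq:app_2by2_bound} together with the compression bound \eqref{eq:app_lower_from_compression} encloses $\lambda_{\max}$ for each block. Taking maxima over $r$ gives $\underline\Lambda_N$ and $\overline\Lambda_N$, and hence the table entries.

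The main obstacle is the tail control at small $N$, especially $N=3$. There $w_N\notin L^2$: since $2\alpha=2/3$, $v_m\sim m^{-1/3}$, and for $r=0$ the tail HS sum $\sum |v_{m-m'}|^2/(|mm'|N^2)$ converges only because of the $1/|mm'|$ weights. The HS bound on $E$ will be relatively crude, decaying only like $M^{-1/6}$ up to logs. I would therefore check that $\|E\|_{\mathrm{HS}}^2/(\lambda_{\max}(B)-\|C\|_{\mathrm{HS}})$ is smaller than the gaps $\underline\sigma_{N+1}-\overline\sigma_N$ in reciprocal form. If it is not, I would increase $M$ or split off a few more explicit rows before applying the HS estimate.
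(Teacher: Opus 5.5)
Your proposal follows the paper's certification scheme exactly: Gamma-formula coefficients, Collatz--Wielandt bounds for $r\neq0$, Rayleigh and row-sum bounds for $\widetilde K_0$, Hilbert--Schmidt tails, the $2\times2$ comparison, and the maximum over $r$. It cannot prove the statement as written, however, because the tabulated row $N=3$ is false. Let $\rho=\pi/(3\sqrt3)$ be the inradius of $\Omega_3$ and take $u=x+c\,(x^2-y^2)$. Both $x$ and $x^2-y^2=\mathrm{Re}\,z^2$ have zero boundary mean by the $120^\circ$ symmetry, so $u$ is admissible. Integrating side by side gives $|\Omega_3|=3\sqrt3\rho^2$, $\int_{\Omega_3}|z|^2=3\sqrt3\rho^4$, $\int_{\Omega_3}\nabla x\cdot\nabla(x^2-y^2)=0$, $\int_{\partial\Omega_3}x^2=6\sqrt3\rho^3$, $\int_{\partial\Omega_3}x(x^2-y^2)=6\sqrt3\rho^4$ and $\int_{\partial\Omega_3}(x^2-y^2)^2=14.4\sqrt3\rho^5$. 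Hence
\[
\frac{\int_{\Omega_3}|\nabla u|^2\,dx}{\int_{\partial\Omega_3}u^2\,ds}=\frac{1}{\rho}\cdot\frac{1+4s^2}{2+4s+4.8\,s^2},\qquad s=c\rho,
\]
which equals $0.61784\ldots$ at $s=0.34$. So $\sigma_1(\Omega_3)<0.6179<\underline\sigma_3=0.621278\ldots$. Since $z+c\bar z^2$ lies in the $r=1$ block and has the same quotient, this also bounds $1/\lambda_{\max}(A_1)$.

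Your own scheme exposes the same problem. For $N=3$ one has $v_m\approx 0.5055\,m^{-1/3}$, so the $m=0$ row of $E$ alone gives $\|E\|^2\ge\sum_{|m|>320}v_m^2/|1+3m|\approx 5.5\times10^{-3}$. The $2\times2$ bound then exceeds $\lambda_{\max}(B)$ by at least about $3.4\times10^{-3}$. The tabulated row, by contrast, corresponds to $\overline\Lambda_3-\underline\Lambda_3\approx1.75\times10^{-3}$. So no valid tail bound can reproduce that row. Your fallback of enlarging $M$ makes it worse: it drives $1/\lambda_{\max}(B)$ down toward $1/\lambda_{\max}(A_1)\le0.6179$, below the tabulated lower endpoint.

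Executed honestly, your plan yields a valid but different enclosure for $N=3$. It still lies far below $\underline\sigma_4$, so the step $N=3\to4$ of the monotonicity survives, but the proposition must be corrected at $N=3$. Your stated criterion, that the tail error be smaller than the monotonicity gaps, is also not the right check here. To prove the proposition as stated, your computed interval must sit inside the tabulated one.

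There are two further slips in the tail step. First, the inequality $\Gamma(m+\alpha)/\Gamma(m+1-\alpha)\le(m+\alpha)^{2\alpha-1}$ is false. The ratio equals $m^{2\alpha-1}(1+O(m^{-2}))$, and already at $m=1$, $\alpha=1/3$ it is $0.989$, against $0.909$ on the right. Use Gautschi's bound $(m-\alpha)^{2\alpha-1}$ or Kershaw's sharper bound $m^{2\alpha-1}$ instead. Second, your decay rates are off. Up to logarithms, the tails satisfy $\|C\|_{\mathrm{HS}}^2\lesssim M^{\max(-1,\,4\alpha-2)}$ and $\|E\|_{\mathrm{HS}}^2\lesssim M^{4\alpha-2}\log M$. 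So at $N=3$ one gets $\|E\|_{\mathrm{HS}}\asymp M^{-1/3}\sqrt{\log M}$, not $M^{-1/6}$, and the exponent $-1+4\alpha$ you quote is positive at $\alpha=1/3$.
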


\begin{table}[t]
\centering
\scriptsize
\setlength{\tabcolsep}{5pt}
\renewcommand{\arraystretch}{1.05}
\caption{Outward-rounded certified enclosures for $\sigma_1(\Omega_N)$ ($M=320$, \texttt{dps}$=140$).}
\begin{tabular}{ccc}
\toprule
$N$ & $\underline\sigma_N$ & $\overline\sigma_N$\\
\midrule
3  & 0.621278808420295929 & 0.621956648650589684 \\
4  & 0.875905318843165851 & 0.876580124289285791 \\
5  & 0.950777029860796927 & 0.951373233988208008 \\
6  & 0.976000306869454176 & 0.976511988910122511 \\
7  & 0.986501698990249543 & 0.986944955925066733 \\
8  & 0.991617850961530935 & 0.992007403592559674 \\
9  & 0.994406838194736546 & 0.994753791343283385 \\
10 & 0.996058800482355868 & 0.996371348839486788 \\
11 & 0.997101505420555048 & 0.997385765295300333 \\
12 & 0.997793541594434556 & 0.998054164443366626 \\
13 & 0.998271910273369299 & 0.998512502575997379 \\
14 & 0.998613898334772289 & 0.998837307101812349 \\
15 & 0.999073804560760450 & 0.999074003056702867 \\
16 & 0.999250297342188147 & 0.999250441894246301 \\
17 & 0.999384605644537179 & 0.999384713609331416 \\
18 & 0.999488626460473108 & 0.999488708873082243 \\
19 & 0.999570444514995879 & 0.999570508623893974 \\
20 & 0.999635685448956362 & 0.999635736152520005 \\
\bottomrule
\end{tabular}

\label{tab:app_sigma_smallN}
\end{table}

\begin{proposition}[Verified strict gaps for $3\le N\le 20$]\label{prop:app_verified_gaps_3_19}
For every $N=3,\dots,20$ one has
\[
\underline\sigma_{N+1}-\overline\sigma_N>0,
\]
hence $\sigma_1(\Omega_{N+1})>\sigma_1(\Omega_N)$ for all $3\le N\le 20$.
Moreover, the smallest verified gap occurs at $N=19$ and satisfies
\[
\underline\sigma_{20}-\overline\sigma_{19}
\ge 6.5176825062388\times 10^{-5}.
\]
\end{proposition}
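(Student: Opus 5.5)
The statement reduces to comparing adjacent entries of Table~\ref{tab:app_sigma_smallN}. By Proposition~\ref{prop:app_sigma_enclosures_3_20}, each $\sigma_1(\Omega_N)$ with $3\le N\le 20$ lies in the certified interval $[\underline\sigma_N,\overline\sigma_N]$. So it suffices to check, for each $N\in\{3,\dots,19\}$, that $\underline\sigma_{N+1}>\overline\sigma_N$. That covers the pairs $(N,N+1)$ with both members in the table. The remaining pair $N=20$ needs $\sigma_1(\Omega_{21})>\sigma_1(\Omega_{20})$, and I would take it from Theorem~\ref{thm:monotone_N0_20}, whose hypothesis $N\ge20$ includes $N=20$. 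If a purely tabulated verification is preferred, one could instead run the same certification scheme (Steps 1--4 above) for $N=21$, or use the expansion of Theorem~\ref{thm:expansion} at $N=21$ to get a lower bound exceeding $\overline\sigma_{20}$.

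Concretely, I would compute the nineteen differences $\underline\sigma_{N+1}-\overline\sigma_N$ from the outward-rounded endpoints in exact decimal (rational) arithmetic, so no rounding enters the comparison. Each difference is then checked to be positive. For example, at $N=3$ the difference is $0.8759\ldots-0.6219\ldots>0$. The differences shrink as $N$ grows, so the binding cases are the last rows. For $N=19$ one finds $\underline\sigma_{20}-\overline\sigma_{19}=0.999635685448956362-0.999570508623893974\approx 6.5177\times10^{-5}$. I would then confirm that every other difference is at least this large, which establishes the claimed minimal gap.

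\emph{Main obstacle.} The comparison itself is trivial once the table is trusted. The real weight therefore rests on the validity of the enclosures, i.e.\ on Steps 1--3 of the certification scheme. Three points need checking there. First, the Collatz--Wielandt bounds for $r\neq0$ rely on the entrywise positivity of $v_m$, which follows from Lemma~\ref{lem:vm_gamma_formula}. Second, the $r=0$ block must be handled through $\widetilde K_0$, using the Rayleigh-quotient lower bound and the $\ell^\infty$ upper bound. Third, and most important, are the Hilbert--Schmidt tail bounds on $E$ and $C$. These must be genuinely certified for every block $r$, with $M=320$ and the slowly decaying $v_m\sim\alpha m^{2\alpha-1}$; for small $N$ this decay is slowest, so the tails are largest. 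I would control them by the explicit Gamma formula, using monotone tail comparison of $\sum_{j}|v_j|^2\chi_r(j)$ with an integral.

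Narrow enclosures are needed most near $N=19,20$, where the gaps are of order $10^{-4}$. For these cases I would verify that the computed tail contributions to $\overline\lambda^{\,\infty}_{N,r,M}$ are well below $10^{-5}$.
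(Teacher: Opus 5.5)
Your route is the paper's own proof of Proposition~\ref{prop:app_verified_gaps_3_19}, which is just inspection of Table~\ref{tab:app_sigma_smallN}. Your differences for $N=3,\dots,19$, with minimum $6.5176825062388\times10^{-5}$ at $N=19$, reproduce it. Your treatment of $N=20$ is more careful than the paper's: its proof needs a value $\underline\sigma_{21}$ that the table does not contain, and citing Theorem~\ref{thm:monotone_N0_20} is the right fix. The ``smallest gap'' clause must then be read over $N\le19$ only. If you realized $\underline\sigma_{21}$ via Theorem~\ref{thm:expansion} with $E_\sigma=1187$, you would get $\underline\sigma_{21}-\overline\sigma_{20}\approx4\times10^{-5}$, which is smaller than the $N=19$ gap.

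The genuine problem is a premise you take over from the paper: that every tabulated interval is a true enclosure. It fails at $N=3$, exactly in the slow-decay regime you singled out. Place $\Omega_3$ (side $s=2\pi/3$) with centroid at $0$ and a vertex on the positive $x$-axis. Test with $u=x+b(x^2-y^2)=\mathrm{Re}(z+b\bar z^2)$, which is harmonic and satisfies $\int_{\partial\Omega_3}u\,ds=0$ by the threefold symmetry. Direct integration gives
\[
\int_{\Omega_3}|\nabla u|^2=\tfrac{\sqrt3}{4}s^2+\tfrac{\sqrt3}{12}s^4b^2,
\qquad
\int_{\partial\Omega_3}u^2\,ds=\tfrac{s^3}{4}+\tfrac{\sqrt3}{12}s^4b+\tfrac{s^5}{20}b^2 .
\]
At $b=0.56$ the quotient is $2.7704/4.4839\approx0.6178$, so $\sigma_1(\Omega_3)\le0.6179<\underline\sigma_3\approx0.6213$. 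Adding $\mathrm{Re}(z^4)$ to the trial space lowers the bound further, to about $0.6164$.

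The likely cause is that the tails were underestimated. For $N=3$ one has $v_m\asymp m^{-1/3}$, and the discarded part of $b$ alone has $\sum_{|m|>320}v_m^2/d_m\approx5\times10^{-3}$. Moreover, the decay bound \eqref{eq:vm_uniform_bound} is only proved for $\alpha\le1/20$.

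The conclusion at $N=3$ still holds by a huge margin, but you must justify it with this Rayleigh bound rather than with $\overline\sigma_3$, comparing it against $\underline\sigma_4$. The $N=4$ interval does contain the exact value $0.87631\ldots$, which comes from the separable eigenfunction $\sin(kx)\cosh(ky)$ on $[-1,1]^2$ with $\cot k=\tanh k$. For $N\ge4$ the tails are far smaller. Even so, the table should be recertified with tail bounds derived directly from the Gamma formula, as you proposed, before it is used as a black box.
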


\begin{table}[t]
\centering
\scriptsize
\setlength{\tabcolsep}{8pt}
\renewcommand{\arraystretch}{1.05}
\caption{The six smallest verified monotonicity gaps among $3\le N\le 20$
(outward rounded to $18$ decimals).}
\begin{tabular}{cc}
\toprule
$N$ & $\underline\sigma_{N+1}-\overline\sigma_N$\\
\midrule
19 & 0.000065176825062388 \\
18 & 0.000081735641913636 \\
13 & 0.000101395758774910 \\
17 & 0.000103912851141692 \\
16 & 0.000134163750290878 \\
15 & 0.000176294285485280 \\
\bottomrule
\end{tabular}
\label{tab:app_sigma_smallN_gaps_tight}
\end{table}

\begin{proof}[Proof of Proposition~\ref{prop:app_verified_gaps_3_19}]
From Proposition~\ref{prop:app_sigma_enclosures_3_20} we have
$\sigma_1(\Omega_N)\in[\underline\sigma_N,\overline\sigma_N]$ for $3\le N\le 20$.
If $\underline\sigma_{N+1}>\overline\sigma_N$, then
\[
\sigma_1(\Omega_{N+1})\ge \underline\sigma_{N+1}>\overline\sigma_N\ge \sigma_1(\Omega_N),
\]
which yields strict monotonicity. The inequalities
$\underline\sigma_{N+1}-\overline\sigma_N>0$ for $3\le N\le 20$ follow by direct inspection of
Table~\ref{tab:app_sigma_smallN}. The smallest difference is the one at $N=19$, recorded in
Table~\ref{tab:app_sigma_smallN_gaps_tight}.
\end{proof}

Together with the analytic monotonicity established for $N\ge 20$, this finite verification completes the proof of Theorem~\ref{thm:global_monotone}. 
\end{document}